\DeclareMathOperator{\spann}{span} \DeclareMathOperator{\coker}{coker}
\newcommand{\thra}{\twoheadrightarrow}
\newcommand{\sA}{\sh{A}}
\newcommand{\sF}{\sh{F}}
\newcommand{\sG}{\sh{G}}
\newcommand{\sH}{\sh{H}}
\newcommand{\sI}{\sh{I}}
\newcommand{\sJ}{\sh{J}}
\newcommand{\sK}{\sh{K}}
\newcommand{\sL}{\sh{L}}
\newcommand{\sN}{\sh{N}}
\newcommand{\sO}{\sh{O}}
\newcommand{\sR}{\sh{R}}
\newcommand{\sS}{\sh{S}}
\newcommand{\CM}{{\mathbf H}}
\newcommand{\KM}{{\mathbf K}}
\newcommand{\QM}{{\mathbf Q}}
\newtheorem{conjecture}{Conjecture}
\newcommand{\Div}{\operatorname{Div}}
\newcommand{\bpr}{\begin{proof}}
\newcommand{\epr}{\end{proof}}
\newcommand{\spec}{\operatorname{Spec}}
\newcommand{\D}{\displaystyle}
\newcommand{\mc}{\mathcal}
\newcommand{\mf}{\mathfrak}
\newcommand{\cha}{\operatorname{char}}
\newcommand{\mb}{\mathbb}
\newcommand{\dra}{\dashrightarrow}
\newcommand{\GK}{\operatorname{GKdim}}
\newcommand{\wt}{\widetilde}
\newcommand{\gldim}{\operatorname{gl.dim}}
\newcommand{\aut}{\operatorname{Aut}}
\DeclareMathOperator{\HB}{H}
\newcommand{\beq}{\begin{equation}}
\newcommand{\eeq}{\end{equation}}
\newcommand{\Hom}{{\rm Hom}}
\newcommand{\Ext}{{\rm Ext}}
\newcommand{\Aut}{{\rm Aut}}
\newcommand{\ang}[1]{\langle #1 \rangle}
\newcommand{\blank}{\mbox{$\underline{\makebox[10pt]{}}$}}
\newcommand{\st}{\left\vert\right.}
 \DeclareMathOperator{\rGr}{Gr-\!}
  \DeclareMathOperator{\lMod}{\!-Mod}  
\newcommand{\sh}{\mathcal}
\newcommand{\struct}{\mathcal{O}}
\renewcommand{\Lsh}{\mathcal{L}}
\newcommand{\shTor}{\mathcal{T}{\it or}}
\newcommand{\FF}{{\mathbb F}}
\newcommand{\ZZ}{{\mathbb Z}}
\newcommand{\PP}{{\mathbb P}}
\newcommand{\PGL}{\PP GL}
\newcommand{\NN}{\mathbb N}
\newcommand{\VV}{\mathbb V}
\newcommand{\kk}{\Bbbk} 
\newcommand{\GG}{\mb G}
\DeclareMathOperator{\im}{im} \DeclareMathOperator{\Pic}{Pic} \DeclareMathOperator{\pic}{Pic}
 \DeclareMathOperator{\pd}{pd} \DeclareMathOperator{\depth}{depth}
\numberwithin{equation}{section}
 \theoremstyle{plain}
\newtheorem{theorem}[equation]{Theorem}
\newtheorem{lemma}[equation]{Lemma}
\newtheorem{corollary}[equation]{Corollary}
\newtheorem{proposition}[equation]{Proposition}
\theoremstyle{definition}
\newtheorem{definition}[equation]{Definition}
\newtheorem{notation}[equation]{Notation}
\newtheorem{remark}[equation]{Remark}
\newtheorem{standing-hypothesis}[equation]{Standing Hypothesis}
\newtheorem*{defn}{Definition}
\newcommand{\Twiddle}{\widetilde{T}}
\newcommand{\coeff}{\binom{n+1}{2}}
 \DeclareMathOperator{\len}{len}
\newcommand{\Lfam}{\sF}
\newcommand{\Xfam}{H'}
\newcommand{\Yfam}{H}
\newcommand{\Ifam}{\sJ}
\newcommand{\Bfam}{ C}
\title{Some noncommutative projective surfaces of GK-dimension 4}
\author{D. Rogalski}
\address{ UCSD Department of Mathematics\\ 9500
Gilman Dr. \#0112\\ La Jolla, CA 92093-0112, USA}
\email{drogalsk@math.ucsd.edu}
\author{S. J. Sierra}
\address{Department of Mathematics\\Princeton University\\Fine Hall, Washington Rd.\\Princeton, NJ 08544, USA}
\email{ssierra@princeton.edu}
\date{\today}
\keywords{noncommutative projective geometry, noncommutative surfaces, noetherian graded rings, ample sequence}
\subjclass[2000]{Primary 16S38; Secondary 14A22, 16P40,  16S80, 16W50 }
\begin{document}
\begin{abstract}
We construct an interesting family of connected graded domains of GK-dimension 4, and show that the general member of this family is noetherian.
The algebras we construct are Koszul and have global dimension 4.  They fail to be Artin-Schelter Gorenstein, however, showing that a theorem of Zhang and Stephenson for dimension 3 algebras does not extend to dimension 4.  The Auslander-Buchsbaum formula also fails to hold for these algebras.

The algebras we construct are  birational to $\PP^2$, and their existence  disproves a conjecture of the first author and Stafford.  The algebras can be obtained as global sections of a certain quasicoherent graded sheaf on $\PP^1 \times \PP^1$, and our key technique is to work with this sheaf.  In contrast to all previously known examples of birationally commutative graded domains, the graded pieces of the sheaf fail to be ample in the sense of Van den Bergh.   Our results thus require significantly new techniques.
\end{abstract}

\maketitle

\tableofcontents

\section{Introduction}\label{INTRO}
Let $\kk$ be an uncountable algebraically closed field, and let $R $ be an $\NN$-graded $\kk$-algebra.   We assume also that $R$ is {\em connected graded}: i.e. $R_0 = \kk$ and ${\rm dim }_{\kk} R_n < \infty$ for all $n$.
Recall that $R$ is {\em Artin-Schelter (AS) regular} if $R$ has global dimension $d < \infty$, finite GK-dimension, and $R\Hom_R(\kk, R) = \kk[\ell]$ for some $\ell$.   In general, AS-regular algebras of dimension $d$ which are also Koszul (in which case $\ell=d$) are considered to be the noncommutative analogues of polynomial rings in $d$ variables.

The  condition that $R\Hom_R(\kk, R) = \kk[\ell]$ is the {\em Artin-Schelter Gorenstein} condition, and it is the most mysterious of the three conditions for regularity.  It has been shown in many cases to imply that the algebra $R$ is well-behaved, and in fact all known examples of AS-regular algebras are noetherian domains.  Conversely, Stephenson and Zhang \cite{SteZh} proved that if a connected graded algebra $R$ is noetherian, Koszul, and has global dimension 3, it is AS-Gorenstein.  In fact, there has been speculation \cite[p.~195]{SV} that this is true if $R$ has arbitrary finite global dimension.

In this paper we show, surprisingly, that the results of \cite{SteZh} fail in dimension 4.   We prove:
\begin{theorem}\label{ithm-main2}
 There is a connected graded noetherian domain $R$ with $\GK R = \gldim R = 4$ such that $R$ is Koszul but not AS-Gorenstein.
\end{theorem}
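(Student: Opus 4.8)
The plan is to construct the algebra $R$ explicitly as a ring of global sections of a graded sheaf on $X = \PP^1 \times \PP^1$, as advertised in the abstract, and then verify each of the required properties. The first step is to set up the geometric data: choose an appropriate automorphism $\sigma$ of $X$ (or a suitable endomorphism structure) together with a line bundle $\sL$, and build a $\ZZ$-indexed sequence of sheaves $\sL_n$ on $X$ by twisting $\sL \otimes \sigma^* \sL \otimes \cdots$ in the usual birationally commutative fashion, but arranged so that the $\sL_n$ are \emph{not} ample in Van den Bergh's sense (this is the novel feature). One then forms the bimodule algebra $\sR = \bigoplus_n \sL_n$ with an appropriate multiplication, and sets $R = \bigoplus_n H^0(X, \sR_n)$. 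Since $X$ is a surface and the construction is birationally commutative over the function field $\kk(X) = \kk(\PP^2)$, the algebra $R$ will be a connected graded domain birational to $\PP^2$, and a Hilbert-function computation on $X$ (Riemann–Roch on the surface, controlling $h^1$ and $h^2$) will give $\GK R = 4$.

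**The core technical work** is proving that $R$ is noetherian without the ampleness hypothesis that underlies all previous such results. The strategy here is to prove that $\sR$ is a noetherian bimodule algebra — i.e. that the category of coherent $\sR$-modules is noetherian — by a direct cohomological argument on $X$, showing that submodules of finitely generated graded $\sR$-modules stabilize; then one transfers noetherianity from $\sR\catmod$ to $R\catmod$ using a comparison of the two module categories (the functor $H^0$ and its adjoint), checking that the failure of ampleness does not obstruct this comparison in the surface setting. Once $R$ is noetherian, the Koszul property and $\gldim R = 4$ are established by an explicit analysis of the minimal free resolution of $\kk_R$: one writes down the beginning of the resolution from the defining relations (which the construction makes quadratic, hence Koszulity is plausible) and shows the resolution has length exactly $4$, again using the geometry of $X$ to identify the $\Ext$ groups.

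**The final — and conceptually the sharpest — step** is to compute $R\Hom_R(\kk, R)$ and exhibit that it is \emph{not} concentrated in a single homological degree with one-dimensional cohomology, so $R$ fails AS-Gorenstein; the same resolution also lets one compute $\depth R$ and the projective dimension of a suitable module to violate Auslander–Buchsbaum. I expect the main obstacle to be the noetherian property: every existing proof that a birationally commutative projective surface is noetherian runs through Van den Bergh's ampleness machinery (or Artin–Van den Bergh / Keeler–Rogalski–Stafford style arguments), and precisely because the $\sL_n$ are not ample, one must replace that black box with a hands-on cohomological vanishing argument tailored to $\PP^1\times\PP^1$ and the specific $\sigma$ and $\sL$ chosen — establishing enough partial vanishing (an "eventual" or "asymptotic" ampleness along the relevant directions) to push through the standard Noetherian induction. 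Controlling which member of the family is noetherian (the statement only claims the general member) suggests this vanishing is delicate and fails on a proper closed subset of parameters, so the argument will need a genericity hypothesis introduced when the geometric data is chosen.
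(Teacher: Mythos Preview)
Your outline captures the broad shape of the construction, but the strategy for the noetherian property has a genuine gap. You propose to establish ``enough partial vanishing (an `eventual' or `asymptotic' ampleness along the relevant directions)'' to push through a Noetherian induction. This will not work: for the sheaves $\sR_n$ that arise here, the groups $H^1(T, \sO(a,b) \otimes \sR_n^{\phi^m})$ are nonzero for \emph{all} large $n$ once $a \leq -3$, so the relevant cohomology modules $\CM^1(\sF,m) = \bigoplus_n H^1(T, \sF \otimes \sR_n^{\phi^m})$ are genuinely infinite-dimensional. No vanishing or asymptotic-ampleness argument is available.

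The paper's key new idea is to abandon vanishing entirely and instead prove that these infinite-dimensional cohomology modules are \emph{noetherian as $R$-modules}. Concretely, one uses the Leray spectral sequence for the projection $p: \PP^1\times\PP^1 \to \PP^1$ to split $\CM^1(\sO(a,b),m)$ into two pieces, $\KM(\sO(a,b),m)$ coming from $H^1(\PP^1, p_*(-))$ and $\QM(\sO(a,b),m)$ coming from $H^0(\PP^1, R^1 p_*(-))$. The $\KM$ piece is shown to be finite-dimensional by a semicontinuity comparison with the degenerate case $\tau = \mathbb{1}$. The $\QM$ piece is infinite-dimensional, but an explicit \v{C}ech computation on fat fibers of $p$ shows it is, up to finite dimension, an extension of finitely many point modules over $R$, hence noetherian. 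This structural result for the cohomology modules is the substantive new technique, and your proposal does not anticipate it.

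Two smaller points. First, the map on $T = \PP^1\times\PP^1$ is a birational self-map $\phi$, not an automorphism or endomorphism; indeed, if $\phi$ were an automorphism the ring would have GK-dimension $3$ or $5$ by the Diller--Favre / Rogalski dichotomy, so the non-regularity of $\phi$ is essential to getting GK-dimension $4$. Second, the Koszul resolution and global dimension are not obtained by direct analysis for general $\tau$: one writes down a candidate complex, verifies exactness for the special (non-noetherian) algebra $A = R(\mathbb{1})$ by the diamond lemma, and then transfers exactness to general $\tau$ by semicontinuity of ranks in a flat family over $\GG$. The same deformation-from-$A$ technique handles the Hilbert series and the computation of $\Ext^i_R(\kk,R)$.
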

The algebra $R$ above has other surprising homological properties.  In particular, the {\em Auslander-Buchsbaum equality} fails for $R$:  we have ${\rm depth } \  \kk + { \rm p. dim }\  \kk = 0 + 4 > {\rm depth }\  R = 2$.   (See Section~\ref{NOTATION} for definitions.)
It is worth noting that $R$ is not only Koszul, but the trivial module has a resolution of the form
\beq\label{res}
0 \to R[-4] \to R[-3]^4 \to R[-2]^6 \to R[-1]^4 \to R \to \kk \to 0,
\eeq
so that $R$ has many properties of a polynomial ring in 4 variables.

The algebra $R$ is interesting in another way, as well:  it is a counterexample to a conjecture in the classification of noncommutative projective surfaces.  This classification is
one of the most important open problems in the subject of noncommutative projective geometry, and is far from complete.  An important special case, however, is well-understood:  {\em birationally commutative} surfaces, defined here as connected graded noetherian domains $R$ whose graded quotient ring is isomorphic to $K[t, t^{-1}; \varphi]$ for some field $K$ of transcendence degree 2.

If such $R$ have GK-dimension 3 or 5, then they have been classified \cite{RS} \cite{S-surfclass} \cite{S-GK5}.  In fact, in \cite{RS} the first author and Stafford conjecture that this classification is complete:  {\em all} birationally commutative surfaces have GK-dimension 3 or 5.  The algebra $R$ above is a counterexample to this conjecture, and shows that there is  still unexplored territory in the study of birationally commutative surfaces.

Let us explain the conjecture of \cite{RS} further.  We begin by summarizing their results on birationally commutative graded algebras.
 If $S$ is a connected graded noetherian domain (or Ore domain, more generally) we may invert the homogeneous elements of $S$ to obtain the {\em graded quotient ring} $Q_{\rm gr}(S)$.  We have  $Q_{\rm gr}(S) \cong D[t, t^{-1}; \varphi]$, where $D$ is a  division ring  and $\varphi \in {\rm Aut}_{\kk}(D)$.  If $D $ is commutative, and thus $D \cong \kk(X)$ for some projective variety $X$, then we say that $S$ is {\em birational to $X$}, or more generally {\em birationally commutative}. If $\varphi$ is induced from an automorphism of $X$, we say that $S$ is {\em geometric}.

Subject to the condition that $S$ is geometric and generated in degree 1, \cite{RS} classifies birationally commutative projective surfaces: in other words those graded algebras birational to a commutative surface.  The prototypical example is a \emph{twisted homogeneous coordinate ring} $B(X, \mc{L}, \sigma) = \bigoplus_{n \in \NN} \HB^0(X, \mc{L} \otimes \sigma^* \mc{L} \otimes \dots \otimes (\sigma^{n-1})^*\mc{L})$ for some projective surface $X$ with automorphism $\sigma: X \to X$ and invertible sheaf $\mc{L}$ on $X$ (see \cite{AV} for more details about this construction). They show:
\begin{theorem} (\cite[Theorem~1.1]{RS})
\label{thm-RS} Let $S$ be a connected graded noetherian domain that is generated in degree 1, birational to a commutative surface, and geometric. Then up to a finite-dimensional vector space, $S$ is either a twisted homogeneous coordinate ring $B(X, \mc{L}, \sigma)$ or else a closely related subring $R(X, Z, \mc{L}, \sigma)$ of such (called a \emph{na\"ive blowup algebra}).
\end{theorem}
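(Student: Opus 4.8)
The plan is to realize $S$ as a graded subalgebra of a twisted homogeneous coordinate ring on a suitable model of $X$, and then to identify the ``defect'' of this inclusion, using the noetherian hypothesis to cut it down to the two listed shapes.

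\emph{Step 1: normalize and choose a model.} Inside $Q_{\rm gr}(S)\cong k(X)[t,t^{-1};\sigma]$ write $S_1=Vt$ with $V\subset k(X)$ a finite-dimensional $k$-subspace; since $S$ is generated in degree $1$ one gets $S_n=V_nt^n$ where $V_n=\spann\bigl(V\cdot\sigma(V)\cdots\sigma^{n-1}(V)\bigr)$, and these spaces satisfy the multiplicativity $V_{m+n}=\spann\bigl(V_m\cdot\sigma^m(V_n)\bigr)$. Replacing $X$ by a birational model I would arrange that $\sigma$ acts biregularly and that the linear systems $|V_n|$ behave coherently: after a $\sigma$-equivariant sequence of blowups the fixed (base) divisor stabilizes and there is an invertible sheaf $\mc L$ on $X$ with $V_n\subseteq\HB^0(X,\mc L_n)$, $\mc L_n:=\mc L\otimes\sigma^*\mc L\otimes\cdots\otimes(\sigma^{n-1})^*\mc L$, compatibly with multiplication; that is, $S$ embeds as a graded subalgebra of $B=B(X,\mc L,\sigma)$. (Degenerate cases, e.g.\ $\dim V\le1$ or $\GK S\le2$, would be handled separately and fall under known curve classifications.)

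\emph{Step 2: constrain $\sigma$ and $\mc L$.} Next I would show $\mc L$ may be taken $\sigma$-ample, so that $B$ is itself a noetherian domain with $\GK B=3$. The point is that noetherianity of $S$ forbids the pathologies: if $\sigma$ had positive entropy, or if some $\sigma$-orbit of curves were ``contracted'' by $\{\mc L_n\}$, then intersection-number growth estimates together with the polynomial growth of $\dim_k V_n$ (forced by $\GK S<\infty$) would produce a non-finitely-generated right ideal of $S$. Hence $\sigma$ is quasi-unipotent on $X$, a $\sigma$-ample $\mc L$ exists, and we fix such an $\mc L$ with $S\subseteq B$.

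\emph{Step 3: analyze the defect.} If $S_n=B_n$ for all $n\gg0$ we are in the twisted homogeneous coordinate ring case. Otherwise I would rephrase the inclusions $V_n\subseteq\HB^0(X,\mc L_n)$ as vanishing conditions: there should be a fixed closed subscheme $Z\subset X$ with $V_n=\HB^0\bigl(X,\mc L_n\otimes\mc I_{Z_n}\bigr)$ for $n\gg0$, where $Z_n$ is the scheme-theoretic union of $Z$ and its first $n$ translates under $\sigma$. The crux is to prove $Z$ is $0$-dimensional: if $Z$ contained a curve, the cokernel $B/S$ would be too large and one could again exhibit a non-finitely-generated ideal of $S$, contradicting noetherianity — this is the step that genuinely uses that $X$ is a \emph{surface}. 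The multiplicativity $V_{m+n}=\spann(V_m\sigma^m(V_n))$ is what forces the ideal sheaves into this simple ``translate-union'' shape rather than something more complicated. With $Z$ $0$-dimensional, $S$ agrees in large degree with the na\"ive blowup algebra $R(X,Z,\mc L,\sigma)$, and noetherianity of $S$ forces, via the theory of na\"ive noncommutative blowing up, the $\sigma$-orbit of $\supp Z$ to be critically dense — exactly the condition built into the definition of $R(X,Z,\mc L,\sigma)$.

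\emph{Main obstacle.} The hard part is Step 3: extracting from $S$ a single closed subscheme $Z$ that controls all of the $V_n$, proving it is $0$-dimensional, and ruling out the ``curve'' and ``non-critically-dense'' configurations — all of which hinge on a delicate interplay between the graded multiplicativity and the surface geometry, with noetherianity as the only available lever. By contrast, the equivariant model choice in Step 1 is essentially bookkeeping, and the rigidity statements about surface automorphisms needed in Step 2 become routine once $\sigma$-ampleness is identified as the right notion.
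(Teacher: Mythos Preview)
This theorem is not proved in the present paper at all: it is quoted from \cite{RS} as background, and the paper provides no argument for it beyond the citation. So there is no ``paper's own proof'' to compare your proposal against here; the paper's actual content is the construction of counterexamples (the algebras $R(\tau)$) showing that the conclusion of Theorem~\ref{thm-RS} \emph{fails} once the geometric hypothesis is dropped.

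That said, your outline is a fair high-level summary of the strategy that \cite{RS} does use: embed $S$ in a twisted homogeneous coordinate ring $B(X,\mc L,\sigma)$ on a suitable model, study the sequence of base loci of the $V_n$, show these are controlled by a single $0$-dimensional subscheme $Z$ and its $\sigma$-translates, and then invoke the na\"ive blowup machinery. A couple of points where your sketch understates the difficulty: the reduction to a model on which $\sigma$ is biregular and $\mc L$ is $\sigma$-ample is not mere bookkeeping --- it uses nontrivial input about surface automorphisms and $\sigma$-ampleness; and the precise shape of the ideal sheaves (that the base loci really are $\mc I_Z \cdot \mc I_Z^{\sigma} \cdots \mc I_Z^{\sigma^{n-1}}$ rather than something looser) requires more than just the multiplicativity $V_m \sigma^m(V_n) \subseteq V_{m+n}$ --- one needs the noetherian hypothesis on both sides, together with a careful cohomological argument, to rule out more exotic ideal-sheaf sequences. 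But as a roadmap of \cite{RS}, your three steps are in the right order and identify the right pressure points.
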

More generally, the second author has obtained in \cite{S-surfclass}, \cite{S-GK5} a similar classification result without the generation in degree $1$ hypothesis.  Possibly after passing to a Veronese subring, all such algebras are again (special kinds of) subrings of twisted homogeneous coordinate rings.  In all cases the rings involved may be written explicitly in terms of commutative geometric data.

The first author and Stafford conjectured \cite[p. 6]{RS} that the conclusions of Theorem~\ref{thm-RS} hold without the assumption that $S$ is geometric.  The conjecture was motivated by the following theorem of the first author, using work of Diller and Favre \cite{DF2001}.

\begin{theorem}\label{thm-R}
(\cite{R-GK}) Let $Q := K[t, t^{-1}; \varphi]$, where $K$ is a finitely generated field extension of $\kk$ of transcendence degree 2. Then every connected graded Ore domain
 $S$ with graded quotient ring $Q$ has the same GK-dimension $d \in \{ 3, 4, 5, \infty\}$.
If $d=\infty$, then $S$ is not noetherian.  If $d < \infty$, then $d \in \{ 3, 5\}$ if and only if $S$ is geometric.
\end{theorem}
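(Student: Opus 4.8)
The plan is to translate the growth of $S$ into intersection theory on a projective model of $K$ and then read off the answer from Diller and Favre's classification of birational surface maps. Write $Q = K[t,t^{-1};\varphi]$, so that $S_n = W_n t^n$ for finite-dimensional $\kk$-subspaces $W_n \subseteq K$ with $W_0 = \kk$ and $W_m\cdot\varphi^m(W_n)\subseteq W_{m+n}$; then $\GK S = d$ exactly when $\dim_\kk W_n$ grows like a polynomial of degree $d-1$, and $\GK S = \infty$ exactly when this growth is exponential. After standard reductions --- $\GK$ is the supremum over finitely generated subalgebras, and passing to a Veronese replaces $\varphi$ by a power of the same dynamical type --- we may assume $S$ is finitely generated, generated in degrees $\leq\ell$. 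Fix a smooth projective surface $X$ with $\kk(X) = K$ on which $\varphi$ acts birationally, and, following Diller--Favre, replace it by a birational model (still called $X$) on which $\varphi$ is algebraically stable.

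First I would attach line bundles: dividing $W_n$ by a fixed nonzero element and blowing up the base locus of the resulting linear system yields a model $Y_n\to X$ and a globally generated $\mathcal{L}_n$ on $Y_n$ with $h^0(Y_n,\mathcal{L}_n) = \dim_\kk W_n$. Finite generation of $S$ gives an upper bound $\mathcal{L}_n\leq\sum_{i<n}(\varphi^i)^*\mathcal{L}$ for a fixed class $\mathcal{L}$ built from the generators (suitably pulled back), while the multiplication rule $W_m\varphi^m(W_n)\subseteq W_{m+n}$ gives a lower bound: $\mathcal{L}_{kn}$ dominates the free part of $\sum_{i<k}(\varphi^{in})^*\mathcal{L}_n$. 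Since on a surface $\dim_\kk W_n = h^0(\mathcal{L}_n)$ is controlled, by asymptotic Riemann--Roch, by the intersection numbers $\mathcal{L}_n^2$ and $\mathcal{L}_n\cdot H$ with $H$ a fixed ample class, the whole problem reduces to estimating the self-intersections of the orbit-sums $\mathcal{D}_n := \sum_{i=0}^{n-1}(\varphi^i)^*\mathcal{L}$ in $\NS(X)_{\mathbb{R}}$ --- a quantity governed entirely by the Jordan form of $\varphi^*$ on $\NS$, equivalently by the growth of $\deg(\varphi^n)$.

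The classification of Diller--Favre now gives, after a further suitable birational modification, exactly four possibilities for $\varphi$ according to the growth of $\deg(\varphi^n)$: (i) bounded, with $\varphi$ virtually an automorphism, whence $\mathcal{D}_n\sim n[\mathcal{L}]$ and $\dim_\kk W_n\asymp n^2$, so $d=3$; (ii) $\sim cn$, with $\varphi$ preserving a rational fibration and not virtually an automorphism, whence (using $F^2=0$ for the fiber class $F$ and the size-$2$ Jordan block) $\mathcal{D}_n^2\sim c'n^3$, so $d=4$; (iii) $\sim cn^2$, with $\varphi$ preserving an elliptic fibration and virtually an automorphism, whence (size-$3$ Jordan block) $\mathcal{D}_n^2\sim c'n^4$, so $d=5$; (iv) $\sim\lambda^n$ with $\lambda>1$, whence $\|\mathcal{D}_n\|\sim\lambda^n$ forces exponential growth, so $d=\infty$. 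Since the growth type of $\deg(\varphi^n)$ is a birational invariant of $(X,\varphi)$, hence of $Q$ alone, $d$ is independent of the choice of $S$. The same case division shows that, among the cases $d<\infty$, one has $d\in\{3,5\}$ if and only if $\varphi$ is induced by an automorphism of a projective model, i.e.\ if and only if $S$ is geometric. Finally, if $d=\infty$ then $\lambda_1(\varphi)>1$, so $\varphi^*$ fails to be quasi-unipotent on $\NS$ of every model; since a noetherian connected graded birationally commutative domain must act quasi-unipotently on the N\'eron--Severi group of some projective model --- a fact one extracts from the theory of $\sigma$-ample line bundles (Keeler) together with structure results of Artin--Stafford and Rogalski--Stafford --- such an $S$ cannot be noetherian.

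I expect the main obstacle to be the matching lower bound on $d$ in cases (ii) and (iii), which is precisely where the hypothesis $Q_{\mathrm{gr}}(S) = Q$ (with $\trdeg_\kk K = 2$) is essential. A priori $\mathcal{D}_n$ could grow too slowly if $[\mathcal{L}]$ and all classes built from $W_1$ were degenerate: contained in a bounded $\varphi^*$-stable subspace in case (ii), or meeting the elliptic fiber class trivially (i.e.\ lying in the span of the lower Jordan vectors) in case (iii). One must rule these out. In the first situation the orbit-sums $\mathcal{D}_n$ would be periodic under $\varphi^*$ up to a factor of $n$, forcing a power of $\varphi$ to fix an ample class and hence to be an automorphism, contradicting (ii); in the second situation $W_1$ and all the $W_n$ would consist of functions pulled back from the base of the fibration, so the field they generate would have transcendence degree $1$, contradicting $Q_{\mathrm{gr}}(S)=Q$. (For a general finitely generated $S$ one first passes to the subalgebra generated by a single $S_{n_0}$ with $n_0$ large enough to realize this non-degeneracy, and computes its $\GK$ by the degree-$1$ case applied to $\varphi^{n_0}$.) A subsidiary, purely geometric obstacle is that making $\dim_\kk W_n\asymp\mathcal{D}_n^2$ hold uniformly in $n$ requires controlling how the base loci of the iterated linear systems accumulate, which is exactly what algebraic stability and, in the fibration cases, Diller--Favre's explicit good models are for.
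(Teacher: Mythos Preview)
The paper does not prove this theorem; it is quoted from \cite{R-GK} without proof, so there is no argument in the present paper to compare against. Your outline---reducing the growth of $S$ to the growth of degree sequences $\deg(\varphi^n)$ via intersection theory on a stable model and then invoking the Diller--Favre classification---is exactly the strategy the paper attributes to \cite{R-GK} (see the sentence introducing Theorem~\ref{thm-R}), and the case split you describe (bounded/linear/quadratic/exponential degree growth corresponding to $d=3,4,5,\infty$, with the automorphism cases being precisely $d\in\{3,5\}$) is the correct one.

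One point to tighten: your argument that $d=\infty$ forces $S$ non-noetherian is somewhat roundabout. In \cite{R-GK} this is handled more directly: if $S$ were noetherian it would have finite GK-dimension by the general results on birationally commutative noetherian domains (ultimately going back to the fact that a noetherian twisted homogeneous coordinate ring requires a $\sigma$-ample line bundle, which forces quasi-unipotence of $\sigma^*$), so the implication is essentially the contrapositive of the finite-GK cases rather than a separate appeal to Artin--Stafford/Rogalski--Stafford structure theory. Your version is not wrong, but it imports more machinery than is needed.
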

It follows from this result that   the hypotheses of Theorem~\ref{thm-RS} imply restrictions on the GK-dimension of $S$.
Thus we may restate the first author and Stafford's conjecture as
\begin{conjecture}\label{conj-RS}
[Rogalski-Stafford] Suppose that $S$ is a connected graded domain of GK-dimension 4, generated in degree 1, that is birational to a commutative surface.  Then $S$ is not noetherian.
\end{conjecture}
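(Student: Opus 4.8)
We will disprove Conjecture~\ref{conj-RS} (and in the process prove Theorem~\ref{ithm-main2}), so the task is really to propose a construction of a connected graded \emph{noetherian} domain $R$ of GK-dimension $4$, generated in degree $1$, and birational to a commutative surface. The plan is to build $R$ not as a twisted homogeneous coordinate ring or a \naive\ blowup algebra, but as a ring of global sections of a graded sheaf, and to do so in a way that deliberately violates the ampleness hypothesis behind Theorem~\ref{thm-R}. Concretely: let $Y = \PP^1 \times \PP^1$, fix an automorphism $\sigma \in \Aut(Y)$, and construct a graded quasicoherent sheaf $\sR = \bigoplus_{n \ge 0} \sR_n$ of $\CO_Y$-modules carrying a ``$\sigma$-twisted'' associative multiplication $\sR_n \otimes_{\CO_Y} {}^{\sigma^n}\!\sR_m \to \sR_{n+m}$, so that $R = \bigoplus_n \HB^0(Y, \sR_n)$ is a connected graded $\kk$-algebra. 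The $\sR_n$ should be chosen so that: (i) $\sR_0 = \CO_Y$ and $\HB^0(Y,\sR_1)$ is four-dimensional; (ii) $\dim_\kk \HB^0(Y,\sR_n) = \binom{n+3}{3}$, so that $H_R(t) = (1-t)^{-4}$ and $\GK R = 4$; and (iii) crucially, the sequence $(\sR_n)$ is \emph{not} ample in the sense of Van den Bergh --- this is what pushes us outside the reach of all prior techniques, and is presumably arranged by forcing the sheaves to fail to be globally generated along a fixed curve, or by letting their base loci grow. The resulting $R$ will depend on $\sigma$ (and the remaining data), producing a family; to disprove the conjecture it suffices to show that the \emph{general} member of this family is noetherian.

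The verification splits as follows. \textbf{Step 1 (basic structure).} Since the twisted multiplication is associative and the $\sR_n$ are torsion-free, $R$ embeds in a skew-Laurent ring $\kk(Y)[t,t^{-1};\varphi]$, so $R$ is a connected graded domain with $Q_{\mathrm{gr}}(R) = \kk(Y)[t,t^{-1};\varphi]$; as $\kk(Y) = \kk(\PP^1\times\PP^1) = \kk(\PP^2)$ has transcendence degree $2$, $R$ is birational to a commutative surface. \textbf{Step 2 (generation in degree $1$).} This is a genuine hypothesis of the conjecture, so one must verify that $\HB^0(Y,\sR_1)^{\otimes n} \to \HB^0(Y,\sR_n)$ is surjective for all $n$; I would make the degree-one data completely explicit and check this by a direct cohomology computation on $\PP^1 \times \PP^1$, using vanishing of $\HB^{\ge 1}$ of the relevant twists. \textbf{Step 3 (Hilbert series and homology).} Compute $\dim_\kk R_n$ exactly to confirm $\GK R = 4$; then, from an explicit presentation, exhibit a quadratic Gr\"obner/PBW basis to deduce Koszulity, build the minimal free resolution of $\kk$ to obtain~\eqref{res} and $\gldim R = 4$, and compute $R\Hom_R(\kk,R)$ --- which will \emph{not} be a single shift of $\kk$ --- to conclude $R$ is not AS-Gorenstein (and, using $\depth R = 2$, that Auslander--Buchsbaum fails). \textbf{Step 4 (noetherianity).} This is the heart of the argument and cannot invoke Van den Bergh's theorem that section rings of ample sequences are noetherian. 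I would instead try to prove directly that the sheaf $\ZZ$-algebra $\sR$ is noetherian --- every graded submodule of a finitely generated graded $\sR$-module is finitely generated --- by exploiting the product structure $Y = \PP^1 \times \PP^1$ to analyse graded $\sR$-modules ``one ruling at a time,'' reducing the surface problem to a family of one-dimensional problems on $\PP^1$ where finite generation can be controlled; one then transfers noetherianity from $\sR\catmod$ to $R\catmod$ through a section functor and a torsion-theoretic comparison. The restriction to a \emph{general} member enters here: the one-dimensional reduction degenerates along $\sigma$-orbits in special position relative to the locus where ampleness fails (periodic orbits, or orbits lying on the bad curve), and these are avoided off a proper closed subset of parameters.

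I expect \textbf{Step 4} to be by far the main obstacle. Every existing proof of noetherianity for a birationally commutative algebra ultimately rests on the ampleness of the relevant bimodule sequence (Artin--Van den Bergh, Van den Bergh), and that input is unavailable \emph{by design}; one therefore needs a substitute robust enough to survive the failure of ampleness yet strong enough to control \emph{all} graded right and left ideals of $R$. Moreover, because noetherianity is not a closed condition on the parameter space, the argument cannot be purely formal: it must identify precisely which configurations of $\sigma$-orbits obstruct finite generation and show they form a non-dense (indeed proper closed) set, which in turn requires a rather precise dictionary among graded ideals of $R$, graded $\sR$-submodules, and the interaction of the $\sigma$-dynamics with the non-ample curve on $Y$. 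Once noetherianity is secured, the remaining claims --- domainhood, GK-dimension $4$, Koszulity, the resolution~\eqref{res}, and the failures of AS-Gorensteinness and Auslander--Buchsbaum --- follow comparatively routinely from the explicit model.
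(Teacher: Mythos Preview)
Your overall architecture is in the right spirit, but there is a fatal error in the construction: you propose to fix an \emph{automorphism} $\sigma \in \Aut(Y)$ and build $R$ inside $\kk(Y)[t,t^{-1};\varphi]$ with $\varphi = \sigma^*$. By Theorem~\ref{thm-R}, however, a connected graded domain birational to a surface has GK-dimension $4$ \emph{only if} it is not geometric --- that is, only if $\varphi$ is \emph{not} induced by an automorphism of any projective model. So any construction starting from an honest automorphism of $\PP^1\times\PP^1$ is doomed to land in GK-dimension $3$ or $5$, never $4$. The paper's $\phi$ is a genuinely birational (non-regular) self-map of $T = \PP^1\times\PP^1$: it is the composite $\phi = \tau\sigma$ of a fixed Cremona-type map $\sigma$ (with two fundamental points) and a varying automorphism $\tau\in\GG$. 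The pullback $\varphi = \phi^*$ on $\kk(T)$ is then a field automorphism that does not descend to any surface automorphism, and this is exactly what forces GK-dimension $4$. It also means that the sheaves $\sR_n$ are defined via iterated \emph{birational} pullbacks, so questions of stability (whether $(\phi^n)^* = (\phi^*)^n$), contracted curves, and growing base loci at the fundamental points enter from the outset; a substantial portion of the paper is devoted to controlling these, and to establishing critical density of the backward $\phi^{-1}$-orbits of the fundamental points --- this dynamical input is what singles out the ``general'' $\tau$.

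Your Step~4 also underestimates the obstruction. The paper does begin by showing that the chain of ideal sheaves associated to an ascending chain of right ideals stabilises (``geometric noetherianity'', using the critical density above), but this alone does not yield noetherianity of $R$: the gap is precisely the failure of ampleness, which manifests as \emph{infinite-dimensional} $R$-modules $\CM^1(\sF,m) := \bigoplus_n H^1(T, \sF\otimes\sR_n^{\phi^m})$. There is no section-functor transfer available in the usual sense; instead one must prove directly that these cohomology modules are noetherian \emph{as $R$-modules}. The paper does this via the Leray spectral sequence for the second projection $p:T\to\PP^1$ --- this is the precise incarnation of your ``one ruling at a time'' intuition --- splitting $\CM^1$ into a piece $\KM$ coming from $H^1(\PP^1, p_*(-))$, shown to be finite-dimensional by a semicontinuity comparison with the degenerate (non-noetherian) algebra $A = R(\mathbb{1})$, and a piece $\QM$ coming from $H^0(\PP^1, R^1p_*(-))$, shown by explicit \v{C}ech computations on fat fibres to be a finite extension of point modules.
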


We note that it is not hard to write down examples of connected graded domains of GK-dimension 4 that are birational to commutative surfaces; some of these are  even finitely presented and  Koszul.  However, analyzing their ring-theoretic structure is nontrivial, and the  examples known until now are  not    noetherian.

We begin with one of these non-noetherian examples, and examine a family of (non-formal) deformations of it.  Namely, let $K := \kk(u,v)$ be a rational function field and define $\sigma: K \to K$ by $\sigma(u) = uv, \sigma(v) = v$.  Setting $E := \kk  + \kk u + \kk v + \kk uv$, the $\kk$-subalgebra $A$ of $K[t, t^{-1}; \sigma]$
generated by $Et$ is a non-noetherian Koszul algebra of GK-dimension 4; it has appeared before in the literature, for example in \cite[Proposition~7.6]{YZ2006}.

To deform $A$, we perturb $\sigma$ by a 2-parameter family of automorphisms of $K$. Given $\rho, \theta \in \kk^* $, let
 $\tau = \tau(\rho, \theta): K \to K$ be given by
\[
\tau(u)  = \frac{(\rho +1) u + (\rho -1)}{(\rho -1) u + (\rho + 1)}, \ \ \ \tau(v) = \frac{(\theta + 1) v + (\theta -1)}{(\theta -1) v + (\theta + 1)}. \] Let $\GG := \{\tau(\rho, \theta) \st \rho, \theta \in \kk^*\}$ be the subgroup of $\Aut(\PP^1 \times \PP^1)$ that fixes each of the four points $[ \pm 1: 1][\pm 1: 1]$. Given $\tau \in \GG$, define  $\varphi := \sigma \circ \tau$.  Let
\begin{equation}\label{foo}
 R(\tau) := \kk \langle Et \rangle \subseteq  k(u,v)[t, t^{-1}; \varphi].
\end{equation}
The family of algebras of interest is then
$\{ R(\tau) \st \tau \in \GG\}$.

We may now state our main result.
\begin{theorem}\label{ithm-main}
Let $R = R(\tau)$ be as in \eqref{foo}, where $\tau = \tau(\rho, \theta)\in \GG$.  For any pair $(\rho, \theta)$ which is algebraically independent over the prime subfield of $\kk$, the algebra $R(\tau)$ is a noetherian domain of GK-dimension 4 that is birational to $\mathbb{P}^2$.  Further, the results of Theorem~\ref{ithm-main2} hold for this $R$.
\end{theorem}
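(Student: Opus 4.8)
\emph{Sketch of the proof.} The strategy, following the outline in the introduction, is to realize $R=R(\tau)$ as the section ring $\bigoplus_{n\geq0}\coH^0(X,\sR_n)$ of an $\NN$-graded quasicoherent sheaf $\sR_\bullet=\bigoplus_n\sR_n$ on the smooth quadric $X=\PP^1\times\PP^1$, and then to read off every assertion from an explicit description of the coherent pieces $\sR_n$. To set this up, identify $E$ with $\coH^0(X,\sO_X(1,1))$ in the bihomogeneous coordinates attached to $(u,v)$; since $\sigma(v)=v$, the birational self-map $\varphi=\sigma\circ\tau$ of $X$ is fibered over the second ruling and has explicitly computable indeterminacy locus and contracted curves. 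Resolving the base locus of the $n$-fold iterate $\varphi^{n}$, one shows that the image $V_n:=E\cdot\varphi(E)\cdots\varphi^{n-1}(E)\subseteq K$ of the product of the pulled-back linear systems has the form $V_n=\coH^0(X,\sR_n)$, where $\sR_n$ is a subsheaf of a line bundle $\sO_X(a_n,b_n)$ --- with $a_n$ growing linearly and $b_n$ quadratically in $n$ --- cut out by a zero-dimensional, in general highly non-reduced, base scheme $Z_n$ whose support and multiplicities are governed by the $\varphi$-dynamics. This is exactly where the hypothesis that $(\rho,\theta)$ be algebraically independent over the prime field enters: it forces $\varphi$ to act with infinite orbits and no accidental coincidences, so that $Z_n$ behaves as predicted rather than degenerating. (Taking $\tau=\id$ recovers the non-noetherian algebra $A$, whose orbit structure is highly non-generic; the family $R(\tau)$ deforms away that degeneracy.)

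Granting this picture, the ``algebraic'' assertions come fairly cheaply. The algebra $R$ is generated in degree $1$ by the four-dimensional space $Et$, and I would verify by a Gr\"obner-basis (PBW-type) computation --- or, equivalently, by computing the quadratic dual $R^{!}$ and checking that it has Hilbert series $(1+s)^4$ --- that the six quadratic relations among the generators form a Koszul set. This simultaneously shows that $R$ is Koszul, that the trivial module admits the resolution \eqref{res}, and hence that $\gldim R=\pd_R\kk=4$ and $R$ has Hilbert series $(1-s)^{-4}$, so $\dim_\kk R_n=\binom{n+3}{3}$ and $\GK R=4$. Finally $R$ is a domain as a subring of $K[t,t^{-1};\varphi]$; it is generated in degree $1$, and since $E$ contains $u$ and $v$ it generates $K=\kk(u,v)$ as a field, so once $R$ is known to be noetherian (hence Ore) its graded quotient ring is all of $K[t,t^{-1};\varphi]$. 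As $K$ is the function field of $\PP^2$, the algebra $R$ is birational to $\PP^2$.

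The crux --- and the place where, as the abstract advertises, genuinely new techniques are needed --- is noetherianity: the sequence $\{\sR_n\}$ is \emph{not} ample in the sense of Van den Bergh, so neither the Artin--Van den Bergh theorem nor the \naive-blowup machinery of Keeler--Rogalski--Stafford applies off the shelf. The plan is to analyze an arbitrary right ideal $I\subseteq R$ through its associated graded subsheaf $\sI_\bullet\subseteq\sR_\bullet$, and to establish a weak form of ampleness strong enough to force ascending chains of right (and, symmetrically, left) ideals to terminate: for each fixed $m$ and all $n\gg0$ the multiplication $R_m\otimes_\kk R_n\to R_{m+n}$ --- equivalently, on global sections, the map $\sR_m\otimes\sR_n\to\sR_{m+n}$ --- should have cokernel of dimension bounded independently of $n$, with the obstruction to surjectivity concentrated along the $\varphi$-contracted curves where $Z_n$ accumulates. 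Pinning down that bound is precisely where the fine description of the $Z_n$ from the first step, and in particular the absence of periodic orbits guaranteed by algebraic independence, are essential. An equivalent route is to prove directly that the quotient category $\operatorname{qgr} R$ is noetherian --- morally, the category of coherent sheaves on a noetherian geometric object assembled from $X$ by the $\varphi$-dynamics --- and then to transfer noetherianity back to $R$ using that $R$ is generated in degree $1$ and satisfies a suitable $\chi$-type finiteness condition.

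It remains to see that $R$ is \emph{not} AS-Gorenstein and that the Auslander--Buchsbaum equality fails. Dualizing the resolution \eqref{res} exhibits $R\Hom_R(\kk,R)$ as the cohomology of the complex $R\to R[1]^4\to R[2]^6\to R[3]^4\to R[4]$; through the section-ring description this cohomology is governed by the sheaf cohomology groups $\bigoplus_n\coH^j(X,\sR_n)$, and the failure of $\{\sR_n\}$ to be ample manifests itself as the non-vanishing of $\coH^1(X,\sR_n)$ for infinitely many $n$. This already shows that $R\Hom_R(\kk,R)$ is not a shift $\kk[\ell]$, so $R$ is not AS-Gorenstein; and the same non-vanishing, read through the local-cohomology modules, gives $\coH^0_\m(R)=\coH^1_\m(R)=0$ but $\coH^2_\m(R)\neq0$, whence $\depth R=2$ and $\depth\kk+\pd_R\kk=0+4>2=\depth R$. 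Of the whole argument only the noetherianity step of the preceding paragraph is genuinely hard; every other assertion reduces fairly mechanically to the explicit sheaf computation of the first step, and it is the construction of a workable replacement for the missing ampleness that constitutes the main new input.
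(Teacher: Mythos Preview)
Your overall architecture---realize $R$ as global sections of a graded sheaf $\sR_\bullet$ on $T=\PP^1\times\PP^1$, compute the base loci $Z_n$ via the $\varphi$-dynamics, and use algebraic independence of $(\rho,\theta)$ to rule out coincidences---is the paper's architecture, and your description of the invertible sheaves $\sO(a_n,b_n)$ with $a_n$ linear and $b_n$ quadratic is correct. The birationality and domain claims are fine. Your approach to Koszulity (direct Gr\"obner/quadratic dual) may well work, though the paper instead proves exactness of the complex \eqref{res} for the degenerate algebra $A=R(\mathbb{1})$ and then uses semicontinuity in the family to transfer exactness to general $\tau$; this deformation argument simultaneously pins down the Hilbert series by squeezing $\dim_\kk R_n$ between $\dim_\kk A_n=\binom{n+3}{3}$ from below and $h^0(T,\sR_n)=\binom{n+3}{3}$ from above.

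However, your argument for the failure of AS-Gorenstein contains a genuine error. You assert that the non-ampleness of $\{\sR_n\}$ appears as $\coH^1(X,\sR_n)\neq0$ for infinitely many $n$, but in fact the paper proves (Proposition~\ref{prop-1E}) that $H^1(T,\sR_n^{\phi^m})=0$ for \emph{all} $m,n$ when $\tau$ is general; the failure of ampleness is detected only after twisting, namely $H^1(T,\sO(a,b)\otimes\sR_n)\neq0$ for $a\leq-3$. So your local-cohomology computation of $\depth R$ does not go through. The paper instead works directly with the dual of \eqref{res}: one sees $\Ext^4_R(\kk,R)\cong R/(Rz_9+Rz_{10})[4]$ and $\Ext^3_R(\kk,R)$ contains two copies of this, and a syzygy $z_1z_9+z_3z_{10}=0$ forces these to be infinite-dimensional; an Euler-characteristic argument then shows $\Ext^2_R(\kk,R)$ is infinite-dimensional as well, while $\Ext^1_R(\kk,R)=0$ is again a semicontinuity argument deforming from $A$. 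This gives $\depth R=2$ and the AS-Gorenstein failure.

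Your noetherianity sketch is also off target. The ``$\chi$-type condition plus $\operatorname{qgr}$ noetherian'' route is not available: the paper shows $R$ fails $\chi_2$, and whether $R$ satisfies $\chi_1$ is left open. Nor does the obstruction to $R_m\otimes R_n\to R_{m+n}$ have dimension bounded in $n$. What the paper actually does is: first show (via critical density of the $\varphi^{-1}$-orbits of the fundamental points) that ascending chains of right ideals stabilize at the sheaf level, reducing the question to whether the \emph{cohomology modules} $\CM^1(\sF,m)=\bigoplus_n H^1(T,\sF\otimes\sR_n^{\phi^m})$ are noetherian $R$-modules for coherent $\sF$. These are typically infinite-dimensional. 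The key new idea is to apply the Leray spectral sequence for the second projection $p:T\to\PP^1$, splitting $\CM^1(\sO(a,b),m)$ into a piece $\KM$ coming from $H^1(\PP^1,p_*(-))$ and a piece $\QM$ coming from $H^0(\PP^1,R^1p_*(-))$. An explicit \v{C}ech computation on fat fibers shows $\QM$ is, up to finite dimension, a finite extension of point modules (hence noetherian), while a comparison with the degenerate case $A$ via semicontinuity shows $\KM$ is finite-dimensional. This Leray decomposition, and the appearance of point modules as the building blocks of the obstruction, is the ``significantly new technique'' the abstract refers to; your proposal does not anticipate it.
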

It is not  hard to show using the growth criterion underlying Theorem~\ref{thm-R} that the rings $R(\tau)$ have GK-dimension $4$ for the very general choices of $(\rho, \theta)$ in the theorem (interestingly, though, the GK-dimension of $R(\tau)$ is $3$ instead for some sporadic non-general choices.)  It is also fairly straightforward that they have global dimension 4 and are Koszul but not AS-Gorenstein.  The fact that the rings $R(\tau)$ are noetherian for general $\tau$ is the deeper content of the result.  The proof of this requires  techniques that differ substantially from past work on birationally commutative algebras and occupies most of the second half of the paper.

The ring $R=R(\tau)$ in Theorem~\ref{ithm-main} must be non-geometric  by Theorem~\ref{thm-R}.  Thus there is no rational projective surface $X$
 for which $\varphi: K \to K$ corresponds to an automorphism $\phi: X \to X$.  This certainly precludes the possibility that $R$ has the same graded quotient ring as any twisted homogeneous coordinate ring, so Theorem~\ref{thm-RS} fails completely in this case. We conjecture that the augmentation ideal $R_+$ is the only nontrivial graded prime ideal of $R$ and thus that $R$ has no nontrivial map to any twisted homogeneous coordinate ring.

To conclude the introduction, we give an overview of the ideas behind the proofs.  Let $R = R(\tau)$ where $\tau = \tau (\rho, \theta)$ for a pair $(\rho, \theta)$ algebraically independent over the prime subfield of $\kk$.  As in past work on birationally commutative algebras, we would like to work as far as possible with  sheaves on a projective variety, rather than $R$-modules.  We work on $T := \mathbb{P}^1\times\mathbb{P}^1$.    Let $\phi:  T\dra T$ be the birational self-map induced by $\varphi = \sigma\tau$.  We have $R\subseteq \kk(T)[t, t^{-1}; \varphi]$.  Let $\mc{R}_n$ be the subsheaf of the constant sheaf of rational functions generated by $R_nt^{-n} \subseteq \kk(T)$.
Much of the first half of the paper is devoted to proving that $R = \bigoplus H^0(T, \sR_n)$ for general $\tau$.  The proof requires a careful analysis of how the properties of the birational maps $\phi^n$ change as $\tau$ varies.   The proof that $\kk_R$ has a resolution as in \eqref{res} is intertwined and must be done simultaneously.

Writing the ring $R$ as a ring of sections is crucial to the proof of the noetherian property, but working with the sheaves $\mathcal{R}_n$ is quite delicate.  In all past work on birationally commutative algebras, the main idea has been to show that the sequence of sheaves $\mathcal{R}_n$ is an \emph{ample sequence} in the sense of \cite{VdB1996}.  There is then a purely formal category equivalence between the category of tails of $R$-modules and the category of tails of (appropriately defined) $\mc{R}$-modules. This allows one to translate questions about properties of $R$ (including the noetherian property) to more tractable geometric questions about $\mc{R}$. Unfortunately, in our case the sheaves $\mathcal{R}_n$ do not form an ample sequence.  This requires us to develop new methods to show the ring $R$ is noetherian.
More specifically, if $\sF$ is a  coherent sheaf on $T$, then $\bigoplus_n H^1(T, \mc{F} \otimes \mc{R}_n)$ may not be finite-dimensional; however,  this graded vector space carries a natural $R$-action and a key step in our proof is  showing that such an $R$-module, which we call a \emph{cohomology module}, is noetherian.

{\bf Acknowledgments.} We thank Mark Gross, Peter J{\o}rgensen, Jessica Sidman, Toby Stafford, Michel Van den Bergh, and James Zhang for helpful discussions and comments.
The first author was supported by NSF grant DMS-0900981, and the second author was supported  by NSF grant DMS-0802935.

\section{Review of pullback by a birational map}
\label{PULLBACK} Throughout the paper, we will work with an automorphism $\varphi$ of $K = \kk(u,v)$ and the induced birational self-map $\phi$ of $T = \PP^1 \times \PP^1$.  Explicitly, $\varphi$ is equal to the pullback action of $\phi$ on $K = \kk(T)$.  In this section, we review  basic facts about pullback of divisors under a birational self-map of a surface. See \cite{R-GK} for a longer discussion of this, which closely follows the ideas in \cite{DF2001}. In this paper we will only need a few selected results.

Fix an algebraically closed base field $\kk$.  Suppose that $\psi: U \to T$ is a regular morphism of nonsingular varieties over $\kk$. Then there is a standard pullback map of (Weil) divisors $\psi^*: \Div T \to \Div U$.
Now let $\psi: S \dra T$ be a birational map of nonsingular varieties.  Then the domain of definition of $\psi$ is an open set $U = S \smallsetminus Z$, where $Z$ has codimension at least $2$ \cite[Lemma V.5.1]{Ha}. Since $\Div S = \Div U$ \cite[Proposition II.6.5]{Ha} we get a pullback map $\psi^*: \Div T \to \Div U = \Div S$.  We define $\psi_*: \Div S \to \Div T$ as $(\psi^{-1})^*$.

On $S$ and $T$, there is a one-to-one correspondence between Weil divisors and invertible subsheaves of $\mc{K}$, where $\mc{K}$ is the constant sheaf of rational functions.  Thus the pullback map defined above also induces a pullback map on invertible subsheaves of $\mc{K}$. If $\mc{M} \subseteq \mc{K}$ we write $\mc{M}^{\psi} \subseteq \mc{K}$ for the pulled back invertible sheaf.  Note that $\mc{M}^{\psi}$ is the unique invertible subsheaf $\mc{N}$ of $\mc{K}$ such that $\mc{N} \vert_U = (\psi \vert_U)^*(\mc{M})$. Let $(f)$ be the principal divisor associated to a rational function $f \in \kk(T)$.
We often use the notation $f^{\psi}: =f \circ \psi$.  It follows directly from the definitions that $ (f^{\psi}) = \psi^* ( f)$, and thus pullback of divisors also induces a pullback map $\psi^*: \pic T \to \pic S$ on the Picard groups.  This also shows that given any invertible sheaf $\mc{M}$ (without a framing inside $\mc{K}$), there is an invertible sheaf $\mc{M}^{\psi}$ which is well-defined up to isomorphism.

One thing that makes pullback by a birational map a subtle operation is that  it can behave poorly with respect to composition. In particular, when one attempts to iterate pullback by a birational self-map $\psi: S \dra S$ of a nonsingular surface, one may have $(\psi^n)^* \neq (\psi^*)^n$.  The simplest example of this is the Cremona transformation $\psi: \mb{P}^2 \dra \mb{P}^2$ defined by $[x : y  : z] \mapsto [yz: xz: xy]$.  At the level of the Picard group $\Pic \mb{P}^2 = \mb{Z}$, the map $\psi^*: \mb{Z} \to \mb{Z}$ is multiplication by $2$, while $\psi^2$ is the identity map and so $(\psi^2)^* \neq (\psi^*)^2$.   The following gives a sufficient condition to avoid such pathologies.
\begin{lemma} (\cite[Lemma 2.4]{R-GK})
\label{comp-lem} Let $\phi, \psi: S \dra S$ be birational self-maps of a nonsingular surface $S$.  If there does not exist a curve $C$ on $S$ such that $\phi$ contracts $C$ to a fundamental point of $\psi$, then  $(\psi \circ \phi)^* = \phi^* \circ \psi^*$ as maps $\Div S \to \Div S$.
\end{lemma}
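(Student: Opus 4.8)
The plan is to pull all three maps back to a single nonsingular surface dominating $S$, where everything becomes an honest morphism, and then to identify the discrepancy $\phi^*\psi^*-(\psi\circ\phi)^*$ as a specific exceptional correction term that the hypothesis forces to vanish.

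First I would record the functoriality we do have: if $\alpha,\beta\colon V\to S$ are proper birational morphisms of nonsingular surfaces, then $(\beta\circ\alpha^{-1})^*=\alpha_*\circ\beta^*$ as maps $\Div S\to\Div S$. Indeed both sides restrict to $(\beta\alpha^{-1}|_U)^*$ on the open set $U\subseteq S$ over which $\alpha$ is an isomorphism and $\beta\alpha^{-1}$ is defined, and $S\smallsetminus U$ has codimension $\geq 2$, so the two divisors agree. I would also note the elementary fact that for one proper birational $b\colon V\to S$ and any $G\in\Div V$, the divisor $b^*b_*G-G$ is supported on the $b$-exceptional locus, since $b_*$ forgets, and $b^*$ re-creates, only the exceptional part.

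Next I would build the common resolution. Resolving the indeterminacy of $\phi$ produces proper birational morphisms $p,q\colon W\to S$ with $\phi=q\circ p^{-1}$; blowing up $W$ further to resolve the rational map $\psi\circ q$ produces $r\colon V\to W$, and setting $a=p\circ r$, $b=q\circ r$, $c=\psi\circ b$ gives three proper birational morphisms $a,b,c\colon V\to S$ with $\phi=b\circ a^{-1}$, $\psi=c\circ b^{-1}$, and $\psi\circ\phi=c\circ a^{-1}$. By the previous paragraph $\phi^*=a_*b^*$, $\psi^*=b_*c^*$, and $(\psi\circ\phi)^*=a_*c^*$, so for any $D\in\Div S$,
\[
\phi^*\psi^*D-(\psi\circ\phi)^*D=a_*\bigl(b^*b_*(c^*D)-c^*D\bigr)=a_*\bigl(b^*(\psi^*D)-c^*D\bigr).
\]
Write $\Delta:=b^*(\psi^*D)-c^*D$. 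By the elementary fact above $\Delta$ is supported on $b$-exceptional curves, so it suffices to show that every $b$-exceptional prime divisor $E\subseteq V$ that is not $a$-exceptional occurs in $\Delta$ with coefficient $0$; then $\Delta\in\ker a_*$ and the lemma follows.

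The heart of the matter is the analysis of such an $E$. Since $E$ is not $a$-exceptional, $C:=a(E)$ is a curve in $S$, and a general point of $C$ has a single $a$-preimage, lying on $E$, at which $a$ is an isomorphism; put $Q:=b(E)$, a single point since $E$ is $b$-exceptional. Tracking a general point of $C$ through $a^{-1}$ and then through $b$ shows that $\phi=b\circ a^{-1}$ contracts $C$ to $Q$. By the hypothesis of the lemma $Q$ is then not a fundamental point of $\psi$, so $\psi$ is a morphism on some open neighborhood $\Omega\ni Q$. On the open set $b^{-1}(\Omega)\supseteq E$ the morphism $c=\psi\circ b$ genuinely factors through $\psi|_\Omega$, whence $c^*D$ and $b^*(\psi^*D)$ agree on $b^{-1}(\Omega)$ and $\Delta$ has coefficient $0$ along $E$, as required. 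The step I expect to demand the most care is not this geometric dichotomy but the conventions: checking that $\psi^*$ on Weil divisors really is the restriction-then-extend operation, that it is compatible with honest pullback on the domain of definition of $\psi$, and that the identities $\phi^*=a_*b^*$, etc., hold for the common resolution rather than only for a minimal one — once these are pinned down, the discrepancy computation is forced.
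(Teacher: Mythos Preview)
The paper does not give its own proof of this lemma: it is quoted verbatim from \cite[Lemma~2.4]{R-GK} and used as a black box, so there is nothing in the present paper to compare your argument against.

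That said, your argument is correct and is essentially the standard proof. Passing to a common roof $a,b,c\colon V\to S$ with $\phi=b\,a^{-1}$, $\psi=c\,b^{-1}$, $\psi\phi=c\,a^{-1}$, and then writing the discrepancy as $a_*\bigl(b^*b_*(c^*D)-c^*D\bigr)$, is exactly the right move. The key identification --- that a $b$-exceptional, non-$a$-exceptional prime $E$ corresponds to a curve $C=a(E)$ contracted by $\phi$ to the point $b(E)$ --- is handled cleanly, and the hypothesis enters precisely where it should, guaranteeing that $\psi$ is a morphism near $b(E)$ so that $c^*D$ and $b^*\psi^*D$ agree along $E$. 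The only cosmetic point worth tightening is the existence of the common roof with $c$ a genuine morphism: you need $S$ projective (or at least that indeterminacy can be eliminated by blowups), which is implicit in the paper's setting and in \cite{R-GK}, but you might state it.
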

Note that the images of curves contracting under $\phi$ are precisely the fundamental points for the map $\phi^{-1}$, by Zariski's main theorem. So an equivalent formulation of the hypothesis of the lemma is to assume that $\phi^{-1}$ and $\psi$ have no common fundamental points.
\begin{definition} (\cite[Definition 2.6]{R-GK})
\label{stable-def} A birational map $\psi: S \dra S$ of a nonsingular projective surface $S$ is called \emph{stable} if for all $n \geq 1$, there is no curve $C$ such that $\psi^n$ contracts $C$ to a fundamental point of $\psi$ (equivalently, if for all $n \in \NN$, $\psi^{-n}$ and $\psi$ have no common fundamental points.)
\end{definition}
The definition follows \cite{DF2001}, where the term analytically stable is used.  By Lemma~\ref{comp-lem}, a stable birational map has $(\psi^n)^* = (\psi^*)^n$ for all $n \in \NN$.  Note that our convention is that $\NN = \{ 0, 1, \ldots \}$.

\section{Establishing notation}
\label{NOTATION} For the remainder of the paper, let $\kk$ be an algebraically closed, uncountable base field. Let $\FF$ be the prime subfield of $\kk$. Let $T:= \PP^1 \times \PP^1$, with coordinates $[x:y][z:w]$.  Let $u := x/y$ and let $v := z/w$, and let $K := \kk(T) = \kk(u,v)$. We define a birational self-map
\begin{align*}
\sigma: T & \dra T \\
[x:y][z:w] & \mapsto [xz:yw][z:w].
\end{align*}
We note the pullback action of $\sigma$ on rational functions:  we have
\begin{equation}\label{eq-sigma}
 u^{\sigma} = uv, \ \ \ v^{\sigma} = v, \ \ \
u^{\sigma^{-1}} = uv^{-1}, \ \ \ v^{\sigma^{-1}} = v.
\end{equation}

We establish some notation for subvarieties of $T$:  let
\begin{gather*}
X := \VV(x) = [0:1] \times \PP^1, \ \ \ \ Y := \VV(y) = [1:0] \times \PP^1, \\
Z := \VV(z) = \PP^1 \times [0:1], \ \ \ \ W := \VV(w) = \PP^1 \times [1:0].
\end{gather*}
Note that $u$ and $v$ give coordinates on $T \smallsetminus (Y \cup W) \cong \mathbb{A}^2$. We also fix names for the four intersection points:
\begin{gather*}
P := Z \cap X = [0:1][0:1], \ \ \ \ Q := Z \cap Y = [1:0][0:1], \\
F := W \cap X = [0:1][1:0], \ \ \ \ G := W \cap Y = [1:0][1:0].
\end{gather*}
Then the fundamental points of $\sigma$ are $Q$ and $F$; further, $\sigma(Z\smallsetminus Q) = P$ and $\sigma(W \smallsetminus F) = G$. The inverse map $\sigma^{-1}$ is given by the formula $[x:y][z:w] \mapsto [xw:yz][z:w]$ and the fundamental points of $\sigma^{-1}$ are $P$ and $G$, while $\sigma^{-1}(Z\smallsetminus G) = Q$ and $\sigma^{-1}(W\smallsetminus P) = F$.

It will be helpful to factor $\sigma$ as a composition of monoidal transformations and their inverses \cite[Theorem~V.5.5]{Ha}.  Let $\alpha: \Twiddle \to T$ be the blowup of $T$ at $Q$ and $F$.  We denote the six $(-1)$ curves on $\Twiddle$ by $L_X, L_F, L_W, L_Y, L_Q, L_Z$; these are arranged in a hexagon in this order.    The morphism $\alpha$ contracts the divisors $L_Q$ and $L_F$ to the points $Q$ and $F$, and maps $(L_X, L_W, L_Y, L_Z)$ to $(X, W, Y, Z)$.    There is also a morphism $\beta: \Twiddle \to T$ so that the diagram
\[ \xymatrix{
& \Twiddle \ar[ld]_{\alpha} \ar[rd]^{\beta} & \\
T \ar@{-->}[rr]_{\sigma} && T }\] commutes.  The morphism $\beta$ contracts $L_Z$ to $P$ and $L_W$ to $G$.  It maps $(L_X, L_F, L_Y, L_Q)$ to $(X, W, Y, Z)$. See Figure~1.
\begin{figure}
\includegraphics[scale=.75]{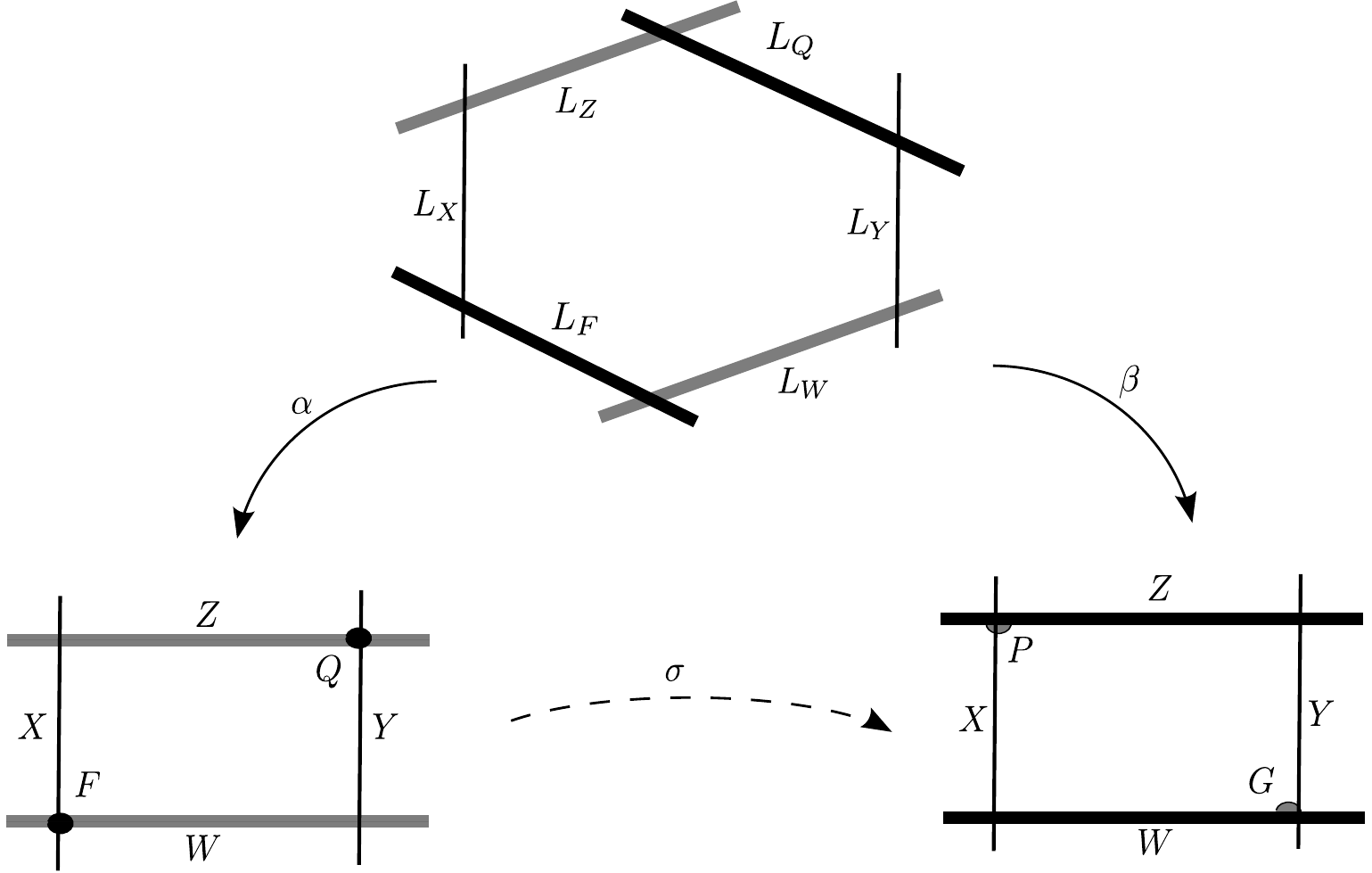}
\caption{The birational self-map $\sigma$.}
\end{figure}

We will want to understand the how divisors on $T$ pull back along $\sigma = \beta\alpha^{-1}$.
Since $\alpha$ and $\beta$ have no fundamental points, by Lemma~\ref{comp-lem} we have:
\begin{gather}
\sigma^* X  = \alpha_* \beta^* X = \alpha_* (L_X+L_Z) = X+Z, \ \ \ \
\sigma^*Y = \alpha_* \beta^* Y = \alpha_* (L_Y+L_W)  = Y+W, \\
\sigma^* Z = \alpha_* (L_Z+L_Q) = Z, \ \ \ \  \sigma^* W = \alpha_* (L_W+L_F) =  W.
\end{gather}
In particular,  this calculation shows that in the Picard group, the pullback map is  $\mc{O}(a,b)^{\sigma} \cong \mc{O}(a, a + b)$.

We are interested in deforming $\sigma$ by composing it with a automorphism of $T$.  For any $\tau \in \Aut^o(T) = \PGL_2 \times \PGL_2$, we define $\phi := \tau \circ \sigma : T \dra T$.  Since $\tau$ is an automorphism acting as the identity on the Picard group of $T$, note that we also have
\begin{equation}
\label{phi-action-eq} \mc{O}(a,b)^{\phi} \cong \mc{O}(a, a + b).
\end{equation}

In fact, it will be convenient to work only with $\tau$ coming from a more restricted (2-dimensional) algebraic subgroup of automorphisms. Let
\[ \mb{G}: = \{ \tau \in \Aut(T) \st \mbox{ $\tau$ fixes each of the points $[\pm 1:1][\pm1:1]$} \} \subseteq \Aut^o(T).\]
It is easy to see that $\mb{G} \cong \kk^* \times \kk^*$ as algebraic groups (an explicit isomorphism is given later in this section), and we
 write $\tau(\rho, \theta)$ for the element of $\mb{G}$ which corresponds to $(\rho, \theta) \in \kk^* \times \kk^*$.

We now define the rings that are the main subject of the paper. Let
\[ \Lsh := \struct_T(Y+W) \cong \struct(1,1) \ \ \text{and}\ \  E := \kk+ \kk u + \kk v + \kk uv \subseteq K = \kk(u,v). \]
We canonically identify $E$ with $H^0(T,\Lsh)$.
Now for given $\tau \in \mb{G}$, we denote the pullback action of $\phi = \tau \sigma: T \dra T$ on rational functions by $\varphi$, so $\varphi(f) = f^\phi$, and we form the skew-Laurent ring $K[t, t^{-1}; \varphi]$.   Let
\[ R:= R(\tau) : = \kk \ang{Et} \subseteq K[t, t^{-1}; \varphi] \]
be the $\kk$-subalgebra generated in degree $1$ by $Et$.

Let us say that $\tau$ is a {\em general} element of  $\mb{G}$ if $\tau $ lies in the complement of a countable union of proper closed subvarieties of $\mb{G}$.  The goal of this paper is to understand the algebras $R(\tau) = R$, at least for general $\tau$. We recall  here the definitions of some homological properties of interest.  The connected graded $\kk$-algebra $R$ is {\em Artin-Schelter (AS) Gorenstein} if $R$ has finite left and right injective dimension $d$ and we have
\[ \Ext^i_R(\kk, R) = \begin{cases}
                     0 & \mbox{if $i \neq d$} \\
        \kk[\ell] & \mbox{if $i = d$}
                    \end{cases} \]
(for some shift $\ell$), where $\kk = R/R_{\geq 1}$ is the trivial graded module.  If in addition $R$ has finite global dimension $d$ and finite GK-dimension, then $R$ is {\em AS-regular}.  The algebra $R$  satisfies (right) $\chi_i$ if $\dim_\kk \Ext_R^j(\kk, M) < \infty$ for all $j \leq i$ and for all finitely generated $M_R$, and $R$ satisfies $\chi$ if it satisfies $\chi_i$ for all $i \geq 0$.

Recall \cite{Jorgensen1998} that the {\em depth} of a graded $R$-module $M$ is defined to be $\min \{ i \st \Ext^i_R(\kk, M) \neq 0 \}$. For commutative noetherian graded rings  and for some classes of noncommutative graded rings, including AS-Gorenstein algebras \cite[Theorem~3.2]{Jorgensen1998},  the {\em Auslander-Buchsbaum formula} holds:  that is, for a graded module $M$ of finite projective dimension
we have $\pd M + \depth M = \depth R$.   Recall also that if $R = \bigoplus_{n \in \NN} R_n$ is a connected graded algebra, then the {\em Hilbert series} of $R$ is the formal power series $h_R(s) = \bigoplus_{n \in \NN} (\dim_{\kk} R_n) s^n$.

The main goal of the remainder of the paper will be to prove the following result:
\begin{theorem}\label{thm-main}
Let the pair $(\rho, \theta)$ be algebraically independent over the prime subfield $\FF$ of $\kk$, and let $R = R(\tau(\rho, \theta))$.
\begin{enumerate}
\item $R$ is  noetherian. \item $R$ has a presentation $k\langle x_1, x_2, x_3, x_4 \rangle/(f_1, \ldots, f_6)$, where
\begin{gather*}
f_1 =  x_1(c x_1 -  x_3) + x_3( x_1 - c x_3), \ \
f_2 =  x_1(c x_2 -  x_4) + x_3 ( x_2 - c x_4), \\
f_3 = x_2(c x_1 -  x_3) + x_4( x_1 - c x_3), \ \
f_4  = x_2(c x_2 -  x_4) + x_4( x_2 - c x_4), \\
f_5 =  x_1(d x_1 -  x_2) + x_4( x_1 - d x_2), \ \ f_6 = x_1(d x_3 -  x_4) + x_4( x_3 - d x_4),
\end{gather*}
for $c = \frac{\theta-1}{\theta+1}, d= \frac{\rho-1}{\rho+1}$. \item The trivial module $\kk$ has a free resolution of the form
\[
0 \to R[-4] \to R[-3]^4 \to R[-2]^6 \to R[-1]^4 \to R \to \kk \to 0.
\]
\item $R$ is Koszul of global dimension 4 and has Hilbert series $h_R(s) = 1/(1-s)^4$.  However, $R$ is not AS-Gorenstein and
    therefore
not AS-regular, and the Auslander-Buchsbaum equality fails for $R$.
\end{enumerate}
\end{theorem}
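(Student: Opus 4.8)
The plan is to follow the strategy outlined in the introduction: replace module computations over $R$ by sheaf computations on $T=\PP^1\times\PP^1$, establish the structural facts of parts~(2) and (3) (the presentation and the resolution \eqref{res} of $\kk$) together with the identification $R=\bigoplus_n H^0(T,\mathcal{R}_n)$ by a single intertwined induction, prove the noetherian property of part~(1) by a new method, and then read off the homological statements of part~(4) formally. Here $\mathcal{R}_n\subseteq\mathcal{K}$ is the subsheaf of the constant sheaf of rational functions generated by $R_nt^{-n}$; throughout one works with a general $\tau\in\mb{G}$, and it suffices to prove each assertion for such $\tau$ since algebraic independence of $(\rho,\theta)$ over $\FF$ places $\tau$ outside any prescribed countable union of proper closed subvarieties of $\mb{G}$. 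By \eqref{phi-action-eq} the class of $\Lsh^{\phi^k}=\struct(1,1)^{\phi^k}$ is $\struct(1,k+1)$, so the ``naive'' product $\mathcal{L}_n:=\Lsh\otimes\Lsh^{\phi}\otimes\cdots\otimes\Lsh^{\phi^{n-1}}$ has class $\struct\bigl(n,\binom{n+1}{2}\bigr)$; the quadratic growth of the second coordinate is exactly what forces $\GK R=4$. The first task is to control the base loci of these linear systems and the fundamental points of the iterates $\phi^n$ as $\tau$ varies. The key geometric input is that $\phi=\tau\sigma$ is \emph{stable} in the sense of Definition~\ref{stable-def} for general $\tau$: this is where algebraic independence is used, by tracking the forward $\phi$-orbits of the fundamental points of $\sigma^{-1}$ and showing that, away from a countable union of proper subvarieties of $\mb{G}$, such an orbit never meets a fundamental point of $\phi$. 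Stability gives $(\phi^n)^*=(\phi^*)^n$, so $\mathcal{R}_n$ becomes computable: roughly, it is $\mathcal{L}_n$ twisted down by an explicit effective base divisor supported on the finitely many orbit points that are relevant.

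Since $R$ is generated in degree $1$, $R_nt^{-n}=E\cdot E^{\phi}\cdots E^{\phi^{n-1}}$ is the image of $E^{\otimes n}\to K$, so $R_n\subseteq H^0(T,\mathcal{R}_n)$ automatically. To obtain equality for general $\tau$ and to build \eqref{res}, I would induct on $n$ using the candidate presentation: the six quadratic forms $f_1,\dots,f_6$ arise by computing the products $(e_it)(e_jt)=e_i\,e_j^{\phi}\,t^2$ explicitly — this is where the constants $c=\tfrac{\theta-1}{\theta+1}$ and $d=\tfrac{\rho-1}{\rho+1}$ come from — after which one verifies that the sequence $0\to R[-4]\to R[-3]^4\to R[-2]^6\to R[-1]^4\to R\to\kk\to0$ is a complex of right $R$-modules. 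Sheafifying gives a complex of subsheaves of $\mathcal{K}$; using the explicit divisor class of $\mathcal{R}_n$ one computes the relevant $H^0$ and $H^1$ groups on $\PP^1\times\PP^1$ (by restricting to the fibers of a projection $T\to\PP^1$ along which the line bundles have large degree, then applying cohomology and base change and Serre duality), and the resulting vanishing statements — which, crucially, are not consequences of any ampleness property — drive the induction, simultaneously proving exactness and minimality of \eqref{res} and the equality $R_n=H^0(T,\mathcal{R}_n)$. This yields parts~(2) and (3); part~(2) is then equivalent to exactness of \eqref{res} at the spots $R[-1]^4\to R$ and $R[-2]^6\to R[-1]^4$.

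The core of the theorem, and the step I expect to be the main obstacle, is the noetherian property. The usual route — proving the $\mathcal{R}_n$ form an ample sequence and invoking the resulting equivalence of $\rqgr R$ with a category of sheaves on $T$, where noetherianity is automatic — is unavailable, since the $\mathcal{R}_n$ are not ample in Van den Bergh's sense. Instead I would argue directly with graded ideals. Given a graded right ideal $I\subseteq R$, form the coherent subsheaves $\mathcal{I}_n\subseteq\mathcal{R}_n$ generated by $I_nt^{-n}$; noetherianity of coherent sheaves on $T$ provides a coherent ``limit'' for this data and hence a candidate finite generating set, and the obstruction to lifting it back into $R$ is measured by the graded vector spaces $\bigoplus_n H^1(T,\mathcal{F}\otimes\mathcal{R}_n)$ attached to coherent sheaves $\mathcal{F}$. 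These need not be finite-dimensional (again because ampleness fails), but they carry a natural $R$-module structure, and the decisive lemma is that such a \emph{cohomology module is a noetherian $R$-module}. Proving that lemma is the technical heart: it calls for a hands-on analysis of how $H^1(T,\mathcal{F}\otimes\mathcal{R}_n)$ grows with $n$, using the concrete structure of $\mathcal{R}_n$ on $\PP^1\times\PP^1$ — in particular pushing forward along the projection whose fibers carry the high-degree part of $\mathcal{R}_n$, so that the first cohomology is controlled and one can exhibit a finite generating set over $R$. Granting the lemma, a noetherian induction on supports together with the long exact cohomology sequences relating $\mathcal{I}_n$, $\mathcal{R}_n$ and their quotient shows every graded right (and, symmetrically, left) ideal of $R$ is finitely generated, so $R$ is noetherian, proving part~(1).

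Finally, part~(4) is formal once \eqref{res} is established. The resolution is linear and minimal, so $R$ is Koszul and $\gldim R=\pd\kk=4$. Taking the alternating sum of Hilbert series in \eqref{res} gives $h_R(s)\,(1-4s+6s^2-4s^3+s^4)=1$, i.e.\ $h_R(s)=1/(1-s)^4$, which re-confirms $\GK R=4$. To see that $R$ is not AS-Gorenstein, apply $\Hom_R(-,R)$ to \eqref{res}: this produces the complex of left $R$-modules $0\to R\to R[1]^4\to R[2]^6\to R[3]^4\to R[4]\to0$ computing $\Ext^*_R(\kk,R)$, with differentials the transposes of the matrices built from the $f_i$. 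A direct computation shows its cohomology is not concentrated in a single degree — one finds $\Ext^0_R(\kk,R)=\Ext^1_R(\kk,R)=0$ but $\Ext^2_R(\kk,R)\neq0$ — so $R$ fails to be AS-Gorenstein, hence a fortiori not AS-regular, and $\depth R=\min\{i:\Ext^i_R(\kk,R)\ne0\}=2$. Since $\depth\kk+\pd\kk=0+4=4\neq2=\depth R$, the Auslander--Buchsbaum equality fails for $R$, completing the proof of part~(4).
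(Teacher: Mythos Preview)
Your overall architecture is right, but the proposal diverges from the paper in one essential respect and has a genuine gap in another.

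\textbf{Parts (2)--(4): deformation from $A$, not direct computation.} You propose to prove $R_n=H^0(T,\mathcal{R}_n)$ and the exactness of \eqref{res} by an ``intertwined induction on $n$,'' computing the relevant $H^0$ and $H^1$ groups for general $\tau$ directly. The paper does something quite different: it never computes $H^1(T,\mathcal{R}_n^{\phi^m})$ for general $\tau$ at all. Instead it computes everything explicitly for the degenerate case $A=R(\mathbb{1})$ (where monomial bases are available), constructs flat families of the sheaves $\mathcal{R}_n^{\phi^m}$, $\mathcal{I}_n^{\phi^m}$, etc.\ over an open set of $\mb{G}$ containing $\mathbb{1}$, and then uses upper semicontinuity of cohomology to transfer the vanishing $H^1(T,\mathcal{A}_n^{\sigma^m})=0$ to general $\tau$. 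Likewise the exactness of \eqref{Koszul} is proved first for $A$ and then propagated by semicontinuity of the dimensions of the images and kernels of the maps. The same mechanism is reused for $\Ext^1_R(\kk,R)=0$. Your direct approach is not obviously wrong, but you have not said how you would actually compute $H^1$ of these fat-point ideal sheaves for general $\tau$; the paper in fact remarks that this vanishing does not follow from known regularity bounds, which is why the deformation trick is used.

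\textbf{Part (1): critical density is missing.} Your outline of the noetherian proof omits a step the paper treats as indispensable. Before one can even formulate the ``noetherian at the level of sheaves'' statement (your passage from a graded ideal $I$ to a stabilizing sequence of ideal sheaves $\mathcal{I}_n$), one needs to know that the forward $\phi^{-1}$-orbits of the fundamental points $F$ and $Q$ are \emph{critically dense} in $T$. Without this, the standard-sequence argument that forces $\mathcal{G}_m\mathcal{I}_1^{\phi^m}=\mathcal{G}_{m+1}$ for $m\gg0$ breaks down, because the support of the scheme defined by $\mathcal{G}_m$ could creep along the orbit indefinitely. The paper devotes a separate section to proving critical density for general $\tau$ (this is where algebraic independence over $\FF$ is really used, via a nonvanishing determinant argument). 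Your proposal also does not mention the Leray decomposition $\CM^1=\KM\oplus\QM$ that organizes the cohomology-module argument: the paper shows $\QM(\mathcal{O}(a,b),m)$ is an extension of finitely many point modules by an explicit \v{C}ech computation on fat fibers, while $\KM(\mathcal{O}(a,b),m)$ is shown to be \emph{finite-dimensional} --- and again this last step is proved by comparison with $A$ via semicontinuity, not directly. Finally, left noetherianity is obtained from $R(\tau)^{op}\cong R(\tau^{-1})$, which you should note.
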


We use the notation $\mathbb 1 := \tau(1, 1)$ for the identity map of $\mb{G}$, and
we give the special case of $R(\tau)$ where $\tau = \mathbb 1$
its own name:
\[
A: = R(\mathbb 1) = \kk \langle t, ut, vt, uvt \rangle \subseteq K[t, t^{-1}; \sigma].
\]
(Note that here we denote $\sigma:  T\dra T$ and the induced pullback action on $K$ by the same symbol; we will do likewise for $\tau$. We hope this will not induce confusion.) The ring $A$ has appeared in the literature before (see \cite[Proposition 7.6]{YZ2006} ) and it has certain bad properties.  Most notably, it is not noetherian on either side (we sketch the simple proof in  \ref{rem-not-noeth} below).   However, several other properties we desire to prove for $R$, such as finite global dimension, hold for $A$ and the main strategy of our proofs in these cases is to show that these properties deform to hold also for $R(\tau)$ for  general $\tau$.

In the remainder of this section, we give some further formulas and subsidiary results which will be useful in the sequel.  Although the coordinate system we have been using is the one in which $\sigma$ is simplest, occasionally we will want to change coordinates so that the automorphisms $\tau \in \mb{G}$ are diagonalized.   We use round brackets $( \ : \ )$ for the coordinate system of $\mb{P}^1$ which is related to the original one by the change of coordinate formulas $[x:y] = (x-y:x+y)$ and $(a:b)= [a+b:-a+b]$.  Then $[1:1] = (0:1)$ and $[1:-1] = (1:0)$, and the group of automorphisms of $\mb{P}^1$ fixing both of these points is isomorphic to $\kk^*$, where we let $\rho \in \kk^*$ correspond to the diagonal automorphism $(a:b) \mapsto (\rho^{-1} a : b)$.   An automorphism in $\mb{G}$ has the form $\mu \times \nu$ where $\mu, \nu \in \aut (\mb{P}^1)$ correspond to elements $\rho, \theta \in \kk^*$ respectively; this makes explicit the isomorphism $\tau:  \kk^* \times \kk^* \to \mb{G}$ already mentioned.

We define the useful abbreviations
\[
\gamma := \rho + 1, \ \ \ \delta := \rho -1, \ \ \ \epsilon := \theta + 1, \ \ \ \zeta := \theta -1.
\]
Then in our two coordinate systems we have the following formulas for $\tau(\rho, \theta): T \to T$:
\begin{align*}
(a:b)(c:d)  & \mapsto (\rho^{-1}a:b)(\theta^{-1}c:d), \\
[x:y][z:w] & \mapsto [\gamma x + \delta y: \gamma y + \delta x][\epsilon z + \zeta w: \epsilon w + \zeta z].
\end{align*}
In terms of the action on rational functions, we record the formulas
\begin{equation}
\label{eq-tau} u^\tau = \frac{\gamma u + \delta}{\delta u + \gamma}, \ \ \ v^\tau = \frac{\epsilon v + \zeta}{\zeta v + \epsilon}, \ \ \ u^{\tau^{-1}} = \frac{\gamma u - \delta}{ - \delta u + \gamma}, \ \ \ v^{\tau^{-1}} = \frac{\epsilon v - \zeta}{-\zeta v + \epsilon}.
\end{equation}

For future reference we also record the following formulas for the action of $\phi$ and its inverse on rational functions.
Recalling that we write $\varphi(f) = f^\phi$, we have:
\begin{equation}
\label{eq-phi} \varphi(u) = \frac{\gamma uv + \delta}{\delta uv + \gamma}, \ \ \ \varphi(v) = \frac{\epsilon v + \zeta}{\zeta v + \epsilon}, \ \ \ \varphi^{-1}(u) = \frac{(\gamma u - \delta)}{( - \delta u + \gamma)}\frac{(-\zeta v + \epsilon)}{(\epsilon v - \zeta)}, \ \ \ \varphi^{-1}(v) = \frac{\epsilon v - \zeta}{-\zeta v + \epsilon}.
\end{equation}

We note here the following symmetry in our main setup.
\begin{lemma}
\label{sym-lem} Let $\psi := \tau(-1, -1)$, so $\psi([x:y][z:w]) = [y:x][w:z]$ is the automorphism of $T$ that switches the coordinates in both copies of $\mb{P}^1$.  Then $\psi$ commutes with $\phi = \tau \sigma$ for all $\tau \in \GG$.  Also, $\psi$ interchanges $X$ and $Y$ and interchanges the points $F$ and $Q$. \qed
\end{lemma}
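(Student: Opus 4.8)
The plan is to reduce the commutation statement to a single elementary verification and then read off the remaining assertions directly from the formula for $\psi$. Since $\psi = \tau(-1,-1)$ lies in the abelian group $\GG \cong \kk^* \times \kk^*$, it commutes with every $\tau \in \GG$; hence $\psi\phi = \psi\tau\sigma = \tau\psi\sigma$ while $\phi\psi = \tau\sigma\psi$, so $\psi$ commutes with $\phi = \tau\sigma$ for all $\tau \in \GG$ if and only if $\psi$ commutes with $\sigma$. It therefore suffices to check the single identity $\psi\sigma = \sigma\psi$ of birational self-maps of $T$.

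To check $\psi\sigma = \sigma\psi$, I would simply compose the defining formulas. Using $\sigma([x:y][z:w]) = [xz:yw][z:w]$ and $\psi([x:y][z:w]) = [y:x][w:z]$, both composites send $[x:y][z:w]$ to $[yw:xz][w:z]$, so they agree; equivalently, one may verify the identity on pullbacks on $K = \kk(u,v)$, where \eqref{eq-sigma} gives $u^\sigma = uv$, $v^\sigma = v$ and \eqref{eq-tau} specialized at $\rho = \theta = -1$ gives $u^\psi = 1/u$, $v^\psi = 1/v$, so that $u^{\psi\sigma} = 1/(uv) = u^{\sigma\psi}$ and $v^{\psi\sigma} = 1/v = v^{\sigma\psi}$; since $u$ and $v$ generate $K$ and a birational self-map of $T$ is determined by its pullback on $K$, this is enough. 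Either way it is a one-line computation.

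For the remaining assertions, first note that $\psi$ is an involution, since $\tau(-1,-1)^2 = \mathbb 1$ (equivalently $\psi(\psi([x:y][z:w])) = [x:y][z:w]$). Because $X = \VV(x) = [0:1] \times \PP^1$ and $Y = \VV(y) = [1:0] \times \PP^1$, the formula for $\psi$ shows $\psi(X) \subseteq Y$, and being an involution $\psi$ interchanges $X$ and $Y$. Likewise $\psi(F) = \psi([0:1][1:0]) = [1:0][0:1] = Q$, so $\psi$ interchanges $F$ and $Q$.

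I do not expect any real obstacle here: the statement is a bookkeeping lemma recording a symmetry of the construction that is later used to halve the work in some arguments. The only point needing mild care is keeping straight the order of composition and the contravariance of pullback, so that the assertion ``$\psi$ commutes with $\phi$'' is not conflated with the (equally true, but separately checked) statement that $\psi^*$ commutes with $\phi^*$ on $K$.
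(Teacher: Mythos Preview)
Your proof is correct. The paper states this lemma without proof (it ends with a bare $\qed$), and your argument is exactly the routine verification the authors are leaving to the reader: reduce to $\psi\sigma=\sigma\psi$ using that $\GG$ is abelian, check this by a one-line computation, and read off the action on $X,Y,F,Q$ from the formula for $\psi$.
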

We close this section with an analysis of the opposite ring of $R(\tau)$.  We show that the opposite ring lives in the same family of examples, and so we will be able to focus our attention on right ideals and right modules.

\begin{proposition}\label{prop-2}
Let $\tau \in \mb{G}$ and let $R = R(\tau)$. Then $R^{op} \cong R(\tau^{-1})$.
\end{proposition}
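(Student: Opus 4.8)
The plan is to pass through skew-Laurent rings. Write $\varphi=\phi^{*}$ for the action on $K=\kk(T)$ by pullback along $\phi=\tau\circ\sigma$, so that $R=R(\tau)=\kk\langle Et\rangle\subseteq K[t,t^{-1};\varphi]$, and write $\varphi'=(\phi')^{*}$ for pullback along $\phi':=\tau^{-1}\circ\sigma$, so that $R(\tau^{-1})=\kk\langle Et\rangle\subseteq K[t,t^{-1};\varphi']$ (here $\tau^{-1}$ is the inverse of $\tau$ in $\GG$). Since $K$ is commutative one checks readily that the $\kk$-linear map $ft^{n}\mapsto\varphi^{1-n}(f)t^{-n}$ is an anti-automorphism of $K[t,t^{-1};\varphi]$. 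It carries $Et$ onto $Et^{-1}$, and hence $R$ onto $\kk\langle Et^{-1}\rangle$. Setting $s:=t^{-1}$ we have $K[t,t^{-1};\varphi]=K[s,s^{-1};\varphi^{-1}]$ (with $sf=\varphi^{-1}(f)s$) and $\kk\langle Et^{-1}\rangle=\kk\langle Es\rangle$; grading $\kk\langle Es\rangle$ by powers of $s$, the anti-isomorphism above becomes a graded isomorphism
\[
R^{op}\;\cong\;\kk\langle Es\rangle\;\subseteq\;K[s,s^{-1};\varphi^{-1}].
\]

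Next I would identify the right-hand side with $R(\tau^{-1})$. Let $g:=\tau(1,-1)\in\GG$, which is the involution $[x:y][z:w]\mapsto[x:y][w:z]$; a direct computation with the explicit formulas for $\sigma$ and $\sigma^{-1}$ gives $g\circ\sigma\circ g^{-1}=\sigma^{-1}$. Put $k:=g\circ\tau=\tau\circ g=\tau(\rho,-\theta)\in\GG$ (the two orders agree since $\GG$ is abelian). Using $g^{-1}=g$, the fact that $g$ commutes with $\tau$, and $g\circ\sigma^{-1}\circ g^{-1}=(g\circ\sigma\circ g^{-1})^{-1}=\sigma$, one computes
\[
k^{-1}\circ\phi^{-1}\circ k=\tau^{-1}\circ(g\circ\sigma^{-1}\circ g^{-1})\circ(g\circ\tau^{-1}\circ g^{-1})\circ\tau=\tau^{-1}\circ\sigma\circ\tau^{-1}\circ\tau=\tau^{-1}\circ\sigma=\phi'.
\]
Passing to pullbacks on $K$ (which is functorial on functions) this gives $k^{*}\circ\varphi^{-1}\circ(k^{*})^{-1}=\varphi'$, equivalently $\varphi'\circ k^{*}=k^{*}\circ\varphi^{-1}$.

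The delicate point --- which I expect to be the main obstacle --- is that $k^{*}$ does not fix the degree-one generating space $E=H^{0}(T,\Lsh)$; it fixes it only up to a nonzero rational function. Indeed, a direct computation from \eqref{eq-tau} gives $k^{*}(E)=\lambda E$ with $\lambda=\big((\delta u+\gamma)(\epsilon v+\zeta)\big)^{-1}\in K^{*}$. One therefore conjugates by $k^{*}$ and rescales the skew variable simultaneously: define $\Omega\colon K[s,s^{-1};\varphi^{-1}]\to K[t,t^{-1};\varphi']$ by $\Omega(f)=k^{*}(f)$ for $f\in K$ and $\Omega(s)=\lambda^{-1}t$. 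The identity $\varphi'\circ k^{*}=k^{*}\circ\varphi^{-1}$ guarantees that the defining relation is respected, since $\Omega(s)\Omega(f)=\lambda^{-1}t\,k^{*}(f)=\lambda^{-1}\varphi'(k^{*}(f))\,t=\lambda^{-1}k^{*}(\varphi^{-1}(f))\,t=\Omega(\varphi^{-1}(f))\Omega(s)$; so $\Omega$ is a well-defined graded $\kk$-algebra isomorphism (its inverse is built the same way). Finally $\Omega(Es)=k^{*}(E)\,\lambda^{-1}t=(\lambda E)\lambda^{-1}t=Et$, so $\Omega$ restricts to an isomorphism $\kk\langle Es\rangle\xrightarrow{\sim}\kk\langle Et\rangle=R(\tau^{-1})$. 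Composing with the first paragraph yields $R^{op}\cong R(\tau^{-1})$ as graded $\kk$-algebras.

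Thus the real content is this last reconciliation: the automorphism $k$ of $T$ that converts the shift $\varphi^{-1}$ into the shift $\varphi'$ of $R(\tau^{-1})$ is, up to the centralizer of $\phi$, forced to be the diagonal automorphism $\tau(\rho,-\theta)$, which moves the framing divisor $Y+W$ and so fails to preserve $E$ on the nose; the rescaling $s\mapsto\lambda^{-1}t$ repairs this, but only because $k$ happens also to satisfy $k^{*}(E)=\lambda E$ for a scalar $\lambda$, and one must check that the conjugation and the rescaling are compatible.
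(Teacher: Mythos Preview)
Your proof is correct and follows essentially the same approach as the paper's: both use the standard anti-isomorphism to realize $R^{op}$ inside $K[t,t^{-1};\varphi^{-1}]$, then conjugate by the automorphism of $K$ coming from $\tau(1,-1)\circ\tau\in\GG$ (your $k$, the paper's $\omega\tau$), verify via the identity $g\sigma g^{-1}=\sigma^{-1}$ that this conjugation converts $\varphi^{-1}$ into $\sigma\tau^{-1}$, and finally rescale the skew variable to absorb the nonzero factor by which this automorphism moves $E$. The only cosmetic difference is that you carry out the key conjugation on the level of birational self-maps of $T$ before passing to pullbacks, whereas the paper works directly with automorphisms of $K$.
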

\begin{proof}
We define an automorphism $\omega: K \to K$ by $\omega(u) =  u, \omega( v) =  v^{-1}$. In other words, $\omega $ is pullback by $ \tau(1,-1) \in \mb{G}.$
  Note that $\omega^2 = \mathbb{1}$.

All compositions in this proof are compositions of automorphisms of $K$.
Now, $R$ is the subalgebra of $K[t; \varphi]$ generated by $H^0(T, \struct(1,1))\cdot t$. The map $K[t; \varphi] \to K[t; \varphi^{-1}]$ given by $\sum a_i t^i \mapsto \sum a_i^{\phi^{-i+1}} t^i$ is an anti-isomorphism, so the subalgebra of $K[t; \varphi^{-1}]$ generated by $H^0(T, \struct(1,1)) \cdot t$ is isomorphic to $R^{op}$. Next, the map $\sum a_i t^i \mapsto \sum \omega(a_i^\tau) t^i$ gives an isomorphism $K[t; \varphi^{-1}] \to K[t; \omega \tau \varphi^{-1} \tau^{-1} \omega^{-1}]$, so $R^{op}$ is isomorphic to the subalgebra of $K[t; \omega \tau \varphi^{-1} \tau^{-1} \omega^{-1}]$ generated by $\omega \tau \bigl(H^0(T, \struct(1,1))\bigr)\cdot t$.  Since $\omega \tau: K \to K$ is induced by an automorphism of $T = \mb{P}^1 \times \mb{P}^1$ which fixes linear equivalence classes of divisors, we have $\omega \tau\bigl(H^0(T, \struct(1,1))\bigr) = a H^0(T, \struct(1,1))$ for some $0 \neq a \in K$.
Finally, putting $t' = at$, we have $K[t;  \omega \tau \varphi^{-1} \tau^{-1} \omega^{-1}] = K[t'; \omega \tau \varphi^{-1} \tau^{-1} \omega^{-1}]$, and now $R^{op}$ is isomorphic to the subalgebra of $K[t'; \omega \tau \varphi^{-1} \tau^{-1} \omega^{-1}]$ generated by $H^0(T, \struct(1,1)) \cdot t'$.

We consider the automorphism $\omega\tau\varphi^{-1} \tau^{-1} \omega^{-1}$ of $K$.  Since  $\sigma$ and $\tau$ act via pullback on $K$, we have $\phi^* = \varphi = \sigma\tau \in \Aut(K).$ Thus $\omega \tau \varphi^{-1} \tau^{-1} \omega^{-1} = \omega \sigma^{-1} \tau^{-1} \omega^{-1} = \omega \sigma^{-1} \omega \tau^{-1}$, as $\GG$ is abelian and $\omega^2 = \mathbb{1}$.  A trivial computation shows that $\omega \sigma^{-1} \omega = \sigma$, and so we conclude that $\omega \tau \varphi^{-1} \tau^{-1} \omega^{-1} = \sigma \tau^{-1}$.

It follows that $R(\tau)^{op} \cong R(\tau^{-1})$, as claimed.
\end{proof}

\begin{corollary}\label{cor-Aop}
The algebras $A$ and $A^{op}$ are isomorphic.  \hfill $\Box$
\end{corollary}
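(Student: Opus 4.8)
The plan is to deduce this immediately from Proposition~\ref{prop-2}, which has just been established in full generality. The key observation is that the element $\mathbb 1 = \tau(1,1) \in \GG$ is the identity of the abelian group $\GG \cong \kk^* \times \kk^*$, so it is its own inverse: $\mathbb 1^{-1} = \mathbb 1$. Since by definition $A = R(\mathbb 1)$, applying Proposition~\ref{prop-2} with $\tau = \mathbb 1$ yields
\[
A^{op} = R(\mathbb 1)^{op} \cong R(\mathbb 1^{-1}) = R(\mathbb 1) = A,
\]
which is exactly the assertion.

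There is essentially no obstacle here: the content lies entirely in Proposition~\ref{prop-2}, and the corollary is just the specialization of that result at the identity parameter. The only thing worth double-checking is the bookkeeping in the isomorphism $\tau\colon \kk^*\times\kk^* \to \GG$ — namely that $\tau(1,1)$ really is the identity automorphism of $T$ (equivalently, that $\rho=\theta=1$ gives $u^\tau = u$, $v^\tau = v$ from \eqref{eq-tau}), so that the group-theoretic inverse $\mathbb 1^{-1}$ coincides with $\mathbb 1$ and the substitution into Proposition~\ref{prop-2} is legitimate. Once that is noted, the proof is a one-line consequence and the box requires no further argument.

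Alternatively, one could give a direct argument mirroring the proof of Proposition~\ref{prop-2}: the anti-isomorphism $\sum a_i t^i \mapsto \sum a_i^{\sigma^{-i+1}} t^i$ from $K[t;\sigma]$ to $K[t;\sigma^{-1}]$, followed by conjugation by $\omega = \tau(1,-1)$ and the identity $\omega\sigma^{-1}\omega = \sigma$, carries the degree-one generators $H^0(T,\struct(1,1))\cdot t$ of $A$ to (a scalar multiple of) the same space of generators inside $K[t;\sigma]$, exhibiting $A^{op}\cong A$. But this is strictly a special case of what is already proved, so invoking Proposition~\ref{prop-2} directly is the cleanest route.
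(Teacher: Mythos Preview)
Your proposal is correct and matches the paper's approach exactly: the corollary is placed immediately after Proposition~\ref{prop-2} with only a $\Box$ and no written proof, indicating it is meant to follow by specializing that proposition to $\tau = \mathbb 1 = \mathbb 1^{-1}$. Your alternative direct argument is also fine but, as you note, superfluous.
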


\section{Geometrizing $R$}
\label{RandR}

We want to describe the rings $R(\tau)$ defined in the previous section in a way analogous to the construction of a twisted homogeneous coordinate ring or a na\"ive blowup.  In particular, we would like to show that each graded piece of $R$ can be identified with the global sections of a certain sheaf on $T$. This will require using pullback by a birational map instead of an automorphism, and so some sensitive calculations will be needed.

All of the notation developed in the previous section will be in force in this section. Fix $\tau \in \GG$.  Recall that we defined $\phi = \tau \sigma$ as a birational self-map of $T$, and we put $E := \HB^0(T, \mc{L}) \subseteq K = \kk(T)$ where $\mc{L} = \mc{O}_T(W + Y) \subseteq \mc{K}$. For all $n, m \in \NN$ we define an invertible sheaf
\[
\mc{L}^{\phi^m}_n := \mc{L}^{\phi^m} \otimes \mc{L}^{\phi^{m+1}} \otimes  \dots \otimes \mc{L}^{\phi^{m+n-1}} \subseteq \mc{K}.
\]
In particular, when $m = 0$ this defines the invertible sheaf $\mc{L}_n := \mc{L} \otimes \mc{L}^{\phi} \otimes \dots \otimes \mc{L}^{\phi^{n-1}}$.  We warn the reader, however, that $\mc{L}^{\phi^m}_n$ is meant to be one notational unit, and is not necessarily equal to the pullback of $\mc{L}_n$ by the birational map $\phi^m$; though if $\phi$ is a stable birational map then this is true.  (In all of our later applications, we will in fact choose $\tau$ so that $\phi$ is stable.)
It is clear that for all $n \geq 0$, $E^{\phi^n} \subseteq \HB^0(T, \mc{L}^{\phi^n}) \subseteq K$, and moreover that $E^{\phi^n}$ generates $\mc{L}^{\phi^n}$ except possibly at the fundamental points of $\phi^n$.  Since we defined $R = R(\tau) = \kk\langle Et \rangle \subseteq K[t, t^{-1}; \varphi]$, we have
\[
R_n = (Et)^n = E E^{\phi} \cdots E^{\phi^{n-1}}t^n \subseteq \HB^0(T, \mc{L}_n)t^n \subseteq K t^n
\]
for all $n \geq 0$.

For all $m, n \in \NN$, we define $\sh{R}^{\phi^m}_n$ to be the sheaf generated by
\[(R_n t^{-n})^{\phi^m} =
E^{\phi^m}E^{\phi^{m+1}} \cdots E^{\phi^{m+n-1}} \subseteq H^0(T, \sL_n^{\phi^m}).\]
  Again,
$\sh{R}^{\phi^m}_n$ is not to be construed as the pullback of $\sh{R}_n$ by $\phi^m$.
We use special notation for the case $A = R(\mathbb 1)$ where $\tau$ is the identity map. In this case we write $\mc{A}_n^{\sigma^m}$ instead of  $\mc{R}_n^{\phi^m}$.

Let $\sh{I}_n^{\phi^m}$ be the base ideal of the  sections in $(R_n t^{-n})^{\phi^m}$, so that $\sh{R}^{\phi^m}_n = \sh{I}_n^{\phi^m} \Lsh^{\phi^m}_n \subseteq \mc{K}$. Let $B_n^m = B_n^m(\tau)$ be the subscheme of $T$ defined by $\sh{I}_n^{\phi^m}$. Now,  $B_1^m$ is a $0$-dimensional subscheme, supported at the fundamental points of $\phi^m$.  So $B_n^m$ is $0$-dimensional for all $m,n$ also.
One of the main goals of the next few sections will be to show that for general choice of $\tau$, then $(R_nt^{-n})^{\phi^m}$  is precisely equal to the global sections of the sheaf $\mc{R}^{\phi^m}_n$.  In particular, taking $m = 0$ this will show that indeed the graded pieces of the ring $R$ have a geometric description as the global sections of certain sheaves.   To move towards this goal, we will need to study how the sheaves $\mc{L}_n$ and the ideal sheaves $\mc{I}_n$
depend on the choice of $\tau$.

It is useful in this section to write $\tau(\rho, \theta) = \mu \times \nu$, where $\mu = \mu(\rho)$ and $\nu = \nu(\theta)$ are automorphisms of $\mb{P}^1$.
In the next several results, we concentrate on gaining an understanding of $\sh{I}_1^{\phi^m}$ for $m \geq 1$. Recall that this was defined as the base ideal of the sections
\[ \kk + \kk u^{\phi^m} + \kk v^{\phi^m} + \kk (uv)^{\phi^m} = E^{\phi^m} \subseteq H^0(T, \mc{L}^{\phi^m}).\]
To compute this, we first compute the Weil divisors associated to these rational functions.  We obtain:
\begin{gather*} (1) = 0, \  (u^{\phi^m}) = X^{\phi^m} - Y^{\phi^m}, \
(v^{\phi^m}) = Z^{\phi^m} - W^{\phi^m}, \  \text{and}\
 ((uv)^{\phi^m}) =
X^{\phi^m} + Z^{\phi^m} - Y^{\phi^m} - W^{\phi^m}.
\end{gather*}
Thus we seek the intersection of the four effective Weil divisors:
\[ (Y^{\phi^m} + W^{\phi^m}) \cap  (Y^{\phi^m} + Z^{\phi^m}) \cap (X^{\phi^m} + W^{\phi^m}) \cap (X^{\phi^m} + Z^{\phi^m}).\]
Notice that a general horizontal line $D = \mb{P}^1 \times [z :w]$ will not contain any fundamental points of $\phi^{-m}$, and thus $D^{\phi^m} = \mb{P}^1 \times \nu^{-m}([z:w])$ since $\phi$ simply acts as $\nu$ in the second copy of $\mb{P}^1$.   Now for any horizontal line $D$ whatsoever, since pullback respects linear equivalence, $D^{\phi^m}$ will be another $(0,1)$-curve; this forces $D^{\phi^m} = \mb{P}^1 \times \nu^{-m}([z,w])$ in all cases.  In particular, $W^{\phi^m}$ and $Z^{\phi^m}$ are distinct horizontal lines and $W^{\phi^m} \cap Z^{\phi^m} = \emptyset$.  So the base locus $B_1^m$
is equal to the scheme-theoretic intersection $Y^{\phi^m} \cap X^{\phi^m}$.

The more specific calculation of $B_1^m$ is straightforward in case $\nu$ has infinite order, but messier otherwise. For given $m$, we want to understand $B_1^m$ at least for an open set of $\tau$ that includes the identity map $\mathbb 1$.  So we will have to carefully analyze the $\tau$ with $\nu$ the identity, but will only need to analyze the case of other finite order $\nu$ in passing.

We denote the order of the automorphism $\nu$ by $o(\nu) \in \{1, 2, \ldots \} \cup \{ \infty \}$.
The $\nu$-orbit of any point in $\PP^1 \smallsetminus \{[\pm 1:1] \}$ has size $o(\nu)$.
\begin{lemma}
\label{easy-orbit-lem} Let $\tau = \tau(\rho, \theta) = \mu \times \nu \in \mb{G}$.  If $o(\nu) = 2k$ for some $k \geq 1$, then $\nu^k[0:1] = [1:0]$ and $\nu^k[1:0] = [0:1]$.  If $o(\nu) = \infty$ or $o(\nu)$ is odd, then the $\nu$-orbits of $[0:1]$ and $[1:0]$ are disjoint.
\end{lemma}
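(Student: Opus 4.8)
The plan is to work in the round-bracket coordinate system, where $\nu$ is diagonal, and reduce everything to a concrete computation about powers of a single $2\times 2$ matrix. Recall that in round brackets $[1:1] = (0:1)$ and $[1:-1] = (1:0)$, and $\nu = \nu(\theta)$ acts by $(c:d) \mapsto (\theta^{-1}c : d)$; so $\nu^k(c:d) = (\theta^{-k}c:d)$. I would first translate the two points of interest into this coordinate system: one checks from the change-of-coordinate formulas $(a:b) = [a+b:-a+b]$ that $[0:1] = (1:1)$ and $[1:0] = (1:-1)$ (equivalently $[0:1]$ and $[1:0]$ correspond to $(1:1)$ and $(1:-1)$ up to scaling). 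So in round-bracket coordinates the claim becomes: $\nu^k(1:1) = (\theta^{-k}:1)$ equals $(1:-1)$ iff $\theta^{-k} = -1$, i.e. iff $\theta^k = -1$; and $\nu^k(1:-1) = (\theta^{-k}:-1)$ equals $(1:1)$ under the same condition $\theta^k = -1$.

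Next I would relate the condition $\theta^k = -1$ to the order $o(\nu)$. Since $\nu$ has order $o(\nu)$ and $\nu$ corresponds to multiplication by $\theta^{-1}$ on the affine coordinate, $o(\nu)$ is exactly the multiplicative order of $\theta$ in $\kk^*$. Now if $o(\nu) = 2k$, then $\theta$ has order $2k$, so $\theta^k$ has order $2$, hence $\theta^k = -1$ (the unique element of order $2$ in a field); this gives $\nu^k[0:1] = [1:0]$ and $\nu^k[1:0] = [0:1]$ as desired. Conversely, if $o(\nu) = \infty$ or $o(\nu)$ is odd, then the $\nu$-orbit of any point outside $\{(1:0),(0:1)\}$, in particular of $(1:1)$, has size $o(\nu)$ and consists of the points $(\theta^{-j}:1)$, $j = 0,1,\dots$; to see the orbits of $[0:1]$ and $[1:0]$ are disjoint, I note that $\nu^j(1:1) = \nu^{j'}(1:-1)$ for some $j,j'$ would force $\theta^{j-j'} = -1$, hence $\theta^{2(j-j')} = 1$, so $2(j-j')$ is a multiple of $o(\nu)$; when $o(\nu)$ is odd this forces $o(\nu) \mid (j-j')$, i.e. $\theta^{j-j'} = 1 \neq -1$, a contradiction, and when $o(\nu) = \infty$ we need $j = j'$, again contradicting $\theta^{j-j'} = -1$.

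I expect no real obstacle here; the lemma is essentially bookkeeping once one has diagonalized $\nu$. The only mildly delicate point is getting the correspondence between square-bracket and round-bracket coordinates for the specific points $[0:1]$ and $[1:0]$ straight, and tracking that $o(\nu)$ coincides with the order of $\theta$ — both are direct consequences of the explicit formulas \eqref{eq-tau} and the change of coordinates recorded in Section~\ref{NOTATION}. I would therefore present the proof as: (i) rewrite the points and $\nu^k$ in round brackets, (ii) observe $o(\nu) = o(\theta)$ in $\kk^*$, (iii) handle the even case via "the unique order-two element is $-1$", and (iv) handle the infinite/odd case via the orbit-size statement already noted before the lemma together with the arithmetic of $\theta^{j-j'} = -1$.
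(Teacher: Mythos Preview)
Your approach is essentially identical to the paper's: diagonalize $\nu$ in the round-bracket coordinates and reduce to the arithmetic of $\theta^k = -1$. One slip worth flagging (it is exactly the ``mildly delicate point'' you anticipated): from $[x:y] = (x-y:x+y)$ one gets $[0:1] = (-1:1)$ and $[1:0] = (1:1)$, so you have the two identifications swapped; this is harmless here because the statement is symmetric in the two points, but you should correct it.
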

\begin{proof}
In the other coordinate system $( \ : \ )$ for $\mb{P}^1$ introduced in Section~\ref{NOTATION},  $\nu$ is the diagonal map $(a:b) \mapsto (\theta^{-1} a : b)$.  In these coordinates, we have $[0:1] = (-1 : 1)$ and $[1:0] = (1: 1)$.  It follows that if $o(\theta) = 2k$ for some $k \geq 1$, then $\theta^k = -1$ and the first case occurs.  Otherwise, clearly $[0:1]$ and $[1:0]$ do not lie on the same orbit, so  the second case occurs.
\end{proof}

\begin{definition}
Let $j \geq 1$. Let $V(j) \subseteq \mb{G}$ be defined by
\[ V(j) := \{ \tau(\rho,\theta) \st o(\theta) > 2j \} \cup \{ \tau(\rho, 1) \st
\rho \neq -1 \}.  \]
\end{definition}

We note that $V(j)$ (or, more properly, its complement) is defined over $\FF$, the prime subfield of $\kk$.

\begin{proposition}
\label{prop-fund-pts} Fix $j \geq 1$ and let $\tau = \tau(\rho, \theta) \in V(j)$.  Then
\begin{enumerate}
\item  For all $0 \leq i \leq j$, $\{ \mbox{fundamental points of } \phi^{-i}\} \cap \{ \mbox{fundamental points of } \phi \} = \emptyset$.
Thus $\phi^{-i}$ is defined at both $F$ and $Q$. \item For all $0 \leq m \leq j+1$, the divisors $X^{\phi^m}$ and $Y^{\phi^m}$ are $(1,m)$-curves, and the scheme $B_1^m$ has length $2m$. Also, $\mc{L}^{\phi^m} \cong \mc{O}(1, m+1)$. \item For all $1 \leq n \leq j+2$ and $0 \leq m \leq j+2-n$, we have $\mc{L}_n^{\phi^m} \cong \mc{O}(n,k)$, where $k =
    \binom{n+m+1}{2}
- \binom{m+1}{2}$. \item $\{F, Q \} \subseteq X^{\phi^m} \cap Y^{\phi^m}$ for all $1 \leq m \leq j$.  In particular, $B_1^1 = \{F, Q \}$ with reduced subscheme structure.
\end{enumerate}
\end{proposition}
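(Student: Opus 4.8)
The plan is to compute everything explicitly in the affine chart $T \smallsetminus (Y \cup W) \cong \mathbb{A}^2$ with coordinates $u, v$, where $\phi = \tau\sigma$ acts by the formulas \eqref{eq-phi}, and then to propagate this information through the iterates $\phi^m$ by an induction on $m$ whose inductive step is controlled by the stability-type hypotheses built into $V(j)$. First I would prove part (1): the fundamental points of $\phi = \tau\sigma$ are $\tau(Q)$ and $\tau(F)$ — since $\tau$ is an automorphism, it just moves the two fundamental points $Q, F$ of $\sigma$ — and more relevantly the fundamental points of $\phi^{-1} = \sigma^{-1}\tau^{-1}$ are obtained from $\{P, G\}$ (the fundamental points of $\sigma^{-1}$) by applying $\tau$. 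For $\phi^{-i}$ with $i \le j$, since $\phi$ acts on the second $\mathbb{P}^1$ coordinate purely as $\nu$ and the fundamental points $F, Q, P, G$ all have second coordinate $[0:1]$ or $[1:0]$, Lemma~\ref{easy-orbit-lem} together with the hypothesis $\tau \in V(j)$ (so $o(\nu) > 2j$ or $\nu = \id$) guarantees that the second-coordinate orbits of $[0:1]$ and $[1:0]$ don't collide in the relevant range, and a short direct check on the first coordinate shows the fundamental points of $\phi^{-i}$ and of $\phi$ stay disjoint; by Lemma~\ref{comp-lem} this also yields $(\phi^m)^* = (\phi^*)^m$ in the needed range, which is what makes the bookkeeping in (2)--(4) legitimate.

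Next I would establish (2) and (4) together by induction on $m$, which is the technical heart. From \eqref{phi-action-eq} we know $\mathcal{O}(a,b)^\phi \cong \mathcal{O}(a, a+b)$ on Picard classes, so $X^{\phi^m}$ and $Y^{\phi^m}$ — which start as the $(1,0)$-curves $X$ and $Y$ — are $(1,m)$-curves for all $m$ in range; since $\mathcal{L} = \mathcal{O}(Y + W) \cong \mathcal{O}(1,1)$ and $W^{\phi^m}$ is the $(0,1)$-curve $\mathbb{P}^1 \times \nu^{-m}([1:0])$, we get $\mathcal{L}^{\phi^m} \cong \mathcal{O}(1, m+1)$. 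For the length of $B_1^m = X^{\phi^m} \cap Y^{\phi^m}$: two $(1,m)$-curves meet in a scheme of length $2m$ by Bézout, \emph{provided they share no common component}; the potential failure is exactly that $X^{\phi^m}$ and $Y^{\phi^m}$ could acquire a common vertical or horizontal component when a fundamental point is blown up, and ruling this out for $m \le j+1$ is where $\tau \in V(j)$ is used again. For part (4), I would track $X^{\phi^m} \cap Y^{\phi^m}$ pointwise: since $\phi^{-1}$ is defined at $F$ and $Q$ for the relevant iterates by part (1), and one checks directly from the explicit formula for $\sigma^{-1}$ (and hence $\phi^{-1}$) that $\phi^{-1}(F)$ and $\phi^{-1}(Q)$ again lie on both $X^{\phi^{m-1}}$-type and $Y^{\phi^{m-1}}$-type loci — really this is the statement that $X, Y$ are "$\phi$-invariant as a pair through the points $F, Q$" — an induction gives $\{F, Q\} \subseteq X^{\phi^m} \cap Y^{\phi^m}$. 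In particular for $m = 1$, $B_1^1$ has length $2$ and contains the two distinct reduced points $F, Q$, hence $B_1^1 = \{F, Q\}$ with reduced structure.

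Finally, part (3) is a purely formal consequence of (2) and the multiplicativity available from (1): $\mathcal{L}_n^{\phi^m} = \mathcal{L}^{\phi^m} \otimes \mathcal{L}^{\phi^{m+1}} \otimes \cdots \otimes \mathcal{L}^{\phi^{m+n-1}}$, and by the $m$-shifted version of (2) each factor $\mathcal{L}^{\phi^{m+i}} \cong \mathcal{O}(1, m+i+1)$ (valid since $m+i \le j+1$ in the stated range $n \le j+2$, $m \le j+2-n$), so $\mathcal{L}_n^{\phi^m} \cong \mathcal{O}\bigl(n, \sum_{i=0}^{n-1}(m+i+1)\bigr) = \mathcal{O}(n, k)$ with $k = \sum_{i=1}^{n+m} i - \sum_{i=1}^{m} i = \binom{n+m+1}{2} - \binom{m+1}{2}$. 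The main obstacle I anticipate is the induction in (2)/(4): one must show that blowing up the fundamental points never causes $X^{\phi^m}$ and $Y^{\phi^m}$ to develop a shared component and that the two marked points $F, Q$ persist in the intersection — this requires a genuine (if elementary) local analysis of the birational map near $F$ and $Q$, carefully using that $\tau \in V(j)$ keeps the orbit of the relevant points off the bad locus for $m$ up to $j+1$. Everything else reduces to Picard-group arithmetic and Bézout once the no-common-component fact is in hand.
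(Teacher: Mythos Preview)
Your overall architecture for (2) and (3) is right and matches the paper, but there are two genuine gaps and one factual slip.

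\textbf{Slip in (1).} The fundamental points of $\phi = \tau\sigma$ are $F$ and $Q$, not $\tau(F)$ and $\tau(Q)$: since $\tau$ is an automorphism and is applied \emph{after} $\sigma$, the indeterminacy locus of $\phi$ is exactly that of $\sigma$. Your identification of the fundamental points of $\phi^{-1}$ as $\tau(P), \tau(G)$ is correct, however.

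\textbf{Gap in (1): the $\theta = 1$ case.} Your second-coordinate separation argument via Lemma~\ref{easy-orbit-lem} collapses completely when $\nu = \id$: all of $F, Q, P, G$ and their $\phi^{-1}$-iterates then have second coordinate in $\{[0:1], [1:0]\}$, so the second coordinate distinguishes nothing. Saying ``a short direct check on the first coordinate'' does not suffice; one needs a genuine argument. The paper observes that when $\theta = 1$ the points $\tau(P), \tau(G)$ are \emph{fixed} by $\phi$ (since $\tau(P) \in Z$ is contracted to $\tau(P)$ itself, etc.), so the fundamental set of $\phi^{-i}$ never grows beyond $\{\tau(P), \tau(G)\}$, and the hypothesis $\rho \neq -1$ is exactly what keeps these distinct from $\{F, Q\}$.

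\textbf{Main gap in (4): your induction is backwards.} You propose to deduce $\{F, Q\} \subseteq X^{\phi^m} \cap Y^{\phi^m}$ by tracking $\phi^{-1}(F)$ and $\phi^{-1}(Q)$ through an induction on $m$. But if $F \in X^{\phi^{m-1}}$ and $\phi^{-1}$ is a local isomorphism at $F$, what you get is $\phi^{-1}(F) \in X^{\phi^m}$, not $F \in X^{\phi^m}$. (Indeed, this is precisely how Lemma~\ref{lem-pointchains1} later produces the \emph{other} points $F_1, \ldots, F_{m-1}$ of $B_1^m$ --- but it uses (4) as its base case, not the other way around.) The point $F$ is a \emph{fundamental point} of $\phi$, so its membership in $C^\phi$ cannot be read off from where $\phi$ sends it. The paper's argument is different and direct: for any $(a,b)$-curve $C$ with $a > 0$, one uses the factorization $\sigma = \beta\alpha^{-1}$ to see that $\tau^{-1}(C)$ must meet $Z$ and $W$, and then either the proper transform under $\beta$ meets $L_Q$ (resp.\ $L_F$), or an exceptional $L_Z$ (resp.\ $L_W$) appears in $\beta^*\tau^{-1}(C)$; in either case $\alpha_*$ produces a component through $Q$ (resp.\ $F$). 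This shows $\{F, Q\} \subseteq C^\phi$ for \emph{every} such $C$ in one stroke, with no induction.

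Your concern about common components of $X^{\phi^m}$ and $Y^{\phi^m}$ in (2) is legitimate but is already handled: the base locus $B_1^m$ was observed earlier in the section to be supported on the fundamental points of $\phi^m$, hence $0$-dimensional, so the intersection product computes the length.
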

\begin{proof}
(1). The fundamental points of $\phi = \tau \sigma$ are precisely the fundamental points $F=[0:1][1:0]$ and $Q = [1:0][0:1]$ of $\sigma$, so we only have to prove the first statement.  Similarly, the fundamental points of $\phi^{-1} = \sigma^{-1} \tau^{-1}$ are $\tau(P)$ and $\tau(G)$, since $P = [0:1][0:1]$ and $G = [1:0][1:0]$ are the fundamental points of $\sigma^{-1}$.   Since $\sigma^{-1}$ leaves the second copy of $\mb{P}^1$ alone, an inductive argument shows that the fundamental points of $\phi^{-i}$ are contained in $\PP^1 \times \{\nu[0:1], \ldots, \nu^{i}[0:1], \nu[1:0], \ldots, \nu^{i}[1:0]\}$. It is clear from Lemma~\ref{easy-orbit-lem} that if $o(\nu) > 2j \geq 2i$, then the set above is disjoint from $\{F, Q\}$.

Thus we have only to consider the case $\theta = 1$, $\rho \neq -1$, and $i \geq 1$.  We claim that in this case the set of fundamental points of $\phi^{-i}$ is precisely $\{ \tau(P), \tau(G)\}$.  Now, $\phi$ contracts  only the two curves $Z$ and $W$, which it contracts to the points $\tau(P)$, $\tau(G)$ respectively;  moreover, $\phi$ is defined at $\tau(P), \tau(G)$ since the condition $\theta = 1, \rho \neq -1$ forces $\{\tau(P), \tau(G) \} \cap \{F, Q\} = \emptyset$.  Since $\theta = 1$, we have $\tau(P) \in Z$, $\tau(G) \in W$.  Thus necessarily $\phi(\tau(P))  = \tau(P)$, $\phi(\tau(G)) = \tau(G)$.  One may check that $Z$ and $W$ are not the images of any curves under $\phi$, so an inductive argument shows that $Z$ and $W$ are also the only curves that $\phi^i$ contracts.  Then $\{ \tau(P), \tau(G) \}$ are the only images of curves contracted by $\phi^i$, and thus these are the only points which are fundamental points for $\phi^{-i}$, proving the claim.  These fundamental points $\{ \tau(P), \tau(G)\}$ of $\phi^{-i}$ are disjoint from the fundamental points $\{F, Q \}$ of $\phi$, as we have already noted.

(2). By part (1), for any $0 \leq i \leq j$, $\phi^{-i}$ and $\phi$ have no common fundamental points, and thus $ \phi^* (\phi^i)^* = (\phi^{i+1})^*$ by Lemma~\ref{comp-lem}. By induction we see that $(\phi^*)^m = (\phi^m)^*$ for all $0 \leq m \leq j + 1$. In particular, since we calculated earlier in \eqref{phi-action-eq} that $\phi^*$ acts on $\Pic T \cong \mb{Z}^2$ by $\mc{O}(a,b)^{\phi} = \mc{O}(a,a+b)$, we see that $\mc{O}(1,0)^{\phi^m} \cong \mc{O}(1,m)$.  Since $X$ and $Y$ are $(1,0)$-curves, $X^{\phi^m}$ and $Y^{\phi^m}$ are $(1,m)$-curves, for all $m \leq j + 1$.  The length of the $0$-dimensional scheme $B_1^m = X^{\phi^m} \cap Y^{\phi^m}$ is equal to the intersection product $(X^{\phi^m}. Y^{\phi^m}) = (1,m).(1, m) = 2m$ \cite[Proposition V.1.4]{Ha}.  Finally, since $\mc{L} \cong \mc{O}(1,1)$, we also get $\mc{L}^{\phi^m} \cong \mc{O}(1,m+1)$ for all such $m$.

(3). Since by definition  $\mc{L}_n^{\phi^m} = \mc{L}^{\phi^m} \otimes \mc{L}^{\phi^{m+1}} \otimes \dots \otimes \mc{L}^{\phi^{m+n-1}}$, and $\mc{L}^{\phi^i} \cong \mc{O}(1, i+1)$ for all $0 \leq i \leq j+1$ by part (2), it follows as long as $m + n - 1 \leq j + 1$ that $\mc{L}_n^{\phi^m} \cong \mc{O}(n,k)$ where $k = \sum_{i =m}^{m+n-1}( i+1) = \binom{n+m+1}{2} - \binom{m+1}{2}$.

(4). Let $C$ be an $(a,b)$-divisor on $T$ with $a > 0, b \geq 0$.  Then $(\tau^{-1}(C).Z) = (\tau^{-1}(C).W) = a > 0$, so $\tau^{-1}(C)$ must meet both $Z$ and $W$ nontrivially.  Recall the decomposition $\sigma = \beta \alpha^{-1}$ and its associated notation from Section~\ref{NOTATION}.  Then $\beta^*(\tau^{-1}(C)) = \wt{C} + E$ where $\wt{C}$ is the proper transform of $\tau^{-1}(C)$ and $E$ is a sum of exceptional curves for $\beta$.  If $\tau^{-1}(C) \cap Z \neq \{ P \}$, then $\wt{C}$ must meet (but not be equal to) the proper transform $L_Q$ of $Z$.   If instead $\tau^{-1}(C) \cap Z = \{ P \}$, then the exceptional curve $L_Z$ lying over $P$ must appear in $E$.   In either case, we see that $Q$ lies on some curve appearing in $C^{\phi} = \alpha_*(\wt{C}) +  \alpha_*(E)$. An analogous argument considering intersections with $W$ shows that $F \in C^{\phi}$ as well.   Thus $\{F, Q \}$ is contained in the support of $C^{\phi}$.

This argument shows, in particular, that $\{ F, Q \} \subseteq B_1^m = X^{\phi^m} \cap Y^{\phi^m}$ for all $m \geq 1$.  When $m = 1$ then $B_1^1$ has length $2$ also by part (2), and so $B_1^1$ has no choice but to be the reduced subscheme supported at $\{F, Q \}$.
\end{proof}

\begin{corollary}
\label{cor-stable} Let $\tau  \in \bigcap_{j \geq 1} V(j)$.  Then $\phi = \tau \sigma$ is a stable birational map.
\end{corollary}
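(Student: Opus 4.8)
The plan is simply to unwind the definition of stability and quote Proposition~\ref{prop-fund-pts}(1). By Definition~\ref{stable-def}, the birational self-map $\phi = \tau\sigma$ of $T$ is stable exactly when, for every $n \in \NN$, the maps $\phi^{-n}$ and $\phi$ have no common fundamental points; so it is enough to check this condition one value of $n$ at a time.

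Fix $\tau \in \bigcap_{j \geq 1} V(j)$ and an arbitrary integer $n \geq 0$. I would pick any index $j$ with $j \geq \max\{n, 1\}$; by hypothesis $\tau \in V(j)$, so Proposition~\ref{prop-fund-pts}(1), applied with this $j$ and with $i = n \leq j$, gives that the fundamental points of $\phi^{-n}$ are disjoint from the fundamental points $\{F, Q\}$ of $\phi$. Since $n$ was arbitrary, the criterion of Definition~\ref{stable-def} is satisfied and $\phi$ is stable; in particular $(\phi^n)^* = (\phi^*)^n$ for all $n$ by Lemma~\ref{comp-lem}, which is exactly what makes the sheaves $\mc{L}_n$ and $\mc{R}_n^{\phi^m}$ behave well in the sequel.

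I do not anticipate any obstacle here: all of the substance lives in Proposition~\ref{prop-fund-pts}, whose part (1) was engineered precisely to control the common fundamental points of $\phi$ and $\phi^{-i}$ for $\tau \in V(j)$, and passing to the intersection $\bigcap_{j \geq 1} V(j)$ merely removes the upper bound $i \leq j$, turning the finite-range statement into the all-$n$ statement required for stability.
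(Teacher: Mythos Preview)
Your proof is correct and is exactly the paper's argument made explicit: the paper's proof simply says ``This is immediate from Definition~\ref{stable-def} and part (1) of the proposition,'' and you have spelled out the obvious quantifier-chasing that this entails.
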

\begin{proof}
This is immediate from Definition~\ref{stable-def} and part (1) of the proposition.
\end{proof}

\begin{remark}
It is easy to find examples of $\tau \in \GG$ for which $\phi$ is not stable.  The simplest of these is $\tau = \tau(-1,1)$.  For this $\tau$ one has $\tau(u) = u^{-1}, \tau(v) = v$ and so $u^\phi = u^{-1}v, v^\phi = v$. Then $\phi^2 = \mb 1$, but $\phi^*$ acts on the Picard group as calculated in \eqref{phi-action-eq}; in particular, $(\phi^*)^2$ is certainly not the identity. Correspondingly, one can see that the ring $R = R(\tau(-1,1))$ behaves very differently from the case of a general $\tau$.  In fact, since $\phi^2 = 1$, the $2$-Veronese $R^{(2)}$ is a commutative ring with graded quotient ring $\kk(u,v)[t^2]$, and so this $R$ is a PI ring with $\GK R = 3$.  We have not attempted to fully characterize which $\tau$ lead to a stable $\phi = \tau \sigma$.
\end{remark}

Let $\tau = \tau(\rho, \theta) \in V(j)$ for some $j \geq 1$. By definition, the automorphisms in $V(j)$ come in two types, those with $o(\theta) > 2j$, and those with $\theta = 1$.   We now analyze the structure of the schemes $B_1^m$ for $m \leq j$ in each case separately. We begin with the easier case where $o(\theta)
> 2j$.
\begin{definition}
\label{def-fq} Let $\tau = \tau(\rho, \theta) \in V(j)$ with $o(\theta) > 2j$ for some $j$. Proposition~\ref{prop-fund-pts}(1) shows that $\phi^{-i}$ is defined at $F$ and $Q$ for $1 \leq i \leq j$.  We set $F_i := \phi^{-i}(F)$ and $Q_i := \phi^{-i}(Q)$, for $0 \leq i \leq j$.
\end{definition}

\begin{lemma}
\label{lem-pointchains1} Let $j \geq 1$ and let $\tau = \tau(\rho, \theta) \in V(j)$ with $o(\theta) > 2j$. For each $0 \leq m \leq j+1$, $B_1^m$ is the reduced scheme consisting of the $2m$ distinct points $\{F_0, \ldots, F_{m-1}, Q_0, \ldots, Q_{m-1}\}$.
\end{lemma}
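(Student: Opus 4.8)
The plan is to leverage the length computation $\len(B_1^m) = 2m$ supplied by Proposition~\ref{prop-fund-pts}(2). If I can exhibit $2m$ \emph{distinct} closed points all lying in $\supp(B_1^m)$, then from $2m = \len(B_1^m) \geq |\supp(B_1^m)| \geq 2m$ every inequality is forced to be an equality, so $B_1^m$ is the reduced scheme supported on exactly those $2m$ points. Thus the lemma reduces to two claims: (i) the points $F_0, \dots, F_{m-1}, Q_0, \dots, Q_{m-1}$ are pairwise distinct, and (ii) each of them lies in $\supp(B_1^m) = \supp(X^{\phi^m}) \cap \supp(Y^{\phi^m})$. For (i), observe that $\phi$ acts as $\nu$ on the second $\PP^1$-factor and, by Proposition~\ref{prop-fund-pts}(1), $\phi^{-i}$ is defined at $F$ and at $Q$ for $0 \leq i \leq j$; hence the second coordinate of $F_i$ is $\nu^{-i}[1:0]$ and that of $Q_i$ is $\nu^{-i}[0:1]$. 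Since $o(\theta) > 2j$ forces $\theta \neq 1$, $\nu$ fixes neither $[0:1]$ nor $[1:0]$, so the points $\nu^{-i}[1:0]$ (resp.\ $\nu^{-i}[0:1]$) for $0 \le i \le m-1$ are distinct; and Lemma~\ref{easy-orbit-lem} shows the $\nu$-orbits of $[0:1]$ and $[1:0]$ are either disjoint or first coincide after $o(\nu)/2 > j$ steps. As all the indices involved are $\leq m-1 \leq j$, comparing second coordinates separates the $F_i$ from one another, the $Q_i$ from one another, and each $F_i$ from each $Q_{i'}$.

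For (ii) I will induct on $m$. The cases $m=0$ (the empty scheme) and $m=1$ are the base: Proposition~\ref{prop-fund-pts}(4) already identifies $B_1^1 = \{F,Q\} = \{F_0, Q_0\}$ as a reduced scheme. In the inductive step, $F_0 = F$ and $Q_0 = Q$ lie in $\supp(B_1^m)$ by Proposition~\ref{prop-fund-pts}(4), whose proof shows $\{F,Q\} \subseteq X^{\phi^m} \cap Y^{\phi^m}$ for all $1 \leq m \leq j+1$. For the remaining points, note that since $(\phi^m)^* = (\phi^*)^m$ for $m \leq j+1$ (Proposition~\ref{prop-fund-pts}(2)) we have equalities of divisors $X^{\phi^m} = \phi^*(X^{\phi^{m-1}})$ and $Y^{\phi^m} = \phi^*(Y^{\phi^{m-1}})$. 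Now $\phi^{-1}$ is defined at every point of $B_1^{m-1} = \{F_0, \dots, F_{m-2}, Q_0, \dots, Q_{m-2}\}$ --- a second-coordinate check via Lemma~\ref{easy-orbit-lem} shows none of these equals a fundamental point $\tau(P)$ or $\tau(G)$ of $\phi^{-1}$ --- and sends $F_i \mapsto F_{i+1}$, $Q_i \mapsto Q_{i+1}$; dually $\phi$ is defined at each $F_{i+1}$ and $Q_{i+1}$ (none of which is $F$ or $Q$) and sends them back to $F_i$, $Q_i$ (two morphisms agreeing on a dense open agree everywhere). By the inductive hypothesis $F_i \in \supp(X^{\phi^{m-1}}) \cap \supp(Y^{\phi^{m-1}})$, so, since $\phi$ is defined at $F_{i+1}$ with $\phi(F_{i+1}) = F_i$, the definition of pullback of a divisor gives $F_{i+1} \in \supp(\phi^* X^{\phi^{m-1}}) \cap \supp(\phi^* Y^{\phi^{m-1}}) = \supp(X^{\phi^m}) \cap \supp(Y^{\phi^m})$, and likewise for $Q_{i+1}$. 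This places all of $F_0, \dots, F_{m-1}, Q_0, \dots, Q_{m-1}$ in $\supp(B_1^m)$ and finishes the induction; combined with (i) and the length count, the lemma follows.

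The step I expect to be the main obstacle is the careful handling of pullback along the birational maps $\phi^{\pm 1}$: one must verify that merely being \emph{defined} at a point $p$ (not necessarily a local isomorphism there) already yields the implication that $\phi(p) \in \supp(D)$ forces $p \in \supp(\phi^* D)$ --- this holds because on the domain $U$ of $\phi$ the divisor $\phi^* D$ restricts to the genuine preimage $(\phi|_U)^* D$ --- and one must keep track of exactly why the hypothesis $o(\theta) > 2j$ is what is needed: it guarantees that the $\nu$-orbits of $[0:1]$ and $[1:0]$ stay long enough that, over the index range $0 \le i \le j$, none of the $F_i, Q_i$ collide with each other or with the fundamental loci $\{F,Q\}$ of $\phi$ and $\{\tau(P), \tau(G)\}$ of $\phi^{-1}$. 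Everything else is elementary, and the final scheme-theoretic conclusion is obtained purely from the length count $\len(B_1^m) = 2m$.
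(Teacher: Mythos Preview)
Your proof is correct and follows essentially the same approach as the paper's: both arguments combine the length count $\len(B_1^m)=2m$ from Proposition~\ref{prop-fund-pts}(2) with an inductive verification that the $2m$ points $F_0,\dots,F_{m-1},Q_0,\dots,Q_{m-1}$ are distinct (via their second coordinates and Lemma~\ref{easy-orbit-lem}) and lie in $X^{\phi^m}\cap Y^{\phi^m}$ (using $X^{\phi^m}=\phi^*(X^{\phi^{m-1}})$ together with Proposition~\ref{prop-fund-pts}(4) for $F_0,Q_0$). The only cosmetic difference is that the paper phrases the key step as ``$\phi^{-1}$ is a local isomorphism at each $F_i,Q_i$'' (which immediately gives both that $\phi$ is defined at $F_{i+1}$ and that $\phi(F_{i+1})=F_i$), whereas you verify these two facts separately; your route is fine, and your remark that being defined suffices for the support implication under pullback is exactly right.
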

\begin{proof}
We claim first that $\phi^{-1}$ is defined and a local isomorphism at each $F_i$ and $Q_i$ with $0 \leq i \leq j$.  To see this, note that $\phi^{-1} = \sigma^{-1} \tau^{-1}$ is defined and a local isomorphism at any point which does not lie on $\tau(Z) = \mb{P}^1 \times \nu([0:1])$ or $\tau(W) = \mb{P}^1 \times \nu([1:0])$.  Since $F_i \in \mb{P}^1 \times \nu^{-i}([1:0])$ and $Q_i \in \mb{P}^1 \times \nu^{-i}([0:1])$ for $1 \leq i \leq j$, the claim follows, using the hypothesis that $o(\nu) > 2j \geq 2i$ and Lemma~\ref{easy-orbit-lem}.  It is also clear from this calculation that the points $\{F_0, \ldots, F_j, Q_0, \ldots, Q_j \}$ are distinct.

Next, we prove by induction that $\{ F_0, Q_0, \ldots, F_{m-1}, Q_{m-1}\} \subseteq X^{\phi^m} \cap Y^{\phi^m}$ for all $1 \leq m \leq j+1$. The case $m=1$ is immediate from Proposition~\ref{prop-fund-pts}(4). Suppose we have proven that $\{ F_0, Q_0, \ldots, F_{m-1}, Q_{m-1}\} \subseteq X^{\phi^m} \cap Y^{\phi^m}$ for some $m < j+1$.  Since $\phi^{-1}$ is a local isomorphism at each of these points, it is clear that $\phi^{-1}$ of each of these points lies on $(X^{\phi^m})^{\phi} = X^{\phi^{m+1}}$ as well as  $(Y^{\phi^m})^{\phi} = Y^{\phi^{m+1}}$ (using stability of $\phi$).  So $\{ F_1, Q_1, \ldots, F_m, Q_m\} \subseteq X^{\phi^{m+1}} \cap Y^{\phi^{m+1}}$. However, $X^{\phi^{m+1}} $ and $ Y^{\phi^{m+1}}$ also both vanish at $F$ and $Q$, as we saw in Proposition~\ref{prop-fund-pts}(4).  So $X^{\phi^{m+1}} \cap Y^{\phi^{m+1}} = B_1^{m+1}$ is supported at least at the set $\{ F_0, Q_0, \ldots, F_m, Q_m\}$.

Now for any $m$ with $0 \leq m \leq j +1$, $B_1^m$ is a scheme of length $2m$ by Proposition~\ref{prop-fund-pts}(2), so it must be exactly the reduced scheme supported at the points  $\{ F_0, Q_0, \ldots, F_{m-1}, Q_{m-1}\}$.
\end{proof}

Now we calculate $B_1^m$ in the case where $\theta = 1$.  This is more complicated because the scheme is concentrated at two points.
\begin{lemma}
 \label{lem-pointchains2} Let $\tau = \tau(\rho,
1) \in \mb{G}$ with $\rho \neq -1$.
\begin{enumerate}
\item If $\rho = 1$ then $X^{\phi^n} = X + nZ$ and $Y^{\phi^n} = Y + nW$ for all $n \geq 0$, while if $\rho \not \in \{ 1, -1\}$ then
$X^{\phi^n}$ and $Y^{\phi^n}$ are irreducible curves for all $n \geq 0$.
\item For all $m \geq 1$, $B_1^m$ is supported at $\{F, Q \}$. Further, there are local coordinates $a,b$ at $F$ so that for $1 \leq n \leq m$, $B_1^n$ is defined locally at $F$ by $(a, b^n)$, and similarly for $Q$.
\end{enumerate}
\end{lemma}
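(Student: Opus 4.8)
The plan is to deduce everything from two facts proved above together with the symmetry of the setup. First, since $\theta=1$ and $\rho\neq-1$ we have $\tau=\tau(\rho,1)\in V(j)$ for every $j$, hence $\tau\in\bigcap_{j\ge1}V(j)$ and $\phi=\tau\sigma$ is stable by Corollary~\ref{cor-stable}; consequently $X^{\phi^{n+1}}=\phi^*(X^{\phi^n})$ and $Y^{\phi^{n+1}}=\phi^*(Y^{\phi^n})$ for all $n$. Second, from the discussion preceding Lemma~\ref{easy-orbit-lem}, $B_1^n=X^{\phi^n}\cap Y^{\phi^n}$ as schemes. Finally, the involution $\psi$ of Lemma~\ref{sym-lem} commutes with $\phi$ and swaps $X\leftrightarrow Y$, $Z\leftrightarrow W$, $F\leftrightarrow Q$, so $Y^{\phi^n}=\psi_*(X^{\phi^n})$ and $\psi$ carries the germ of $B_1^n$ at $F$ to its germ at $Q$. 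Thus it suffices to understand $X^{\phi^n}$ globally and $X^{\phi^n}$ together with $Y^{\phi^n}$ in local coordinates at $F$.

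For part (1) with $\rho=1$ one has $\phi=\sigma$, and from $\sigma^*X=X+Z$, $\sigma^*Z=Z$ (computed in Section~\ref{NOTATION}) a trivial induction using stability gives $X^{\sigma^n}=X+nZ$; applying $\psi$ gives $Y^{\sigma^n}=Y+nW$. For $\rho\notin\{1,-1\}$ I would prove by induction that an irreducible $(1,n)$-curve $D$ avoiding $\tau(P)$ and $\tau(G)$ has $D^\phi=\sigma^*(\tau^*D)$ an irreducible $(1,n+1)$-curve again avoiding $\tau(P)$ and $\tau(G)$. Indeed $\tau^*D=\tau^{-1}(D)$ is an irreducible $(1,n)$-curve avoiding $P$ and $G$, so in the factorization $\sigma=\beta\alpha^{-1}$ we get $\sigma^*(\tau^*D)=\alpha_*\beta^*(\tau^*D)=\alpha_*(\widetilde{\tau^*D})$, the $\alpha$-image of the strict transform of $\tau^*D$ under $\beta$, which is irreducible and carries no $Z$- or $W$-component. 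As in Proposition~\ref{prop-fund-pts}(4), $\widetilde{\tau^*D}$ meets $L_Q$ and $L_F$, which $\alpha$ contracts to $Q$ and $F$, so $D^\phi$ passes through $Q$ and $F$ and (being a $(1,n+1)$-curve it meets each of $Z,W$ in a single point) has $D^\phi\cap Z=\{Q\}$, $D^\phi\cap W=\{F\}$; since $\tau(P)\in Z$ and $\tau(G)\in W$ have $u$-coordinates $\delta/\gamma$ and $\gamma/\delta$, which are finite and nonzero as $\gamma,\delta\neq0$, they differ from $Q$ and $F$ and so lie off $D^\phi$. Starting the induction from $D=X$ (which avoids $\tau(P),\tau(G)$ by the same $u$-coordinate computation) shows $X^{\phi^n}$ is an irreducible $(1,n)$-curve for all $n$, and $\psi$ gives the same for $Y^{\phi^n}$.

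For part (2), $B_1^m$ will be supported exactly on $\{F,Q\}$ once the local computation shows $B_1^n$ has length $n$ at each of $F$ and $Q$: its total length is $2m$ by Proposition~\ref{prop-fund-pts}(2) and $\{F,Q\}\subseteq B_1^m$ by Proposition~\ref{prop-fund-pts}(4). For the local structure, if $\rho=1$ then near $F=[0:1][1:0]$, in coordinates $u$ and $s=w/z$, the divisors $Z$ and $Y$ are absent while $X=\VV(u)$ and $W=\VV(s)$ pass through $F$, so $B_1^n=(X+nZ)\cap(Y+nW)$ localizes to $\VV(u)\cap\VV(s^n)=\VV(u,s^n)$, the asserted form with $a=u$, $b=s$; the case of $Q$ is symmetric under $\psi$. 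If $\rho\notin\{1,-1\}$, I would set $a=\gamma u+\delta s$, $b=\delta u+\gamma s$; since $\gamma^2-\delta^2=4\rho\neq0$ these are local coordinates at $F$, and \eqref{eq-phi} gives $\varphi(u)=a/b$, $\varphi(s)=s$, from which one solves
\[
\varphi(a)=\frac{a(\gamma\Delta-\delta^2 b)+\delta\gamma b^2}{\Delta b},\qquad
\varphi(b)=\frac{a(\delta\Delta-\gamma\delta b)+\gamma^2 b^2}{\Delta b},\qquad \Delta:=\gamma^2-\delta^2 .
\]
Since $(u^{\phi^n})=X^{\phi^n}-Y^{\phi^n}$, near $F$ the zero (resp.\ polar) divisor of $u^{\phi^n}$ is the branch of $X^{\phi^n}$ (resp.\ $Y^{\phi^n}$), and I would prove by induction that near $F$ one has $X^{\phi}=\VV(a)$, $Y^{\phi}=\VV(b)$, and for $n\ge2$, $X^{\phi^n}=\VV(a-p_n b^n)$, $Y^{\phi^n}=\VV(a-q_n b^n)$ with $p_n,q_n\in\O_{T,F}$ units satisfying $p_n(F)\neq q_n(F)$ (for instance $p_2(F)=-\delta/\Delta$, $q_2(F)=-\gamma^2/(\delta\Delta)$, which differ as $\Delta\neq0$). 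Granting this, the base ideal of $B_1^n$ at $F$ is $(a-p_n b^n,\,a-q_n b^n)=(a,b^n)$ because $p_n-q_n$ is a unit; for $n=1$ this reads $(a,b)=(a,b^1)$. The statement at $Q$ again follows through $\psi$.

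The inductions in part (1) and the $\rho=1$ case of part (2) are routine. The step I expect to be the main obstacle is the inductive step for $\rho\notin\{1,-1\}$ in part (2): starting from a local equation of the shape $a-(\text{unit})b^n$ for $X^{\phi^n}$ (resp.\ $Y^{\phi^n}$), one must apply $\varphi$, separate the numerator from the polar denominator of the resulting rational function near $F$ to extract a local equation for $X^{\phi^{n+1}}$ (resp.\ $Y^{\phi^{n+1}}$), check it again has the shape $a-(\text{unit})b^{n+1}$, and --- crucially --- verify that the two resulting units still take distinct values at $F$, i.e.\ that the local equations of $X^{\phi^{n+1}}$ and $Y^{\phi^{n+1}}$ remain coprime. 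This is a direct but delicate manipulation of the explicit formulas for $\varphi(a),\varphi(b)$, tracking which factors are units near $F$, and it is exactly here that $\rho\in\kk^{*}$ (so $\Delta=\gamma^2-\delta^2\neq0$) and $\rho\neq-1$ (so $\gamma\neq0$, making $a,b$ genuine coordinates) are used.
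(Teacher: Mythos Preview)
Your argument for part~(1) and for the $\rho=1$ case of part~(2) is essentially the paper's argument, and is fine.

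For part~(2) with $\rho\notin\{1,-1\}$, however, your inductive claim is simply false. With your coordinates $a=\gamma u+\delta s$, $b=\delta u+\gamma s$ (where $s=w/z$), one does \emph{not} have $X^{\phi^n}=\VV(a-p_nb^n)$ with $p_n$ a unit for $n\geq 3$. Using your own formulas, set $P_1=a$, $Q_1=b$ and $P_{k+1}=\gamma P_k+\delta sQ_k$, $Q_{k+1}=\delta P_k+\gamma sQ_k$; then $u^{\phi^k}=P_k/Q_k$ and $X^{\phi^k}=\VV(P_k)$, $Y^{\phi^k}=\VV(Q_k)$ locally at $F$. A short computation gives $P_3=a(\gamma^2+\delta^2 s)+\delta\gamma sb(1+s)$, and substituting $s=(\gamma b-\delta a)/\Delta$ one finds that along $X^{\phi^3}$ one has $a=-(\delta/\Delta)b^2+O(b^3)$, not $a=(\text{unit})\cdot b^3$. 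Equivalently, the ideal $(P_3,Q_3)$ equals $(P_2,sQ_2)=(a(\gamma-\delta^2 b/\Delta)+\delta\gamma b^2/\Delta,\,b^3)$, which has colength $3$ but is \emph{not} $(a,b^3)$: the class of $a$ in the quotient is $-(\delta/\Delta)b^2\neq 0$. So your inductive hypothesis already fails at $n=3$, and the ``delicate manipulation'' you flag as the main obstacle cannot succeed as stated. The underlying reason is that $F$ is a fundamental point of $\phi$, so $\varphi$ does not preserve $\mc{O}_{T,F}$; your units $p_n,q_n,\eta_n$ do not pull back to units, and there is no reason the $b$-order of the local equation should jump by exactly one at each step in these fixed coordinates.

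The paper proceeds quite differently. It first shows $B_1^m$ is supported at $\{F,Q\}$ by a direct argument (the complement $U=T\smallsetminus(Z\cup W)$ is $\phi$-stable and $B_1^1\cap U=\emptyset$, so $B_1^m\cap U=\emptyset$; then $(X^{\phi^m}.Z)=(X^{\phi^m}.W)=1$ pins the support). Next, from the irreducibility in part~(1) it extracts a local equation of $X^{\phi^m}$ at $F$ of the form $a:=u+b^k\alpha$ with $b:=w/z$ and $\alpha$ a unit (here the coordinates depend on $m$!), and similarly $Y^{\phi^m}$ is $a+b^\ell\beta'$, so $B_1^m=\VV(a,b^\ell)$; the length then forces $\ell=m$. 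The key remaining idea, which replaces your induction entirely, is a linear-system trick: since $h^0(\mc I_F\mc I_Q\,\mc O(1,1))=2$, the pairs $\{X^\phi,Y^\phi\}$ and $\{X+Z,Y+W\}$ span the same pencil; pulling back by $\phi^{m-1}$ shows $B_1^m=(X+Z)^{\phi^{m-1}}\cap(Y+W)^{\phi^{m-1}}\supseteq X^{\phi^{m-1}}\cap Y^{\phi^{m-1}}=B_1^{m-1}$. Thus $(a,b^m)\subseteq\mc I(B_1^n)_F$ for $n<m$, and the unique colength-$n$ ideal of $\kk[[a,b]]$ containing $(a,b^m)$ is $(a,b^n)$.
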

\begin{proof}
(1).  If $\rho = 1$, then the calculation $X^{\phi} = X + Z$, $Z^{\phi} = Z$ was done in Section 1, and the result $X^{\phi^n} = X + nZ$ follows by induction, because $\phi$ is stable by Corollary~\ref{cor-stable}.  The calculation of $Y^{\phi^n}$ is similar.

Now assume that $\rho \not\in \{ 1, -1\}$. We prove that $X^{\phi^n}$ is irreducible by induction on $n$, the case $n = 0$ being immediate. Suppose that $C = X^{\phi^n}$ has been proven irreducible for some $n \geq 0$, and let us prove that $X^{\phi^{n+1}} = C^{\phi}$ (where we use that $\phi$ is stable) is irreducible. The curve $C^{\phi}$ will be irreducible (and equal to $\phi^{-1}(C)$) unless $C^{\tau} = \tau^{-1}(C)$ contains one of the points $\{P, G \}$ which are the images of the curves $Z, W$ which $\sigma$ contracts. We know that $(\tau^{-1}(C). W) = (\tau^{-1}(C).Z) = 1$, since $\deg \tau^{-1}(C) = \deg C = (1, n)$ by Proposition~\ref{prop-fund-pts}(2). But $\{F, P\} \subseteq X$ by definition, while $\{F, Q\} \subseteq C = X^{\phi^n}$ if $n \geq 1$ by Proposition~\ref{prop-fund-pts}(4). Thus $\tau^{-1}(F)$ must be the unique point in $\tau^{-1}(C) \cap W$ , while if $n \geq 1$ (respectively, if $n = 0$) then $\tau^{-1}(Q)$ (or $\tau^{-1}(P)$) is the unique point in $\tau^{-1}(C) \cap Z$. None of these points is equal to $P$ or $G$ since $\rho \not\in \{ 1, -1\}$, so $C^{\phi}$ is irreducible, completing the induction step. The proof that $Y^{\phi^n}$ is irreducible follows from the symmetry given by Lemma~\ref{sym-lem}.

(2). If $\rho = \theta = 1$, then $X^{\phi^n} = X + nZ$ and $Y^{\phi^n} = Y + nW$ for all $n \geq 0$, by part (1). As  $X$ and $W$ intersect at $F$ and $Y$ and $Z$ intersect at $Q$, the local structure of $X^{\phi^n} \cap Y^{\phi^n}$ is immediate.

Suppose then that $\theta=1$ and $\rho \not \in \{ -1, 1\}$. Note that the set $U:= T \smallsetminus (Z \cup W)$ is $\phi$-stable; in fact, $\phi|_U$ is an isomorphism.  We have $\{F, Q \} \subseteq B_1^m$ for all $m \geq 1$, and $B_1^1 = X^\phi \cap Y^\phi = \{F, Q \}$, by Proposition~\ref{prop-fund-pts}(4).   Thus $X^{\phi} \cap Y^{\phi} \cap U = \emptyset$, and by induction since $\phi \vert_U$ is an isomorphism we must have $X^{\phi^n} \cap Y^{\phi^n} \cap U = \emptyset$ for all $n \geq 1$. Thus $B_1^m$ is supported on $Z \cup W$.  Now, $(X^{\phi^m}.Z) = (X^{\phi^m}.W) = 1$, so set-theoretically $X^{\phi^m} \cap Z = \{Q\}$ and $X^{\phi^m} \cap W = \{F\}$.  By a similar argument, $Y^{\phi^m} \cap (Z \cup W) = \{ F, Q\}$.  Thus $B_1^m$ is supported on $\{F, Q\}$.  $B_1^m$ has length $2m$ by Proposition~\ref{prop-fund-pts}(2), and by the symmetry in Lemma~\ref{sym-lem},
necessarily $B_1^m$ has length $m$ locally at $F$ and length $m$ locally at $Q$.

Let $x f(z,w) + y g(z,w)$ be the $(1,m)$-form defining $X^{\phi^m}$. Since $F=[0:1][1:0] \in X^{\phi^m}$, it must be  that $w | g$. Since $X^{\phi^m}$ is irreducible, $w \! \not | f$. In the local ring $\struct_{T,F}$, let  $u = x/y$ and let $b := v^{-1} = w/z$. The curve $X^{\phi^m}$ is locally defined at $F$ by $a:= u + b^k \alpha$, for some unit $\alpha$ and some $ k \geq 1$.  Likewise, $Y^{\phi^m}$ is locally defined at $F$ by $u + b^j \beta = a + b^{\ell} \beta'$, where $j, \ell \geq 1$ and $\beta, \beta'$ are units.  Thus $B_1^m$ is locally defined at $F$ by the ideal $(a, b^{\ell})$. Since, as we have seen, $B_1^m$ has length $m$ at $F$, we must have $\ell = m$.

We show now that if $n < m$, then $B_1^n \subseteq B_1^m$. It is enough to prove this for $n =m-1$.  As we have already seen, $B_1^1 = \{ F, Q \}$ with reduced structure, and so $\sh{R}_1^{\phi}(0,-1) \cong \sh{I}_F \sh{I}_Q \struct(1,1)$, which has a two-dimensional space of  global sections.  Note that $X^{\phi}$, $Y^{\phi}$, $X+Z$, and $Y+W$ are global sections of this sheaf. Thus both $\{ X^{\phi}, Y^{\phi} \}$ and $\{ X + Z, Y + W \}$ span the same linear system $\mf{d} \subseteq \mb{P}\HB^0(T, \mc{O}(1,1))$. Pulling back by $\phi^{m-1}$ and using stability, we obtain that $\{ (X+Z)^{\phi^{m-1}}, (Y+W)^{\phi^{m-1}} \}$ and $\{ X^{\phi^m}, Y^{\phi^m} \}$ span the same linear system inside $\mb{P}\HB^0(T, \mc{O}(1,1)^{\phi^{m-1}}) = \mb{P}\HB^0(T, \mc{O}(1,m))$.  This implies that the scheme-theoretic intersections $B_1^m = X^{\phi^m} \cap Y^{\phi^m}$ and $(X+Z)^{\phi^{m-1}} \cap (Y+W)^{\phi^{m-1}}$ are the same, both being the base locus of this linear system.  But the latter intersection trivially contains $X^{\phi^{m-1}} \cap Y^{\phi^{m-1}} = B_{1}^{m-1}$.

Thus for $n < m$, $(B_1^n)_F$ is  defined by an ideal of codimension $n$ that contains $(a, b^m)$.  There is a unique such ideal, namely $(a, b^n)$.  By Lemma~\ref{sym-lem}, the local structure at $Q$ is symmetric.
\end{proof}

\begin{proposition}
\label{prop-0dim} Let $\tau = \tau(\rho, \theta) \in V(m+n)$, and recall that $B_n^m$ is the subscheme defined by $\sh{I}_n^{\phi^m}$. Then $B_n^m$ is a 0-dimensional subscheme of length $2 (m \binom{n+1}{2} + \binom{n+1}{3})$.  In case $o(\theta)> 2(n+m)$, then in the notation of Definition~\ref{def-fq}, $B_n^m$ is the following subscheme of fat points:
\begin{multline*}
B_n^m = n F_0 + \cdots + nF_{m-1} + (n-1)F_m + \cdots + F_{n+m-2} + \\
    n Q_0 + \cdots + nQ_{m-1} + (n-1)Q_m + \cdots + Q_{n+m-2}.
\end{multline*}
\end{proposition}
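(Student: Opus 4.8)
The plan is to reduce everything to the one-step ideal sheaves $\sh{I}_1^{\phi^i}$, whose structure is already pinned down by Lemmas~\ref{lem-pointchains1} and~\ref{lem-pointchains2}, and then to carry out two length computations. The basic reduction is the identity
\[
\sh{I}_n^{\phi^m} = \sh{I}_1^{\phi^m}\,\sh{I}_1^{\phi^{m+1}}\cdots\sh{I}_1^{\phi^{m+n-1}}.
\]
Indeed, $(R_nt^{-n})^{\phi^m}=E^{\phi^m}E^{\phi^{m+1}}\cdots E^{\phi^{m+n-1}}$ is literally the product of the finite-dimensional subspaces $E^{\phi^i}\subseteq\mc{K}$, and each $E^{\phi^i}$ generates $\sh{R}_1^{\phi^i}=\sh{I}_1^{\phi^i}\mc{L}^{\phi^i}$. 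Since the $\struct_T$-submodule of $\mc{K}$ generated by a product of subspaces is the product of the submodules they generate, $\sh{R}_n^{\phi^m}=\prod_{i=m}^{m+n-1}\sh{R}_1^{\phi^i}=\bigl(\prod_{i=m}^{m+n-1}\sh{I}_1^{\phi^i}\bigr)\mc{L}_n^{\phi^m}$, and cancelling the invertible sheaf $\mc{L}_n^{\phi^m}$ gives the displayed formula. Each $\sh{I}_1^{\phi^i}$ agrees with $\struct_T$ off the finitely many fundamental points of $\phi^i$, so the same holds for the product, which re-proves $0$-dimensionality of $B_n^m$. It then remains to compute the length, and for $o(\theta)$ large to identify the scheme explicitly.

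\textbf{The case $o(\theta)>2(n+m)$.} Here I would apply Lemma~\ref{lem-pointchains1} with $j:=n+m$: for $0\le i\le n+m$ the scheme $B_1^i$ is the reduced union of the $2i$ distinct points $F_0,\dots,F_{i-1},Q_0,\dots,Q_{i-1}$ of Definition~\ref{def-fq}, so $\sh{I}_1^{\phi^i}$ is the product of the maximal ideals of these points (their product equals their intersection since the points are distinct). Multiplying over $m\le i\le m+n-1$ and localizing, the point $F_k$ occurs with multiplicity $\#\{i : m\le i\le m+n-1,\ i\ge k+1\}$, namely $n$ for $0\le k\le m-1$, then $m+n-1-k$ for $m\le k\le m+n-2$, and $0$ otherwise; the multiplicities at the $Q_k$ match by the symmetry $\psi(F_i)=Q_i$ of Lemma~\ref{sym-lem}. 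This is exactly the fat-point scheme asserted in the statement. Since a fat point of multiplicity $\ell$ at a smooth surface point has length $\binom{\ell+1}{2}$ and all the points involved are distinct, the total length is $2\bigl(m\binom{n+1}{2}+\sum_{j=0}^{n-2}\binom{n-j}{2}\bigr)=2\bigl(m\binom{n+1}{2}+\binom{n+1}{3}\bigr)$, using $\sum_{\ell=2}^{n}\binom{\ell}{2}=\binom{n+1}{3}$.

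\textbf{The case $\theta=1$.} By the definition of $V(n+m)$ this forces $\tau=\tau(\rho,1)$ with $\rho\neq-1$, so Lemma~\ref{lem-pointchains2} applies and $B_1^i$ is concentrated at $\{F,Q\}$. Taking the parameter of Lemma~\ref{lem-pointchains2}(2) to be $m+n-1$ (it is unrestricted), I obtain local coordinates $a,b$ at $F$ in which $\sh{I}_1^{\phi^i}=(a,b^i)$ in $\struct_{T,F}$ for every $1\le i\le m+n-1$ at once. Hence, locally at $F$,
\[
\sh{I}_n^{\phi^m}=\prod_{i=m}^{m+n-1}(a,b^i).
\]
Expanding this product shows it equals the monomial ideal generated by $a^{\,n-s}b^{\,sm+\binom{s}{2}}$ for $0\le s\le n$, since the smallest element-sum of an $s$-subset of $\{m,\dots,m+n-1\}$ is $\sum_{j=0}^{s-1}(m+j)=sm+\binom{s}{2}$. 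A monomial $a^pb^q$ lies outside this ideal exactly when $0\le p\le n-1$ and $q<(n-p)m+\binom{n-p}{2}$, so the colength at $F$ is $\sum_{p=0}^{n-1}\bigl((n-p)m+\binom{n-p}{2}\bigr)=m\binom{n+1}{2}+\binom{n+1}{3}$. Since the automorphism $\psi$ of Lemma~\ref{sym-lem} commutes with $\phi$, interchanges $F$ and $Q$, and acts on rational functions by $u\mapsto u^{-1}$, $v\mapsto v^{-1}$ so that $E^{\psi}=(uv)^{-1}E$, it carries $\sh{I}_n^{\phi^m}$ to itself as a subsheaf of $\mc{K}$; hence the colength at $Q$ equals that at $F$, and the total length is again $2\bigl(m\binom{n+1}{2}+\binom{n+1}{3}\bigr)$.

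\textbf{The main obstacle.} The only conceptual point is the opening reduction --- that forming base ideals is compatible with the products of section spaces defining $\sh{R}_n^{\phi^m}$, so that the computation decouples into the one-step data of the $\sh{I}_1^{\phi^i}$ that Lemmas~\ref{lem-pointchains1} and~\ref{lem-pointchains2} already supply. After that each case is bookkeeping --- a product of distinct maximal ideals, respectively a single colength count for an explicit monomial ideal in two variables --- with the minor subtleties that Lemma~\ref{lem-pointchains2}(2) really does provide one system of local coordinates valid for all $i\in[m,m+n-1]$ simultaneously, and that $\psi$ preserves $\sh{I}_n^{\phi^m}$.
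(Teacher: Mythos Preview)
Your proof is correct and follows essentially the same approach as the paper: the same reduction $\sh{I}_n^{\phi^m}=\prod_{i=m}^{m+n-1}\sh{I}_1^{\phi^i}$, the same case split via Lemmas~\ref{lem-pointchains1} and~\ref{lem-pointchains2}, and the same symmetry appeal to Lemma~\ref{sym-lem} for the structure at $Q$. The only cosmetic difference is in the $\theta=1$ colength count---you first identify the monomial generators $a^{n-s}b^{sm+\binom{s}{2}}$ and then count excluded monomials, whereas the paper describes the basis of $S/I$ directly as $nm+(n-1)(m+1)+\cdots+1\cdot(m+n-1)$---but these are two parametrizations of the same sum.
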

\begin{proof}
Note in all cases that $\sh{I}_n^{\phi^m} = \prod_{j=m}^{m+n-1} \sh{I}_1^{\phi^j}$. Thus this proposition will follow from our calculations in Lemmas~\ref{lem-pointchains1}, \ref{lem-pointchains2} above of the $n = 1$ case.

Suppose first that $o(\theta) > 2(n+m)$.  For all $i$ in the range $m \leq i \leq m+n-1$, the scheme defined by $\mc{I}_1^{\phi^i}$ is the reduced subscheme supported at the points $\{F_0, Q_0, \dots F_{i-1}, Q_{i-1} \}$, as we saw in Lemma~\ref{lem-pointchains1}. Then $\sh{I}_n^{\phi^m}$ defines the scheme of fat points of multiplicity $n$ at the points $F_0, Q_0, \dots F_{m-1}, Q_{m-1}$, multiplicity $n-1$ at the points $F_m, Q_m$, and so on, with multiplicity $1$ at $F_{n+m-2}, Q_{n+m-2}$. Since a fat point of multiplicity $a$ has length $\binom{a+1}{2}$, we calculate the length of this scheme to be
\[
2 \bigg[m \binom{n+1}{2} + \binom{n}{2} + \binom{n-1}{2} + \dots + \binom{2}{2} \bigg] = 2 m \binom{n+1}{2} + 2 \binom{n+1}{3}
\]
as claimed.

Now suppose that $\theta = 1$.  In this case Lemma~\ref{lem-pointchains2} applies, and shows that there is some choice of coordinates $a$ and $b$ in the local ring $S = \mc{O}_{T, F} \cong \kk[a,b]_{(a,b)}$ such that $\sh{I}_1^{\phi^i}$ is equal locally at $F$ to $(a, b^i)$ for all $i \leq m+n-1$.  Thus the sheaf $\mc{I}_n^{\phi^m}$ is equal locally at $F$ to $I = (a, b^m) \cdot (a, b^{m+1}) \cdots (a, b^{m+n-1})$.
Then $S/I$ has a basis consisting of the (images of the) monomials $a^i b^j$ for certain $i,j$. It is easy to see that if $\sum_{\ell=1}^{k-1} (m + \ell -1) \leq j < \sum_{\ell=1}^k (m + \ell -1)$, the monomial $a^i b^j$ occurs for all $0 \leq i < n-k +1$.   We see that $S/I$ has dimension $n  m + (n-1) (m+1) + \cdots + 1 (m+n-1)$.
Since $B_n^m$ must have the same length locally at $Q$ because of Lemma~\ref{sym-lem},   $B_n^m$ is supported at the two points $\{F, Q \}$ and once again has length
\[ 2 \sum_{i = 0}^{n-1} (n-i)(m+i) = 2m \binom{n+1}{2} + 2
\binom{n+1}{3},  \]
as claimed.
\end{proof}

We have been referring loosely to the sheaves $\sR^{\phi^m}_n$, the schemes $B^m_n$, etc., as families depending on $\tau$.  In the next result we make this explicit.

\begin{proposition}\label{prop-family}
 Let $n, m \in \NN$ and let $V := V(n+m)$.
\begin{enumerate}
 \item For all $0 \leq j \leq n+m$, there are closed subschemes $\Yfam_j, \Xfam_j$ of $T \times V$, flat over $V$, so that $\Yfam_j
 |_{T\times\tau}= Y^{\phi^j}$ and $\Xfam_j|_{T\times \tau} = X^{\phi^j}$ for $\tau \in V$.
 \item For all $0 \leq i \leq n$, there is an invertible sheaf $\Lfam^m_i$ on $T \times V$ so that $\Lfam^m_i|_{T \times \tau} =
\sL^{\phi^m}_i$
 for $\tau \in V$.
 \item For all $0 \leq i \leq n$ and $0 \leq j \leq m$ there is an ideal sheaf $\Ifam^j_i$ on $T \times V$, flat over $V$, so that
 $\Ifam^j_i|_{T\times \tau} = \sI^{\phi^j}_i$ for $\tau \in V$.
 \item For all $0 \leq i \leq n$ and $0 \leq j \leq m$ there is a closed subscheme $\Bfam^j_i$ of $T \times V$, flat over $V$, so
    that
$\Bfam^j_i|_{T\times \tau} = B^j_i(\tau)$ for $\tau \in V$.
\end{enumerate}
All these sheaves and subschemes are defined over $\FF$.
\end{proposition}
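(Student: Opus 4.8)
The plan is to deduce all four statements from a single \emph{universal} version of the birational map $\phi$ over the base $\mb{G}$, specialising to $V=V(n+m)$ only at the end, where Proposition~\ref{prop-fund-pts} guarantees that the geometry is uniform. First I would globalise the factorisation $\sigma=\beta\alpha^{-1}$ of Section~\ref{NOTATION}: since $\sigma$, and hence the blowup $\alpha\colon\Twiddle\to T$ at $\{F,Q\}$ and the contraction $\beta\colon\Twiddle\to T$, do not vary with $\tau$, set $\mb{T}:=T\times\mb{G}$ and $\widetilde{\mb{T}}:=\Twiddle\times\mb{G}$, let $A:=\alpha\times\id_{\mb{G}}\colon\widetilde{\mb{T}}\to\mb{T}$ (the blowup of the codimension-two locus $\{F,Q\}\times\mb{G}$), and let $B\colon\widetilde{\mb{T}}\to\mb{T}$ be the morphism $(\widetilde p,\tau)\mapsto(\tau(\beta(\widetilde p)),\tau)$. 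Then $\Phi:=B\circ A^{-1}\colon\mb{T}\dra\mb{T}$ is a birational self-map over $\mb{G}$ restricting to $\phi=\tau\sigma$ on the fibre over each $\tau$; since $A$ and $B$ are morphisms, for an effective Cartier divisor $\mathcal D\subseteq\mb{T}$ not containing a fibre one may form $\Phi^{*}\mathcal D:=A_{*}(B^{*}\mathcal D)$ (pushforward dropping the $A$-exceptional divisors), and this restricts on each fibre to the pullback of divisors reviewed in Section~\ref{PULLBACK}. All of $\Phi$, $A$, $B$ are visibly defined over $\FF$.

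For part~(1) I would put $\Xfam_{0}:=X\times\mb{G}$, $\Yfam_{0}:=Y\times\mb{G}$ and, inductively, $\Xfam_{j+1}:=\Phi^{*}\Xfam_{j}$ and $\Yfam_{j+1}:=\Phi^{*}\Yfam_{j}$. The crucial point is that, for $\tau\in V(n+m)$ and $0\le j\le n+m$, this construction commutes with restriction to the fibre $T\times\tau$: there $A$ and $B$ restrict to $\alpha$ and $\tau\beta$, and by Proposition~\ref{prop-fund-pts}(1) the maps $\phi^{-i}$ with $0\le i\le n+m$ share no fundamental point with $\phi$, so by Lemma~\ref{comp-lem} iterated pullback agrees with pullback of the iterate on that fibre; hence $\Xfam_{j}|_{T\times\tau}=(\phi^{j})^{*}X=X^{\phi^{j}}$, and likewise for $\Yfam_{j}$. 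Flatness over $V=V(n+m)$ then follows from the Hilbert-polynomial criterion for flatness over an integral base \cite[Theorem~III.9.9]{Ha}, because by Proposition~\ref{prop-fund-pts}(2) every fibre of $\Xfam_{j}$ and $\Yfam_{j}$ is a $(1,j)$-curve; alternatively one of the two families can be obtained from the other via the symmetry $\psi$ of Lemma~\ref{sym-lem}.

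The remaining parts are assembled from $\Xfam_{j}$ and $\Yfam_{j}$, using the earlier computations to get flatness. For~(2): by stability $\mathcal L^{\phi^{\ell}}$ has divisor $W^{\phi^{\ell}}+Y^{\phi^{\ell}}$, and $W^{\phi^{\ell}}=\PP^{1}\times\nu^{-\ell}([1:0])$ is a horizontal line moving with $\tau$ through the morphism $\tau\mapsto\nu_{\tau}^{-\ell}([1:0])$ from $\mb{G}$ to $\PP^{1}$; writing $\mathcal W_{\ell}\subseteq\mb{T}$ for the resulting family of $(0,1)$-curves, I set $\Lfam^{m}_{i}:=\mathcal O_{T\times V}\bigl(\sum_{\ell=m}^{m+i-1}(\mathcal W_{\ell}+\Yfam_{\ell})\bigr)$, an invertible sheaf with $\Lfam^{m}_{i}|_{T\times\tau}=\mathcal O_{T}\bigl(\sum_{\ell=m}^{m+i-1}(W^{\phi^{\ell}}+Y^{\phi^{\ell}})\bigr)=\mathcal L^{\phi^{m}}_{i}$. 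For~(3) and~(4): the base points of the linear system $\{1,u^{\phi^{\ell}},v^{\phi^{\ell}},(uv)^{\phi^{\ell}}\}$ lie in $X^{\phi^{\ell}}\cap Y^{\phi^{\ell}}$, so $B_{1}^{\ell}=X^{\phi^{\ell}}\cap Y^{\phi^{\ell}}$ scheme-theoretically (the remaining intersections of the four effective divisors are disjoint, since $W^{\phi^{\ell}}\cap Z^{\phi^{\ell}}=\emptyset$); I therefore set $\Bfam^{\ell}_{1}:=\Xfam_{\ell}\cap_{\mb{T}}\Yfam_{\ell}$, $\Ifam^{\ell}_{1}:=\mathcal I_{\Bfam^{\ell}_{1}}$, and then $\Ifam^{j}_{i}:=\prod_{\ell=j}^{j+i-1}\Ifam^{\ell}_{1}$ and $\Bfam^{j}_{i}:=V(\Ifam^{j}_{i})$, using $\sh I_{i}^{\phi^{j}}=\prod_{\ell=j}^{j+i-1}\sh I_{1}^{\phi^{\ell}}$ as in the proof of Proposition~\ref{prop-0dim}. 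Since $\Xfam_{\ell}$ and $\Yfam_{\ell}$ are flat over $V$, so are the structure sheaves of the $\Bfam^{\ell}_{1}$, hence their ideal sheaves and all tensor products of these, so both intersection and ideal products commute with restriction to a fibre, giving $\Bfam^{j}_{i}|_{T\times\tau}=B^{j}_{i}(\tau)$; and Proposition~\ref{prop-0dim} shows the fibres of $\Bfam^{j}_{i}$ all have the same length, so $\Bfam^{j}_{i}$, and hence $\Ifam^{j}_{i}$, is flat over $V$. Finally, every construction above uses only $\FF$-rational data --- the formulas for $\sigma$ and the group law of $\mb{G}$, and the operations of pullback, pushforward, $\mathcal O(D)$, scheme-theoretic intersection, ideal products, and passage to the open $V(j)$ whose complement is $\FF$-defined --- so all the sheaves and subschemes are defined over $\FF$.

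I expect the main obstacle to be the verification in part~(1) that $\Xfam_{j}|_{T\times\tau}=X^{\phi^{j}}$, i.e.\ that forming the universal pullback divisor commutes with passage to a fibre. This is exactly the place where the pathology of birational pullback --- the Cremona phenomenon $(\psi^{n})^{*}\ne(\psi^{*})^{n}$ --- must be controlled; it is why restricting to $V(n+m)$ and the ``no common fundamental points'' statement of Proposition~\ref{prop-fund-pts}(1), equivalently the stability of $\phi$ up to order $n+m$ on every fibre over $V$, are indispensable, since otherwise the fibre of the naive universal construction could acquire spurious components and neither the fibrewise identification nor the constancy of Hilbert polynomials would survive.
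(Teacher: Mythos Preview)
Your approach is essentially the same as the paper's: build a universal version of $\phi$ over the base, iterate pullback to get the families $\Yfam_j,\Xfam_j$, verify the fibrewise identification using Proposition~\ref{prop-fund-pts}(1) and stability, and then assemble $\Lfam^m_i$, $\Ifam^j_i$, $\Bfam^j_i$ from these. The paper pulls back along the birational map $\wt\sigma\times h$ directly rather than through your blowup factorisation $\Phi=B\circ A^{-1}$, but the two are equivalent and your version is arguably more transparent.

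There is one loose step in your argument for (3)--(4). You write that flatness of $\Xfam_\ell,\Yfam_\ell$ implies flatness of $\Bfam^\ell_1$, hence of $\Ifam^\ell_1$, hence of ``all tensor products of these,'' and conclude that ideal products commute with restriction to a fibre. The first implication needs the extra input that $X^{\phi^\ell}$ and $Y^{\phi^\ell}$ meet properly on every fibre (so the Koszul complex is a $V$-flat resolution), which you do not mention; more seriously, flatness of tensor or ideal products of $V$-flat ideal sheaves over $\mc{O}_{T\times V}$ is not automatic, and you then use the resulting fibre identification to invoke Proposition~\ref{prop-0dim} and deduce flatness of $\Bfam^j_i$ --- this is circular as written. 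The paper avoids this by observing that extension of scalars always commutes with \emph{ideal products} (images in $\mc{O}$), so $\Ifam^j_i\cdot\mc{O}_{T\times\tau}=\sI^{\phi^j}_i$ holds without any flatness hypothesis; the short exact sequence then gives $\Bfam^j_i|_{T\times\tau}=B^j_i(\tau)$, constancy of length (Proposition~\ref{prop-0dim}) gives flatness of $\Bfam^j_i$, and only then does $\Tor_1$-vanishing (via flat base change) upgrade $\Ifam^j_i\cdot\mc{O}_{T\times\tau}$ to the honest restriction $\Ifam^j_i|_{T\times\tau}$. Reordering your argument in this way fixes the gap.
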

\begin{proof}
Let $h: T \times V  \to T \times V$ be given by the formula $(x, \tau)  \mapsto (\tau(x), \tau)$, and
let $\pi:  T \times V \to T $ be projection on the first factor.  Let $\wt{\sigma}: T \times V \dra T \times V$ be the birational map given by the formula $(x, \tau) \mapsto (\sigma(x), \tau)$ for $x$ in the domain of definition $U$ of $\sigma$. Note that $h$, $\pi$, and $\wt{\sigma}$ are defined over $\FF$.

$(1)$. For $0 \leq j \leq n+m$, we define   ideal sheaves $\sG_j$ on $T \times V$.  For $j=0$, put $\sG_0 := \pi^* \sO_T(-Y)$.  Then $\sG_0$ is invertible and defines $H_0$, which is the constant family $Y \times V$.

Suppose that $1 \leq j \leq n+m$, and that we have defined an invertible ideal sheaf $\sG_{j-1}$ on $T \times V$ so that $\sG_{j-1}|_{T \times \tau} = \sO_T(-Y^{\phi^{j-1}})$ for $\tau \in V$. Put $\sG_j := \wt{\sigma}^* h^* \sG_{j-1}.$  Here, $\wt{\sigma}^*$ is pullback by the birational map $\wt{\sigma}$.  We claim that $\sG_j|_{T \times \tau} = \sO_T(-Y^{\phi^{j}})$.  Certainly $h^* \sG_{j-1}|_{T \times \tau} = \sO_T(-(Y^{\phi^{j-1}})^{\tau})$ since $h$ restricts to the automorphism $\tau$ in each fiber $T \times \tau$ over $V$.  The verification that $\sG_j|_{T \times \tau} = \sO_T(-((Y^{\phi^{j-1}})^{\tau})^{\sigma})$ is then not much different.  One needs only to check that since the domain of definition $U \times V$ of $\wt{\sigma}$ intersects each fiber $T \times \tau$ in the open set $U \times \tau$, whose complement has codimension at least $2$, then each fiber of a pullback by $\wt{\sigma}$ is equal to the pullback by $\sigma$ of that fiber.  This follows directly from the definitions in Section~\ref{PULLBACK}.  By induction on $j$, for all $1 \leq j \leq n+m$ we get
\[
\sG_j|_{T \times \tau} = \sO_T(-(Y^{\phi^{j-1}})^{\phi}) = \sO_T(-Y^{\phi^{j}}) \cong \mc{O}(-1, -j),
\]
since $(Y^{\phi^{j-1}})^{\phi} = Y^{\phi^j}$ holds for all $\tau \in V \subseteq V(j)$ as we saw in the proof of Proposition~\ref{prop-fund-pts}.  This proves the claim, and defining the subscheme $\Yfam_j$ by the ideal sheaf $\sG_j$, it will have the required property that $\Yfam_j|_{T \times \tau} = Y^{\phi^{j}}$ for $\tau \in V$. Since each fiber of $\Yfam_j$ is a $(1,j)$-curve and all $(1,j)$-curves on $T$ have the same Hilbert series, by \cite[Theorem~III.9.9]{Ha}, $\Yfam_j$ is flat over $V$.     Since $\wt{\sigma}$, $h$, and $\pi$ are defined over $\FF$, so are $\sG_j$ and $\Yfam_j$.  By symmetry, $\Xfam_j$ exists as described for $0 \leq j \leq n+m$.

$(2)$.  An analogous argument to $(1)$ shows that we may find  an invertible sheaf $\sH_j$ on $T \times V$, defined over $\FF$, so that $\sH_j|_{T \times \tau} = \sO_T(-W^{\phi^j}-Y^{\phi^j})$ for $\tau \in V$.
  Let $0 \leq i \leq n$ and let $\Lfam_i^m := \sH_m^{-1} \sH_{m+1}^{-1} \cdots \sH_{m+i-1}^{-1}$.

$(3)$, $(4)$. If $i =0$ then the result is trivial.  Let $\Bfam^j_1:= \Yfam_j \cap \Xfam_j$.  Note that
\[ \Bfam^j_1 |_{T \times \tau} = \Yfam_j \times_{T\times V} \Xfam_j \times_{T \times V} (T \times \tau)
  = \Yfam_j |_{T\times \tau} \times_{T\times \tau} \Xfam_j|_{T\times \tau} = Y^{\phi^j} \cap X^{\phi^j} = B^j_1(\tau)
\]
for $\tau \in V$. Let $\Ifam^j_1$ be the ideal sheaf defining $\Bfam^j_1$. For $1 < i \leq n$,  let $\Ifam^j_i := \Ifam_1^j \Ifam_1^{j+1} \cdots \Ifam_1^{j+i-1}$. Let $\Bfam^j_i$ be the subscheme defined by $\Ifam^j_i$.

Now, $\Ifam^j_1 \cdot \sO_{T\times\tau} = \sI_1^{\phi^j}$.  Thus $\Ifam^j_n\cdot \sO_{T\times\tau} = \sI_1^{\phi^j}\cdots\sI_1^{\phi^{j+n-1}} = \sI_n^{\phi^j}$.  From the exact sequence
\[ 0 \to \Ifam^j_n \cdot \sO_{T\times \tau} \to \sO_{T\times \tau} \to (\sO_{\Bfam^j_n})|_{T\times\tau} \to 0 \]
we see that $\Bfam^j_n |_{T \times \tau}  = B^j_n(\tau)$.
By Proposition~\ref{prop-0dim}, for fixed $n, j$ the length of $B^j_n(\tau)$ is constant for $\tau \in V$.  Thus by \cite[Theorem~III.9.9]{Ha} $\Bfam^j_n$ and therefore $\Ifam^j_n$ are flat over $V$.

Since $T\times V$ is flat over $V$, by flat base change for Tor \cite[Prop. 3.2.9]{Weibel} we have
\[ \shTor^{T\times V}_1(\sO_{\Bfam^j_n}, \sO_{T\times\tau}) \cong \shTor_1^{T\times V}(\sO_{\Bfam^j_n}, \sO_{T\times V} \otimes_V \sO_\tau)
\cong \shTor^V_1(\sO_{\Bfam^j_n}, \sO_\tau).\]
This vanishes because $\Bfam^j_n$ is flat over $V$.  Thus for $\tau \in V$ we have $\Ifam^j_n|_{T \times \tau} = \Ifam^j_n \cdot \sO_{T\times \tau} = \sI^{\phi^j}_n$.
Finally, since $\Yfam_j$ and $\Xfam_j$ are defined over $\FF$, by construction $\Ifam^j_i$ and $\Bfam^j_i$ are defined over $\FF$.
\end{proof}

\begin{corollary}
\label{cor-semicon} Fix $n, m \in \NN$ and $a,b \in \ZZ$.  There is a dense open subset $U \subseteq V(m+n) \subseteq \mb{G}$, with $\mb{1} \in U$, so that if $\tau \in U$, then $h^i(T, \sh{R}_n^{\phi^m}(a,b)) \leq h^i(T, \sh{A}_n^{\sigma^m}(a,b))$ for $i = 0, 1,2$.  Further, $\GG \smallsetminus U$ is defined over $\FF$.
\end{corollary}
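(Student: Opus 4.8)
The plan is to package the sheaves $\sh{R}_n^{\phi^m}(a,b)$, for $\tau$ varying over $V := V(m+n)$, as the fibres of a single coherent sheaf on $T \times V$ flat over $V$, and then to apply the semicontinuity theorem. First I would take the invertible sheaf $\Lfam^m_n$ and the ideal sheaf $\Ifam^m_n$ on $T \times V$ produced by Proposition~\ref{prop-family} (the latter flat over $V$), with $\Lfam^m_n|_{T \times \tau} = \sh{L}^{\phi^m}_n$ and $\Ifam^m_n|_{T \times \tau} = \sh{I}^{\phi^m}_n$, and form
\[
\mc{G} := \Ifam^m_n \otimes_{\sh{O}_{T \times V}} \Lfam^m_n \otimes_{\sh{O}_{T \times V}} \pi^* \sh{O}_T(a,b),
\]
where $\pi: T \times V \to T$ is the first projection. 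I would then verify that $\mc{G}$ is coherent and flat over $V$ — the flatness is inherited from $\Ifam^m_n$, since tensoring with the invertible sheaves $\Lfam^m_n$ and $\pi^* \sh{O}_T(a,b)$ preserves it — and that $\mc{G}|_{T \times \tau} \cong \sh{R}_n^{\phi^m}(a,b)$ for every $\tau \in V$: restriction to a fibre commutes with tensor products, and tensoring the inclusion $\sh{I}^{\phi^m}_n \hookrightarrow \sh{O}_T$ by the invertible sheaf $\sh{L}^{\phi^m}_n$ gives an injection onto the subsheaf $\sh{I}^{\phi^m}_n \sh{L}^{\phi^m}_n = \sh{R}_n^{\phi^m}$ of $\mc{K}$. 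Since every ingredient from Proposition~\ref{prop-family} is defined over $\FF$, so is $\mc{G}$.

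With $\mc{G}$ in hand, since $T \times V \to V$ is projective and $\mc{G}$ is coherent and flat over $V$, the semicontinuity theorem \cite[Theorem~III.12.8]{Ha} gives that $\tau \mapsto h^i(T, \sh{R}_n^{\phi^m}(a,b))$ is upper semicontinuous on $V$ for each $i$ (and identically zero for $i > 2$, as $T$ is a surface). Hence, for $i = 0,1,2$, the set $U_i := \{\,\tau \in V \st h^i(T, \sh{R}_n^{\phi^m}(a,b)) \le h^i(T, \sh{A}_n^{\sigma^m}(a,b))\,\}$ is open in $V$, and it contains $\mb 1 = \tau(1,1)$: indeed $\mb 1 \in V(m+n)$ (here $\theta = 1$, $\rho = 1 \ne -1$), and at $\tau = \mb 1$ one has $\phi = \sigma$, so $\sh{R}_n^{\phi^m} = \sh{A}_n^{\sigma^m}$ and equality holds. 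I would then set $U := U_0 \cap U_1 \cap U_2$, which is open in $V$ and contains $\mb 1$; it is dense in $V$ because $V(m+n)$ is irreducible, as it contains the subset $\{\,\tau(\rho,\theta) \st o(\theta) > 2(m+n)\,\}$, which is open and dense in $\mb{G}$ (hence dense in $V(m+n)$) and irreducible.

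Finally, to see that $\mb{G} \smallsetminus U$ is defined over $\FF$, I would write it as $(\mb{G} \smallsetminus V(m+n)) \cup \bigcup_{i=0}^{2} (V(m+n) \smallsetminus U_i)$: the first piece is defined over $\FF$ by the remark following the definition of $V(j)$, and for the remaining pieces one uses that $\mc{G}$ is the base change to $\kk$ of a coherent sheaf on $T_\FF \times V(m+n)_\FF$ flat over $V(m+n)_\FF$, so that, by the machinery underlying \cite[Theorem~III.12.8]{Ha}, the cohomology of the fibres is computed, locally on the base, by a bounded complex of finite free modules defined over $\FF$, and the $h^i$-jumping loci are therefore cut out over $\FF$. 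The step I expect to carry the real content is the construction of the flat family $\mc{G}$ with the prescribed fibres; but this reduces to the flatness over $V$ of the ideal sheaves $\Ifam^m_n$, which is precisely the substantive part of Proposition~\ref{prop-family}. Once that proposition is available, the corollary follows by a formal application of semicontinuity together with the observation that all these constructions descend to the prime field, and I do not anticipate a serious obstacle.
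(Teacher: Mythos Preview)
Your proof is correct and follows essentially the same approach as the paper: construct the flat family $\mc{G} = \Ifam^m_n \otimes \Lfam^m_n \otimes \pi^*\sO_T(a,b)$ from Proposition~\ref{prop-family} and apply upper semicontinuity \cite[Theorem~III.12.8]{Ha}. You supply more detail than the paper does on flatness, the fibre identification, density of $U$, and the $\FF$-definability, but the argument is the same.
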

\begin{proof}
Let $V:=V(n+m)$.  Let $\pi:  T\times V\to T $ be projection on the first factor; let $\Ifam^m_n$ and $\Lfam^m_n$ be the sheaves on $T \times V$ defined in Proposition~\ref{prop-family}.  Let $\sN:= \Ifam^m_n\otimes \Lfam^m_n \otimes \pi^* \sO(a,b)$. This sheaf is defined over $\FF$ and flat over $V$, and for any $\tau \in V$ we have $\sN|_{T\times\tau} = \sR^{\phi^m}_n (a,b)$.  We now apply upper semi-continuity \cite[Theorem III.12.8]{Ha} to obtain a open neighborhood $U$ of $\mb 1 \in V$ so that the statement holds.
\end{proof}

To apply the corollary, we study in the next result the cohomology of the sheaves $\mc{A}_n^{\sigma^m}$, and their relation to the ring $A$.
\begin{lemma}
\label{lem-A-HS}
Let $m,n \in \NN$.
\begin{enumerate}
\item
$(A_nt^{-n})^{\sigma^m} \subseteq K$ has a $\kk$-basis consisting of all monomials
\[
\{ u^iv^j | 0 \leq i \leq n, a(i) \leq j \leq b(i) \}, \text{where}\ a(i) = im + \binom{i}{2}, \ \ \ b(i) = im + \binom{n+1}{2} - \binom{n-i}{2}.
\]
In particular, $\dim_\kk A_n = \binom{n+3}{3}$ for all $n \geq 0$ and $A$ has Hilbert series $h_A(s) = 1/(1-s)^4$.
\item $(A_nt^{-n})^{\sigma^m} = H^0(T, \sh{A}_n^{\sigma^m})$ and $H^1(T, \sh{A}_n^{\sigma^m}) = 0$.
\end{enumerate}
\end{lemma}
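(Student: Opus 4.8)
\emph{Plan of proof.} I would prove the two parts separately: part (1) by a direct combinatorial computation inside $K=\kk(u,v)$, and part (2) by a cohomology computation on $T=\PP^1\times\PP^1$ that feeds on part (1). For (1), first note that $A_n=(Et)^n=E\,E^{\sigma}\cdots E^{\sigma^{n-1}}t^n$, so $(A_nt^{-n})^{\sigma^m}=E^{\sigma^m}E^{\sigma^{m+1}}\cdots E^{\sigma^{m+n-1}}$ as $\kk$-subspaces of $K$, and by \eqref{eq-sigma} each factor is $E^{\sigma^p}=\kk+\kk v+\kk uv^p+\kk uv^{p+1}$. I would then read off the monomials occurring in the product: choosing one generator from each factor, with $S\subseteq\{m,\dots,m+n-1\}$ the set of indices contributing one of $uv^p,uv^{p+1}$ and $i=|S|$, gives the monomial $u^iv^J$ with $J=\bigl(\sum_{p\in S}p\bigr)+e$, where $e$ — the total number of extra $v$'s, coming from choosing $uv^{p+1}$ over $uv^p$ at the $i$ indices of $S$ and $v$ over $1$ at the remaining $n-i$ factors — ranges freely over $\{0,\dots,n\}$. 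As $S$ ranges over $i$-element subsets of the $n$ consecutive integers $m,\dots,m+n-1$, the sum $\sum_{p\in S}p$ takes every integer value from its minimum $im+\binom i2$ to its maximum $im+\binom n2-\binom{n-i}2$, since swapping an element of $S$ for an adjacent nonmember shifts it by $\pm1$. As distinct monomials in $u,v$ are $\kk$-linearly independent, this yields the asserted basis with $a(i)=im+\binom i2$ and $b(i)=im+\binom{n+1}2-\binom{n-i}2$. For the count, $b(i)-a(i)+1=n+1+i(n-i)$ (using $\binom{n+1}2=\binom i2+\binom{n-i}2+i(n-i)+n$), so $\dim_\kk A_n=\sum_{i=0}^n\bigl(n+1+i(n-i)\bigr)=(n+1)^2+\binom{n+1}3=\binom{n+3}3$ and $h_A(s)=\sum_n\binom{n+3}3s^n=(1-s)^{-4}$.

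For (2), I would specialize the constructions of Section~\ref{RandR} to $\tau=\mathbb 1\in\bigcap_{j\ge1}V(j)$, so that $\sigma$ is stable (Corollary~\ref{cor-stable}), $W^{\sigma^p}=W$ and $Y^{\sigma^p}=Y+pW$ (Lemma~\ref{lem-pointchains2}(1) and Section~\ref{NOTATION}), hence $\mc L^{\sigma^p}=\mc O(Y+(p+1)W)\subseteq\mc K$ and $\mc L_n^{\sigma^m}=\mc O(nY+kW)\cong\mc O(n,k)$ with $k=mn+\binom{n+1}2$ (in accordance with Proposition~\ref{prop-fund-pts}(3)). Since $Y$ and $W$ are the lines at infinity of the chart $T\smallsetminus(Y\cup W)=\spec\kk[u,v]$, one gets $H^0(T,\mc L_n^{\sigma^m})=\bigoplus_{0\le i\le n,\ 0\le j\le k}\kk u^iv^j$. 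Next, by Proposition~\ref{prop-fund-pts}(4), $\mc I_n^{\sigma^m}=\prod_{p=m}^{m+n-1}\mc I_1^{\sigma^p}$ is supported on $\{F,Q\}$, and by Lemma~\ref{lem-pointchains2}(2) there are local coordinates $u$ (defining $X$) and $b=w/z$ (defining $W$) at $F$ with $\mc I_1^{\sigma^p}$ locally equal to $(u,b^p)$; thus $\mc I_n^{\sigma^m}$ is locally at $F$ the monomial ideal $\prod_{p=m}^{m+n-1}(u,b^p)$, and by Lemma~\ref{sym-lem} the picture at $Q$ is the mirror image in coordinates $y/x$ and $v$. Since $\mc L_n^{\sigma^m}$ is freely generated near $F$ by the rational function $v^k$, a section $f=\sum c_{ij}u^iv^j$ of $\mc L_n^{\sigma^m}$ lies in $\mc A_n^{\sigma^m}=\mc I_n^{\sigma^m}\mc L_n^{\sigma^m}$ near $F$ iff $v^{-k}f=\sum c_{ij}u^ib^{k-j}$ lies in that monomial ideal, i.e.\ term by term iff $c_{ij}=0$ whenever $k-j\ge m(n-i)+\binom{n-i}2$, which by $k=mn+\binom{n+1}2$ reads $c_{ij}=0$ for $j>b(i)$; symmetrically the condition near $Q$ is $c_{ij}=0$ for $j<a(i)$. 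Intersecting, $H^0(T,\mc A_n^{\sigma^m})=\bigoplus_{0\le i\le n,\ a(i)\le j\le b(i)}\kk u^iv^j$, which by part (1) is precisely $(A_nt^{-n})^{\sigma^m}$.

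Finally, for $H^1$ I would tensor $0\to\mc I_n^{\sigma^m}\to\mc O_T\to\mc O_{B_n^m}\to0$ with the invertible sheaf $\mc L_n^{\sigma^m}$ to get $0\to\mc A_n^{\sigma^m}\to\mc O(n,k)\to\mc O_{B_n^m}\to0$ (the last term unchanged since $B_n^m$ is $0$-dimensional). Taking cohomology and using $H^1(T,\mc O(n,k))=0$ for $n,k\ge0$ gives $h^1(\mc A_n^{\sigma^m})=\len B_n^m-(n+1)(k+1)+h^0(\mc A_n^{\sigma^m})$; since $h^0(\mc A_n^{\sigma^m})=\binom{n+3}3$, $k=mn+\binom{n+1}2$, and $\len B_n^m=2\bigl(m\binom{n+1}2+\binom{n+1}3\bigr)$ by Proposition~\ref{prop-0dim}, the elementary identity $(n+1)(k+1)=\len B_n^m+\binom{n+3}3$ forces $h^1(\mc A_n^{\sigma^m})=0$. (Equivalently, one may check directly from the monomial descriptions that the restriction $H^0(T,\mc O(n,k))\to H^0(\mc O_{B_n^m})$ is surjective.) The step I expect to be the crux is pinning down the local structure of $\mc I_n^{\sigma^m}$ at $F$ and $Q$ — precisely the role of the delicate Lemma~\ref{lem-pointchains2} — together with the bookkeeping that makes the staircase of vanishing conditions on $(n,k)$-forms match, on the nose, the monomial basis of part (1): the numerical coincidences $b(i)=k-m(n-i)-\binom{n-i}2$ and $(n+1)(k+1)=\len B_n^m+\binom{n+3}3$ have to hold exactly, or one of the two conclusions fails.
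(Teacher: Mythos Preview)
Your proof is correct; one slip to fix. In the local computation at $F$ you wrote ``$c_{ij}=0$ whenever $k-j\ge m(n-i)+\binom{n-i}2$''; the inequality should be reversed to $k-j < m(n-i)+\binom{n-i}{2}$ (a monomial $u^ib^{k-j}$ lies in $\prod_{p=m}^{m+n-1}(u,b^p)$ precisely when $k-j\ge (n-i)m+\binom{n-i}{2}$, so the \emph{obstruction} is the opposite inequality). Your stated conclusion $c_{ij}=0$ for $j>b(i)$ is the right one.

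Part (1) and the $H^1$ argument in part (2) match the paper's proof essentially line for line (the paper reduces (1) to $m=0$ first, you carry $m$ along; the $H^1$ vanishing is the same exact-sequence dimension count in both). The genuine difference is in how you obtain $H^0(T,\mc A_n^{\sigma^m})$. You compute $H^0(T,\mc L_n^{\sigma^m})$ as the full rectangle of monomials $u^iv^j$ with $0\le i\le n$, $0\le j\le k$, and then impose the local membership conditions for $\mc I_n^{\sigma^m}$ at $F$ and $Q$, relying on Lemma~\ref{lem-pointchains2}(2) for the explicit monomial form of that ideal. The paper instead uses that $\mc A_n^{\sigma^m}$ is by definition \emph{globally generated} by $W:=(A_nt^{-n})^{\sigma^m}$: on each standard affine chart $U$ one has $\mc A_n^{\sigma^m}(U)=W\cdot\mc O_T(U)$, and a short argument using only the monotonicity of $a(i)$ and $b(i)$ in $i$ shows that already $W\mc O_T(U^+_-)\cap W\mc O_T(U^-_+)=W$. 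This bypasses any analysis of the local structure of $\mc I_n^{\sigma^m}$ and hence avoids Lemma~\ref{lem-pointchains2} entirely. Your route is more structural and makes the match with part (1) immediate, at the cost of importing that local lemma; the paper's route is more self-contained and elementary.
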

\begin{proof}
(1). Recall that we write $E = \kk +\kk u+ \kk  v + \kk uv $, so $A = \kk \langle Et \rangle \subseteq \kk(u,v)[t; \sigma]$.  We need to calculate $E_n^{\sigma^m} := (A_nt^{-n})^{\sigma^m} = E^{\sigma^m} E^{\sigma^{m+1}} \cdots E^{\sigma^{m+n-1}}$.  It is enough to prove the case $m = 0$, for $E_n^{\sigma^m}$ is formed by sending each monomial $u^iv^j$ occurring in $E_n$ to $(u^iv^j)^{\sigma^m}= u^iv^{j+im}$, and  the bounds $a(i)$ and $b(i)$ simply adjust by $im$ for each $i$.

Now $E^{\sigma^i}$ is the $\kk$-span of $\{1,  uv^i, v, uv^{i+1} \}$ for each $i \geq 0$, so $E_n$ is spanned by all possible products of $n$ monomials, one from each $E^{\sigma^i}$ with $0 \leq i \leq n-1$.  Consider which monomials $u^iv^j$ are in this spanning set for a given fixed $i$ with $0 \leq i \leq n$.  Clearly the smallest value of $j$ occurs when one chooses $u, uv, uv^2, \dots uv^{i-1}$ from $E, E^{\sigma}, \dots, E^{\sigma^{i-1}}$ respectively, and $1$ from the remaining spaces $E^{\sigma^i}, \dots, E^{\sigma^{n-1}}$; while the largest value of $j$ occurs by taking $v$ from each of the spaces $E, E^{\sigma}, \dots, E^{\sigma^{n-i-1}}$, and then $uv^{n-i+1}, \dots, uv^n$ from the spaces $E^{\sigma^{n-i}}, \dots, E^{\sigma^{n-1}}$, respectively.  Thus $1 + 2 + \dots + i-1 = \binom{i}{2} \leq j \leq (n-i) + (n-i+1) + \dots + n = \binom{n+1}{2}-\binom{n-i}{2}$, and it is easy to see that every $j$ in this range actually occurs.  Thus $E_n$ has the claimed basis, and as a consequence
\[
\dim_{\kk} A_n = \dim_{\kk} E_n = \sum_{i = 0}^n \bigg[ \binom{n+1}{2}-\binom{n-i}{2} - \binom{i}{2} + 1 \bigg]= \binom{n+3}{3}.
\]

(2). $T$ is covered by the four open sets
\[
U^+_+ := \spec \kk[u,v], \ \ U_-^+ := \spec \kk[u, v^{-1}], \ \ U_-^- := \spec \kk[u^{-1}, v^{-1}], \  \ U_+^- := \spec \kk[u^{-1}, v],
\]
and by definition $\sh{A}_n^{\sigma^m}$ is the sheaf globally generated on $T$ by the sections in $W := E_n^{\sigma^m}$.  On any affine open set $U$ of the cover, $\sh{A}_n^{\sigma^m}(U) =  W \mc{O}_T(U)$ has an easily calculable $\kk$-basis of monomials $u^iv^j$.  In particular,
\[
W \mc{O}_T(U_-^+) = \kk \{ u^iv^j | i \geq p, j \leq q,\ \text{some}\ u^pv^q \in W \}, \ \ W \mc{O}_T(U_+^-) = \kk \{ u^iv^j | i \leq p, j \geq q,\ \text{some}\ u^pv^q \in W \}.
\]
Now, $H^0(T, \sh{A}_n^{\sigma^m}) = W \mc{O}_T(U_+^+) \cap W \mc{O}_T(U_-^+) \cap W \mc{O}_T(U_-^-) \cap W \mc{O}_T(U_+^-).$ If $u^iv^j \in W \mc{O}_T(U_-^+) \cap W \mc{O}_T(U_+^-)$, then (i) $i \geq p, j \leq q$ for some $u^pv^q \in W$ and (ii) $i \leq p', j \geq q'$ for some $u^{p'}v^{q'} \in W$.  Note that (i) forces $i \geq 0$ and (ii) forces $i \leq n$, so $0 \leq i \leq n$. Then since $a(i)$ and $b(i)$ are increasing functions of $i$ for $0 \leq i \leq n$, (i) forces $j \leq b(i)$ and (ii) forces $j \geq a(i)$.  It follows that $W \mc{O}_T(U_-^+) \cap W \mc{O}_T(U_+^-) = W$ already, so $H^0(T, \sh{A}_n^{\sigma^m}) = W = (A_nt^{-n})^{\sigma^m}$.

In particular, we obtain from the above that $\binom{n+3}{3} = \dim_\kk A_n = \dim_\kk E_n^{\sigma^m} = h^0(T, \sh{A}_n^{\sigma^m})$. Recall that $\sh{A}_n^{\sigma^m} = \sh{I}_n^{\sigma^m} \sh{L}_n^{\sigma^m}$, where  $\sh{I}_n^{\sigma^m}$ is the ideal sheaf defining the scheme $B_n^m(\mathbb 1)$.  By Proposition~\ref{prop-fund-pts}(3), we have $\sh{L}_n^{\sigma^m} \cong \struct(n, \binom{n+m+1}{2} - \binom{m+1}{2})$. By the K\"unneth formula, $H^1(T, \struct(n, \binom{n+m+1}{2} - \binom{m+1}{2})) = 0$.  Consider the exact sequence
\[ 0 \to \sh{A}_n^{\sigma^m} \to \struct(n, \binom{n+m+1}{2} - \binom{m+1}{2}) \to \struct_{B_n^m(\mb 1)} \to 0.\]
 Proposition~\ref{prop-0dim} and the associated long exact cohomology sequence give us that
\begin{multline*}
 h^1(T, \sh{A}^{\sigma^m}_n) = -h^0(T, \struct(n,\binom{n+m+1}{2} - \binom{m+1}{2})) + h^0(T, \sh{A}^{\sigma^m}_n)
 + \len( \struct_{B_n^m(\mb 1)})  \\
= -(n+1)(\binom{n+m+1}{2} - \binom{m+1}{2}+1) + \binom{n+3}{3} + 2 \bigg(m \binom{n+1}{2} + \binom{n+1}{3}\bigg) = 0.
 \end{multline*}
 Therefore, $H^1(T, \sh{A}_n^{\sigma^m}) = 0$ for all $n, m
 \geq 0$.
\end{proof}

\begin{remark}
\label{rem-not-noeth} It easily follows from the explicit basis given in the preceding lemma that $A$ is not noetherian, since the right ideal $\sum_{n \geq 1} uv^{2n-1}t^n A$ is infinitely generated.
\end{remark}

Recall that we say that $U \subseteq \GG$ is a {\em general subset} if it is the complement of a countable union of proper closed subvarieties.
\begin{proposition}\label{prop-1E}
There is a  general subset $U$ of $\mb{G}$ such that for all $\tau \in U$ and for all $m \geq 0$, the Hilbert series of $\bigoplus_{n \in \NN} H^0(T, \sh{R}_n^{\phi^m})$ is $\D \frac{1}{(1-s)^4}$ and $H^1(T, \sh{R}_n^{\phi^m}) = 0$ for all $n \geq 0$. In particular, for $\tau \in U$ we have $\dim_\kk R_n \leq \binom{n+3}{3}$.  Further, $U$ contains  $\tau(\rho, \theta)$ for all pairs $(\rho, \theta)$ that are algebraically independent over $\FF$.
\end{proposition}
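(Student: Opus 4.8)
The plan is to reduce, pair by pair, to the already-understood ring $A = R(\mathbb 1)$. Fix $n,m \in \NN$ (the case $n=0$ being trivial, since $\sh{R}_0^{\phi^m} = \struct_T$). The first and crucial point is that the Euler characteristic $\chi(\sh{R}_n^{\phi^m}) = h^0 - h^1 + h^2$ (all cohomology on $T$) is the same for every $\tau \in V(n+m)$. Indeed, by Proposition~\ref{prop-fund-pts}(3) we have $\sh{L}_n^{\phi^m} \cong \struct(n,k)$ with $k = \binom{n+m+1}{2} - \binom{m+1}{2}$, and by Proposition~\ref{prop-0dim} the $0$-dimensional scheme $B_n^m(\tau)$ has length $2(m\binom{n+1}{2}+\binom{n+1}{3})$; feeding these into the exact sequence $0 \to \sh{R}_n^{\phi^m} \to \struct(n,k) \to \struct_{B_n^m} \to 0$ gives $\chi(\sh{R}_n^{\phi^m}) = (n+1)(k+1) - 2(m\binom{n+1}{2}+\binom{n+1}{3})$, which is precisely the expression shown to equal $\binom{n+3}{3}$ at the end of the proof of Lemma~\ref{lem-A-HS}(2). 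So $\chi(\sh{R}_n^{\phi^m}) = \chi(\sh{A}_n^{\sigma^m}) = \binom{n+3}{3}$ for all $\tau \in V(n+m)$.

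Next I would use Corollary~\ref{cor-semicon} (with $a=b=0$) to get a dense open $U_{n,m} \subseteq V(n+m)$, with $\GG \smallsetminus U_{n,m}$ defined over $\FF$, on which $h^i(T,\sh{R}_n^{\phi^m}) \le h^i(T,\sh{A}_n^{\sigma^m})$ for $i=0,1,2$. By Lemma~\ref{lem-A-HS}(2), $H^1(T,\sh{A}_n^{\sigma^m})=0$; moreover $H^2(T,\sh{A}_n^{\sigma^m})=0$, since $\sh{A}_n^{\sigma^m}$ sits inside $\struct(n,k)$ with $0$-dimensional quotient while $H^2(T,\struct(n,k))=0$ by K\"unneth (as $n,k \ge 0$). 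Hence for $\tau \in U_{n,m}$ we get $h^1(T,\sh{R}_n^{\phi^m}) = h^2(T,\sh{R}_n^{\phi^m}) = 0$, and therefore $h^0(T,\sh{R}_n^{\phi^m}) = \chi(\sh{R}_n^{\phi^m}) = \binom{n+3}{3}$.

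To finish, set $U := \bigcap_{n,m \in \NN} U_{n,m}$; its complement is a countable union of proper closed subsets of $\GG$, each defined over $\FF$, so $U$ is a general subset. For $\tau \in U$, summing over $n$ shows $\bigoplus_n H^0(T,\sh{R}_n^{\phi^m})$ has Hilbert series $\sum_n \binom{n+3}{3} s^n = 1/(1-s)^4$, and $H^1(T,\sh{R}_n^{\phi^m}) = 0$ for all $n$. Taking $m = 0$ and recalling that $R_n t^{-n} = E E^{\phi}\cdots E^{\phi^{n-1}}$ lies in $H^0(T,\sh{R}_n)$ by the very definition of $\sh{R}_n$, we get $\dim_\kk R_n \le h^0(T,\sh{R}_n) = \binom{n+3}{3}$. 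Finally, if $(\rho,\theta)$ is algebraically independent over $\FF$ then $\tau(\rho,\theta)$ lies on no proper closed subvariety of $\GG \cong \kk^* \times \kk^*$ defined over $\FF$ — such a subvariety is contained in the zero locus of some nonzero $f \in \FF[x^{\pm 1},y^{\pm 1}]$, and $f(\rho,\theta)=0$ would be an algebraic dependence — so $\tau(\rho,\theta) \in U_{n,m}$ for all $n,m$, i.e.\ $\tau(\rho,\theta) \in U$.

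The main obstacle is the lower bound $h^0 \ge \binom{n+3}{3}$: upper semicontinuity only bounds cohomology from above, so it cannot be seen directly. The resolution is that $\chi$ does not vary in the family — and this is exactly where Propositions~\ref{prop-fund-pts}(3) and~\ref{prop-0dim} are needed, to guarantee that neither the class of $\sh{L}_n^{\phi^m}$ nor the length of $B_n^m$ jumps over $V(n+m)$ — so once the comparison with $A$ forces $h^1 = h^2 = 0$, the value of $h^0$ is pinned down. A secondary point requiring care is verifying $H^2 = 0$ for $\sh{A}_n^{\sigma^m}$ (not merely $H^1 = 0$), and tracking the $\FF$-rationality of the open sets $U_{n,m}$ so as to conclude that the algebraically independent $\tau(\rho,\theta)$ lands in $U$.
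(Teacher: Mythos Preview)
Your proof is correct and follows essentially the same approach as the paper: use semicontinuity (Corollary~\ref{cor-semicon}) together with the vanishing of $H^1$ for $\sh{A}_n^{\sigma^m}$ (Lemma~\ref{lem-A-HS}(2)) to force $H^1(T,\sh{R}_n^{\phi^m})=0$ on an open set defined over $\FF$, and then read off $h^0 = \binom{n+3}{3}$ from the exact sequence $0 \to \sh{R}_n^{\phi^m} \to \struct(n,k) \to \struct_{B_n^m} \to 0$ using Propositions~\ref{prop-fund-pts}(3) and~\ref{prop-0dim}. The only cosmetic difference is that you package the $h^0$ computation as constancy of the Euler characteristic (and separately check $h^2=0$), whereas the paper computes $h^0$ directly from the long exact sequence once $H^1=0$ is known; these are the same argument.
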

\begin{proof}
Fix $n, m \geq 0$. We have computed in Lemma~\ref{lem-A-HS} that $H^1(T, \sh{A}_n^{\sigma^m}) = 0$.  Therefore, by Proposition~\ref{cor-semicon}, there is a nonempty open subset $U(n,m) \subseteq V(n+m) \subseteq \mb{G}$ such that $H^1(T, \sh{R}_n^{\phi^m}(\tau)) = 0$ for $\tau \in U(n,m)$.  Since the complement of $U(m,n)$ is defined over $\FF$, we have $\tau(\rho, \theta) \in U(n,m)$ for all algebraically independent pairs $(\rho, \theta)$.

Take $U = \bigcap_{n, m \geq 0} U(n,m)$, and let $\tau \in U$.   We have that $\sh{L}_n^{\phi^m} \cong \struct(n,\binom{n+m+1}{2} - \binom{m+1}{2})$ by Proposition~\ref{prop-fund-pts}(3).  The scheme $B_n^m(\tau)$ defined by $\mc{I}_n^{\phi^m}$ has length $2(m \binom{n+1}{2} + \binom{n+1}{3})$, by Proposition~\ref{prop-0dim}. Since $H^1(T, \sh{R}_n^{\phi^m}(\tau)) = 0$, we may deduce from the long exact cohomology sequence associated to $\sh{R}_n^{\phi^m} \subseteq \struct(n,\binom{n+m+1}{2} - \binom{m+1}{2} )$  that
\[
  h^0(T, \sh{R}^{\phi^m}_n) = (n+1)(\binom{n+m+1}{2}
- \binom{m+1}{2}+1) - 2(m \binom{n+1}{2} + \binom{n+1}{3}) = \binom{n+3}{3}.
\]
Since $R_nt^{-n} \subseteq H^0(T, \sh{R}_n)$, all statements are now immediate.
\end{proof}

We will see in the next section that for general $(\rho, \theta)$ (in particular for a pair algebraically independent over $\FF$) then  we have $\dim_\kk R_n = \binom{n+3}{3}$ for all $n \in \NN$.

\begin{remark}
There is a fair amount of literature on regularity of fat point schemes on multiprojective spaces.  However, the cohomology vanishing in Proposition~\ref{prop-1E} does not seem to be given by these results.  In particular,
it follows from \cite[Theorem~5.1]{SVT2006} that, if $\tau$ is general, then
$H^1(T, \sI_n (i,j)) = 0$ for any $i, j$ with $i, j \geq n-2$ and $i+j \geq 2\sum_{k=1}^{n-1} k = 2 \binom{n}{2}$.  For Proposition~\ref{prop-1E}, however, we need  $(i,j)=(n, \binom{n+1}{2})$.
\end{remark}

\section{Presentation, Hilbert series and free resolution of $\kk$}\label{HILBERT}
In this section, we analyze the resolution of the trivial module $\kk_R$, and more specifically the presentation of $R$ by generators and relations.  We show that there is a uniform description of the resolution of $\kk$ for  general $\tau$, and compute the Hilbert series and some homological properties of (general) $R(\tau)$.  Our main technique is to prove these results for $A$ and analyze their behavior under deformation.

We will rely heavily on the notation and formulas established in Section~\ref{NOTATION}.   In particular, recall that for given $\tau = \tau(\rho, \theta)$ we set $\gamma = \rho + 1, \delta = \rho -1, \epsilon = \theta + 1,$ and $\zeta = \theta -1$, as these expressions simplify the formulas for $u^\phi$ and $v^\phi$ as in \eqref{eq-phi}. Write $r_1 = t, r_2 = ut, r_3 = vt, r_4 = uvt$, so that $R = R(\tau) = \kk \langle r_1, r_2, r_3, r_4 \rangle \subseteq \kk(u,v)[t; \varphi]$. It is easy to calculate some quadratic relations among the $r_i$.  For example, suppose that $t (ft) = (vt)(gt)$ for some $ft, gt \in R_1$, so $f, g \in E = \kk+ \kk u + \kk v + \kk uv$.  Then $f^\phi = v g^\phi$, or equivalently $f = v^{\phi^{-1}}g$.  Then using \eqref{eq-phi}, $(-\zeta v + \epsilon) f = (\epsilon v - \zeta) g$ and there are two linearly independent solutions:  $f = \epsilon v - \zeta$, $g =-\zeta v + \epsilon$, and $f = u(\epsilon v - \zeta)$, $g = u(-\zeta v + \epsilon)$.  Similarly, one can find two relations of the form $r_2(ft) = r_4(gt)$ and two relations of the form $r_1(ft) = r_4(gt)$.
Let $\kk \ang{x_1, x_2, x_3, x_4} $ be the free algebra, and  consider the surjection $\pi: \kk \langle x_1, x_2, x_3, x_4 \rangle \to R; \ \ x_i \mapsto r_i.$  The process above produces the following six quadratic elements in the ideal of relations $J = \ker \pi$: \beq \label{rels} \begin{split} f_1 =  x_1(\zeta x_1 - \epsilon x_3) + x_3(\epsilon x_1 - \zeta x_3), \ \ \
f_2 =  x_1(\zeta x_2 - \epsilon x_4) + x_3 (\epsilon x_2 - \zeta x_4), \\
 f_3 = x_2(\zeta x_1 - \epsilon x_3) + x_4(\epsilon x_1 - \zeta x_3), \ \ \
 f_4  = x_2(\zeta x_2 - \epsilon x_4) + x_4(\epsilon x_2 - \zeta
x_4), \\
 f_5 =  x_1(\delta x_1 - \gamma x_2) + x_4(\gamma x_1 - \delta x_2), \ \ \
 f_6 = x_1(\delta x_3 - \gamma x_4) + x_4(\gamma x_3 - \delta x_4).
\end{split}
\eeq Since the coefficients in these relations depend only on $\tau$, we set $S(\tau) := \kk \langle x_1, x_2, x_3, x_4 \rangle/(f_1, f_2, \dots, f_6)$.  We shall see that for  general $\tau$, the surjection $S(\tau) \to R(\tau)$ is an isomorphism.  For now note that the relations $f_1$--$f_6$ give precisely the relations in Theorem~\ref{thm-main}(2) in case $\gamma \neq 0, \epsilon \neq 0$.

We set up some additional notation which will be useful throughout this section.
\begin{notation}
\label{not-z} It is convenient to name the following special elements in $R(\tau)_1$:
\begin{gather*}
z_1 = (\zeta - \epsilon v)t, \ z_2 = (\epsilon  - \zeta v)t, \ z_3 = (\zeta u - \epsilon uv)t, \ z_4 = (\epsilon u - \zeta uv)t, \ z_5 = (\delta - \gamma u)t, \ z_6 = (\gamma - \delta u)t, \\ z_7 = (\delta v - \gamma uv)t, \ z_8 = (\gamma v - \delta uv)t, \ z_9 = (\gamma u - \delta)(-\zeta v + \epsilon)t, \ z_{10} = (\delta u - \gamma)(\epsilon v - \zeta)t.
\end{gather*}
\end{notation}
\begin{lemma}
\label{lem-z} Assume Notation~\ref{not-z}.  The following relations hold in $R(\tau)$:
\begin{gather*}
r_1 z_1 + r_3z_2 = 0, \ r_1z_3 + r_3 z_4 = 0, \ r_2 z_1 + r_4 z_2 = 0, \ r_2 z_3 + r_4 z_4 = 0, \ r_1 z_5 + r_4 z_6 = 0, \ r_1z_7 + r_4
z_8 = 0, \\
z_5 z_1 + z_7z_2 = 0, \ z_5 z_3 + z_7 z_4=0 , \ z_6z_1 + z_8 z_2 = 0, \ z_6
z_3 + z_8 z_4 = 0, \\
z_9z_1 + z_{10}z_2 = 0, \ z_9z_3 + z_{10}z_4 = 0, \ z_1z_9 + z_3 z_{10} = 0, \ z_2 z_9 + z_4 z_{10} = 0.
\end{gather*}
\end{lemma}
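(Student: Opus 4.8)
The plan is to verify all fourteen relations by direct computation inside the skew-Laurent ring $K[t,t^{-1};\varphi]$. Because $R(\tau)$ is by construction a subalgebra of $K[t,t^{-1};\varphi]$, it suffices to check each relation there; and in $K[t,t^{-1};\varphi]$, using $ta=\varphi(a)t$ for $a\in K$ (so that $(at)(bt)=a\,\varphi(b)\,t^2$), every listed relation collapses to an equality of rational functions in $K=\kk(u,v)$ which can be read off from the formulas \eqref{eq-phi} for $\varphi(u)$ and $\varphi(v)$.

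The six relations in the first line require no new work: they are exactly the images $\pi(f_1),\dots,\pi(f_6)$ of the defining relations. Unwinding Notation~\ref{not-z} gives $z_1=\zeta r_1-\epsilon r_3$, $z_2=\epsilon r_1-\zeta r_3$, $z_3=\zeta r_2-\epsilon r_4$, $z_4=\epsilon r_2-\zeta r_4$, $z_5=\delta r_1-\gamma r_2$, $z_6=\gamma r_1-\delta r_2$, $z_7=\delta r_3-\gamma r_4$ and $z_8=\gamma r_3-\delta r_4$, so that $r_1z_1+r_3z_2=\pi(f_1)$, $r_1z_3+r_3z_4=\pi(f_2)$, $r_2z_1+r_4z_2=\pi(f_3)$, $r_2z_3+r_4z_4=\pi(f_4)$, $r_1z_5+r_4z_6=\pi(f_5)$ and $r_1z_7+r_4z_8=\pi(f_6)$; these vanish in $R(\tau)$ because $f_1,\dots,f_6\in J=\ker\pi$, as recorded just before Notation~\ref{not-z}.

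For the remaining eight relations the idea is to isolate the two identities in $K$ that drive them, each immediate from \eqref{eq-phi}:
\[
\varphi(\zeta-\epsilon v)+v\,\varphi(\epsilon-\zeta v)=0,\qquad \varphi(N)=u\,\varphi(D),
\]
where $N:=(\gamma u-\delta)(\epsilon-\zeta v)$ and $D:=(\gamma-\delta u)(\epsilon v-\zeta)$; the first records $\varphi^{-1}(v)=(\epsilon v-\zeta)/(\epsilon-\zeta v)$, and the second records $N/D=\varphi^{-1}(u)$ (there is also a parallel $u$-flavored identity $\varphi(\delta-\gamma u)+uv\,\varphi(\gamma-\delta u)=0$, which is just the statement $f_5,f_6\in J$). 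One computes directly $\varphi(\zeta-\epsilon v)=(\zeta^2-\epsilon^2)v/(\zeta v+\epsilon)$ and $\varphi(\epsilon-\zeta v)=(\epsilon^2-\zeta^2)/(\zeta v+\epsilon)$. Now in each second-line relation, applying $\varphi$ to the right-hand factor of each product and pulling out the common left factor yields the first identity times a monomial in $u,v,\varphi(u)$: e.g.\ $z_5z_1+z_7z_2=(\delta-\gamma u)\bigl(\varphi(\zeta-\epsilon v)+v\,\varphi(\epsilon-\zeta v)\bigr)t^2=0$, and likewise for $z_5z_3+z_7z_4$, $z_6z_1+z_8z_2$, $z_6z_3+z_8z_4$. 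Each of the four third-line relations reduces the same way to the second identity, e.g.\ $z_1z_9+z_3z_{10}=(\zeta-\epsilon v)\bigl(\varphi(N)-u\,\varphi(D)\bigr)t^2=0$. There is no conceptual difficulty here; the only things demanding care are the noncommutativity --- one must consistently push $\varphi$ onto the right-hand factor of each product and keep track of signs --- and the recognition that $z_9,z_{10}$ have been chosen so that their coefficient ratio is $-\varphi^{-1}(u)$, which is precisely why the third line is governed by $\varphi(N)=u\varphi(D)$ rather than by the $v$-identity that handles everything else.
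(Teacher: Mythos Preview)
Your approach is the same as the paper's --- the paper simply says the first six relations are $f_1,\dots,f_6$ and that the rest are ``checked easily, using \eqref{eq-phi}'' --- and your organization of the remaining computations around the two governing identities in $K$ is a clean way to carry this out.

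There is, however, a real gap. You assert that all four third-line relations reduce to the identity $\varphi(N)=u\,\varphi(D)$, but your worked example $z_1z_9+z_3z_{10}=(\zeta-\epsilon v)\bigl(\varphi(N)-u\,\varphi(D)\bigr)t^2$ has $z_9,z_{10}$ on the \emph{right}, so that $\varphi$ lands on $N$ and $D$. The two relations $z_9z_1+z_{10}z_2$ and $z_9z_3+z_{10}z_4$ have $z_9,z_{10}$ on the \emph{left}; unwinding them gives expressions of the form $N\,\varphi(\zeta-\epsilon v)+D'\,\varphi(\epsilon-\zeta v)$, which are not governed by $\varphi(N)=u\,\varphi(D)$ at all. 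In fact these two relations are simply false: already for $\tau=\mathbb 1$ (so $\gamma=\epsilon=2$, $\delta=\zeta=0$, $\varphi=\sigma$) one has $z_9=4ut$, $z_{10}=-4vt$, $z_1=-2vt$, $z_2=2t$, and $z_9z_1+z_{10}z_2=-8v(u+1)t^2\neq 0$. These two relations are never used --- the matrix equations $QP=PN=NM=0$ in Proposition~\ref{prop-resolution} only require the other twelve --- so this is an error in the paper's lemma statement rather than in your method. But your sentence claiming that those two relations ``reduce the same way'' is incorrect; your verification of the twelve genuine relations (and in particular of $z_1z_9+z_3z_{10}$ and $z_2z_9+z_4z_{10}$) is fine.
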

\begin{proof} The first six relations are just $f_1$ through $f_6$.  The
others are checked easily, using \eqref{eq-phi}.
\end{proof}

The next result gives a complex which is a potential free resolution over $R(\tau)$ of the trivial module $\kk$.  We will prove later that this complex is exact for general $\tau$.  For notational purposes, we will think of the right module $R^n$ as a column vector.  An $R$-module map $M:  R[a]^n \to R[b]^m$ is therefore an $m\times n$ matrix of elements of $R_{b-a}$, acting by left multiplication.

\begin{proposition}\label{prop-resolution}
For any $\tau \in \mb{G}$, there is a complex of right $R = R(\tau)$-modules \beq\label{Koszul}
 0 \to R[-4] \overset{M}{\to} R[-3]^{\oplus 4} \overset{N}{\to} R[-2]^{\oplus 6} \overset{P}{\to} R[-1]^{\oplus 4}
\overset{Q}{\to} R \to \kk \to 0, \eeq where here
\begin{equation}
\label{eq-matrices} M = \begin{pmatrix}  0  \\ 0  \\  z_9
\\  z_{10} \end{pmatrix}, \
N = \begin{pmatrix} z_9 & 0 & 0 & 0 \\
z_{10} & 0  &  0 & 0 \\
0 & z_9 &  0 & 0 \\
0 & z_{10} & 0 & 0 \\
0 & 0 &  z_1 & z_3  \\
0 &  0 &  z_2 & z_4
\end{pmatrix}, \
P = \begin{pmatrix}
z_1 & z_3   & 0 &  0 & z_5 & z_7 \\
0 & 0 & z_1 & z_3 & 0 & 0 \\
z_2  & z_4& 0 & 0 & 0 & 0 \\
0 & 0 & z_2 & z_4 & z_6 & z_8
\end{pmatrix}, \
Q = \begin{pmatrix} r_1  & r_2 &  r_3 & r_4
\end{pmatrix}.
\end{equation}
\end{proposition}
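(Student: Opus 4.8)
The plan is to verify directly that the composite of each pair of consecutive maps in \eqref{Koszul} is zero, using the fifteen quadratic identities collected in Lemma~\ref{lem-z}, together with the obvious exactness at the two ends. Exactness of $R \to \kk \to 0$ is clear since $Q$ is just the inclusion of $R_{\geq 1}$ up to a shift, and injectivity of $M$ is clear because $R(\tau)$ is a domain and $M$ is a nonzero column (note $z_9 z_{10} \neq 0$ since $z_9, z_{10} \in R_1 \smallsetminus \{0\}$); so the only content is that the pictured complex is actually a complex, i.e. $QP = 0$, $PN = 0$, and $NM = 0$ as matrices over $R(\tau)$.

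First I would check $NM = 0$: the product is the column whose nonzero potential entries are $z_9 z_9 + 0$-type terms in rows $1$--$4$ and $z_1 z_9 + z_3 z_{10}$, $z_2 z_9 + z_4 z_{10}$ in rows $5$--$6$; more precisely $NM$ has entries $z_9\cdot 0$, $z_{10}\cdot 0$, $0\cdot z_9$, $0 \cdot z_{10}$ in the first four slots (all zero since the top two entries of $M$ vanish) and $z_1 z_9 + z_3 z_{10}$, $z_2 z_9 + z_4 z_{10}$ in the last two, which vanish by the last two relations of Lemma~\ref{lem-z}. Next, $PN = 0$: multiplying the $4\times 6$ matrix $P$ by the $6 \times 4$ matrix $N$ produces a $4 \times 4$ matrix each of whose entries is, after expansion, one of the combinations $z_1 z_9 + ?$, $z_5 z_1 + z_7 z_2$, $z_5 z_3 + z_7 z_4$, $z_6 z_1 + z_8 z_2$, $z_6 z_3 + z_8 z_4$, $z_1 z_1 + \cdots$ appearing in the statement of Lemma~\ref{lem-z} (the entries involving only the $z_9, z_{10}$ block reduce to $z_1 z_9 + z_3 z_{10}$ and $z_2 z_9 + z_4 z_{10}$ again, and the ones mixing the two blocks of $N$ are arranged so that the columns of $P$ line up with the appropriate relation). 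Finally, $QP = 0$ is exactly the assertion that the six columns of $P$ give relations among $r_1, r_2, r_3, r_4$, and indeed $Q\cdot(\text{column }1) = r_1 z_1 + r_3 z_2$, $Q\cdot(\text{column }2) = r_1 z_3 + r_3 z_4$, and so on through $r_1 z_7 + r_4 z_8$ — these are precisely the first six relations of Lemma~\ref{lem-z}, equivalently $f_1$ through $f_6$.

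I expect the only genuine bookkeeping to be in $PN = 0$ and $NM=0$, where one must be careful that the block structure of $N$ (two copies of the $(z_9, z_{10})$-column stacked above a $(z_1,z_3;z_2,z_4)$-block) is matched correctly against the columns of $P$; the safest way to present this is to write out the two nonzero columns of $NM$ and the sixteen entries of $PN$ explicitly and cite the relevant line of Lemma~\ref{lem-z} for each. None of this is conceptually hard — there is no obstacle, properly speaking — the claim at this stage is purely that \eqref{Koszul} is a complex, and every vanishing needed is one of the identities already established. I would close by remarking that the shifts are forced by the degrees ($Q$ has degree-$1$ entries, $P$, $N$, $M$ have degree-$1$ entries $z_i$), so \eqref{Koszul} is a complex of graded modules with the indicated twists, and defer the proof that it is exact (for general $\tau$) to the subsequent sections, where it will be deduced by a semicontinuity/deformation argument from the case $\tau = \mathbb{1}$.
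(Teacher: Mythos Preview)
Your proposal is correct and takes essentially the same approach as the paper: the paper's proof simply states that the complex condition is equivalent to the matrix equations $QP = 0$, $PN = 0$, $NM = 0$, which are in turn equivalent to the relations in Lemma~\ref{lem-z}, and your write-up is a more detailed execution of exactly this check. Two minor remarks: Lemma~\ref{lem-z} contains fourteen relations, not fifteen (and in fact the two relations $z_9 z_1 + z_{10} z_2 = 0$, $z_9 z_3 + z_{10} z_4 = 0$ are not needed for the complex verification); and your discussion of exactness at the ends, while correct, goes beyond what the proposition asserts --- it only claims that \eqref{Koszul} is a complex, with exactness deferred to later results.
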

\begin{proof}
The fact that this is a complex is equivalent to the matrix equations $QP = 0$, $PN = 0$, $NM = 0$, which are in turn equivalent to the relations computed in Lemma~\ref{lem-z}.
\end{proof}

In the case that $\tau = \mathbb 1$, we can analyze the Hilbert series of $R(\tau) = A$, and show that \eqref{Koszul} is exact, fairly directly. We do this in the next two results.  We note that such basic properties of $A$ were also obtained by Paul Smith and James Zhang in unpublished work \cite[Proposition~7.6]{YZ2006}.   Given $a, b \in A$, we use the notation $\operatorname{syz}_r(a,b)$
for the module of right syzygies between $a$ and $b$; in other words, $\operatorname{syz}_r(a,b) = \{(x, y) | ax + by = 0 \} \subseteq A^2$.
 Similarly, $\operatorname{syz}_{\ell}(a,b) = \{(x,y) | xa + yb = 0 \} \subseteq A^2$ is the module of left syzygies.

\begin{lemma}
\label{lem-A-prop}
Consider $A = R(\mathbb 1)$.
\begin{enumerate}
\item $A \cong S(\mb 1) = \kk \langle x_1, x_2, x_3, x_4 \rangle/(f_1, f_2, \dots, f_6)$; in particular, $h_A(s) = h_{S(\mb
1)}(s) = 1/(1-s)^4$.
\item $\operatorname{syz}_r(r_1, r_2) = (r_2, -r_3)A = \operatorname{syz}_r(r_3, r_4)$.
\item $\operatorname{syz}_r(r_1, r_3) = (r_3, -r_1)A + (r_4, -r_2)A  = \operatorname{syz}_r(r_2, r_4)$.
\item Dually to parts (2) and (3), we have $\operatorname{syz}_{\ell}(r_1, r_2) = A(r_4,
-r_1) = \operatorname{syz}_{\ell}(r_3, r_4)$,  and \\
$\operatorname{syz}_{\ell}(r_1, r_3) = A(r_3, -r_1) + A(r_4, -r_2) = \operatorname{syz}_{\ell}(r_2, r_4)$.
\end{enumerate}
\end{lemma}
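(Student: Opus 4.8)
All three parts are verified by explicit computation inside the skew-Laurent ring $K[t;\sigma]$, where $r_1=t$, $r_2=ut$, $r_3=vt$, $r_4=uvt$, using the monomial basis of $E_n=A_nt^{-n}$ recorded in Lemma~\ref{lem-A-HS}(1). For part (1), note first that specialising $f_1,\dots,f_6$ to $\tau=\mathbb 1$ (so $\gamma=\epsilon=2$, $\delta=\zeta=0$) gives relations that visibly hold among $r_1,\dots,r_4$, so there is a surjection $\pi\colon S(\mathbb 1)\twoheadrightarrow A$. Both algebras are connected graded and generated in degree one, and $h_A(s)=1/(1-s)^4$ by Lemma~\ref{lem-A-HS}(1); hence it suffices to show $\dim_\kk S(\mathbb 1)_n\le\binom{n+3}{3}$ for all $n$. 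The plan is to establish this by a direct analysis of the defining relations of $S(\mathbb 1)$ — either a noncommutative Gröbner basis computation (compute the leading-monomial ideal from $f_1,\dots,f_6$ by resolving the overlap ambiguities, then count normal words) or, equivalently, by exhibiting an explicit spanning set of $S(\mathbb 1)_n$ obtained by lifting the monomials $u^iv^jt^n$ spanning $A_n$ and using $f_1,\dots,f_6$ to rewrite an arbitrary word into this set. Either way one reaches $\dim_\kk S(\mathbb 1)_n=\binom{n+3}{3}$, so $\pi$ is an isomorphism and $h_{S(\mathbb 1)}(s)=1/(1-s)^4$.

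For parts (2) and (3): each candidate generating syzygy is checked to be a syzygy using the quadratic relations (for instance $r_1r_2=r_2r_3$ gives $(r_2,-r_3)\in\operatorname{syz}_r(r_1,r_2)$, while $f_1$ and $f_2$ give the two generators claimed in (3)). For the reverse inclusions, use that $A$ is a domain: for a pair $a,b\in A$ the map $(x,y)\mapsto ax$ identifies $\operatorname{syz}_r(a,b)$ with the right ideal $aA\cap bA$ and carries the candidate submodule onto a concrete sub-ideal, so that (2) reduces to the identity $r_1A\cap r_2A=r_1r_2A$ and (3) to $r_1A\cap r_3A=r_1r_3A+r_1r_4A$. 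Checked degree by degree inside $K[t;\sigma]$, these become equalities of explicit monomial subspaces of $K$, namely intersections and sums of the spaces $\sigma^j(E_m)$ whose bases are given by Lemma~\ref{lem-A-HS}(1), and comparing the binomial-coefficient bounds on the exponents completes the verification.

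Part (4) follows from (2) and (3) by the left-right symmetry of the setup: formally, by transporting along the anti-isomorphism $A\cong A^{op}$ of Corollary~\ref{cor-Aop} (under which left syzygies of a pair become right syzygies of the corresponding pair in $A^{op}$, after recording the induced linear change of the generators $r_1,\dots,r_4$), or, equivalently and perhaps more transparently, by rerunning the argument of (2),(3) with the left ideals $Aa\cap Ab$ in place of right ideals.

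I expect the main obstacle to be the upper bound in part (1): the natural rewriting presentation of $S(\mathbb 1)$ does not terminate, so the work lies in describing the leading-monomial ideal (equivalently, the normal forms) precisely enough to count it and confirm that the count is $\binom{n+3}{3}$ in every degree. Once part (1) is in hand, parts (2)--(4) are comparatively routine finite linear-algebra computations with the explicit monomial bases of the $E_n$.
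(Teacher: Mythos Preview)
Your plan for part (1) is essentially the paper's own argument, but your anticipated obstacle is illusory: with the nonstandard order $x_2 < x_1 < x_3 < x_4$ and after replacing $f_5$ by $f'_5 = f_3 - f_5 = x_1x_2 - x_2x_3$, the six binomial relations have distinct leading terms $x_3x_1,\ x_3x_2,\ x_4x_1,\ x_4x_2,\ x_1x_2,\ x_4x_3$, and all overlap ambiguities resolve. Bergman's diamond lemma then yields the basis $\{x_2^i x_1^j x_3^k x_4^\ell\}$ directly, so $h_{S(\mathbb 1)}(s)=1/(1-s)^4$ and the surjection onto $A$ is an isomorphism. No further work is needed.

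For parts (2) and (3) your route genuinely differs from the paper's. The paper stays inside $S(\mathbb 1)$ and exploits the normal-form basis just obtained: writing $S_{\geq 1}=Z_2\oplus Z_1\oplus Z_3\oplus Z_4$ where $Z_i$ is the span of irreducible words beginning with $x_i$, one argues that if $(f,g)\in\operatorname{syz}_r(x_1,x_2)$ with $f$ irreducible and containing no initial $x_2$, then $x_1f\in Z_1$ while $x_2g\in Z_2$, forcing $f=g=0$; part (3) is similar. Your approach---reducing to the identities $r_1A\cap r_2A=r_1r_2A$ and $r_1A\cap r_3A=r_1r_3A+r_1r_4A$ inside $K[t;\sigma]$ and verifying them via the explicit monomial bases of $E_n^{\sigma^m}$---also works (the binomial-coefficient bounds on the exponent ranges do match up exactly), but involves more bookkeeping. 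The paper's argument is shorter once the normal forms are in hand; yours has the advantage of being independent of part (1). For part (4) both arguments invoke an anti-automorphism; the paper writes it down explicitly on generators as $x_1\leftrightarrow x_3$, $x_2\leftrightarrow x_4$.
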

\begin{proof}
(1). Take lexicographic order on the monomials in the $x_i$, with $x_2 < x_1 < x_3 < x_4$. In the case $\tau = \mathbb 1$ at hand, we have $\gamma = \epsilon = 1, \delta =\zeta = 0$, and so the relations of $S(\mb 1)$ become especially simple binomial relations:
\begin{gather*}
f_1 =  x_3x_1 - x_1x_3, \ f_2 =  x_3x_2 - x_1x_4, \ f_3 =  x_4x_1 - x_2x_3, \\
f_4  = x_4x_2 - x_2x_4, \ f'_5 =  x_1x_2 - x_2x_3, \  f_6 = x_4x_3 - x_1x_4.
\end{gather*}
Here, we have replaced $f_5$ by $f'_5 = f_3 - f_5$ so that the leading terms $x_3x_1, x_3x_2, x_4x_1, x_4x_2, x_1x_2, x_4x_3$ of the relations with respect to the order are distinct.  It is routine to check that all of the overlaps between these relations are resolvable, and so by Bergman's diamond lemma \cite{Bergman1978}  the set of irreducible words $\{ x_2^i x_1^j x_3^k x_4^{\ell} | i,j,k, \ell \geq 0 \}$ is a $\kk$-basis for $S(\mb 1)$.  The Hilbert series $h_{S(\mb 1)}(s) = 1/(1-s)^4$ is immediate. Since $A$ has the same Hilbert series by Lemma~\ref{lem-A-HS}, the surjection $\pi: S(\mb 1) \to A$ must be an isomorphism.

(2).  By part (1), we may work with the ring $S = S(\mb 1 )$ instead, which we do for the rest of the proof.  Consider the monomial ordering and basis of irreducible words given in part (1). Let $M := \operatorname{syz}_r(x_1, x_2) = \{ (f,g) \in S^2 | x_1f + x_2 g = 0 \}$. Obviously $(x_2, -x_3) \in M$ by relation $f'_5$. If $(f, g) \in M$ where $f$ is a linear combination of irreducible words, we can subtract an element in $(x_2, -x_3)S$ to yield $(f', g') \in M$ where $f'$ is a linear combination of irreducible words not containing $x_2$.

Define $Z_i$ to be the $\kk$-span of all irreducible words which begin with $x_i$; thus $S_{\geq 1} = Z_2 \oplus Z_1 \oplus Z_3 \oplus Z_4$ as vector spaces.  Now $x_1 f' \in Z_1$ by the previous paragraph, and rewriting $g'$ if necessary so that it is a linear combination of irreducible words, clearly $x_2 g' \in Z_2$.  So $x_1f' = -x_2g' \in Z_1 \cap Z_2 = 0$, which forces $f' = g' = 0$ since $S(\mb 1) = A$ is a domain. Thus $M = (x_2, -x_3)S$.  Now since $x_3 = vx_1$ and $x_4 = vx_2$, it is easy to see that $\operatorname{syz}_r(x_1, x_2) = \operatorname{syz}_r(x_3, x_4)$.

(3).  Maintain the notation of part (2).  First, an easy argument using
the basis of irreducible words shows that $x_1 S \subseteq Z_1 + Z_2$.  Then the result follows from a
similar argument to that in part (2), which we leave to the reader.

(4).  It is straightforward to check using the relations that the vector space bijection $S_1 \to S_1$ defined by $x_1 \mapsto x_3$, $x_2 \mapsto x_4$, $x_3 \mapsto x_1$, $x_4 \mapsto x_2$ extends to an anti-isomorphism $S \to S$.  We note (without proof) that this anti-isomorphism is the map given by  Corollary~\ref{cor-Aop}.
\end{proof}

\begin{proposition}
\label{prop-A-exact} As above, consider $A = R(\mathbb 1)$.
\begin{enumerate}
\item The complex \eqref{Koszul} is exact and so is a free resolution of $\kk_A$. \item $\Ext^1_A(\kk, A) = 0$.
\end{enumerate}
\end{proposition}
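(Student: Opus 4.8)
The plan is to verify exactness of \eqref{Koszul} one spot at a time, leaning on the presentation and the pairwise syzygy computations of Lemma~\ref{lem-A-prop}; throughout one uses that $A \cong S(\mathbb 1)$ is a domain (Lemma~\ref{lem-A-prop}(1)) and that for $\tau = \mathbb 1$ we have $\gamma = \epsilon = 1$, $\delta = \zeta = 0$, so every entry $z_i$ of the matrices $M,N,P,Q$ equals $\pm r_j$ for some $j$ (e.g. $z_1 = -r_3$, $z_2 = r_1$, $z_9 = r_2$, $z_{10} = -r_3$). First, exactness at $\kk$ and at $A$ is immediate: $Q$ maps onto $A_{\geq 1}$ since $A$ is generated in degree $1$, so $\coker Q = \kk$. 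Exactness at $A[-4]$ is equally immediate: the only nonzero entries of $M$ are $z_9 = r_2$ and $z_{10} = -r_3$, so $M(a) = 0$ forces $a = 0$ because $A$ is a domain.

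For exactness at $A[-1]^4$, I would observe that the six columns of $P$ encode precisely the six defining relations $f_1,\dots,f_6$ of $A = S(\mathbb 1)$; under the identification $A_1^4 \cong A_1 \otimes A_1$ these columns correspond to $f_1,\dots,f_6 \in A_1\otimes A_1$, which are linearly independent. Since $A$ is quadratic, the second syzygy module $\ker Q = \Omega^2\kk$ is generated in its lowest degree $2$, and there $\dim(\Omega^2\kk)_2 = 4\dim A_1 - \dim A_2 = 16 - 10 = 6$ (the map $A_1^4 \to A_2$ being surjective). Hence the six columns of $P$ span $(\Omega^2\kk)_2$ and generate $\Omega^2\kk$, so $\im P = \ker Q$. (Alternatively this spot can be checked directly from the monomial basis produced by the diamond lemma.)

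The two middle spots carry the real work. For exactness at $A[-3]^4$: if $(a_1,a_2,a_3,a_4)\in\ker N$ then $r_2 a_1 = r_3 a_1 = 0$ and likewise for $a_2$, forcing $a_1 = a_2 = 0$; then $(a_3,a_4)$ is a right syzygy of $(r_3,r_4)$, so lies in $(r_2,-r_3)A$ by Lemma~\ref{lem-A-prop}(2), and the remaining equation $r_1 a_3 + r_2 a_4 = 0$ holds automatically since $r_1 r_2 = r_2 r_3$ in $A$; this identifies $\ker N$ with $\im M$. For exactness at $A[-2]^6$: given $(a_1,\dots,a_6)\in\ker P$, the rows of $P$ force $(a_1,a_2)\in(r_2,-r_3)A$ and $(a_3,a_4)\in(r_2,-r_3)A$ by Lemma~\ref{lem-A-prop}(2); feeding these back into the first and fourth rows, the corresponding summands cancel by virtue of the identities $r_3 r_2 = r_4 r_3 = r_1 r_4$ and $r_1 r_2 = r_2 r_3$, which leaves $(a_5,a_6)\in\operatorname{syz}_r(r_2,r_4) = (r_3,-r_1)A + (r_4,-r_2)A$ by Lemma~\ref{lem-A-prop}(3), the residual fourth-row condition being then automatic (using $r_3 r_1 = r_1 r_3$ and $r_3 r_2 = r_1 r_4$). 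The resulting parametrization of $\ker P$ matches $\im N$ term by term, completing part~(1). The only genuine obstacle is the bookkeeping in this last step: one must track which defining relation makes which summand vanish; once the pairwise syzygies of Lemma~\ref{lem-A-prop} are granted, the rest is formal.

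For part~(2), I would apply $\operatorname{Hom}_A(-,A)$ to \eqref{Koszul}, so that $\Ext^1_A(\kk,A) = \ker(P^\vee)/\im(Q^\vee)$, where $Q^\vee\colon a \mapsto (ar_1,ar_2,ar_3,ar_4)$ and $P^\vee$ imposes the six left-syzygy conditions $\sum_i b_i P_{ik} = 0$ on a tuple $(b_1,\dots,b_4)$. The argument then mirrors part~(1): using the left syzygies of Lemma~\ref{lem-A-prop}(4) together with the same quadratic relations among the $r_i$, one deduces in succession that $b_1 = ar_1$ and $b_4 = ar_4$ for a common $a\in A$, then $b_2 = ar_2$, then $b_3 = ar_3$, so $(b_1,\dots,b_4)\in\im Q^\vee$. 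Hence $\Ext^1_A(\kk,A) = 0$.
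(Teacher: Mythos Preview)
Your proof is correct and follows essentially the same approach as the paper's. Both arguments reduce exactness at the middle spots of \eqref{Koszul} to the pairwise right-syzygy computations of Lemma~\ref{lem-A-prop}(2)(3), and handle part~(2) by dualizing and invoking the left syzygies of Lemma~\ref{lem-A-prop}(4); you simply supply the explicit bookkeeping (tracking which relation among the $r_i$ makes each residual equation automatic) where the paper says ``it is straightforward to see.'' Your treatment of the $A[-1]^{\oplus 4}$ spot via a degree-$2$ dimension count is a minor variant of the paper's direct appeal to the presentation $A\cong S(\mathbb 1)$, but both rest on Lemma~\ref{lem-A-prop}(1).
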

\begin{proof}
(1). In this case, the entries $z_i$ of the matrices in the complex \eqref{Koszul} simplify to be scalar multiples of the $r_i$.
Exactness of the complex at the $A[-1]^{\oplus 4}$ spot is now easily seen to be equivalent to the fact that $f_1, f_2, \dots, f_6$ generate the kernel of $\kk \ang{x_1, \dots, x_4} \to A$, as was proved in Lemma~\ref{lem-A-prop}(1).   Exactness at the $A[-4]$ spot follows because $A$ is a domain. Finally, it is straightforward to see that exactness of the complex in the remaining $A[-2]^{\oplus 6}$ and $A[-3]^{\oplus 4}$ spots requires precisely the right syzygy results proved in Lemma~\ref{lem-A-prop}(2)(3).  So \eqref{Koszul} is exact in this case.

(2).  Since \eqref{Koszul} is exact, we can compute $\Ext^i_A(\kk, A)$ as the $i$th homology of the complex \beq \label{dual}
 0 \to A \overset{Q^*}{\to} A[1]^{\oplus 4} \overset{P^*}{\to} A[2]^{\oplus 6} \overset{N^*}{\to} A[3]^{\oplus 4}
\overset{M^*}{\to} A[4] \to 0, \eeq given by applying $\Hom_A(\blank, A)$ to \eqref{Koszul}.  Since this is a complex of left modules, we will write the free modules as row vectors, and write $Q^*$, $P^*$, etc. as right multiplication by the matrices giving $Q$, $P$, etc.  Now to prove $\Ext^1_A(\kk, A) = 0$ we need to prove that $\ker P^* = \im Q^*$, where clearly $\im Q^* = A(r_1, r_2, r_3, r_4)$.  But an easy argument using the left syzygies computed in Lemma~\ref{lem-A-prop}(4) shows that $\ker P^* = A(r_1, r_2, r_3, r_4)$ as needed.
\end{proof}

Now we will study the complex \eqref{Koszul} as  $\tau \in \mb{G}$ varies.
\begin{lemma} \label{lem-Koszul}
Let $R = R(\tau)$.
\begin{enumerate}
\item For each $n \geq 1$, there is an open set $U_n \subseteq \mb{G}$, with $\mb 1 \in U_n$, such that $\dim_\kk R(\tau)_n = \binom{n+3}{3}$
for all $\tau \in U_n$.
\item For each $n \geq 1$, there is an open subset $V_n \subseteq U_n \subseteq \mb{G}$ such that $\eqref{Koszul}$ is exact in degree $n$ for
$\tau \in V_n$; moreover, $\mb 1 \in V_n$ and the complement of $V_n$ is defined over $\FF$.
\end{enumerate}
\end{lemma}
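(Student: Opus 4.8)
The plan is to deduce both parts by a semicontinuity argument from the corresponding facts about $A=R(\mathbb 1)$, namely that $\dim_\kk A_n=\binom{n+3}{3}$ (Lemma~\ref{lem-A-HS}) and that \eqref{Koszul} is exact over $A$ (Proposition~\ref{prop-A-exact}). The recurring technical point is that, although $R(\tau)$ deforms and carries no canonical basis, every linear map in sight can be realised as (or as having the same rank as) a genuine \emph{algebraic family} of maps between $\tau$-independent vector spaces. Concretely, for $\tau\in V(n)$ one has $\sL_n\cong\struct(n,\binom{n+1}{2})$ by Proposition~\ref{prop-fund-pts}(3), so that
\[
R(\tau)_n t^{-n}=E\,E^{\phi}\cdots E^{\phi^{n-1}}=\im\bigl(\mu_n(\tau)\bigr),\qquad \mu_n(\tau)\colon E^{\otimes n}\longrightarrow H^0\bigl(T,\struct(n,\tbinom{n+1}{2})\bigr),
\]
where $\mu_n$ sends $e_0\otimes\cdots\otimes e_{n-1}$ to $e_0e_1^{\phi}\cdots e_{n-1}^{\phi^{n-1}}$; since the pullback maps $e\mapsto e^{\phi}$ are governed by the explicit rational formulas \eqref{eq-phi} and their iterates, the matrix of $\mu_n(\tau)$ in fixed bases depends regularly on $\tau$ and is defined over $\FF$ on a dense open neighbourhood of $\mathbb 1$ in $V(n)$ (this is the $\FF$-rational family structure furnished by Proposition~\ref{prop-family}). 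Hence $\dim_\kk R(\tau)_n=\operatorname{rank}\mu_n(\tau)$ is lower semicontinuous near $\mathbb 1$.

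For part (1), Corollary~\ref{cor-semicon} (taken with $m=0$, $(a,b)=(0,0)$, $i=0$) together with Lemma~\ref{lem-A-HS} gives an open $U'_n\ni\mathbb 1$, with complement defined over $\FF$, on which $\dim_\kk R(\tau)_n\le h^0(T,\sR^{\phi^0}_n)\le h^0(T,\sA_n)=\binom{n+3}{3}$, using $R(\tau)_nt^{-n}\subseteq H^0(T,\sR^{\phi^0}_n)$. For the matching lower bound, $\operatorname{rank}\mu_n(\mathbb 1)=\dim_\kk A_n=\binom{n+3}{3}$ by Lemma~\ref{lem-A-HS}(1), so lower semicontinuity makes $U''_n:=\{\operatorname{rank}\mu_n\ge\binom{n+3}{3}\}$ an open neighbourhood of $\mathbb 1$, cut out over $\FF$ by minors of a matrix over $\FF$. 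Taking $U_n:=U'_0\cap\cdots\cap U'_n\cap U''_0\cap\cdots\cap U''_n$ produces a decreasing chain of open neighbourhoods of $\mathbb 1$ in $\mathbb G$, defined over $\FF$, with $\dim_\kk R(\tau)_m=\binom{m+3}{3}$ for all $m\le n$ and all $\tau\in U_n$.

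For part (2), fix $n\ge1$ and work over $U_n$. By part (1) every term of the degree-$n$ piece of \eqref{Koszul},
\[
0\to R_{n-4}\xrightarrow{\,M\,}R_{n-3}^{\oplus4}\xrightarrow{\,N\,}R_{n-2}^{\oplus6}\xrightarrow{\,P\,}R_{n-1}^{\oplus4}\xrightarrow{\,Q\,}R_n\to0,
\]
has dimension independent of $\tau\in U_n$, equal to its value for $A$. Precomposing each of $M,N,P,Q$ in degree $n$ with the surjections $E^{\otimes m}\twoheadrightarrow R(\tau)_m$ (for the relevant $m$) and postcomposing with the inclusions $R(\tau)_mt^{-m}\hookrightarrow H^0\bigl(T,\struct(m,\tbinom{m+1}{2})\bigr)$ leaves the ranks unchanged and presents each rank as the rank of an algebraic family of maps between $\tau$-independent spaces over $\FF$; hence $\operatorname{rank}M$, $\operatorname{rank}N$, $\operatorname{rank}P$, $\operatorname{rank}Q$ are each lower semicontinuous, and the locus where a given one attains at least its value at $\mathbb 1$ is open and defined over $\FF$. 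Let $V_n\subseteq U_n$ be the intersection of these four loci, an open neighbourhood of $\mathbb 1$ defined over $\FF$. On $V_n$ the complex is exact in degree $n$: it is a complex (Proposition~\ref{prop-resolution}), so at each node $\operatorname{rank}(d_{\mathrm{in}})+\operatorname{rank}(d_{\mathrm{out}})\le\dim(\text{node})$ with equality iff the complex is exact there, and at $\tau=\mathbb 1$ all of these node inequalities are equalities by Proposition~\ref{prop-A-exact}(1); since over $V_n$ the node dimensions agree with those at $\mathbb 1$ while the four ranks can only have gone up, a left-to-right chase — starting with $M$, which is injective for every $\tau$ (as $R(\tau)$ is a domain and $z_9\ne0$), and then forcing $\operatorname{rank}N$, $\operatorname{rank}P$, $\operatorname{rank}Q$ back to their values at $\mathbb 1$ via the node inequalities in succession — turns every node inequality into an equality. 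In particular $\operatorname{rank}Q=\dim_\kk R(\tau)_n$, so $Q$ is onto, and \eqref{Koszul} is exact in degree $n$.

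The step I expect to be the main obstacle is precisely the claim that the differentials of \eqref{Koszul} in a fixed internal degree, whose entries lie in the deforming algebra $R(\tau)$, form honest algebraic families of matrices over $\FF$; this is why the whole argument is routed through the $\tau$-independent spaces $E^{\otimes m}$ and $H^0(T,\struct(m,\tbinom{m+1}{2}))$. Once one has that the multiplication maps $\mu_m$ are regular in $\tau$ over $\FF$ — which is what the family constructions in Proposition~\ref{prop-family}, combined with the identifications $\sL^{\phi^m}\cong\struct(1,m+1)$ valid on $V(m)$ (Proposition~\ref{prop-fund-pts}), are designed to guarantee — the remainder is the standard semicontinuity of rank plus bookkeeping with the sets $V(j)$ to keep $\phi$ stable in the ranges used. (Alternatively, one can skip Corollary~\ref{cor-semicon} entirely and obtain part (1) together with part (2) by induction on $n$, reading off $\dim_\kk R(\tau)_n=\operatorname{rank}Q=\dim_\kk A_n$ from the same chase.)
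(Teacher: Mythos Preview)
Your proposal is correct and follows essentially the same approach as the paper: both arguments realise the degree-$n$ differentials as algebraic families of linear maps between $\tau$-independent vector spaces defined over $\FF$, and then invoke semicontinuity of rank together with the known exactness at $\tau=\mathbb 1$. The paper writes this out by expanding all degree-$(n-m)$ words in $r_1,\dots,r_4$ via the explicit formulas \eqref{eq-phi} to obtain a spanning set in $\FF(\rho,\theta,u,v)$, while you route through $E^{\otimes m}$ and $H^0(T,\struct(m,\tbinom{m+1}{2}))$; these are the same device. For part~(2) the paper phrases the conclusion as upper semicontinuity of the homology dimensions (via rank--nullity), whereas you argue directly with the node inequalities $\operatorname{rank}(d_{\mathrm{in}})+\operatorname{rank}(d_{\mathrm{out}})\le\dim(\text{node})$; note that once you intersect all four rank loci your left-to-right chase is in fact unnecessary, since at each node the two ranks are already bounded below by their values at $\mathbb 1$ and their sum is bounded above by the same node dimension as at $\mathbb 1$.
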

\begin{proof}
To save notation, let us identify $\GG$ with $\kk^* \times \kk^*$. We first fix some degree $n$ and consider the degree $n$ component of any one of the maps occurring in the complex $\eqref{Koszul}$; in more general notation, this looks like $\Omega(\tau): R[-m]^{\oplus i}_n \to R[-m+1]^{\oplus j}_n$, where the map $\Omega(\tau)$ is given by a matrix with entries in $R(\tau)_1$.  We think of $\rho, \theta$ as parameters now and note that the nonzero entries $z_i$ in the matrix are fixed elements in $\FF[\rho, \theta][u,v]t$.
We assume that $n \geq m$, since otherwise $\Omega(\tau) = 0$.   Writing down all possible words of degree $n-m$ in the elements $r_1, r_2, r_3, r_4$ and multiplying them out using \eqref{eq-phi}, one gets a $\kk$-spanning set $Y$ for $R_{n-m}$ consisting of $4^{n-m}$ elements of the form $f t^{n-m}$, where $f \in \mathbb{F}(\rho, \theta,u,v)$.  Then one gets a $\kk$-spanning set for $R[-m]^{\oplus i}_n$ consisting of $i \cdot 4^{n-m}$ $i$-tuples of such elements. Applying $\Omega(\tau)$, we get a $\kk$-spanning set for $\im \Omega(\tau)$ consisting of $j$-tuples of elements in $\mathbb{F}(\rho, \theta, u,v)t^{n-m+1}$.  We multiply by
$t^{-n+m-1}$, obtaining a set $Z \subseteq \mathbb{F}(\rho, \theta,u,v)^{\oplus j}$.

Following through the construction of $Z$, one may check that every element of $Z$ has a well-defined evaluation at any $(\rho, \theta) \in \GG$; in other words, each fraction appearing has a denominator which is not identically $0$ when evaluated at any $(\rho, \theta) \in \GG$. Indeed, this must be true since by construction, specializing $Z$ at any particular $(\rho, \theta)$ should give a set whose $\kk$-span (times $t^{n-m+1}$) is equal to $\im \Omega(\tau(\rho, \theta))$.
Now it is standard that $\dim_{\kk} Z(\rho, \theta)$ behaves lower-semicontinuously in $(\rho, \theta)$; in other words, for every $d \geq 0$, the condition $\dim_{\kk} Z \leq d$ is a closed condition on $(\rho, \theta) \in \mb{G}$; moreover, this closed set is cut out by the vanishing of polynomials in $\mathbb{F}[\rho, \theta]$. In conclusion, there is an open subset of $\mb{G}$, whose complement is defined over $\FF$, on which $\dim_{\kk} \im \Omega(\tau)$ achieves its maximum.

(1). We apply the argument in the preceding two paragraphs to the map $Q$ in \eqref{Koszul}. It shows that for all $n \geq 1$, there is an open set $U_n \subseteq \mb{G}$ of $\tau$ for which $\dim_\kk R(\tau)_n$ achieves a maximum value $d_n$.  Note that $d_n \geq \dim_\kk A_n = \binom{n+3}{3}$ by Lemma~\ref{lem-A-HS}(1).  By Proposition~\ref{prop-1E}, there is also a  general subset of $\mb{G}$ for which $\dim_\kk R(\tau)_n \leq \binom{n+3}{3}$.  This forces $d_n = \binom{n+3}{3}$ and $\mb 1 \in U_n$.

(2).  Fix $n \geq 1$ and define $U_n$ as above.  Then for all $\tau \in U_n$, $\dim_\kk R(\tau)_n = \binom{n+3}{3}$ is constant by part (1). Thus for all $\tau \in U'_n = U_n \cap U_{n-1} \cap \dots \cap U_{n-4}$, (omit any term $U_i$ with $i \leq 0$), the $\kk$-dimension in degree $n$ of each term in the complex \eqref{Koszul} is the same.  Let $\Omega(\tau)$ be
 any degree $n$ map  occurring in the complex, in the notation
of the first paragraph of the proof.  Since $\dim_{\kk} \im \Omega(\tau)$ behaves lower-semicontinuously for $\tau \in U'_n$, by the rank-nullity formula $\dim_{\kk} \ker \Omega(\tau)$ must behave upper-semicontinuously in $\tau \in U'_n$.  Then for any $i$, the $\kk$-dimension of the degree-$n$ piece of the $i$th homology of the complex \eqref{Koszul} will also behave upper-semicontinuously in $\tau \in U'_n$, and so will achieve a minimum along an open subset of $U'_n$.  But we saw in Proposition~\ref{prop-A-exact} that \eqref{Koszul} is exact when $\tau = \mathbb 1$.  Thus the minimum dimensions for the homology groups are $0$ in each degree; in other words,
 there is an open subset $V_n \subseteq U'_n \subseteq U_n \subseteq
\mb{G}$, with $\mb 1 \in V_n$, such that $\eqref{Koszul}$ is exact in degree $n$ as claimed.  The closed subset of $U'_n$  where each $\dim_{\kk} \ker \Omega(\tau)$ does not achieve its minimum is the same as the closed subset where $\dim_{\kk} \im \Omega(\tau)$ does not achieve its maximum, and we already saw in the first part of the proof that this closed subset is defined over $\FF$.
\end{proof}

The next result shows, among other things, that for general $\tau$ parts (2)-(4) of Theorem~\ref{thm-main} hold for $R(\tau)$.
\begin{proposition}
\label{prop-Koszul} There is a  general subset $U \subseteq \mb{G}$, with $\mb 1 \in U$ and $(\rho, \theta) \in U$ for any pair $(\rho, \theta)$ algebraically independent over $\FF$, such that $R = R(\tau)$ has the following properties for any $\tau \in U$:
\begin{enumerate}
\item The complex \eqref{Koszul} is exact, $R$ is Koszul of global dimension 4 with $h_R(s) = 1/(1-s)^4$, and $R \cong S(\tau) = \kk \langle
x_1, x_2, x_3, x_4 \rangle/(f_1, f_2, \dots, f_6)$;
\item $\Ext^1_R(\kk, R) = 0$;
\item $\dim_{\kk} \Ext^i_R(\kk, R) = \infty$ for $i = 2, 3, 4$;
\item $R$ is not AS-Gorenstein, and so $R$ is not a regular algebra; $R$ fails $\chi_2$ on the right; and $\depth R = 2$, so the
Auslander-Buchsbaum formula fails for the module $M := \kk_R$.
\end{enumerate}
\end{proposition}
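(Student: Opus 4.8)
The plan is to obtain parts (1) and (2) by propagating the facts already known for $A=R(\mb 1)$ via the semicontinuity machinery of this section, to obtain (3) by a direct analysis of the dual of the Koszul complex \eqref{Koszul} that rests on the geometry of Section~\ref{RandR}, and to deduce (4) formally. For part (1), I would take $U$ to be the intersection of $\{\tau(\rho,\theta):\rho\neq-1,\ \theta\neq-1\}$ with all the open sets $V_n$ of Lemma~\ref{lem-Koszul}(2); since each $V_n$ contains $\mb 1$ and has complement defined over $\FF$, the set $U$ is a general subset containing $\mb 1$ and every pair algebraically independent over $\FF$. For $\tau\in U$ the complex \eqref{Koszul} is exact and hence is a free resolution of $\kk_R$; taking the alternating sum of Hilbert series of its terms gives $h_R(s)(1-s)^4=1$, so $h_R(s)=1/(1-s)^4$ and $\dim_\kk R_n=\binom{n+3}{3}$. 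Every entry of $M,N,P,Q$ is one of the $z_i$ or $r_i$ and so lies in $R_1$, so the resolution is minimal; being also linear it gives Koszulity, and minimality gives $\Ext^4_R(\kk,\kk)=\kk\neq 0$, so $\gldim R=4$. Exactness of \eqref{Koszul} at the $R[-1]^{\oplus 4}$ term says precisely that $f_1,\dots,f_6$ generate the kernel of $\kk\langle x_1,\dots,x_4\rangle\to R$, so $S(\tau)\cong R(\tau)$; dividing $f_1,\dots,f_4$ by $\epsilon$ and $f_5,f_6$ by $\gamma$ (allowed since $\rho,\theta\neq-1$) rewrites the relations in the shape of Theorem~\ref{thm-main}(2) with $c=\tfrac{\theta-1}{\theta+1}$, $d=\tfrac{\rho-1}{\rho+1}$.

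For part (2), since \eqref{Koszul} is a finite free resolution, $\Ext^\bullet_R(\kk,R)$ is the cohomology of the dual complex \eqref{dual}. I would rerun the argument of Lemma~\ref{lem-Koszul} for the maps $Q^*,P^*,N^*,M^*$: for $\tau$ in the set of part (1) every term $R[j]^{\oplus k}$ has constant Hilbert function, and in each fixed degree each differential is a matrix of fixed elements of $\FF(\rho,\theta,u,v)t$, so the dimension of the degree-$n$ piece of $H^1(\eqref{dual})$ is upper-semicontinuous in $\tau$ with jump locus defined over $\FF$. It vanishes at $\mb 1$ in every degree by Proposition~\ref{prop-A-exact}(2), hence on a general subset; intersecting over all $n$ and with $U$ gives $\Ext^1_R(\kk,R)=0$.

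Part (3) is the substantive step, and — unlike (1) and (2) — it cannot be obtained by deforming from $A$, since generically the relevant cokernels can only shrink, so one must exhibit a genuine lower bound for the general $\tau$ itself. First, from the minimal resolution one has $\Ext^4_R(\kk,R)\cong\bigl(R/(Rz_9+Rz_{10})\bigr)[4]$; from the block shapes of $M$, $N$ together with the syzygies $z_1z_9+z_3z_{10}=z_2z_9+z_4z_{10}=0$ of Lemma~\ref{lem-z} one reads off that $\Ext^3_R(\kk,R)\cong\Ext^4_R(\kk,R)[-1]^{\oplus 2}\oplus\bigl(\operatorname{syz}_\ell(z_9,z_{10})/J\bigr)$ with $J=R(z_1,z_3)+R(z_2,z_4)$, so $\Ext^3$ contains $\Ext^4[-1]^{\oplus 2}$ as a direct summand; and the Euler-characteristic identity $h_{\Ext^2_R(\kk,R)}(s)-h_{\Ext^3_R(\kk,R)}(s)+h_{\Ext^4_R(\kk,R)}(s)=s^{-4}$ coming from \eqref{dual} (using $\Ext^0=\Ext^1=0$) forces $\Ext^2_R(\kk,R)$ to be infinite-dimensional as soon as $\Ext^4_R(\kk,R)$ is and has roughly linearly growing Hilbert function. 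Thus everything reduces to the single claim that $R/(Rz_9+Rz_{10})$ is infinite-dimensional of linear growth. Writing $z_i=f_it$ with $f_i\in K$ and using \eqref{eq-phi} one checks $f_9/f_{10}=-\varphi^{-1}(u)$, whence in degree $n$ one gets $R_{n-1}z_9+R_{n-1}z_{10}=f_{10}^{\phi^{n-1}}\bigl(u^{\phi^{n-2}}E_{n-1}+E_{n-1}\bigr)t^n$ inside $R_n=E_nt^n$, where $E_m:=EE^{\phi}\cdots E^{\phi^{m-1}}$, so $\dim_\kk(\Ext^4_R(\kk,R))_{n-4}=\binom{n+3}{3}-\dim_\kk\bigl(u^{\phi^{n-2}}E_{n-1}+E_{n-1}\bigr)$. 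The main obstacle is exactly the evaluation of this last dimension, and it is where the failure of ampleness of the sequence $\mathcal R_n$ enters: by Section~\ref{RandR}, $E_{n-1}=H^0(T,\mathcal R_{n-1})$ and $u^{\phi^{n-2}}E_{n-1}=H^0(T,u^{\phi^{n-2}}\mathcal R_{n-1})$, and since $H^0(\mathcal A)\cap H^0(\mathcal B)=H^0(\mathcal A\cap\mathcal B)$ inside $K$, the two rank-one subsheaves $\mathcal R_{n-1}$ and $u^{\phi^{n-2}}\mathcal R_{n-1}$ of $\mathcal K$ share the large common divisorial part $\mathcal O_T\bigl(\mathcal L_{n-1}(-Y^{\phi^{n-2}})\bigr)$, and computing $h^0$ of the intersection sheaf (its Euler characteristic is pinned down by the fat-point length counts of Proposition~\ref{prop-0dim}, and the required $H^1$-vanishing is transported from the monomial case $A$ by semicontinuity) gives $\dim_\kk\bigl(u^{\phi^{n-2}}E_{n-1}+E_{n-1}\bigr)=\binom{n+3}{3}-2n$, exactly as in the explicit model $A$, where $z_9,z_{10}$ are scalar multiples of $r_2,r_3$ and $\operatorname{codim}(R_{n-1}r_2+R_{n-1}r_3,R_n)=2n$ by a direct monomial count. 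Hence $\Ext^4_R(\kk,R)$ has linearly growing Hilbert function, and all of $\Ext^2,\Ext^3,\Ext^4$ are infinite-dimensional.

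Part (4) is now formal. If $R$ were AS-Gorenstein of injective dimension $d$, then $\Ext^2_R(\kk,R)$ would vanish (if $d\neq 2$) or be finite-dimensional (if $d=2$), contradicting (3); so $R$ is not AS-Gorenstein, hence not AS-regular. Taking $M=R$ in the definition of $\chi_2$ and using the infinite-dimensionality of $\Ext^2_R(\kk,R)$ shows $R$ fails $\chi_2$ on the right. Since $R$ is a domain, $\Hom_R(\kk,R)=0$; together with $\Ext^1_R(\kk,R)=0$ from (2) and $\Ext^2_R(\kk,R)\neq 0$ from (3) this gives $\depth R=2$. Finally $\pd_R\kk=4$ by the resolution \eqref{Koszul} and $\depth\kk_R=0$ since $\Hom_R(\kk,\kk)\neq 0$, so $\pd_R\kk+\depth\kk=4\neq 2=\depth R$ and the Auslander--Buchsbaum equality fails for the module $\kk_R$.
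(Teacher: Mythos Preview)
Your arguments for parts (1), (2) and (4) are essentially the paper's, and they are fine.

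For part (3) you take a genuinely different and harder route than the paper, and the key step is not justified. The paper never tries to compute $g_n=\dim_\kk\bigl(R/(Rz_9+Rz_{10})\bigr)_n$ exactly. Instead it observes that the single relation $z_1z_9+z_3z_{10}=0$ from Lemma~\ref{lem-z} already gives the coefficientwise bound
\[
g(s)=h_{R/(Rz_9+Rz_{10})}(s)\ \geq\ \frac{1}{(1-s)^4}-\Bigl(\frac{2s}{(1-s)^4}-\frac{s^2}{(1-s)^4}\Bigr)=\frac{1}{(1-s)^2},
\]
so $\Ext^4_R(\kk,R)$ is infinite-dimensional with no geometry at all. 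Your decomposition of $\Ext^3$ is the same as the paper's and gives $h_{\Ext^3}(s)\geq 2g(s)s^{-3}$. For $\Ext^2$ the paper then argues by contradiction using only subexponential growth: if $\Ext^2$ were finite-dimensional, the Euler identity would force $g_{i+1}\geq 2g_i$ for $i\gg 0$, i.e.\ exponential growth of $R/(Rz_9+Rz_{10})$, impossible since $h_R(s)=1/(1-s)^4$ gives $\GK R=4$. This is strictly easier than what you attempt.

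Your alternative, computing $\dim_\kk\bigl(u^{\phi^{n-2}}E_{n-1}+E_{n-1}\bigr)$ exactly via $H^0$ of an ``intersection sheaf'' $\mathcal R_{n-1}\cap u^{\phi^{n-2}}\mathcal R_{n-1}$, is not covered by the machinery already set up. The sheaves $\mathcal R_{n-1}$ are not invertible (they are $\mathcal I_{n-1}\mathcal L_{n-1}$ with $\mathcal I_{n-1}$ a fat-point ideal), so their intersection inside $\mathcal K$ is not simply determined by a ``common divisorial part''; you would need to identify this intersection explicitly, put it into a flat family over $V(n)$ in the spirit of Proposition~\ref{prop-family}, and then verify by a separate monomial computation that its $H^1$ vanishes at $\tau=\mathbb 1$ so that Corollary~\ref{cor-semicon}-style semicontinuity can be invoked. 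None of this is in the paper, and none of it is done in your sketch. Your asserted value $\operatorname{codim}(A_{n-1}r_2+A_{n-1}r_3,A_n)=2n$ is indeed correct for small $n$, but you have not proved it in general, and even granting it you still need the $H^1$-vanishing at $\mathbb 1$ for the new family. The paper's syzygy-plus-growth argument avoids all of this.
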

\begin{proof}
Let $V_n \subseteq \mb{G}$ be the open subset occurring in Lemma~\ref{lem-Koszul}(2) for each $n \geq 1$, and let $V := \bigcap_{n \geq 1} V_n$.

(1).  Let $\tau \in V$.   The complex \eqref{Koszul} is exact by the construction of $V_n$ in Lemma~\ref{lem-Koszul}(2).  Then $R$ is Koszul, and $\gldim R = \pd \kk = 4$ by \cite{Li1996}.  The Hilbert series of $R(\tau)$ also follows immediately from the shape of the free resolution of $\kk$ in \eqref{Koszul}.  The fact that the kernel of the map $\kk \langle x_1, x_2, x_3, x_4 \rangle \to R(\tau)$ is generated as an ideal by $\{f_1, \dots, f_6 \}$ follows from the exactness of \eqref{Koszul} at the $R[-1]^{\oplus 4}$ spot.

(2).  As in Proposition~\ref{prop-A-exact}(2), we examine the complex \beq \label{dual2}
 0 \to R \overset{Q^*}{\to} R[1]^{\oplus 4} \overset{P^*}{\to} R[2]^{\oplus 6} \overset{N^*}{\to} R[3]^{\oplus 4}
\overset{M^*}{\to} R[4] \to 0, \eeq given by applying $\Hom_R(\blank, R)$ to \eqref{Koszul}, where the free modules are rows and the maps $M^*, N^*$ etc. are right multiplication by $M, N$, etc.  By $(1)$ we can calculate $\Ext^i_R(\kk, R)$ as the $i$th homology of \eqref{dual2} for $\tau \in V$.

But now analogous arguments as in Lemma~\ref{lem-Koszul} apply to the complex \eqref{dual2}.  In particular, the dimensions of the $n$th graded pieces of the part of this complex relevant to the calculation of $\Ext^1_R(\kk, R)$, namely
\[
R \overset{Q^*}{\to} R[1]^{\oplus 4} \overset{P^*}{\to} R[2]^{\oplus 6},
\]
are all constant for $\tau$ on an open set $U''_n := U_n \cap U_{n+1} \cap U_{n+2}$, in the notation of the proof of Lemma~\ref{lem-Koszul}(1). The same argument as in the proof of Lemma~\ref{lem-Koszul}(2) shows that the first
homology of \eqref{dual2} in any degree $n \geq -2$ is upper-semicontinuous for $\tau\in U_n''$. Since $\Ext^1_A(\kk, A) = 0$ by Proposition~\ref{prop-A-exact}, we conclude that for $n \geq 1$ there are open subsets $W_n \subseteq U''_n$, containing $\mb 1$ and all pairs $(\rho, \theta)$ which are algebraically independent over $\mathbb{F}$, such that \eqref{dual2} is exact in the $R[1]^{\oplus 4}$ spot (in degree $n$) for $\tau \in W_n$.  In particular, $\Ext^1_R(\kk, R) = 0$ for all $\tau \in U:= V \cap  \bigcap_{n \geq -2} W_n$.

(3).  Assume that $\tau \in U$ throughout this part.  Since $\tau \in V$, $\Ext^i_R(\kk, R)$ is the $i$th homology of \eqref{dual2}.  Clearly $\Ext^4_R(\kk, R) = R[4]/\im M^* = R/(Rz_9 + Rz_{10})[4]$.  There is a relation $z_1z_9 + z_3z_{10} = 0$ by Lemma~\ref{lem-z}.  Let $g(s)$ be the Hilbert series of $R/(Rz_9 + Rz_{10})$.  Then
\[
g(s) \geq 1/(1-s)^4 - \big[ 2s/(1-s)^4 - s^2/(1-s)^4 ] = 1/(1-s)^2,
\]
and in particular $\dim_\kk \Ext^4_R(\kk, R) = \infty$.

Similarly, we have $\Ext^3_R(\kk,R) = \ker M^*/\im N^*$, and obviously $\{(a, b, 0, 0) | a, b \in R \} \subseteq \ker M^*$.  On the other hand, $\im N^* \cap \{(a, b, 0, 0) | a, b \in R \} = \{(a, b, 0,0) | a, b \in Rz_9 + Rz_{10} \}$.  Thus $\Ext^3_R(\kk, R)$ has a subfactor isomorphic to $R/(Rz_9 + Rz_{10})^{\oplus 2}[3]$; in particular, $\dim_{\kk} \Ext^3_R(\kk,R) = \infty$ also.

It follows from the exactness of \eqref{Koszul} that
\[
h_R(s) - h_{R[1]^{\oplus 4}}(s) + h_{R[2]^{\oplus 6}}(s) - h_{R[3]^{\oplus 4}}(s) + h_{R[4]}(s) = s^{-4}.
\]
This  also gives the alternating sum of the Hilbert series of the homology groups of \eqref{dual2}; in other words we must have $\sum_{i=0}^4 (-1)^i h_{\Ext^i_R(\kk, R)}(s) = s^{-4}$. Then we have $\Ext^i_R(\kk, R) = 0$ for $i = 0,1$  (since $\tau \in W$), and by the calculations above, $h_{\Ext^4_R(\kk, R)}(s) = g(s)s^{-4}$ and $h_{\Ext^3_R(\kk, R)}(s) \geq  2g(s)s^{-3}$.  It follows that $h_{\Ext^2_R(\kk, R)}(s) \geq 2g(s)s^{-3} - g(s)s^{-4}$.
Write $g(s) = \sum g_i s^i$.  Now, if $\Ext^2_R(\kk, R)$ is finite-dimensional, for $i \gg 0$ we have  $0 < 2g_i \leq g_{i+1}$. Thus $R/(Rz_9+R z_{10})$ has  exponential growth; as we know $R$ has GK-dimension 4 this is impossible.

(4). For $\tau \in V$, the failure of the AS-Gorenstein property for $R$ follows from  part (3) above.  The failure of $\chi_2$ is also immediate from part (3).  Parts (2) and (3) show that $\depth R = 2$ for $\tau \in U$.  Therefore, for such $\tau$ the Auslander-Buchsbaum formula fails for the module $M := \kk$, since $\depth \kk = 0$ and $\pd \kk = 4$.
\end{proof}

We remark that \cite[Theorem~3.2]{Jorgensen1998} shows that if $R$ is connected graded and noetherian, and the smallest nonvanishing $\Ext^i_R(\kk, R)$ is finite-dimensional, then $R$ must satisfy the Auslander-Buchsbaum property.   Since we show later in the paper that $R$ is noetherian for general $\tau$, this shows that once we know that $\Ext^2_R(\kk, R) \neq 0$, it must necessarily be infinite-dimensional.
Note that \cite[Proposition~3.5]{Jorgensen1998} also shows that any noetherian connected graded algebra that satisfies $\chi$ and has finite global dimension must be Artin-Schelter regular.  The algebras $R$ show that the $\chi$ conditions are in some sense necessary for J{\o}rgensen's results.

\section{Critical density}\label{CD}

An infinite subset $C$ of a variety $S$ is called \emph{critically dense} if every infinite subset of $C$ is Zariski dense in $S$. This property arises naturally, among other places, in the study of the noetherian property for na\"ive  blowup algebras $R(S, c, \mc{L}, \sigma)$ as in \cite{KRS}:  a necessary condition for such a na\"ive blowup algebra to be noetherian is that the point $c \in S$ being blown up lies on a critically dense orbit of the automorphism $\sigma$.  Bell et. al. have studied this condition further in \cite{BGT} and have shown the that critical density of such an orbit of an automorphism is simply equivalent to density in case $\cha \kk = 0$. (In positive characteristic, on the other hand, there are easy examples of orbits of automorphisms, even of $\mb{P}^2$, which are dense but not critically dense.)

The aim in this section is to prove that the forward $\phi^{-1}$-orbits of the special points $F$ and $Q$ (when these are defined) are critically dense subsets of $T$, when $\tau$ is  general.  As we will see in the next section, this is a necessary condition for the ring $R(\tau)$ to be noetherian, by a similar argument as in the na\"ive blowup case.  Since critical density for an orbit of a birational map has not really been studied, we will prove critical density holds for general $\tau$ more or less from scratch, using a  method similar to that
 used in \cite{R-generic}.

In the proof of critical density, the alternative coordinate system $(\ : \ )$ for $\mb{P}^1$ introduced in Section~\ref{NOTATION} is especially useful, and we use it throughout this section. We begin with a simple computation that gives the general form of the points on the forward $\phi^{-1}$-orbit of $F = (1:-1)(1:1)$; the behavior of the orbit of $Q$ is symmetric.

\begin{lemma}
\label{lem-F-orb} Let $\tau = \tau(\rho, \theta) \in \mb{G}$ be a general element of $\mb{G}$, thinking of $\rho$ and $\theta$ as parameters. There are polynomials $p_n = p_n(\rho, \theta)$, $q_n = q_n(\rho, \theta)$ in $\FF[\rho, \theta]$ such that $\phi^{-n}(F)$ is defined and equal to $(p_n:q_n)(\theta^n:1)$
 for all $(\rho, \theta)$ such that $p_n(\rho, \theta), q_n(\rho, \theta)$ are not both zero.  Moreover, $p_n = \rho^n + \theta p'_n$ and $q_n =
 -1 + \theta q'_n$ for some polynomials $p'_n, q'_n \in \FF[\rho, \theta]$.
\end{lemma}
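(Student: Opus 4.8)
I would work throughout in the alternative coordinate system $(\ :\ )$, in which $\tau$ is diagonal, and reduce the whole statement to iterating one explicit formula. First I would record the shape of $\phi^{-1}$ in these coordinates. Writing $s=a/b$, $r=c/d$ for the affine coordinates on the two factors of $T$ (so $u=(1+s)/(1-s)$, $v=(1+r)/(1-r)$), a short computation from $u^{\sigma^{-1}}=u/v$ shows that $\sigma^{-1}$ is the map $(s,r)\mapsto\bigl(\tfrac{s-r}{1-sr},\,r\bigr)$, while $\tau(\rho,\theta)^{-1}$ is $(s,r)\mapsto(\rho s,\theta r)$. Composing, $\phi^{-1}=\sigma^{-1}\circ\tau^{-1}$ is the rational self-map of $T=\PP^1\times\PP^1$ given in homogeneous coordinates by
\[
(a:b)(c:d)\ \longmapsto\ \bigl(\rho ad-\theta bc:\ bd-\rho\theta ac\bigr)(\theta c:d).
\]
The two bihomogeneous $(1,1)$-forms $\rho ad-\theta bc$ and $bd-\rho\theta ac$ have no common polynomial factor, so this formula genuinely computes $\phi^{-1}$, and $\phi^{-1}$ is defined at a point $P$ exactly when these two forms do not both vanish at $P$ (the second factor $(\theta c:d)$ is never degenerate, since $(c:d)$ is a point and $\theta\neq 0$).

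Next I would define $p_n,q_n\in\FF[\rho,\theta]$ by $p_0=1$, $q_0=-1$ and the recursion $p_{n+1}=\rho p_n-\theta^{n+1}q_n$, $q_{n+1}=q_n-\rho\theta^{n+1}p_n$, and observe that evaluating the displayed formula for $\phi^{-1}$ at the point $(p_n:q_n)(\theta^n:1)$ returns precisely $(\rho p_n-\theta^{n+1}q_n:\ q_n-\rho\theta^{n+1}p_n)(\theta^{n+1}:1)=(p_{n+1}:q_{n+1})(\theta^{n+1}:1)$. Since $F=[0:1][1:0]$ has $(\ :\ )$-coordinates $(1:-1)(1:1)=(p_0:q_0)(\theta^0:1)$, this gives the identity $\phi^{-n}(F)=(p_n:q_n)(\theta^n:1)$ wherever the orbit is defined. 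The asymptotic shape $p_n=\rho^n+\theta p_n'$, $q_n=-1+\theta q_n'$ with $p_n',q_n'\in\FF[\rho,\theta]$ then follows by a parallel induction, using that $\theta\mid\theta^{n+1}$ for all $n\geq 0$: from $p_n=\rho^n+\theta p_n'$ one gets $p_{n+1}=\rho^{n+1}+\theta(\rho p_n'-\theta^n q_n)$, and likewise $q_{n+1}=-1+\theta(q_n'-\rho\theta^n p_n)$.

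The one point requiring care is to show that $\phi^{-n}(F)$ is actually \emph{defined}, and equals the stated point, for \emph{every} $(\rho,\theta)$ with $(p_n(\rho,\theta),q_n(\rho,\theta))\neq(0,0)$, not merely for general $(\rho,\theta)$. The crucial observation is that the recursion is $\FF[\rho,\theta]$-linear, so $(p_n,q_n)=(0,0)$ forces $(p_{n+1},q_{n+1})=(0,0)$; contrapositively, if $(p_n,q_n)\neq(0,0)$ then $(p_i,q_i)\neq(0,0)$ for all $0\le i\le n$, so in particular each $(p_i:q_i)(\theta^i:1)$ is a genuine point of $T$. I would then induct on $n$: for such $(\rho,\theta)$, $\phi^{-(n-1)}(F)$ is defined and equal to $(p_{n-1}:q_{n-1})(\theta^{n-1}:1)$ by the inductive hypothesis (base case $n=0$ being the identification of $F$ above); since $(p_n,q_n)\neq(0,0)$, the two $(1,1)$-forms of $\phi^{-1}$ do not both vanish at this point, so $\phi^{-1}$ is defined there, and paragraph two shows its value is $(p_n:q_n)(\theta^n:1)$. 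Note that no genericity of $\tau$ is actually used — the polynomials $p_n,q_n$ and the identity are valid for all $(\rho,\theta)\in\kk^*\times\kk^*$ satisfying the non-vanishing condition. I expect this last paragraph to be the only delicate part: one must keep straight the distinction between "the defining forms of $\phi^{-1}$ vanish at $P$'' and "$\phi^{-1}$ is undefined at $P$'', and must make sure the edge case $q_n=0$ (the point at $s=\infty$) is handled by working projectively rather than with the affine recursion for $s_n=p_n/q_n$; everything else is a direct computation.
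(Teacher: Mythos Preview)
Your proof is correct and follows essentially the same route as the paper: derive the explicit formula for $\phi^{-1}$ in the $(\ :\ )$-coordinates, set up the same recursion $p_{n+1}=\rho p_n-\theta^{n+1}q_n$, $q_{n+1}=q_n-\rho\theta^{n+1}p_n$ with $p_0=1$, $q_0=-1$, and induct. Your treatment is in fact more careful than the paper's in one respect: you make explicit the linearity argument showing that $(p_n,q_n)\neq(0,0)$ forces $(p_i,q_i)\neq(0,0)$ for all $i\le n$, which is needed to guarantee that $\phi^{-1}$ is defined at each intermediate point; the paper leaves this implicit.
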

\begin{proof}
An easy calculation using the formulas in Section~\ref{NOTATION} shows that in terms of the coordinate system $( \ : \ )$, the formula for the birational map $\phi^{-1}$ is
\[
\phi^{-1}(a:b)(c:d) = \sigma^{-1} \tau^{-1}(a:b)(c:d) = \sigma^{-1}(\rho a:b)(\theta c:d) = (\rho ad - \theta bc: bd - \rho \theta ac)(\theta c:d).
\]
Let $p_0 := 1, q_0 := -1$, and inductively define $p_{n+1} := \rho p_n - \theta^{n+1} q_n$ and $q_{n+1} := q_n - \rho \theta^{n+1} p_n$. Induction on $n$ shows that  $\phi^{-n}(F) = (p_n:q_n)(\theta^n:1)$ for all $n \geq 0$ (for $(\rho, \theta)$ such that $p_n, q_n$ are not both zero).  Clearly $p_n, q_n \in \FF[\rho, \theta]$ for all $n$.   The last claim also follows easily by induction.
\end{proof}

\begin{proposition}\label{prop-CD}
There is a  general subset $U$ of $\mb{G}$, containing $\tau(\rho, \theta)$
 for all pairs $(\rho, \theta)$ which are algebraically independent
over  $\FF$, such that for  $\tau \in U$, the points $F_n = \phi^{-n}(F)$ and $Q_n = \phi^{-n}(Q)$ are defined for all $n \geq 0$ and $\{F_n \}_{n \geq 0}$ and $\{Q_n \}_{n \geq 0}$ are critically dense subsets of $T$.
\end{proposition}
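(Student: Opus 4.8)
The plan is to take $U := \{\tau(\rho,\theta) : (\rho,\theta)\text{ is algebraically independent over }\FF\}$, identifying $\GG$ with $\kk^*\times\kk^*$. This is a general subset of $\GG$: its complement is $\bigcup_{0\neq f\in\FF[x,y]}(\VV(f)\cap\GG)$, a countable union of proper closed subvarieties defined over $\FF$. It obviously contains $\tau(\rho,\theta)$ for every algebraically independent pair, and since for such a pair $\theta$ is transcendental over $\FF$ (so $o(\theta)=\infty$) and $\rho\neq -1$, we have $U\subseteq\bigcap_{j\geq 1}V(j)$; in particular $\phi$ is stable for $\tau\in U$ by Corollary~\ref{cor-stable}. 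Moreover $F_n$ is defined for all $n$: by Lemma~\ref{lem-F-orb}, $q_n\equiv -1\pmod\theta$, so $q_n$ is not the zero polynomial and hence $q_n(\rho,\theta)\neq 0$ by algebraic independence, so $F_n=(p_n:q_n)(\theta^n:1)$ makes sense; and then $Q_n=\psi(F_n)$ is defined too, where $\psi$ is the involution of Lemma~\ref{sym-lem}, which commutes with $\phi$ and satisfies $\psi(F)=Q$. As critical density is preserved by the automorphism $\psi$, it suffices to prove that $\{F_n\}_{n\geq 0}$ is critically dense for $\tau\in U$. Since a proper closed subvariety of $T$ is a finite union of points and irreducible curves and the $F_n$ are pairwise distinct (distinct second coordinates $(\theta^n:1)$, as $\theta$ is not a root of unity), this amounts to showing that for $\tau\in U$ no irreducible curve $D\subseteq T$ contains infinitely many $F_n$.

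So fix an irreducible curve $D=\VV(g)\subseteq T$ of bidegree $(a,b)\neq(0,0)$, where $g=\sum_{0\leq k\leq a,\,0\leq l\leq b}c_{kl}\,s^kt^{a-k}x^ly^{b-l}$ with $(c_{kl})\neq 0$, and note that $F_n\in D$ is precisely the equation $\sum_{k,l}c_{kl}\,p_n^kq_n^{a-k}\theta^{nl}=0$. Put $N+1:=(a+1)(b+1)$. If $F_{n_0},\dots,F_{n_N}\in D$ for distinct $n_0<\cdots<n_N$, then the nonzero vector $(c_{kl})$ is killed by the $(N+1)\times(N+1)$ matrix obtained by evaluating at $(\rho,\theta)$ the matrix $\Lambda\in M_{N+1}(\FF[\rho,\theta])$ whose $(i,(k,l))$ entry is $p_{n_i}^kq_{n_i}^{a-k}\theta^{n_il}$. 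Since $\det\Lambda\in\FF[\rho,\theta]$ and $(\rho,\theta)$ is algebraically independent over $\FF$, this determinant does not vanish at $(\rho,\theta)$ provided $\det\Lambda$ is a nonzero polynomial — and in that case $(c_{kl})=0$, a contradiction. Thus the proposition reduces to the purely polynomial claim that for every bidegree $(a,b)\neq(0,0)$ and every choice of distinct indices, $\det\Lambda\neq 0$ in $\FF[\rho,\theta]$ (it is in fact enough to know this for some $N+1$ indices drawn from any prescribed infinite set).

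To prove the claim I would use the recursion $p_0=1$, $q_0=-1$, $p_{n+1}=\rho p_n-\theta^{n+1}q_n$, $q_{n+1}=q_n-\rho\theta^{n+1}p_n$ of Lemma~\ref{lem-F-orb}. An easy induction gives $p_n=\rho^n+(\text{terms of }\rho\text{-degree}<n)$ and $q_n=-(\rho\theta)^n+(\text{terms of }\rho\text{-degree}<n)$, so every entry in row $i$ of $\Lambda$ has $\rho$-degree exactly $n_ia$, and the matrix of top $\rho$-coefficients has $(i,(k,l))$ entry $(-1)^{a-k}\theta^{n_i(a-k+l)}$. When $ab=0$ — in particular for the fiber classes $(0,1)$ and $(1,0)$ — the exponents $a-k+l$ are pairwise distinct as $(k,l)$ varies, so up to nonzero column scalars this is a Vandermonde matrix in the quantities $\theta^{n_i}$, with determinant $\prod_{i<j}(\theta^{n_j}-\theta^{n_i})\neq 0$; hence $\det\Lambda\neq 0$ in these cases. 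The main obstacle is the case $a,b\geq 1$, where the exponents $a-k+l$ repeat (there are only $a+b+1$ of them but $(a+1)(b+1)$ columns), so the top $\rho$-coefficient matrix is singular and the leading $\rho$-term of $\det\Lambda$ vanishes; one must then descend to lower-order terms. I expect to handle this by refining the leading-term analysis — for instance by a bigraded (in $\rho$ and $\theta$) valuation keyed to the Newton polygons of $p_n$ and $q_n$, or, in characteristic $0$, by specializing $(\rho,\theta)$ to suitable transcendental complex numbers of prescribed archimedean size and running a perturbed-Vandermonde nonvanishing estimate on the resulting distinct exponential growth rates, along the lines of \cite{R-generic}; a characteristic-free version can be obtained by carrying out the same bookkeeping by induction on $a+b$ via the recursion. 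Once the claim holds for all bidegrees, the reduction above shows that every irreducible curve on $T$ meets $\{F_n\}$ — and hence also $\{Q_n\}=\psi(\{F_n\})$ — in only finitely many points, which is exactly the asserted critical density.
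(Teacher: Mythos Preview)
Your setup matches the paper's: the same symmetry reduction via $\psi$, the same matrix $\Lambda$, and the same reduction to showing $\det\Lambda\neq 0$ in $\FF[\rho,\theta]$ for every bidegree. The gap is exactly where you say it is: you chose the wrong leading-term filtration. Taking top $\rho$-degree gives $q_n\sim -(\rho\theta)^n$, and then the column exponents $a-k+\ell$ collide for $a,b\geq 1$, so the leading matrix is singular and you are stuck. Your suggested remedies (Newton polygons, archimedean specialisations, induction on $a+b$) are vague and none is carried out; as written the proof is incomplete precisely at the step that carries all the content.

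The paper's fix is a one-line change of valuation that you already have the ingredients for. Instead of top $\rho$-degree, use \emph{lowest $\theta$-degree}: Lemma~\ref{lem-F-orb} gives $p_n\equiv\rho^n\pmod\theta$ and $q_n\equiv -1\pmod\theta$, so the lowest-$\theta$ term of $p_{n_j}^kq_{n_j}^{a-k}\theta^{n_j\ell}$ is $(-1)^{a-k}\rho^{kn_j}\theta^{\ell n_j}$. Now the exponent pair $(kn_j,\ell n_j)$ separates the columns cleanly. Ordering monomials in $\FF[\rho,\theta]$ lexicographically with $\theta<\rho$, and ordering the columns $(k,\ell)$ so that $\ell$ decreases (then $k$ decreases), a bubble-sort argument shows that for any non-identity permutation $\chi$ with an inversion at $i_1<i_2$, swapping $\chi(i_1),\chi(i_2)$ strictly lowers the product of lowest monomials; hence the identity permutation contributes the unique smallest monomial to the Leibniz expansion of $\det\Lambda$, which therefore cannot vanish. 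This works uniformly for all bidegrees $(a,b)$ and in all characteristics, with no case division and no analytic input.
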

\begin{proof}
By the usual symmetry argument using Lemma~\ref{sym-lem}, it is enough to prove the claims for the point $F$ and its $\phi^{-1}$-orbit. We use the notation and the result of Lemma~\ref{lem-F-orb}.

Suppose we are given any $m \geq 0, s \geq 0$, and an increasing sequence $0 \leq n_1 < n_2 < \dots < n_N$ of $N = (m+1)(s+1)$ nonnegative integers.    An arbitrary hypersurface of degree $(m,s)$ on $T$ is the vanishing of some nonzero multi-homogeneous form $\sum_{k=0}^m \sum_{\ell = 0}^s c_{k \ell} x^ky^{m-k} z^{\ell} w^{s-\ell}.$ We totally order monomials of bidegree $(m,s)$ as follows:  we set $x^iy^{m-i} z^j w^{s-j} < x^ky^{m-k} z^{\ell} w^{s-\ell}$ if $j > \ell$ or if $j = \ell$ and $i > k$.
Let $f_1 = x^mz^s < f_2 < \dots < f_N = y^m w^s$ be the enumeration of all monomials of degree $(m,s)$ in this order. Consider the $N \times N$ matrix
\begin{equation}
\label{eq-matrix} M_{m,s, \{n_j \}} = \big(a_{ij}\big)_{1 \leq i,j \leq N} =
 \big(f_i(p_{n_j},q_{n_j}, \theta^{n_j},1)\big)_{1 \leq i,j \leq N}
\end{equation}
which has entries in $\FF[\rho, \theta]$.  We claim that $\det M_{m,s, \{n_j \}}$ is a \emph{nonzero} polynomial in $\FF[\rho, \theta]$. Supposing we have proven this claim, then note that if the points $F_n$ happen to be defined for all $n \geq 0$, but the set $\{F_n | n \geq 0 \}$ is not critically dense in $T$, there will be some hypersurface $H$ of degree $(m,s)$ and some infinite subset of $\mb{N}$, say $\{ n_1, n_2, \dots \}$, such that $F_{n_j} \in H$ for all $n_j$.  This will force $(\rho, \theta)$ to be in the vanishing set of $\det M_{m,s, \{n_j \}_{j = 1}^N}$. Moreover, by Proposition~\ref{prop-fund-pts}(1) we already know that the points $\phi^{-n}(F)$ are well-defined for all $n \geq 0$ as long as $\theta$ does not have finite order.   In conclusion, $\{F_n | n \geq 0 \}$ is a well-defined critically dense set of points as long as $(\rho, \theta)$ is not in the vanishing set of any of the countably many polynomials $\det M_{m,s, \{n_j \}}$, or contained in the countably many horizontal lines where $\theta$ is a root of unity. Let $U \subseteq \mb{G}$  be the complement of these countably many proper closed subsets.  Since all of the removed closed sets are defined over  $\FF$, any point $(\rho, \theta)$ with coordinates algebraically independent over $\FF$ must belong to $U$.

It remains to prove the claim that $D = \det M_{m,s, \{n_j \}}$ is a nonzero polynomial. For this, think of $D$ as a sum of $N !$ signed products of entries of $ (a_{ij})$.  Order monomials in $\FF[\rho, \theta]$ lexicographically with $\theta < \rho$, so $\theta^i \rho^j < \theta^k \rho^{\ell}$ if $i < k$ or if $i = k$ and $j < \ell$.  We want to consider, for each such signed product, the smallest possible monomial in this ordering occurring with nonzero coefficient.  Since we have shown in Lemma~\ref{lem-F-orb} that $p_n$ has a single term $\rho^n$ of degree $0$ in $\theta$ and $q_n$ also has a single term $-1$ of degree $0$ in $\theta$, it follows that $a_{ij} = f_i(p_{n_j},q_{n_j},\theta^{n_j},1)$ has a unique term of lowest degree in $\theta$, namely $f_i(\rho^{n_j}, -1, \theta^{n_j}, 1)$. More specifically, if $f_i = x^ky^{m-k} z^{\ell} w^{s-\ell}$, where $k = k(i)$ and $\ell = \ell(i)$, then this is $(-1)^{m-k} \rho^{k n_j} \theta^{\ell n_j}$.  So clearly $\rho^{k n_j} \theta^{\ell n_j }$ is the smallest monomial occurring in $a_{ij}$.  Now if $\chi$ is any permutation of $\{1, 2, \dots, N \}$, and $P_{\chi} = \prod_{i =1}^{N} a_{i, \chi(i)}$ is one of the products occurring in the expansion of $D$, we may calculate the smallest monomial occurring in this product by multiplying the smallest monomials occurring in each factor.  The resulting smallest monomial in $P_{\chi}$ is
\[
L_{\chi} = \prod_{i = 1}^{N} \rho^{k(i) n_{\chi(i)}} \theta^{\ell(i) n_{\chi(i)} }.
\]
Let $\chi$ be any nonidentity permutation of $\{1, 2, \dots, N \}$; so there is $i_1 < i_2$ such that $\chi(i_1) > \chi(i_2)$. Define $\chi' = \chi \circ \tau$, where $\tau = (i_1, i_2)$ is the transposition interchanging $i_1$ and $i_2$.   We show that $L_{\chi'} < L_{\chi}$. Since only the $i_1, i_2$ terms in the products $L_{\chi}, L_{\chi'}$ differ, we just need to show that
\begin{equation}
\label{needed-eq} \rho^{k(i_1) n_{\chi(i_1)}} \theta^{\ell(i_1) n_{\chi(i_1)}} \rho^{k(i_2) n_{\chi(i_2)}} \theta^{\ell(i_2) n_{\chi(i_2)}}
> \rho^{k(i_1) n_{\chi(i_2)}} \theta^{\ell(i_1) n_{\chi(i_2)}} \rho^{k(i_2) n_{\chi(i_1)}} \theta^{\ell(i_2) n_{\chi(i_1)}}.
\end{equation}
By the way the $f_i$ were enumerated, since $i_1 < i_2$, we have $\ell(i_1) \geq \ell(i_2)$ and if $\ell(i_1) = \ell(i_2)$, then $k(i_1) > k(i_2)$.  It is then straightforward to verify that \eqref{needed-eq} holds.  In particular, this implies that $L_{e}$, where $e$ is the identity permutation, is strictly smaller than the smallest monomial occurring in $L_{\chi}$ for any non-identity $\chi$.  This finishes the proof that $D$ is not identically $0$, since $L_{e}$ cannot be canceled by any other term in the expansion of $D$.
\end{proof}

Let us pause and take stock of our progress so far.   We have shown that almost all of Theorem~\ref{thm-main} holds for  general $\tau \in \GG$, as well as proving a number of additional results about the map $\phi$ and the cohomology of the sheaves $\sR_n^{\phi^m}$.  More specifically, we have:

\begin{theorem}\label{thm-generic}
Let  $(\rho, \tau)$ be a pair algebraically independent over $\FF$, and let $\tau := \tau(\rho, \theta)$.  Then $R(\tau)$ and $\sR(\tau)$ satisfy the following properties:
\begin{enumerate}
\item For any $n, m \in \NN$ and $i, j \in \ZZ$, we have $h^1(T, \sh{R}_n^{\phi^m}(i,j)) \leq h^1(T, \sh{A}_n^{\sigma^m}(i,j))$.

\item $h^1(T, \mc{R}_n^{\phi^m}) = 0$ for all $m,n \in \NN$.

\item $(R_n t^{-n})^{\phi^m} = H^0(T, \mc{R}^{\phi^m}_n)$ for all $n,m \in \NN$.

\item For any $m \in \NN$, the rational map $\phi^{-m}$ is defined  at $F$ and $Q$ and the $\phi^{-1}$-orbits of $F$ and $Q$ are infinite.

\item The set $\{ \phi^{-m}  F\}_{m \geq 0} \cup \{ \phi^{-m} Q\}_{m \geq 0}$
is a critically dense subset of $T$.

\item $\dim_\kk R_n = \binom{n+3}{3}$ and $R \cong \kk \langle x_1, x_2, x_3, x_4 \rangle/(f_1, f_2, \dots, f_6)$ where the relations $f_1,
\dots, f_6$ are as in \eqref{rels}.

\item $R$ has  left and right global dimension 4, and \eqref{Koszul} is a free resolution of $\kk_R$. \item  $R$ fails left and right
$\chi_2$. \item The Auslander-Buchsbaum property fails for $R$ on the left and the right.
\end{enumerate}
\end{theorem}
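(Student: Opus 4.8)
The plan is to observe that Theorem~\ref{thm-generic} is essentially a bookkeeping consequence of the results of Sections~\ref{RandR}--\ref{CD}, so the proof should be short. Fix a pair $(\rho,\theta)$ algebraically independent over $\FF$ and set $\tau = \tau(\rho,\theta)$. The one point to isolate at the outset is a uniformity observation: each of the statements cited below holds on a subset of $\GG \cong \kk^*\times\kk^*$ whose complement is a proper closed subvariety --- or a countable union of such --- defined over $\FF$, i.e.\ cut out by nonzero polynomials in $\FF[\rho,\theta]$. Since an algebraically independent pair is a common zero of no nonzero polynomial in $\FF[\rho,\theta]$, the point $\tau$ lies in every one of these (countably many) subsets, hence in their intersection. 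So it suffices to run through the nine assertions, citing the relevant result in each case; several of the cited statements (Propositions~\ref{prop-1E}, \ref{prop-Koszul}, \ref{prop-CD}) already record explicitly that they hold at $\tau(\rho,\theta)$ for algebraically independent $(\rho,\theta)$, and for Corollary~\ref{cor-semicon} and Lemma~\ref{lem-Koszul} one invokes the $\FF$-definedness of the exceptional locus as just explained.

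With this in hand: Part~(1) is the semicontinuity inequality of Corollary~\ref{cor-semicon}, intersected over all $n,m\in\NN$ and $i,j\in\ZZ$. Part~(2) is Proposition~\ref{prop-1E}, which also supplies $h^0(T,\sR_n^{\phi^m}) = \binom{n+3}{3}$. Part~(6) is Proposition~\ref{prop-Koszul}(1) (for the presentation $R\cong S(\tau)$, noting $\gamma = \rho+1\neq0$ and $\epsilon = \theta+1\neq0$ so that the relations are exactly those of Theorem~\ref{thm-main}(2)) together with $\dim_\kk R_n = \binom{n+3}{3}$, which follows from that Hilbert series or directly from Lemma~\ref{lem-Koszul}(1). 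For Part~(3) I would argue as follows: since $\tau\in\bigcap_j V(j)$, the map $\phi$ is stable by Corollary~\ref{cor-stable}, so $(\phi^m)^\ast = \varphi^m$ is a field automorphism of $K$; hence $\dim_\kk(R_nt^{-n})^{\phi^m} = \dim_\kk R_n = \binom{n+3}{3}$ by Part~(6). As $(R_nt^{-n})^{\phi^m}$ is a subspace of $H^0(T,\sR_n^{\phi^m})$ and the latter also has dimension $\binom{n+3}{3}$ by Proposition~\ref{prop-1E}, the inclusion is an equality.

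Parts~(4) and~(5): that $\phi^{-m}$ is defined at $F$ and $Q$ for all $m$ is Proposition~\ref{prop-fund-pts}(1), while critical density of $\{F_n\}_{n\ge0}$ and $\{Q_n\}_{n\ge0}$ --- and with it the distinctness of the points, hence the infinitude of the $\phi^{-1}$-orbits --- is Proposition~\ref{prop-CD}. Finally, the two-sided statements~(7)--(9) I would deduce from Proposition~\ref{prop-2}, which gives $R(\tau)^{\mathrm{op}}\cong R(\tau^{-1})$ with $\tau^{-1} = \tau(\rho^{-1},\theta^{-1})$. Since $\FF(\rho^{-1},\theta^{-1}) = \FF(\rho,\theta)$, the pair $(\rho^{-1},\theta^{-1})$ is again algebraically independent over $\FF$, so every right-handed conclusion of Proposition~\ref{prop-Koszul}(1)--(4) applies to $R(\tau^{-1})$; transporting these across the anti-isomorphism yields the corresponding left-handed statements for $R(\tau)$. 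This gives left and right global dimension $4$ with \eqref{Koszul} (resp.\ its opposite) a resolution of $\kk_R$ (resp.\ ${}_R\kk$) for Part~(7); the failure of $\chi_2$ on both sides for Part~(8); and the failure of the Auslander--Buchsbaum equality on both sides for Part~(9), the right-handed halves being Proposition~\ref{prop-Koszul}(4). There is no genuine obstacle here, since all the hard work is done in the earlier sections; the only things needing care are the uniformity observation (verifying in each cited statement that the bad locus is $\FF$-defined, which is recorded there) and the short dimension count that promotes the inclusion in Part~(3) to an equality.
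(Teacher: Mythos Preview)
Your proposal is correct and follows essentially the same approach as the paper's own proof: both simply collect the earlier results (Corollary~\ref{cor-semicon}, Propositions~\ref{prop-1E}, \ref{prop-CD}, \ref{prop-Koszul}, Lemma~\ref{lem-Koszul}) and use the anti-isomorphism $R(\tau)^{\mathrm{op}}\cong R(\tau^{-1})$ from Proposition~\ref{prop-2} to upgrade the one-sided conclusions of Proposition~\ref{prop-Koszul} to two-sided ones. One minor remark on your argument for Part~(3): the invocation of stability is unnecessary, since the pullback action on $K$ is the field automorphism $\varphi$ in all cases (stability concerns divisors, not rational functions), so $\dim_\kk (R_nt^{-n})^{\phi^m}=\dim_\kk R_n$ holds automatically; the dimension count is otherwise exactly what the paper intends by ``(3) follows from Proposition~\ref{prop-1E} and Lemma~\ref{lem-Koszul}(1)''.
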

\begin{proof}
Since $R(\tau)^{op} \cong R(\tau^{-1})$ by Proposition~\ref{prop-2}, it is enough to show that for such $\tau$ each of the properties claimed for $R(\tau)$ hold individually on the right. Then (1) is Corollary~\ref{cor-semicon}, (2) is Proposition~\ref{prop-1E}, (3) follows from Proposition~\ref{prop-1E} and Lemma~\ref{lem-Koszul}(1),  (4) and (5) are Proposition~\ref{prop-CD}, and the remaining properties are Proposition~\ref{prop-Koszul}.
\end{proof}

The ring-theoretic properties of a (general) $R(\tau)$ shown  in parts (6)-(9) of the theorem are not terribly surprising, since the pathological example $A$ has all of these properties.  What is less expected is that in the general case $R(\tau)$ becomes noetherian, unlike $A$.   Proving this is the goal of the remainder of the paper.

\section{Cohomology modules}\label{COHMODULES}
For the rest of the paper, we assume that $\tau= \tau(\rho, \theta)$ where the pair $(\rho, \theta)$ is algebraically independent over $\kk$. Thus $R = R(\tau)$ and $\sh{R} = \sh{R}(\tau)$ will satisfy all of the properties in Theorem~\ref{thm-generic}.

In this and the following two sections, we prove that $R$ is noetherian and thus complete the proof of Theorem~\ref{thm-main}.  We begin with some comments on the proof strategy.

There is a method of attack that has successfully shown that many classes of birationally commutative algebras are noetherian (c.f. \cite{KRS}, \cite{RS-0}, \cite{S-idealizer}, \cite{S-surfprop}). Ultimately, this goes back to Artin and Van den Bergh's original paper \cite{AV} on twisted homogeneous coordinate rings.  Suppose that one is interested in a graded algebra $S$,  given as global sections of some quasicoherent graded sheaf $\sS \cong \bigoplus \sS_n$ on a projective scheme $X$.  Roughly speaking,  the method is as follows.  First, one puts a multiplicative structure on $\sS$ that induces the multiplication on $S$; that is, one makes $\sS$ into a {\em bimodule algebra}, as in \cite{VdB1996}.  One shows that the bimodule algebra $\sS$ is noetherian; one may think of this as saying that $S$ is noetherian at the level of geometry.  Then one shows that the sheaves $\sS_n$ form an {\em ample sequence} in the sense of \cite{VdB1996}.  This forces certain cohomology groups to vanish, and  one then applies \cite[Theorem~5.2]{VdB1997} to show that $S$ itself is noetherian.

This method fails for the algebras $R(\tau)$. As we shall see in Remark~\ref{rem-notample}, the sheaves $\sR_n$ do not form an ample sequence, and thus one cannot force cohomology to vanish.  We will see, in fact, that there are infinite-dimensional {\em cohomology modules} over $R$ that form an extremely interesting class of objects.  In this section, we will define cohomology modules, and reduce the problem of showing that $R$ is noetherian to that of showing that (particular) cohomology modules are noetherian.

We begin, however, by showing that $R$ is noetherian at the level of geometry.  This amounts to showing that there is a well-behaved correspondence between graded right ideals of $R$ and  ideal sheaves on $T$.

\begin{proposition}\label{prop-3A}
Let $J^{(1)} \subseteq J^{(2)}\subseteq \cdots $ be an ascending chain of graded right ideals of $R$.
There are a number $k \in \NN$ and an ideal sheaf $\sJ \subseteq \sI_k$  so that the sections in $J^{(\ell)}_n$ generate $ \sh{J} \Lsh_k \cdot \sh{R}_{n-k}^{\phi^k}$ for $n \geq k, \ell \gg 0$.
\end{proposition}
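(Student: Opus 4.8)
The plan is to encode each graded right ideal of $R$ by coherent ideal sheaves on $T$ and to prove that this ``sheafification'' is finitely generated in bounded degree as a right module over the bimodule algebra $\bigoplus_n\sh{R}_n$ --- the geometric shadow of the noetherian property --- using the explicit fat--point structure of the base loci $B_n^m$ and the critical density of the forward orbits of $F$ and $Q$.

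First I would attach to a graded right ideal $J\subseteq R$, for each $n\ge0$, the ideal sheaf $\sJ_n\subseteq\sI_n$ determined by: $\sJ_n\Lsh_n$ is the subsheaf of $\sh{R}_n=\sI_n\Lsh_n$ generated by the image of $J_nt^{-n}\subseteq H^0(T,\sh{R}_n)$ (this is well defined since $\Lsh_n$ is invertible). Using the product formulas $\sI_{n+m}=\sI_n\cdot\sI_m^{\phi^n}$ and $\Lsh_{n+m}=\Lsh_n\otimes\Lsh_m^{\phi^n}$ from Section~\ref{RandR} (so $\sh{R}_{n+m}=\sh{R}_n\cdot\sh{R}_m^{\phi^n}$ inside $\mc{K}$) together with the right--ideal condition $J_nR_m\subseteq J_{n+m}$, one gets $\sJ_n\,\sI_m^{\phi^n}\subseteq\sJ_{n+m}$; equivalently $(\sJ_k\Lsh_k)\cdot\sh{R}_{n-k}^{\phi^k}\subseteq\sJ_n\Lsh_n$ for $n\ge k$, the left side being the subsheaf generated by $J_kR_{n-k}t^{-n}\subseteq J_nt^{-n}$. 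It would then suffice to show that for the graded right ideal $J:=\bigcup_\ell J^{(\ell)}$ there is a $k$ with $\sJ_n=\sJ_k\,\sI_{n-k}^{\phi^k}$ for all $n\ge k$, and to take $\sh{J}:=\sJ_k\subseteq\sI_k$. The dependence on $\ell$ is then harmless: $R_n$ is finite--dimensional, so for each fixed $n$ the chain $J_n^{(\ell)}$ stabilizes and hence so does $\sJ_n^{(\ell)}$; and once $\ell$ is large enough that $\sJ_k^{(\ell)}=\sh{J}$, the squeeze $\sh{J}\,\sI_{n-k}^{\phi^k}\subseteq\sJ_n^{(\ell)}\subseteq\sJ_n=\sh{J}\,\sI_{n-k}^{\phi^k}$ forces $\sJ_n^{(\ell)}=\sJ_n$ for all $n\ge k$ at once.

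To prove the bounded--generation statement for a single graded right ideal $J$ I would argue stalkwise. Away from the forward $\phi^{-1}$--orbits of $F$ and $Q$ --- which by Proposition~\ref{prop-CD} are sets of distinct points with no accumulation --- every $\sI_m^{\phi^j}$ is the unit ideal, so the module relations reduce to honest inclusions $\sJ_j|_x\subseteq\sJ_n|_x$ for $j\le n$; hence the increasing union $\sum_n\sJ_n\subseteq\struct_T$ of coherent subsheaves stabilizes on the noetherian scheme $T$, say $\sum_n\sJ_n=\sum_{n\le n_0}\sJ_n=:\sh{P}$, and $\sJ_n|_x=\sh{P}|_x$ for $n\ge n_0$ and all $x$ off the two orbits. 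In particular the divisorial part of $\sJ_n$ is constant for $n\ge n_0$, and (since the $\sI_m^{\phi^j}$ are $0$--dimensional) already matches that of $\sJ_k\sI_{n-k}^{\phi^k}$ once $k\ge n_0$. At an orbit point $F_i=\phi^{-i}F$ the local ring $S=\struct_{T,F_i}$ is regular of dimension $2$, and by Lemma~\ref{lem-pointchains2} and the multiplicity count of Proposition~\ref{prop-0dim} each $\sI_m^{\phi^j}$ is locally a round power $\m_{F_i}^{\,c}$, $c$ the multiplicity of $F_i$ in $B_m^j$; for $i\ge\max(n_0,k)$ the relevant powers agree, $\sI_n|_{F_i}=\m_{F_i}^{\,n-1-i}=\sI_{n-k}^{\phi^k}|_{F_i}$, and one gets the sandwich $\sJ_k|_{F_i}\cdot\m_{F_i}^{\,n-1-i}\subseteq\sJ_n|_{F_i}\subseteq\sh{P}|_{F_i}\cap\m_{F_i}^{\,n-1-i}$, the lower bound from the module structure and the upper from the previous step.

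The hard part will be to show that, for $k$ chosen large enough to absorb the $\sh{P}$--data at every orbit point, this sandwich collapses to $\sJ_n|_{F_i}=\sJ_k|_{F_i}\cdot\m_{F_i}^{\,n-1-i}$ for all $n\ge k$ --- an Artin--Rees--type statement in the local rings $S$, using crucially that each ``new'' orbit point enters $B_n$ as a \emph{reduced} point (Lemmas~\ref{lem-pointchains1},~\ref{lem-pointchains2}), so that the growing fat point $\m_{F_i}^{\,n-1-i}$ carries no ideal--theoretic information beyond a bounded $\m_{F_i}$--adic range pinned down by $\sh{P}$; the finitely many orbit points with $i<\max(n_0,k)$, where the multiplicity bookkeeping is messier, would be handled by hand. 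The symmetric statement at the $Q$--orbit follows from Lemma~\ref{sym-lem}, and, the orbit points being distinct and discrete, the stalkwise equalities assemble to $\sJ_n=\sJ_k\,\sI_{n-k}^{\phi^k}$ for $n\ge k$. Reconciling the unboundedly many, and increasingly fat, orbit points appearing in $B_n$ with generation in a single bounded degree is the one genuinely new ingredient compared with the classical ample--sequence arguments, and is where the bulk of the effort will go.
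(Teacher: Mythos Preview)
Your overall strategy---sheafify, reduce to a stalkwise statement, and separate the analysis into ``off the orbit'' versus ``on the orbit''---is exactly the paper's approach, and your reduction to the union $J=\bigcup_\ell J^{(\ell)}$ together with the finite--dimensional squeeze is correct.  The gap is in your treatment of orbit points with large index.  Your sandwich
\[
\sJ_k|_{F_i}\cdot\m_{F_i}^{\,n-1-i}\ \subseteq\ \sJ_n|_{F_i}\ \subseteq\ \sh{P}|_{F_i}\cap\m_{F_i}^{\,n-1-i}
\]
collapses only when $\sh{P}|_{F_i}=\struct_{T,F_i}$: if $\sh{P}|_{F_i}$ is a proper ideal $I$, then $I\cdot\m^c\subsetneq I\cap\m^c$ for $c\ge1$ (already for $I=\m$), and no Artin--Rees statement in the single local ring repairs this.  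An Artin--Rees constant would at best give $I\cap\m^c=\m^{c-c_0}(I\cap\m^{c_0})$, not $I\cdot\m^c$, and in any case would depend on the point; you need a \emph{uniform} $k$ over infinitely many $F_i$, which is not available from local data alone.

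What is missing is the correct use of critical density.  You invoke Proposition~\ref{prop-CD} only to say the orbit points are distinct, but its real role here is: the coherent ideal sheaf $\sh{P}$ (assuming it is nonzero) cuts out a proper closed subscheme, and by critical density only finitely many $F_i$ and $Q_j$ lie on it.  At all other orbit points $\sh{P}|_{F_i}=\struct_{T,F_i}$, the sandwich collapses trivially, and a short induction gives $(\sJ_n)|_{F_i}=\m_{F_i}^{\,n-1-i}=(\sI_n)|_{F_i}$ for all $n$.  The finitely many exceptional orbit points---those lying on the support of $\struct_T/\sh{P}$, together with your small--index points---are then handled one at a time.  The paper's device for these is to observe that $\bigoplus_{m\ge j+1}(\sJ_m)|_{F_j}$ is a graded ideal of the Rees algebra $\bigoplus_{m\ge0}\m_{F_j}^{\,m}$, which is noetherian; finite generation of this graded ideal is exactly the statement $(\sJ_m)|_{F_j}\cdot\m_{F_j}=(\sJ_{m+1})|_{F_j}$ for $m\gg0$.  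Your ``Artin--Rees--type'' intuition is pointing at the right algebraic fact, but it must be applied at only finitely many points, and it is critical density---not the fat--point bookkeeping---that makes the set of bad points finite.
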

\begin{proof}
Let $H$ be any graded right ideal of $R$.  Let $\mc{H}_n$ be the subsheaf of the constant sheaf $\mc{K}$ generated by $H_nt^{-n}$. That $H$ is a right ideal means  that $H_m R_n \subseteq H_{m+n} \subseteq Kt^{n+m}$, and so $\mc{H}_m \mc{R}_n^{\phi^m} \subseteq \mc{H}_{m+n}$ for all $m,n \in \NN$.  Now since $H_n \subseteq R_n = H^0(T, \mc{I}_n \otimes \mc{L}_n)$, we can write $\mc{H}_n = \mc{G}_n \otimes \mc{L}_n$, where $\mc{G}_n \subseteq \mc{I}_n$ is also an ideal sheaf.  Then the condition that $H$ is a right ideal becomes $\mc{G}_m \mc{I}_n^{\phi^m} \subseteq \mc{G}_{n+m}$ for all $m,n \in \NN$. Obviously this is equivalent to the conditions $\mc{G}_m \mc{I}_1^{\phi^m} \subseteq \mc{G}_{m+1}$ for all $m \geq 0$.

We call any sequence of ideal sheaves $\{ \mc{G}_m | m \geq 0 \}$ satisfying $\mc{G}_m \subseteq \mc{I}_m$ and $\mc{G}_m \mc{I}_1^{\phi^m} \subseteq \mc{G}_{m+1}$ for all $m \geq 0$ a \emph{standard sequence}.  It is enough to prove that for any standard sequence, we have $\mc{G}_m \mc{I}_1^{\phi^m} = \mc{G}_{m+1}$ for $m \gg 0$.  For supposing we have proved this, let $\{\mc{G}_m^{(i)}\}$ be the standard sequence associated to the ideal $J^{(i)}$.  For fixed $m$, the ascending chain $\mc{G}_m^{(1)} \subseteq \mc{G}_m^{(2)} \subseteq \cdots$ stabilizes to a fixed sheaf, call it $\mc{G}_m$. Clearly $\{ \mc{G}_m \}$ is again a standard sequence. We will have $\mc{G}_m \mc{I}_1^{\phi^m} = \mc{G}_{m+1}$ for $m \geq m_0$.  Let  $\mc{J} := \mc{G}_{m_0}$.  For $\ell \gg 0$, we have $\mc{G}_{m_0}^{(\ell)} = \mc{J}$. Then the proposition holds, with this $\mc{J}$ and  $k = m_0$.

Thus we must show that   $\mc{G}_m \mc{I}_1^{\phi^m} = \mc{G}_{m+1}$ for $m \gg 0$, for any standard sequence $\{ \mc{G}_m \}$. Let $m'$ be the smallest $m$ such that $\mc{G}_m \neq 0$. By redefining $\mc{G}_m = \mc{G}_{m'}$ for $m < m'$, we obtain another standard sequence, and it is enough to prove the claim for this sequence.  Thus we may assume that $\mc{G}_0$ defines a proper subscheme $C$ of $T$. Since  $D := \{\phi^{-n}(F) | n \geq 0 \} \cup \{ \phi^{-n}(Q) | n \geq 0 \}$ is a critically dense set by the hypothesis that $\tau$ is general, $S := C \cap D$ is a finite set of points.

Let $x := \phi^{-j}(F)$ for some $j \geq 0$. In this case, using Lemma~\ref{lem-pointchains1} we have that
\[
(\mc{I}_1^{\phi^m})_x = \begin{cases} \mf{m}_x\ \text{if}\ m \geq j + 1 \\ \mc{O}_{T, x}\ \text{if}\ m \leq j \end{cases}
\]
where we write $\mf{m}_x$ for the maximal ideal of $\mc{O}_{T, x}$. It follows similarly that $(\mc{I}_m)_x = \mf{m}_x^{m - j- 1}$ for $m \geq j + 1$, while $(\mc{I}_m)_x = \mc{O}_{T,x}$ for $m \leq j$. Similar formulas obviously hold if $x = \phi^{-j}(Q)$ for some $j \geq 0$. In particular, we deduce from the equation $\mc{G}_m \mc{I}_1^{\phi^m} \subseteq \mc{G}_{m+1}$ that the scheme defined by $\mc{G}_m$ is supported on the set $C \cup D$
 for all $m \geq 0$.

Let $y \in T$.  We  study the local behavior of the standard sequence at $y$.  We know that
\begin{equation}
\label{eq-stand} (\mc{G}_m)_y \subseteq (\mc{I}_m)_y, \ \ \ \text{and}\ (\mc{G}_m \mc{I}_1^{\phi^m})_y \subseteq (\mc{G}_{m+1})_y.
\end{equation}
We now consider cases.  Suppose that $y \not \in C \cup D$. In this case, \eqref{eq-stand} specializes to $(\mc{G}_m \mc{I}_1^{\phi^m})_y = \mc{O}_{T, y}= (\mc{G}_{m+1})_y$ for all $m \geq 0$.  Next, suppose that $y \in D \smallsetminus C$, say $y = \phi^{-j}(F)$ (the case of a point on the orbit of $Q$ is similar). Then \eqref{eq-stand} again specializes to $(\mc{G}_m)_y \mc{O}_{T,y} \subseteq (\mc{G}_{m+1})_y$ for $0 \leq m \leq j$, and it specializes to $(\mc{G}_m)_y \mf{m}_y \subseteq (\mc{G}_{m+1})_y \subseteq \mf{m}_y^{m-j}$ for $m \geq j+1$. Since $y \not \in C$, we will have  $(\mc{G}_0)_y = \mc{O}_{T, y}$.  Now induction on $m$ using the equations above shows that $(\mc{G}_m)_y = \mc{O}_{T, y}$ for $0 \leq m \leq j+1$, and $(\mc{G}_m)_y = \mf{m}^{m-j-1}$ for $m \geq j+2$.  In particular, all inclusions are equalities above and $(\mc{G}_m)_y (\mc{I}_1^{\phi^m})_y = (\mc{G}_{m+1})_y$ holds for all $m \geq 0$.

Next, let $y \in U := C \smallsetminus D$.  Since $C \cap D$ is finite, $U$ is an open subscheme of $C$.  Specializing \eqref{eq-stand}, we obtain
 $(\mc{G}_m)_y \mc{O}_{T,y} \subseteq
(\mc{G}_{m+1})_y$, since $y \not \in D$.  In other words, the sequence $\{ \mc{G}_m \vert_U \}$ gives an ascending chain of ideal sheaves on $U$. Since $U$ is a noetherian scheme, $(\mc{G}_m)_y (\mc{I}_1^{\phi^m})_y = (\mc{G}_m)_y = (\mc{G}_{m+1})_y$ for all $y \in U$ and $m \geq m_1$, for some $m_1$.

The finitely many points in  $ S=C \cap D$ are left. Suppose that $y \in C \cap D$, say $y = \phi^{-j}(F)$.
  Then for $m \geq j+1$, \eqref{eq-stand} says
$(\mc{G}_m)_y \subseteq  \mf{m}_y^{m - j- 1}$ and $(\mc{G}_m)_y \mf{m}_y \subseteq (\mc{G}_{m+1})_y$. These conditions can be reinterpreted as follows: $\bigoplus_{i = 0}^{\infty} (\mc{G}_{i + j + 1})_y$ is a graded ideal of the Rees algebra $\mc{O}_{T, y} \oplus \mf{m}_y \oplus \mf{m}_y^2 \oplus \dots$.  This Rees algebra is noetherian, so the ideal is finitely generated. This means precisely that $(\mc{G}_m \mc{I}_1^{\phi^m})_y = (\mc{G}_{m+1})_y$ for $m \gg 0$.  The case that $y = \phi^{-j}(Q)$ is similar. Repeating finitely many times,
 we conclude there is a single $m_2$ such that $(\mc{G}_m
\mc{I}_1^{\phi^m})_y = (\mc{G}_{m+1})_y$ for $m \geq m_2$, and for all $y \in S$.  Thus $\mc{G}_m \mc{I}_1^{\phi^m} = \mc{G}_{m+1}$ for $m \geq \max(m_1, m_2)$, and the claim is proved.
\end{proof}

It is not immediately clear from Proposition~\ref{prop-3A} that $R$ is right noetherian.  As we will see, the obstruction lies in the cohomology of sheaves of the form $\sh{F} \otimes \sh{R}_n^{\phi^m}$.   To begin to analyze this issue, we make some definitions.  For $m \geq 0$, let $\sR^{\phi^m} := \bigoplus_n \sR_n^{\phi^m}$.  For $m, n, \ell \in \NN$, let $\nu_{n, \ell}^m:  \sR_n^{\phi^m} \otimes \sR_{\ell}^{\phi^{m+n}} \to \sR_{n+\ell}^{\phi^m}$ be the natural multiplication on $\sR^{\phi^m}$ induced by the embeddings of these sheaves in the constant sheaf $\mc{K}$.

\begin{defn}
Suppose that $\sG:= \bigoplus_{n \in \NN} \sG_n$ is a quasicoherent sheaf on $T$, and that for all $n,  \ell \in \NN$ there are action maps $\mu_{n, \ell}: \sG_n \otimes \sR_{\ell}^{\phi^{n+m}} \to \sG_{n+\ell}$ so that the diagram \beq\label{assoc}
 \xymatrix{
 \sG_n \otimes \sR_{\ell}^{\phi^{n+m}} \otimes \sR_k^{\phi^{n+\ell+m}}
            \ar[r]_{\mu_{n, \ell}\otimes 1} \ar[d]_{1 \otimes \nu_{\ell, k}^{n+m}}
 & \sG_{n+\ell}\otimes \sR_k^{\phi^{n+\ell+m}} \ar[d]^{\mu_{n+\ell, k}} \\
\sG_n\otimes \sR_{\ell+k}^{\phi^{n+m}} \ar[r]_{\mu_{n,\ell+k}} &   \sG_{n+\ell+k}} \eeq commutes for all $n, \ell, k \in \NN$.  Then we call $\mc{G}$ an \emph{$\mc{R}^{\phi^m}$-module}.  For any $\sR^{\phi^m}$-module $\sG$ and $0 \leq i \leq 2$ we call $H^i(T, \sG)$ a \emph{cohomology module}.

An important special case is $\sG = \sh{F} \otimes \mc{R}^{\phi^m}$ for some quasi-coherent sheaf $\sh{F}$, with the maps $\mu_{n, \ell}$ induced by the multiplication maps on $\sR^{\phi^m}$. In this case we use the special notation
\[ \CM^i(\sh{F}, m) := \bigoplus_{n \in \NN} H^i(T, \sh{F} \otimes \sh{R}_n^{\phi^m})
\]
for the cohomology module $H^i(T, \mc{G})$.
\end{defn}

We must show that cohomology modules do in fact have an $R$-action, as the name suggests.  We prove this and other important formal properties of this construction in the next result.

\begin{lemma}\label{lem-cohom-module}
Let $m \in \NN$.

$(1)$ For any $\mc{R}^{\phi^m}$-module $\sG$, there is an $R$-module action on $H^i(T, \sG)$ induced by the maps $\mu_{n, \ell}$.

$(2)$ Let $\sG$ be an $\mc{R}^{\phi^m}$-module  such that for all $n > 0$ the map $\mu_{0, n}:  \sG_0 \otimes \sR_n^{\phi^m} \to \sG_n$ is surjective with 0-dimensional kernel.  Then $\mu_{0, \bullet}$
induces a surjective map $\CM^i(\sG_0, m) \to H^i(T, \sG)$ of $R$-modules that is an isomorphism for $i \geq 1$.

$(3)$ $\CM^i(\blank, m)$ is a functor from $\struct_X \lMod$ to $\rGr R$.  Moreover, for an exact sequence $0 \to \sF  \to \sG \to \sH \to 0$ of quasicoherent sheaves on $T$, there is a long exact sequence of $R$-modules
\begin{multline*}
 \CM^0(\sF, m) \to \CM^0(\sG, m) \to \CM^0(\sH, m) \to
 \CM^1(\sF, m) \to \CM^1(\sG, m) \to \CM^1(\sH, m) \to \\
  \CM^2(\sF, m) \to \CM^2(\sG, m) \to \CM^2(\sH, m) \to 0.
\end{multline*}

\end{lemma}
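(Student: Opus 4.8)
My plan is to prove all three parts by formal diagram chasing; the only non-formal inputs I will need are that higher cohomology of a $0$-dimensional sheaf on $T$ vanishes and that $H^{i}(T,-)=0$ for $i\geq 3$.

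For (1), given $r\in R_{\ell}$ I would set $f:=rt^{-\ell}\in R_{\ell}t^{-\ell}\subseteq K$; then $f^{\phi^{n+m}}$ is one of the generating sections of $\sR_{\ell}^{\phi^{n+m}}$, hence a global section, i.e.\ an $\struct_{T}$-module map $\struct_{T}\to\sR_{\ell}^{\phi^{n+m}}$, $1\mapsto f^{\phi^{n+m}}$. I would then declare right multiplication by $r$ on $H^{i}(T,\sG_{n})$ to be $H^{i}$ of the composite $\sG_{n}=\sG_{n}\otimes\struct_{T}\to\sG_{n}\otimes\sR_{\ell}^{\phi^{n+m}}\xrightarrow{\mu_{n,\ell}}\sG_{n+\ell}$, where the first arrow tensors $\sG_n$ with that section. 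Additivity and $\kk$-bilinearity are clear. To verify $(x\cdot r)\cdot r'=x\cdot(rr')$ for $r=ft^{\ell},\,r'=f't^{k}$, I would use $rr'=f\,(f')^{\phi^{\ell}}t^{\ell+k}$ and the identity in $K$ $\bigl(f(f')^{\phi^{\ell}}\bigr)^{\phi^{n+m}}=f^{\phi^{n+m}}(f')^{\phi^{n+\ell+m}}=\nu^{n+m}_{\ell,k}\bigl(f^{\phi^{n+m}}\otimes(f')^{\phi^{n+\ell+m}}\bigr)$, which is valid because $\varphi\in\Aut(K)$; after substituting this and using the bifunctoriality of $\otimes$, the desired equality becomes $H^{i}$ of the outer boundary of the associativity diagram \eqref{assoc}, which commutes by hypothesis. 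Finally $x\cdot 1=x$ holds since $\sR_{0}^{\phi^{m}}=\struct_{T}$ and $\mu_{n,0}$ is the canonical isomorphism $\sG_{n}\otimes\struct_{T}\cong\sG_{n}$.

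For (2), I would write $\sF:=\sG_{0}$ and, for $n\geq 1$, $\sN_{n}:=\ker\mu_{0,n}$, which is $0$-dimensional by hypothesis, so $H^{j}(T,\sN_{n})=0$ for $j\geq 1$. From the exact sequences $0\to\sN_{n}\to\sF\otimes\sR_{n}^{\phi^{m}}\xrightarrow{\mu_{0,n}}\sG_{n}\to 0$ and the long exact cohomology sequence (using $H^{3}=0$), the map $H^{i}(\mu_{0,n})$ is surjective for $i=0$ and bijective for $i=1,2$, and in degree $0$ it is the identity of $H^{i}(T,\sF)$; summing over $n$ then gives the asserted map $\CM^{i}(\sF,m)\to H^{i}(T,\sG)$ with the claimed properties. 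For $R$-linearity I would observe that the $\mu_{0,\bullet}$ assemble into a morphism of $\sR^{\phi^{m}}$-modules $\sF\otimes\sR^{\phi^{m}}\to\sG$ — the required compatibility square is exactly \eqref{assoc} with its first index specialized to $0$ — and that, by the construction in (1), any morphism of $\sR^{\phi^{m}}$-modules induces an $R$-linear map on each $H^{i}$.

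For (3), functoriality is immediate: a morphism $\sF\to\sF'$ of $\struct_{T}$-modules induces $\sF\otimes\sR_{n}^{\phi^{m}}\to\sF'\otimes\sR_{n}^{\phi^{m}}$, hence a map on $H^{i}$, and summing over $n$ gives an $R$-linear map $\CM^{i}(\sF,m)\to\CM^{i}(\sF',m)$ (because tensoring with $\id_{\sR^{\phi^{m}}}$ commutes with the multiplications $\nu^{m}_{n,\ell}$), and composites and identities are clearly preserved, so $\CM^{i}(-,m)$ is a functor to $\rGr R$. The one point needing care is the long exact sequence, since $\sR_{n}^{\phi^{m}}\cong\sI_{n}^{\phi^{m}}\otimes\sL_{n}^{\phi^{m}}$ is \emph{not} flat: $\sI_{n}^{\phi^{m}}$ is the ideal sheaf of the $0$-dimensional scheme $B_{n}^{m}$. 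However, $\Tor_{1}^{\struct_{T}}(-,\sR_{n}^{\phi^{m}})$ is supported on $B_{n}^{m}$ and hence $0$-dimensional, so tensoring $0\to\sF\to\sG\to\sH\to 0$ with $\sR_{n}^{\phi^{m}}$ yields a four-term exact sequence $0\to\sC_{n}\to\sF\otimes\sR_{n}^{\phi^{m}}\to\sG\otimes\sR_{n}^{\phi^{m}}\to\sH\otimes\sR_{n}^{\phi^{m}}\to 0$ in which $\sC_{n}$ is a quotient of $\Tor_{1}(\sH,\sR_{n}^{\phi^{m}})$, hence $0$-dimensional. I would split this into two short exact sequences and use $H^{\geq 1}(\sC_{n})=0$ together with $H^{3}=0$ to produce, in each fixed degree $n$, the nine-term exact sequence of vector spaces; note that it correctly begins $\CM^{0}(\sF,m)_{n}\to\cdots$ rather than with $0\to$, precisely because $H^{0}(\sC_{n})$ can be nonzero. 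Summing over $n$ gives the stated sequence, and $R$-linearity of every map in it, including the connecting maps, follows from naturality of the connecting homomorphism applied to the ladder obtained by multiplying the three tensored sequences by the section $f^{\phi^{n+m}}$ attached to $r=ft^{\ell}\in R_{\ell}$.

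The main obstacle I anticipate is not any single deep step but the pervasive bookkeeping of the $R$-module structure: nearly every verification comes down to rewriting the action in terms of the maps $\mu$ and $\nu$ and then invoking diagram \eqref{assoc}, together with the fact that $\varphi$ is an honest automorphism of $K$, so that $(gh)^{\phi^{j}}=g^{\phi^{j}}h^{\phi^{j}}$ and $(g^{\phi^{i}})^{\phi^{j}}=g^{\phi^{i+j}}$ for rational functions — a point where one would go astray treating $\phi$ merely as a birational self-map, where pullback can fail to compose. Beyond standard surface cohomology, the only genuinely geometric input is the observation in (3) that the failure of flatness of $\sR_{n}^{\phi^{m}}$ is concentrated on the $0$-dimensional scheme $B_{n}^{m}$ and is therefore invisible to $H^{i}$ for $i\geq 1$.
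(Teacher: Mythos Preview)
Your proposal is correct and follows essentially the same route as the paper's proof: in (1) you build the action from sections $f^{\phi^{n+m}}$ and invoke the associativity square \eqref{assoc}; in (2) you use the $0$-dimensional kernel to kill higher cohomology; and in (3) you observe that the failure of flatness of $\sR_n^{\phi^m}$ is concentrated on $B_n^m$ and hence invisible to $H^{\geq 1}$. The only cosmetic difference is that in (3) the paper names the image $\sK_n$ of $\sF\otimes\sR_n^{\phi^m}\to\sG\otimes\sR_n^{\phi^m}$, observes that $\sK=\bigoplus\sK_n$ inherits an $\sR^{\phi^m}$-module structure, writes the long exact sequence with $H^i(T,\sK)$ in the $\sF$-slots, and then invokes part (2) (with $\sK_0=\sF$) to replace these by $\CM^i(\sF,m)$---whereas you split the four-term sequence directly and appeal to naturality of the connecting map; the content is identical.
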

\begin{proof}
The proof of this lemma is routine, and so we leave some details to the reader.

$(1)$.  Fix $m$, $n, \ell \in \NN$.  Now,  $\mc{R}_{\ell}^{\phi^{n+m}}$ is globally generated,
and by Theorem~\ref{thm-generic}(3) we have $H^0(T, \sR_\ell^{\phi^{n+m}}) = (R_\ell t^{-\ell})^{\phi^{n+m}}$.  We write (loosely) $R_{\ell}^{\phi^{n+m}} = H^0(T, \sR_\ell^{\phi^{n+m}})$.  There is thus
  a surjective map \beq\label{globalsect}
 \struct_T \otimes R_{\ell} \to \struct_T \otimes R_{\ell}^{\phi^{n+m}} \to  \sh{R}_{\ell}^{\phi^{n+m}}.
\eeq Tensor \eqref{globalsect} with $\sh{G}_n$ and follow this by the multiplication map $\mu_{n, \ell}: \sG_n \otimes \sR_{\ell}^{\phi^{n+m}} \to \sG_{n+\ell}$; then applying $H^i(T, \blank)$ gives a map $H^i(T, \sG_n) \otimes R_{\ell} \to H^i(T, \sG_{n+\ell})$ which provides the desired $R$-action on $H^i(T, \sG)$.  Associativity of this action follows from \eqref{assoc}.

$(2)$.   Consider the action map $\mu_{0, \bullet}:  \sG_0 \otimes \sR^{\phi^m} \to \sG$; applying $H^i(T, \blank)$ induces the map
\[ \CM^i(\sG_0, m) = H^i(T, \sG_0 \otimes \sR^{\phi^m}) \to H^i(T, \sG). \]
It is an $R$-module map by a diagram chase, using \eqref{assoc} again.  From the long exact sequence in cohomology and our assumption on the kernel of $\mu_{0,n}$, we deduce that this map is surjective for $i \geq 0$ and is an isomorphism for $i \geq 1$.

$(3)$.  Given an exact sequence $0 \to \mc{F} \to \mc{G} \to \mc{H} \to 0$ of sheaves, there are exact sequences
\begin{equation}
\label{K-seq1} 0 \to \mc{K}_n \to \mc{G} \otimes \mc{R}_n^{\phi^m} \to \mc{H} \otimes \mc{R}_n^{\phi^m} \to 0\ \  \text{and}  \ \ 0 \to \sK'_n \to \sF \otimes \sR_n^{\phi^m} \to \sK_n \to 0,
\end{equation}
where the sheaves $\sK'_n$ have 0-dimensional support.

Since $\theta: \bigoplus_{n \geq 0} \mc{G} \otimes \mc{R}_n^{\phi^m} \to \bigoplus_{n \geq 0} \mc{H} \otimes \mc{R}_n^{\phi^m}$ is a morphism of $\mc{R}^{\phi^m}$-modules (in other words, $\theta$ commutes with the multiplication maps in the obvious sense), it is routine to check that $\ker \theta = \mc{K} = \bigoplus_{n \geq 0} \mc{K}_n$ obtains an induced $\mc{R}^{\phi^m}$-module structure, and so $H^i(T, \sK)$ is a right $R$-module.

For any $\ell \geq 0$, the multiplication map $\sR_n^{\phi^m} \otimes R_{\ell}^{\phi^{n+m}} \to \sR_{n+\ell}^{\phi^m}$ induces a morphism of exact sequences \beq\label{worldcup} \xymatrix{ 0 \ar[r]  & \sK_n \otimes R_{\ell}^{\phi^{n+m}} \ar[r]  \ar[d] &
 \sG \otimes \sR_n^{\phi^m} \otimes R_{\ell}^{\phi^{n+m}} \ar[r] \ar[d] &
 \sH\otimes \sR_n^{\phi^m} \otimes R_{\ell}^{\phi^{n+m}} \ar[d] \ar[r] & 0 \\
0 \ar[r] & \sK_{n+\ell} \ar[r] &
 \sG \otimes \sR_{n+\ell}^{\phi^{m}} \ar[r] &
 \sH \otimes \sR_{n+\ell}^{\phi^{m}} \ar[r] & 0.   }
\eeq
Consider the morphism of long exact sequences in cohomology induced from \eqref{worldcup}, which begins
\[ \xymatrix@C=6pt{
 0 \ar[r] & H^0(T, \sK_n)\otimes R_{\ell}^{\phi^{n+m}} \ar[r] \ar[d] &
H^0(T, \sG\otimes\sR_n^{\phi^m})\otimes R_{\ell}^{\phi^{n+m}} \ar[r] \ar[d] & H^0(T, \sH \otimes \sR_n^{\phi^{m}}) \otimes R_{\ell}^{\phi^{n+m}} \ar[r]\ar[d] &
 \cdots \\
0 \ar[r] & H^0(T, \sK_{n+\ell}) \ar[r] & H^0(T, \sG\otimes \sR_{n+\ell}^{\phi^m}) \ar[r] & H^0(T, \sH \otimes \sR_{n+\ell}^{\phi^m}) \ar[r]  & \cdots.}
\]
Because this diagram commutes, the cohomology long exact sequence \beq \label{LES} 0 \to H^0(T, \sK) \to \CM^0(\sG, m) \to \CM^0(\sH, m) \to H^1(T, \sK) \to \cdots. \eeq
 that we obtain by taking $\ell=0$ and summing over $n$ is in fact a long exact sequence of $R$-modules.

Note that $\sK_0 = \sF$. Now, from $\eqref{K-seq1}$ and part (2) of the lemma, we obtain for all $i \geq 0$ a surjective map of $R$-modules $\CM^i(\sF, m) \to H^i(T, \sK)$, which is an  isomorphism for $i \geq 1$.  Combining these maps with \eqref{LES},  we obtain the desired long exact sequence of $R$-modules.  Functoriality of $\CM^i(\blank, m)$ easily follows also.
\end{proof}

Cohomology modules allow us to make an important reduction.
\begin{proposition}\label{prop-3C}
To show that $R$ is right noetherian, it is enough to show that all cohomology modules $\CM^1(\sh{F}, m)$ are noetherian, where $\sh{F}$ is a coherent sheaf on $T$ and $m \in \NN$.
\end{proposition}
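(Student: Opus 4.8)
The plan is to combine Proposition~\ref{prop-3A}, which says that $R$ is ``noetherian at the level of geometry'', with the cohomology-module formalism of Lemma~\ref{lem-cohom-module}, letting the noetherianity of the $\CM^1(\sF,m)$ play the role that ampleness of the $\sR_n$ plays in the classical argument of Artin--Van den Bergh.

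First I would reduce to showing that $R$ has the ascending chain condition on graded right ideals, and take such a chain $J^{(1)}\subseteq J^{(2)}\subseteq\cdots$. Proposition~\ref{prop-3A} produces $k\in\NN$ and an ideal sheaf $\sJ\subseteq\sI_k$ so that for $\ell\gg0$ and $n\geq k$ the sections in $J^{(\ell)}_n t^{-n}$ generate the $\ell$-independent sheaf $\sJ_n:=\sJ\sL_k\cdot\sR_{n-k}^{\phi^k}$; for the finitely many $n<k$ the generated sheaves form ascending chains of coherent sheaves on $T$, hence also stabilize, so for $\ell\geq\ell_0$ every $J^{(\ell)}$ has the same sheafification $\sJ=\bigoplus_n\sJ_n$. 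Put $\sF:=\sJ\sL_k$, a coherent sheaf on $T$, and $J':=\bigoplus_nH^0(T,\sJ_n)$, a fixed graded right ideal containing every $J^{(\ell)}$. It is now enough to show that $J'$ is a noetherian $R$-module: then the chain $\{J^{(\ell)}\}\subseteq J'$ stabilizes. (Equivalently, since $J'/J^{(\ell_0)}$ is the torsion submodule of the cyclic module $R/J^{(\ell_0)}$ and every $J^{(\ell)}/J^{(\ell_0)}$ lies inside it, it is enough that this torsion submodule be noetherian.)

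Next I would realize $J'$ as, essentially, a cohomology module. The graded sheaf $\bigoplus_n\sF\cdot\sR_n^{\phi^k}$ is an $\sR^{\phi^k}$-module with degree-zero part $\sF$, and each multiplication map $\sF\otimes\sR_n^{\phi^k}\to\sF\cdot\sR_n^{\phi^k}$ is surjective with $0$-dimensional kernel (it is an isomorphism off the finite set $B_n^k$, where $\sR_n^{\phi^k}$ is invertible). By Lemma~\ref{lem-cohom-module}(2) there is a surjection of graded right $R$-modules $\CM^0(\sF,k)\thra\bigoplus_nH^0(T,\sF\cdot\sR_n^{\phi^k})$ whose degree-$(n-k)$ term for $n\geq k$ is $H^0(T,\sJ_n)=J'_n$; since $\bigoplus_{n<k}J'_n$ is finite-dimensional, $J'$ is noetherian as soon as $\CM^0(\sF,k)$ is. (The same comparison, together with $H^1(T,\sR_n)=0$ from Theorem~\ref{thm-generic}(2), identifies $\bigoplus_{n\geq k}H^1(T,\sJ_n)$ up to a finite-dimensional summand with $\CM^1(\sF,k)$, which is noetherian by hypothesis; this is what is needed when one compares $J$, $J'$ and the saturation of $R/J$.) So the whole problem is reduced to: $\CM^0(\sF,m)$ is noetherian for every coherent $\sF$ on $T$ and every $m\in\NN$.

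To deduce this from the hypothesis, resolve $\sF$ by line bundles: choose a surjection $\sP\thra\sF$ with $\sP$ a finite direct sum of line bundles $\sO_T(a_i,b_i)$ and let $\sF'$ be the kernel. The long exact sequence of Lemma~\ref{lem-cohom-module}(3) contains the exact triple $\CM^0(\sP,m)\to\CM^0(\sF,m)\to\CM^1(\sF',m)$, and since $\CM^1(\sF',m)$ is noetherian and submodules and extensions of noetherian modules are noetherian, $\CM^0(\sF,m)$ is noetherian provided each ``building block'' $\CM^0(\sO_T(a,b),m)$ is. I expect this last point to be the main obstacle. The idea I would pursue is to peel off the special curves $X,Y,Z,W$ using $0\to\sO_T(a,b)(-C)\to\sO_T(a,b)\to\sO_T(a,b)|_C\to0$ — the restricted sheaves $\sR_n^{\phi^m}|_C$ to such a curve $C\cong\PP^1$ do form an ample sequence, so the associated cohomology rings are noetherian by Van den Bergh's criterion \cite{VdB1996} — and to trade a twist by $\sL^{\phi^{m-1}}\cong\sO_T(1,m)$ for a lowering of the exponent using $\sL^{\phi^{m-1}}\otimes\sR_n^{\phi^m}\cong\sR_{n+1}^{\phi^{m-1}}$ (up to $0$-dimensional error). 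Managing the accumulated $0$-dimensional corrections, and the base cases (which are essentially shifted copies of $R$ itself and so have to be handled together with the direct proof, in the later sections, that the $\CM^1(\sF,m)$ are noetherian), is where the real work lies; everything before that is a formal consequence of Proposition~\ref{prop-3A} and Lemma~\ref{lem-cohom-module}.
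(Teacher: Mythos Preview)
Your reduction has a genuine gap that makes it circular. You correctly use Proposition~\ref{prop-3A} and Lemma~\ref{lem-cohom-module} to reduce to showing that $\CM^0(\sF,m)$ is a noetherian $R$-module for every coherent $\sF$, and then further to showing that each $\CM^0(\sO_T(a,b),m)$ is noetherian. But $\CM^0(\sO_T,0)=\bigoplus_n H^0(T,\sR_n)=R$ itself, so your ``base case'' literally \emph{is} the statement that $R$ is right noetherian. You acknowledge this in passing, but the speculative fixes you propose (restricting to curves, trading twists for lower $m$) do not escape the circularity: every route through $\CM^0$ of a line bundle eventually lands back on noetherianity of $R$ or something equivalent to it.

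The paper sidesteps this entirely by never asking $\CM^0(\sH,m)$ to be noetherian. The point is that only the \emph{quotient} $H'/VR$ needs to be noetherian, where $V=J^{(1)}_m$ is a finite-dimensional space of sections generating $\sH:=\sJ\sL_m$. Because $V$ generates $\sH$, there is a short exact sequence
\[
0\to\sF\to V\otimes\sO_T\to\sH\to 0,
\]
and the long exact sequence of Lemma~\ref{lem-cohom-module}(3) reads
\[
V\otimes\CM^0(\sO_T,m)\xrightarrow{f}\CM^0(\sH,m)\xrightarrow{d}\CM^1(\sF,m)\to V\otimes\CM^1(\sO_T,m).
\]
The last term vanishes by Theorem~\ref{thm-generic}(2), and the image of $f$ is exactly $VR$ inside $\CM^0(\sH,m)=H'[m]$. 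Hence $d$ embeds $(H'/VR)[m]$ into $\CM^1(\sF,m)$, which is noetherian by hypothesis. This is the missing idea: feed the generating subspace $V$ into the long exact sequence so that the uncontrollable $\CM^0$ appears only as the image you are quotienting by, and the thing you actually need to control lands in a $\CM^1$.
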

\begin{proof}
Suppose that $\CM^1(\sh{F},m)$ is noetherian for all $\sh{F}, m$.  Let $J^{(1)} \subseteq J^{(2)}\subseteq \cdots$ be an ascending chain of graded right ideals of $R$.  By Proposition~\ref{prop-3A}, there are an ideal sheaf $\sh{J}$ and  integers $m$ and $\ell_0$ so that for $n \geq m$ and $\ell \geq \ell_0$, the sections in $J^{(\ell)}_n$ generate $ \sh{J} \Lsh_m \cdot \sh{R}_{n-m}^{\phi^m}$. Without loss of generality, we may assume that $\ell_0 = 1$.

In particular, for all $j$, the sections in $J^{(j)}_m$ generate $\sh{H}:= \sh{J} \Lsh_m \subseteq \sh{R}_m.$ Clearly, it is enough to show that the chain $J^{(1)}_{\geq m} \subseteq J^{(2)}_{\geq m} \subseteq \cdots$ stabilizes, so we may assume that all $J^{(i)}$ are contained in the right ideal
\[ H := \bigoplus_{n \geq m} H^0(T, \sh{H} \sh{R}_{n-m}^{\phi^m}) \subseteq R_{\geq m}.\]

Let $H'$ be the shifted cohomology module
\[ H' := \CM^0(\sh{H}, m)[-m] = \bigoplus_{n \geq m} H^0(T, \sh{H} \otimes \sh{R}_{n-m}^{\phi^m}).  \]
By Lemma~\ref{lem-cohom-module}(2), there is a surjection $H'\to H$; since $\sR^{\phi^m}_0 = \sO_T$, this map is an isomorphism in degree $m$. Let $V$ be the preimage of $J^{(1)}_m$ in $H'_m$.  Since $H/J^{(1)}_{\geq m}$ is a factor of $H'/VR$, it suffices to show that $ H'/VR$ is noetherian.

Recall that $V$ generates $\sH$. Consider the exact sequence  $0 \to \sh{F} \to V \otimes \struct_T \to \sh{H} \to 0$, for the appropriate $\sh{F}$. By Lemma~\ref{lem-cohom-module}(3), there is a long exact sequence of cohomology modules that reads in part:
\[ \xymatrix{
V \otimes \CM^0(\sO_T,m) \ar[r]^f & \CM^0(\sH,m) \ar[r]^d & \CM^1(\sF,m) \ar[r] & V \otimes \CM^1(\sO_T, m).} \] By choice of $\tau$, $H^1(T, \sR_n^{\phi^m}) = 0$ for all $n \in \NN$, and the last term of the exact sequence above vanishes. The first term is $V \otimes R^{\phi^m}$. The cohomology module $\CM^0(\sH, m)$ is $H'[m]$.  The  connecting homomorphism $d$  thus induces an injection from $\bigl(H'/VR \bigr) [m]$ into $\CM^1(\sF, m)$.
Since $\CM^1(\sF, m)$ is noetherian by assumption, so is $H'/VR$.
\end{proof}

We observe that the cohomology modules $\CM^2(\sF, m)$ are easily seen to be finite-dimensional.
\begin{lemma}\label{lem-4}
For any coherent $\sh{F}$ and $m \in \NN$, the cohomology module $\CM^2(\sh{F}, m)$ is finite-dimensional.
\end{lemma}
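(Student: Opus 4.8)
The plan is to show that $H^2(T, \sh{F} \otimes \sh{R}_n^{\phi^m}) = 0$ for all $n \gg 0$. Since $T$ is projective and each $\sh{F} \otimes \sh{R}_n^{\phi^m}$ is coherent, the remaining finitely many terms are finite-dimensional, and hence $\CM^2(\sh{F},m) = \bigoplus_{n \in \NN} H^2(T, \sh{F}\otimes\sh{R}_n^{\phi^m})$ is finite-dimensional. The first step is to reduce to the case where $\sh{F}$ is a line bundle. Since $\sh{F}$ is coherent on the projective surface $T$, there are $N, s \in \NN$ and a surjection $\struct(-N,-N)^{\oplus s} \thra \sh{F}$; tensoring with $\sh{R}_n^{\phi^m}$ gives a surjection $\struct(-N,-N)^{\oplus s} \otimes \sh{R}_n^{\phi^m} \thra \sh{F}\otimes\sh{R}_n^{\phi^m}$ with coherent kernel $\sh{Q}_n$. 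Since $H^3$ vanishes on the surface $T$, the cohomology long exact sequence yields a surjection $H^2(T,\sh{R}_n^{\phi^m}(-N,-N))^{\oplus s} \thra H^2(T,\sh{F}\otimes\sh{R}_n^{\phi^m})$, so it suffices to prove $H^2(T,\sh{R}_n^{\phi^m}(-N,-N)) = 0$ for $n \gg 0$.

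For this I would use the structure sequence of $\sh{R}_n^{\phi^m}$. Recall $\sh{R}_n^{\phi^m} \cong \sh{I}_n^{\phi^m}\otimes\Lsh_n^{\phi^m}$, so tensoring $0 \to \sh{I}_n^{\phi^m} \to \struct_T \to \struct_{B_n^m} \to 0$ by the line bundle $\Lsh_n^{\phi^m}(-N,-N)$ gives
\[ 0 \to \sh{R}_n^{\phi^m}(-N,-N) \to \Lsh_n^{\phi^m}(-N,-N) \to \sh{C}_n \to 0, \]
where $\sh{C}_n$ is supported on the $0$-dimensional scheme $B_n^m$, so $H^1(T,\sh{C}_n) = H^2(T,\sh{C}_n) = 0$. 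Hence $H^2(T,\sh{R}_n^{\phi^m}(-N,-N)) \cong H^2(T,\Lsh_n^{\phi^m}(-N,-N))$. By Proposition~\ref{prop-fund-pts}(3), which applies for every $n, m$ since the general element $\tau$ lies in $\bigcap_{j\geq 1} V(j)$, we have $\Lsh_n^{\phi^m} \cong \struct(n, k_n)$ with $k_n = \binom{n+m+1}{2} - \binom{m+1}{2}$, so $\Lsh_n^{\phi^m}(-N,-N) \cong \struct(n-N, k_n - N)$. By the K\"unneth formula $H^2(\PP^1\times\PP^1, \struct(p,q)) = 0$ whenever $p \geq -1$, and therefore $H^2(T,\Lsh_n^{\phi^m}(-N,-N)) = 0$ as soon as $n \geq N-1$, which is exactly what was needed.

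I do not expect a real obstacle here: once one has the structure sequence $0\to\sh{R}_n^{\phi^m}\to\Lsh_n^{\phi^m}\to\struct_{B_n^m}\to 0$ and knows that $\Lsh_n^{\phi^m}$ is a line bundle whose first K\"unneth coordinate equals $n$, the vanishing is automatic. The only point requiring care is that $\otimes\,\sh{F}$ is not exact, so one cannot directly deduce the vanishing of $H^2(T,\sh{F}\otimes\sh{R}_n^{\phi^m})$ from that of $H^2(T,\sh{R}_n^{\phi^m})$; the reduction to line bundles in the first step is what sidesteps this.
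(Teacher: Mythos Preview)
Your proof is correct and follows essentially the same approach as the paper: reduce to a direct sum of line bundles via a surjection, pass from $\sh{R}_n^{\phi^m}$ to $\Lsh_n^{\phi^m}$ using that the difference is supported in dimension $0$, and then invoke K\"unneth-type vanishing for $\Lsh_n^{\phi^m}(-N,-N)\cong\struct(n-N,k_n-N)$ when $n$ is large. The only difference is the order of the two reductions---the paper first replaces $\sh{R}_n^{\phi^m}$ by $\Lsh_n^{\phi^m}$ for arbitrary $\sh{F}$ and then reduces to line bundles, whereas you reduce to line bundles first; your ordering has the minor advantage that tensoring the structure sequence with a line bundle is exact, so you never need to think about $\shTor$.
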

\begin{proof}
Fix $\sh{F}$ and $m$.  There is a natural map $\sh{F} \otimes \sh{R}_n^{\phi^m} \to \sh{F} \otimes \Lsh_n^{\phi^m}$, whose kernel and cokernel have 0-dimensional support. In particular, taking cohomology we obtain that
\[ \CM^2(\sh{F}, m)_n = H^2(T, \sh{F} \otimes \sh{R}_n^{\phi^m}) \cong H^2(T, \sh{F} \otimes \Lsh_n^{\phi^m}).\]

Let $\mc{P}$ be a finite direct sum of invertible sheaves so that there is a surjection $\mc{P} \thra \mc{F}$. Then $H^2(T, \mc{P} \otimes \Lsh_n^{\phi^m})$ surjects onto $H^2(T, \mc{F} \otimes \Lsh_n^{\phi^m}) \cong \CM^2(\sh{F}, m)_n.$  Now, $H^2(T, \mc{P} (a,b)) = 0$ for all $a, b \gg 0$. Recall that $\sL_n^{\phi^m} \cong \sO(n, mn+\coeff)$.  As $n \to \infty$ so does $mn + \coeff$.
 Therefore, $H^2 (T, \mc{P}\otimes \Lsh_n^{\phi^m}) = 0$ for $n \gg 0$, and
$\CM^2(\sh{F}, m)$ is finite-dimensional, as claimed.
\end{proof}

Let $p: T \to \PP^1$ be projection onto the 2nd factor.  Our next goal is to show that the Leray spectral sequence associated to $p$ induces a decomposition of a cohomology module.
\begin{proposition}\label{prop-5}
Fix a coherent sheaf $\sh{F}$ on $T$ and $m \in \NN$.  Then there are natural $R$-actions on \beq\label{eq-a}
 \KM(\sh{F}, m) := \bigoplus_{n \in \NN} H^1(\PP^1, p_* (\sh{F} \otimes \sh{R}_n^{\phi^m}))
\eeq and \beq\label{eq-b} \QM(\sh{F}, m):= \bigoplus_{n \in \NN} H^0(\PP^1, R^1p_* (\sh{F} \otimes \sh{R}_n^{\phi^m})). \eeq
Further, there is a natural exact sequence of $R$-modules,
\[
0 \to \KM(\sh{F},m) \to \CM^1(\sh{F}, m) \to \QM(\sh{F}, m) \to 0 .
\]
\end{proposition}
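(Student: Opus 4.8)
The plan is to extract everything at once from the Leray spectral sequence of $p\colon T\to\PP^1$, applied to all the sheaves $\sh{F}\otimes\sh{R}_n^{\phi^m}$ simultaneously. Recall that for a quasicoherent sheaf $\sh{G}$ on $T$ the Leray spectral sequence reads $E_2^{a,b}=H^a(\PP^1,R^bp_*\sh{G})\Rightarrow H^{a+b}(T,\sh{G})$. Since the fibres of $p$ are copies of $\PP^1$, we have $R^bp_*\sh{G}=0$ for $b\ge 2$; since $\PP^1$ is a curve, $H^a(\PP^1,-)=0$ for $a\ge 2$; in particular $E_2^{2,0}=H^2(\PP^1,p_*\sh{G})=0$. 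The five-term exact sequence of the spectral sequence, together with this vanishing, reduces to a short exact sequence
\[ 0\to H^1(\PP^1,p_*\sh{G})\to H^1(T,\sh{G})\to H^0(\PP^1,R^1p_*\sh{G})\to 0, \]
and, crucially, every map in it is natural in $\sh{G}$ because the spectral sequence is functorial.

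Next I would set up the $R$-actions, following the proof of Lemma~\ref{lem-cohom-module}(1) verbatim but with $H^i(T,-)$ replaced by other functors. Global generation of $\sh{R}_\ell^{\phi^{n+m}}$, together with the identification $H^0(T,\sh{R}_\ell^{\phi^{n+m}})=(R_\ell t^{-\ell})^{\phi^{n+m}}\cong R_\ell$ from Theorem~\ref{thm-generic}(3), produces a surjection $\struct_T\otimes_\kk R_\ell\thra\sh{R}_\ell^{\phi^{n+m}}$. Tensoring with $\sh{G}_n:=\sh{F}\otimes\sh{R}_n^{\phi^m}$ and composing with the multiplication $\mu_{n,\ell}$ of the $\sh{R}^{\phi^m}$-module $\sh{G}=\sh{F}\otimes\sh{R}^{\phi^m}$ gives $\kk$-linear maps of sheaves $\sh{G}_n\otimes_\kk R_\ell\to\sh{G}_{n+\ell}$, and the commuting diagram~\eqref{assoc} says precisely that these make $\{\sh{G}_n\}_n$ into a graded right $R$-module object in $\operatorname{QCoh}(T)$. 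Now each of the three functors $\sh{G}\mapsto H^1(\PP^1,p_*\sh{G})$, $\sh{G}\mapsto H^1(T,\sh{G})$, and $\sh{G}\mapsto H^0(\PP^1,R^1p_*\sh{G})$ is additive and commutes with $-\otimes_\kk V$ for a finite-dimensional $\kk$-vector space $V$ (both $p_*$ and $R^1p_*$ commute with finite direct sums, as do cohomology of $\PP^1$ and of $T$). Applying any one of these functors to the maps $\sh{G}_n\otimes_\kk R_\ell\to\sh{G}_{n+\ell}$ and summing over $n$ therefore yields a graded right $R$-module. This endows $\KM(\sh{F},m)$ and $\QM(\sh{F},m)$ with $R$-module structures, and applied to $H^1(T,-)$ it reproduces the structure on $\CM^1(\sh{F},m)$ from Lemma~\ref{lem-cohom-module}(1).

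Finally I would assemble the sequence. Applying the natural short exact sequence of the first paragraph to $\sh{G}=\sh{G}_n$ for each $n$ and summing over $n$ gives the short exact sequence
\[ 0\to\KM(\sh{F},m)\to\CM^1(\sh{F},m)\to\QM(\sh{F},m)\to 0 \]
of graded vector spaces. Its two maps are the Leray edge maps, which are natural transformations of functors in $\sh{G}$; hence applying them to the sheaf maps $\sh{G}_n\otimes_\kk R_\ell\to\sh{G}_{n+\ell}$ produces commuting squares, and comparison with the definition of the $R$-actions above shows both maps are morphisms of right $R$-modules.

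I do not expect a genuine obstacle: the content is entirely formal once one has $R^2p_*=0$ and $H^2(\PP^1,-)=0$, which collapse the Leray spectral sequence to a short exact sequence. The only point needing care is the bookkeeping that the $R$-module structure passes through the functors $p_*$, $R^1p_*$ and the cohomology of $\PP^1$; this is handled uniformly by the remark that all functors in sight are additive and commute with tensoring by a fixed finite-dimensional $\kk$-vector space, so that the mechanism of Lemma~\ref{lem-cohom-module}(1) applies unchanged.
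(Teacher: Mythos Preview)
Your proof is correct and follows essentially the same route as the paper: both define the $R$-actions by applying the relevant functors to the multiplication map as in Lemma~\ref{lem-cohom-module}(1), invoke the Leray spectral sequence for $p$ and its five-term low-degree exact sequence (which collapses to a short exact sequence since $H^2(\PP^1,-)=0$), and then use naturality of that sequence in $\sh{M}$ to verify that the edge maps are $R$-module homomorphisms. Your added remarks on $R^bp_*=0$ for $b\ge 2$ and on the functors commuting with $-\otimes_\kk V$ are accurate elaborations but not additional ideas.
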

\begin{proof}
The $R$-action on \eqref{eq-a} (respectively, on \eqref{eq-b}) is given by applying $H^1(\PP^1, p_* \blank)$ (respectively, $H^0(\PP^1, R^1 p_* \blank)$) to the multiplication map
\begin{equation}
\label{eq-mm} \sh{F} \otimes \sh{R}_n^{\phi^m} \otimes R_{\ell} \to \sh{F} \otimes \sh{R}_n^{\phi^m} \otimes R_{\ell}^{\phi^{n+m}} \to \sh{F} \otimes \sh{R}_{n+\ell}^{\phi^m},
\end{equation}
as in the proof of Lemma~\ref{lem-cohom-module}(1). By \cite[5.8.6]{Weibel}, for any quasi-coherent sheaf $\sh{M}$ on $T$, there is a convergent Leray spectral sequence $H^i(\PP^1, R^j p_* \sh{M}) \Rightarrow H^{i+j}(T, \sh{M})$. Further, by \cite[Theorem~5.8.3]{Weibel},  the exact sequence of low degree terms  is \beq\label{LSS}
 0 \to H^1(\PP^1,p_* \sh{M}) \to H^1(T, \sh{M}) \to H^0(\PP^1, R^1p_*\sh{M}) \to H^2(\PP^1, p_* \sh{M}) = 0,
\eeq
 and the maps in this exact sequence are natural in $\sh{M}$.

Fix $n, \ell \in \NN$.  We apply the exact sequence \eqref{LSS} to the multiplication map \eqref{eq-mm}. By naturality, the diagram
\[  \xymatrix@C=8pt{
0 \ar[r] & H^1(\PP^1, p_*(\sh{F} \otimes \sh{R}_n^{\phi^m})) \otimes R_{\ell} \ar[r] \ar[d] & H^1(T, \sh{F} \otimes \sh{R}_n^{\phi^m}) \otimes R_{\ell} \ar[r] \ar[d] &
H^0(\PP^1, R^1p_*(\sh{F} \otimes \sh{R}_n^{\phi^m})) \otimes R_{\ell} \ar[r] \ar[d] & 0 \\
0 \ar[r] & H^1(\PP^1, p_*(\sh{F} \otimes \sh{R}_{n+\ell}^{\phi^m})) \ar[r] & H^1(T, \sh{F} \otimes \sh{R}_{n+\ell}^{\phi^m}) \ar[r] & H^0(\PP^1, R^1p_*(\sh{F} \otimes \sh{R}_{n+\ell}^{\phi^m})) \ar[r] & 0 }
\]
 commutes.  This precisely says that the maps
\[ 0 \to \KM(\sh{F}, m) \to \CM^1(\sh{F}, m) \to \QM(\sh{F},m) \to 0\]
given by \eqref{LSS} preserve the $R$-module structure.
\end{proof}

Note that naturality of \eqref{LSS} also implies that $\QM(\blank, m)$ and $\KM(\blank, m)$ are functors from $\sO_T \lMod \to \rGr R$.

The strategy of the remainder of the proof that $R$ is noetherian will be to verify the hypotheses in the following corollary, which we do in the final two sections of the paper.
\begin{corollary}\label{cor-Leray}
To show that $R$ is right noetherian, it suffices to show that the modules $\KM(\struct(a,b), m)$ and $\QM(\struct(a,b), m)$ defined above are noetherian for all $a,b \in \ZZ$ and $m \in \NN$.
\end{corollary}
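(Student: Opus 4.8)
The plan is to deduce the corollary purely formally from the three reductions already in hand. By Proposition~\ref{prop-3C} it suffices to prove that $\CM^1(\sh{F},m)$ is noetherian for every coherent sheaf $\sh{F}$ on $T$ and every $m\in\NN$. So fix $m$, and let $\mathcal{C}$ be the class of coherent sheaves $\sh{F}$ on $T$ for which $\CM^1(\sh{F},m)$ is noetherian; I want to show $\mathcal{C}$ contains all coherent sheaves. The hypothesis of the corollary says that $\KM(\struct(a,b),m)$ and $\QM(\struct(a,b),m)$ are noetherian for all $a,b\in\ZZ$; feeding this into the exact sequence $0\to\KM(\struct(a,b),m)\to\CM^1(\struct(a,b),m)\to\QM(\struct(a,b),m)\to 0$ of Proposition~\ref{prop-5}, and using that an extension of noetherian modules is noetherian, I get $\struct(a,b)\in\mathcal{C}$ for all $a,b$. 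Since $\CM^1(\sh{A}\oplus\sh{B},m)=\CM^1(\sh{A},m)\oplus\CM^1(\sh{B},m)$ directly from the definition, every finite direct sum $\bigoplus_i\struct(a_i,b_i)$ also lies in $\mathcal{C}$.

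Next I would show that $\mathcal{C}$ is closed under passage to quotient sheaves. Given an exact sequence $0\to\sh{A}\to\sh{B}\to\sh{C}\to 0$ of coherent sheaves with $\sh{B}\in\mathcal{C}$, the long exact sequence of $R$-modules of Lemma~\ref{lem-cohom-module}(3) contains the segment $\CM^1(\sh{B},m)\to\CM^1(\sh{C},m)\to\CM^2(\sh{A},m)$. Hence $\CM^1(\sh{C},m)$ is an extension of a submodule of $\CM^2(\sh{A},m)$ — which is finite-dimensional by Lemma~\ref{lem-4}, hence noetherian — by a quotient of the noetherian module $\CM^1(\sh{B},m)$; therefore $\CM^1(\sh{C},m)$ is noetherian and $\sh{C}\in\mathcal{C}$. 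Finally, every coherent sheaf $\sh{F}$ on $T=\PP^1\times\PP^1$ is a quotient of a finite direct sum of line bundles: for $a,b\gg 0$ the twist $\sh{F}(a,b)$ is globally generated, yielding a surjection $\struct(-a,-b)^{\oplus N}\thra\sh{F}$. Combining the previous two sentences, $\sh{F}\in\mathcal{C}$ for every coherent $\sh{F}$ and every $m\in\NN$, and Proposition~\ref{prop-3C} then gives that $R$ is right noetherian.

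Inside this argument there is no real obstacle: it is entirely formal once Propositions~\ref{prop-3C} and \ref{prop-5} and Lemmas~\ref{lem-cohom-module} and \ref{lem-4} are available. The one point to be careful about is that the dévissage must be run through \emph{surjections} onto $\sh{F}$ rather than through inclusions of subsheaves, because the relevant segment of the long exact sequence of Lemma~\ref{lem-cohom-module}(3) controls $\CM^1$ of a quotient — using crucially that $\CM^2$ of the kernel is finite-dimensional to absorb the error coming from the failure of $\blank\otimes\sh{R}_n^{\phi^m}$ to be exact — but does not directly control $\CM^1$ of a subsheaf. The genuine difficulty is precisely what this corollary defers to the final two sections: showing that $\KM(\struct(a,b),m)$ and $\QM(\struct(a,b),m)$ are noetherian, which is hard exactly because the sheaves $\sh{R}_n^{\phi^m}$ are not ample, so these cohomology modules can be infinite-dimensional and one must instead exploit the critical density of the orbits of $F$ and $Q$.
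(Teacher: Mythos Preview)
Your proof is correct and follows essentially the same approach as the paper: both use Proposition~\ref{prop-5} to conclude that $\CM^1$ of a line bundle (hence of a direct sum of line bundles) is noetherian, then surject a direct sum of line bundles onto an arbitrary coherent $\sh{F}$ and use the segment $\CM^1(\sh{H},m)\to\CM^1(\sh{F},m)\to\CM^2(\sh{F}',m)$ of Lemma~\ref{lem-cohom-module}(3) together with Lemma~\ref{lem-4}. Your framing via the class $\mathcal{C}$ and the remark about needing surjections rather than inclusions are helpful clarifications, but the argument is the same.
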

\begin{proof}
Since any invertible sheaf is isomorphic to some $\struct(a,b)$, the hypothesis together with Proposition~\ref{prop-5} shows that $\CM^1(\sh{H}, m)$ is noetherian for any invertible sheaf $\mc{H}$.

By Proposition~\ref{prop-3C}, it is enough to show that for any coherent $\sh{F}$ and $m \in \NN$, the cohomology module $\CM^1(\sh{F}, m)$ is noetherian. There is an exact sequence $0 \to \sh{F}' \to \sh{H} \to \sh{F} \to 0$ where $\sh{H}$ is isomorphic to a direct sum of invertible sheaves on $T$.  By Lemma~\ref{lem-cohom-module}(3),  there is an exact sequence of $R$-modules
\[ \CM^1(\sh{H}, m) \stackrel{\alpha}{\to} \CM^1(\sh{F}, m) \to \CM^2(\sh{F}', m).\]
By Lemma~\ref{lem-4}, the cokernel of $\alpha$ is finite-dimensional and is thus noetherian.  Since $\CM^1(\sh{H}, m)$ is noetherian by the first paragraph, $\CM^1(\sh{F}, m)$ is noetherian.
\end{proof}

\section{$\QM(\struct(a,b),m)$ is noetherian}\label{Q-MODULES}
In this section, we calculate the modules $\QM(\struct(a,b), m)$ and show that they are noetherian.  In fact, we given even more details of their structure:  these modules are finite extensions of point modules.

We continue to assume that $\tau = \tau(\rho, \theta) \in \mb{G}$ where the pair $(\rho, \theta)$ is algebraically independent over the prime subfield $\FF$, so that all of the properties in Theorem~\ref{thm-generic} hold. For  $j \geq 0$, recall that $F_j = \phi^{-j}(F)$ and $Q_j = \phi^{-j}(Q)$.  As in the last section, we write $p: T \to \mb{P}^1$ for the projection of $T$ onto the second factor.  Let $q_j := p(Q_j)$ and $f_j:= p(F_j)$, and write $f:=f_0$ and $q:=q_0$. Recall also  that $\sh{I}_n^{\phi^m}$ is defined as the base ideal of the subsheaf of $\Lsh_n^{\phi^m}$ generated by the rational functions in $(R_n t^{-n})^{\phi^m}$. We saw in Proposition~\ref{prop-0dim} that $\sh{I}_n^{\phi^m}$ defines the fat point subscheme that we may write as \beq\label{Inm-eq}
 n F_0 +  \cdots + n F_{m-1} + (n-1) F_m + \cdots +  F_{n+m-2} +
 n Q_0 +  \cdots + n Q_{m-1} + (n-1) Q_m + \cdots +  Q_{n+m-2}.
 \eeq
By choice of $\tau$, the points $F_j$ and $Q_i$ all lie on distinct fibers of $p$. We need a notation for a general (fat) fiber of $p$.  If $c \in \PP^1$ and $\ell\geq 1$, define $\ell T_c$ by the fiber square
\[ \xymatrix{
\ell T_c \ar[r] \ar[d] & T \ar[d]^{p} \\
\ell c \ar[r] & \PP^1. }\]

The main idea of this section is to reduce the calculation of $\QM(\struct(a,b), m)$ to the calculation of $\CM^1(\sh{F},m)$ for certain sheaves $\sh{F}$ supported entirely on a single fiber of $p$.  These latter cohomology modules are then computed directly with \v{C}ech cohomology and shown to be noetherian.  The result of this computation is given in the following main technical lemma.   The proof is somewhat sensitive because of the need to carefully track the $R$-action on the cohomology, and so we defer it until the end of the section.

\begin{lemma}
\label{lem-miracle} Let $\ell \geq 1$, $d \geq 0$, $a, b \in \ZZ$. For $n \geq \max(d, 1)$, let
\[ \sH(\ell)_n := \sI_Q^{n-d} \cdot \sO_{\ell Z}(aX+bW) \otimes \sL_n.\]
This is an $\mc{R}$-module since $\sH(\ell)_n\cdot \sR_k^{\phi^n} \subseteq \sH(\ell)_{n+k}$, and so
$H(\ell) := \bigoplus_{n \geq \max(d,1)} H^1(T, \sH(\ell)_n)$ is an $R$-module by Lemma~\ref{lem-cohom-module}(1).

Let $\ell_0 := \max(1, -a-1)$; let $n_0:=max(d, 1, -a-1)$.   For all $n \geq n_0$, $\ell \geq \ell_0$,  the natural restriction map $H^1(T, \mc{H}(\ell+1)_n) \to H^1(T, \mc{H}(\ell)_{n})$ is an isomorphism, and multiplication by $t \in R_1$ gives a bijection $\mu_t: H(\ell)_n \to H(\ell)_{n+1}$.

Moreover,  we have
\[
H^1(T, \mc{H}(\ell)_{n}) \cong  \begin{cases} 0 &  a \geq -d-1 \\
\struct_{q} \oplus \sO_{2q} \oplus \cdots \struct_{(-a-d-1)q} \ & a \leq -d-2.
\end{cases}
\]
(Note that since $\mc{H}(\ell)_{n}$ is supported along the fat fiber $\ell Z$, $H^1(T, \mc{H}(\ell)_{n}) = H^1(\ell Z, \mc{H}(\ell)_n) $ obtains an $\mc{O}_{\ell q}$-module structure from the base.)
In particular, $\dim_{\kk} H(\ell)_n = \binom{-a-d}{2}$ is constant for $n \geq n_0$, $ \ell \geq \ell_0$. Furthermore, $H(\ell)$ is noetherian and is, up to finite dimension, an extension of $\binom{-a-d}{2}$ point modules.
\end{lemma}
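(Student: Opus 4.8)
The plan is to reduce everything to a \v{C}ech computation on the fat curve $\ell Z$. Since $Z = \PP^1\times[0:1]$ and $[0:1]$ lies in a standard affine chart of the second factor, $\ell Z$ is literally the product $\PP^1\times\spec(\kk[v]/v^\ell)$, with $\PP^1$ the first factor (coordinate $u$) and $v$ a local coordinate at $[0:1]$; I would compute $H^1(T,\sH(\ell)_n) = H^1(\ell Z,\sH(\ell)_n)$ via the two-element affine cover $\{u\ne\infty\},\{u\ne 0\}$ of the $\PP^1$-factor. Three preliminary reductions: (i) by Proposition~\ref{prop-fund-pts}(3) with $m=0$, $\sL_n\cong\sO(n,\binom{n+1}{2})$, so $\sO(aX+bW)\otimes\sL_n\cong\sO(n+a,\binom{n+1}{2}+b)$; (ii) the second-factor twist $\sO(0,\binom{n+1}{2}+b)$ restricts on $\ell Z$ to a free rank-one $\kk[v]/v^\ell$-module (the chart is affine) and so is irrelevant to $H^1$, whence we may work with $\sI_Q^{n-d}\cdot\sO_{\ell Z}\otimes\sO(n+a,0)$; (iii) near $Q = [1:0][0:1]$, with $s = 1/u$ the complementary coordinate, $\sI_Q = (s,v)$, and for $n$ large the submodule $(s,v)^{n-d}$ of $\bigoplus_{i=0}^{\ell-1}\kk[s]\,v^i$ equals $\bigoplus_{i=0}^{\ell-1}s^{\,n-d-i}\kk[s]\,v^i$, so the $i$-th infinitesimal neighbourhood of $Z$ at $Q$ carries vanishing order $n-d-i$.

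Normalising the framing of $\sO(n+a,0)$ so that its generator is $1$ over $\{u\ne\infty\}$ and $u^{n+a}$ over $\{u\ne 0\}$, the reduced \v{C}ech complex then gives
\[
H^1(T,\sH(\ell)_n)\;=\;\frac{\kk[u,u^{-1}]\otimes\kk[v]/v^\ell}{\bigl(\kk[u]\otimes\kk[v]/v^\ell\bigr)+\bigoplus_{i=0}^{\ell-1}u^{\,a+d+i}\kk[u^{-1}]\,v^i}\;\cong\;\bigoplus_{i=0}^{\ell-1}H^1\bigl(\PP^1,\sO(a+d+i)\bigr)\,v^i ,
\]
with $a+d+i$ coming from $\deg(\sO(n+a)|_Z)=n+a$ minus the vanishing order $n-d-i$. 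As $H^1(\PP^1,\sO(e))$ is $0$ for $e\ge-1$ and of dimension $-e-1$ for $e\le -2$, all summands vanish when $a\ge -d-1$; when $a\le -d-2$ and $\ell\ge\ell_0=\max(1,-a-1)\ge -a-d-1$ the nonzero ones are exactly those with $i\le -a-d-2$, so $\dim_\kk H(\ell)_n=\sum_{i=0}^{-a-d-2}(-a-d-i-1)=\sum_{j=1}^{-a-d-1}j=\binom{-a-d}{2}$, independent of $n\ge n_0$ and of $\ell\ge\ell_0$. Multiplication by $v$ sends $u^{-k}v^i\mapsto u^{-k}v^{i+1}$, which survives iff $k\le -a-d-i-2$; reading off which classes persist shows $u^{\,a+d+j}v^0$ generates a copy of $\sO_{jq}$ for $j=1,\dots,-a-d-1$, these being independent, so $H^1(T,\sH(\ell)_n)\cong\sO_q\oplus\sO_{2q}\oplus\cdots\oplus\sO_{(-a-d-1)q}$ as an $\sO_{\ell q}$-module. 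The restriction $H^1(T,\sH(\ell+1)_n)\to H^1(T,\sH(\ell)_n)$ induced by $\sO_{(\ell+1)Z}\twoheadrightarrow\sO_{\ell Z}$ just forgets the $v^\ell$-level, which contributes $H^1(\PP^1,\sO(a+d+\ell))=0$ once $\ell\ge\ell_0$, and hence is an isomorphism.

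For $\mu_t$: by Lemma~\ref{lem-cohom-module}(1), $t\in R_1$ acts through its image $1\in E^{\phi^n}\subseteq H^0(T,\sL^{\phi^n})$; on $\ell Z$ this section vanishes only along $(Y^{\phi^n}+W^{\phi^n})\cap\ell Z$, and $W^{\phi^n}$ is the horizontal line over $\nu^{-n}[1:0]\ne[0:1]$ (disjoint from $\ell Z$) while $Y^{\phi^n}$ meets $Z$ only at $Q$ (Proposition~\ref{prop-fund-pts}(4)). Tracing this through the \v{C}ech complex, tensoring with $1$ and multiplying into $\sH(\ell)_{n+1}$ respects the $v$-grading and induces the canonical identity on each summand $H^1(\PP^1,\sO(a+d+i))$; equivalently, since $\dim_\kk H(\ell)_n=\binom{-a-d}{2}$ is constant in $n$, it is enough to check injectivity, which is visible on cochains. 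So $\mu_t\colon H(\ell)_n\to H(\ell)_{n+1}$ is bijective for $n\ge n_0$, whence $H(\ell)_{\ge n_0}$ is generated in degree $n_0$ and $H(\ell)$ is noetherian. Finally, the remaining generators $ut,vt,uvt$ act by the sections $u^{\phi^n},v^{\phi^n},(uv)^{\phi^n}$ of $\sR_1^{\phi^n}$, and since $\phi$ contracts the reduced curve $Z$ (so that $\sR_1^{\phi^n}|_Z$ is a trivial line bundle on $Z\cong\PP^1$) these restrict on $Z$ to constant multiples of $1$ and on $\ell Z$ differ from those constants by terms of strictly positive $v$-order. Hence the $R$-action on $H(\ell)$ is triangular for the $v$-order filtration, and on the associated graded $R_1$ acts in each degree through $\kk\cdot t$; each graded piece therefore splits, by $u$-exponent, into point modules, giving in all a flag of $H(\ell)_{\ge n_0}$ with $\binom{-a-d}{2}$ point-module factors. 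Up to the finite-dimensional part in degrees $<n_0$, this is the desired description.

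The step I expect to be the real obstacle is the last one: keeping the $R$-module structure, not merely the vector-space dimensions, visible through the \v{C}ech computation — pinning down how $t$ and the other degree-one generators act on explicit cocycles, and using the contraction of $Z$ by $\phi$ together with the triviality of $\sR_1^{\phi^n}|_Z$ to obtain the triangularity that makes the point-module flag consist of genuine $R$-submodules. By contrast the cohomological computation itself is routine once $\ell Z$ is recognised as a product and the second-factor twist is discarded.
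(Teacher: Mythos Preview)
Your approach is essentially correct and takes a genuinely cleaner route than the paper for the cohomology computation. The paper works throughout in the canonical framing of $\sH(\ell)_n$ as a subsheaf of the constant sheaf on $\ell Z$, where $\sL_n|_{\ell Z} = \sO_{\ell Z}(Y + Y^{\phi} + \cdots + Y^{\phi^{n-1}})$; on the chart $U^-$ near $Q$ this brings in the $n$-dependent unit $h_n = (r_0\cdots r_{n-1})^{-1}u^{-n}$ built from the local equations $r_i = u^{-1}+\alpha_i$ of $Y^{\phi^i}|_{\ell Z}$, and the heart of the paper's argument (its Step~3) is the explicit verification that, despite this, $\im\partial_n = L$ is independent of $n$. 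Your reframing $\sL_n|_{\ell Z}\cong\sO_{\ell Z}(n,0)$ bypasses $h_n$ entirely and gives the decomposition $\bigoplus_i H^1(\PP^1,\sO(a+d+i))\,v^i$ at once. Likewise, your $v$-order filtration for the point-module structure is simpler than the paper's two-step flag (first by $i-j$, then by $j$), and it works for the same reason: since $\phi$ contracts $Z$, each $\alpha^{\phi^n}|_Z$ is constant, so $R_1$ acts on each $v$-graded piece through a one-dimensional quotient (not literally ``through $\kk\cdot t$'' --- different generators act by different scalars $\alpha(p_n)$).

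There is, however, a real gap at the $\mu_t$ step, precisely where you flagged the difficulty. Your claim that $\mu_t$ ``induces the canonical identity on each summand $H^1(\PP^1,\sO(a+d+i))$'' is unjustified: your identification $\sH_n\cong\sI_Q^{n-d}\sO_{\ell Z}(n+a,0)$ is only well-defined up to a unit in $(\kk[v]/v^\ell)^\times$, and there is no reason the transported map respects your $v$-graded splitting, let alone acts as the identity. The fallback ``injectivity visible on cochains'' does not work either: an injective map of sheaves need not induce an injection on $H^1$ (e.g.\ $\sO_{\PP^1}(-2)\hookrightarrow\sO_{\PP^1}(-1)$). The fix is short and uses your own dimension count, but must be carried out in the \emph{canonical} framing. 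There $\mu_t$ is the inclusion $\sH_n\hookrightarrow\sH_{n+1}$ of subsheaves of the constant sheaf (locally at $Q$ one checks $r_n\in\sI_Q$, so $\sI_Q^{n-d}\sO(Y^{\phi^n})^{-1}\subseteq\sI_Q^{n+1-d}$), and on $U^\pm$ it is the identity on $S^\pm$; hence $\im\partial_n\subseteq\im\partial_{n+1}$ and the induced map $S^\pm/\im\partial_n\to S^\pm/\im\partial_{n+1}$ on $H^1$ is \emph{surjective}. Since you have shown $\dim H^1$ is constant in $n$, it is bijective --- equivalently $\im\partial_n=\im\partial_{n+1}$, which is exactly the paper's ``miracle'', now obtained for free. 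With this established, your $v$-order filtration argument goes through in the canonical framing without change.
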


A symmetric result holds for the sheaves
$\sH^{\vee}(\ell)_n := \sI_F^{n-d} \cdot \sO_{\ell W} (aX+bW) \otimes \sL_n$, and we will use this without further comment.

We now note some consequences of the lemma above.  First, some special cohomology modules are noetherian.
\begin{lemma}
\label{lem-Nnoeth} Let $c=q_k$ or $c=f_k$ for some $k \in \NN$.  Let $ a, b \in \ZZ$.  For $\ell \geq 1$ let $\sF_{\ell}:= \sO_T(a,b)|_{\ell T_c}$.  Let $m \in \NN$.  Then
the cohomology module $\CM^1(\mc{F}_{\ell}, m)$ is noetherian and is, up to finite dimension, an extension of finitely many point modules.
\end{lemma}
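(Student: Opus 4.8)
The plan is to deduce the statement from Lemma~\ref{lem-miracle} (and, for $c=f_k$, from its symmetric counterpart using the sheaves $\sH^{\vee}(\ell)_n$ and Lemma~\ref{sym-lem}), by matching $\sF_\ell\otimes\sR_n^{\phi^m}$ for $n\gg 0$ with one of the sheaves studied there, after transporting along an iterate of $\phi$.

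First I would pin down $\sF_\ell\otimes\sR_n^{\phi^m}$ locally; take $c=q_k$. Since $(\rho,\theta)$ is algebraically independent over $\FF$, Proposition~\ref{prop-0dim} (together with Lemma~\ref{lem-pointchains1}) identifies the base scheme of $\sI_n^{\phi^m}$ with the fat-point scheme \eqref{Inm-eq}, whose support $\{F_j\}\cup\{Q_j\}$ meets each fibre of $p$ in at most one point. Hence on the support $\ell T_{q_k}$ of $\sF_\ell$ only the component at $Q_k$ is felt, and by \eqref{Inm-eq} its multiplicity there is $n-d$ with $d:=\max(0,\,k-m+1)$; so for $n\ge\max(d,1)$ there is an isomorphism $\sF_\ell\otimes\sR_n^{\phi^m}\cong\sI_{Q_k}^{\,n-d}\cdot\bigl(\sN_n|_{\ell T_{q_k}}\bigr)$, where $\sN_n$ is an invertible sheaf whose restriction to $T_{q_k}\cong\PP^1$ has degree $n$ (the first coordinate of $\sL_n^{\phi^m}\cong\sO(n,\ast)$ from Proposition~\ref{prop-fund-pts}(3); the other components of \eqref{Inm-eq} lie on different fibres and do not affect the restriction to $T_{q_k}$). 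The $b$ in $\sF_\ell=\sO_T(a,b)|_{\ell T_c}$ contributes only a trivial twist and is immaterial.

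Next I would transport this to the fibre $Z=T_{q_0}$. As $\theta$ has infinite order, Lemma~\ref{easy-orbit-lem} shows that $T_{q_0},\dots,T_{q_k}$ are pairwise distinct from $Z$ and from $W$; since $\phi$ contracts only $Z$ and $W$ and its fundamental points lie on $Z\cup W$, the iterate $\phi^k$ therefore restricts to an isomorphism of a neighbourhood of $\ell T_{q_k}$ onto a neighbourhood of $\ell Z$, carrying $Q_k$ to $Q$. Under this local isomorphism the sheaf above becomes, for $n\gg 0$, exactly one of the sheaves $\sH(\ell)_n=\sI_Q^{\,n-d}\cdot\sO_{\ell Z}(aX+b'W)\otimes\sL_n$ of Lemma~\ref{lem-miracle}, with the same $d$ and the same degree $a$ inherited from $\sF_\ell$; thus $\CM^1(\sF_\ell,m)_n\cong H(\ell)_n$ for $n\gg 0$. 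The delicate point, which I expect to be the main obstacle, is that this identification is $R$-linear: the $R$-action on both modules is multiplication of cohomology classes by rational functions inside the constant sheaf $\mathcal{K}$ (Lemma~\ref{lem-cohom-module}(1)), and $\phi^k$ acts on $\mathcal{K}$ as a ring automorphism, so the multiplication maps are intertwined once one checks that the twists $R_1\to R_1^{\phi^{\bullet}}$ built into each construction correspond under $\phi^k$, possibly after a harmless degree shift --- equivalently, that the \v{C}ech computation of Lemma~\ref{lem-miracle} carries over verbatim to the fibre $T_{q_k}$ with $\sR_n^{\phi^m}$ in place of $\sR_n$.

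Granting this, the conclusion is immediate: for a suitable $n_0$, the truncation $\CM^1(\sF_\ell,m)_{\ge n_0}$ is, as a graded $R$-module, a truncation of $H(\ell)$, which by Lemma~\ref{lem-miracle} is noetherian and, up to finite dimension, an extension of $\binom{-a-d}{2}$ point modules. Since $\CM^1(\sF_\ell,m)$ differs from this truncation only by the finite-dimensional module $\bigoplus_{n<n_0}H^1(T,\sF_\ell\otimes\sR_n^{\phi^m})$, it is itself noetherian and, up to finite dimension, an extension of finitely many point modules, as claimed.
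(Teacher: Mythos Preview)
Your approach is essentially the paper's: reduce to Lemma~\ref{lem-miracle} by transporting the sheaf along an iterate of $\phi$ to the fibre $Z$, and then invoke that lemma for the noetherian and point-module statements. You also correctly identify the $R$-linearity of the transport as the crux.

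A few points where your sketch is looser than the paper's argument. First, a small slip: $T_{q_0}=Z$, so your claim that ``$T_{q_0},\dots,T_{q_k}$ are pairwise distinct from $Z$'' is false as written; what you need (and what holds) is that $T_{q_1},\dots,T_{q_k}$ avoid $Z\cup W$, so that $\phi^k$ is a local isomorphism from a neighbourhood of $T_{q_k}$ onto one of $Z$. Second, the ``harmless degree shift'' is not optional: after pulling back by $\phi^{-k}$ the action maps involve $\sR_\bullet^{\phi^{n+m-k}}$, not $\sR_\bullet^{\phi^n}$, so the transported object is an $\sR^{\phi^{m-k}}$-module rather than an $\sR$-module, and hence is not literally $H(\ell)$ from Lemma~\ref{lem-miracle} at degree $n$. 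The paper handles this by two explicit shifts: first rewriting $\sL_n^{\phi^m}=\sL_m^{-1}\otimes\sL_{n+m}$ and shifting by $-m$ to obtain an honest $\sR$-module $\bigoplus H^1(T,\sH_n)$ with $\sH_n=\sI_{Q_k}^{n-r}\cdot\sO_{\ell T_c}(a',b')\otimes\sL_n$ (so the twist $a$ changes to $a'=a-m$), and then pulling back by $\phi^{-k}$ and reindexing by $+k$ to land on $\sI_Q^{n-(r-k)}\cdot\sO_{\ell Z}(a'',b'')\otimes\sL_n$. Thus the parameters fed into Lemma~\ref{lem-miracle} are $d=r-k=\max(m-k,1)$ and $a''$, not your $d=\max(0,k-m+1)$ and the original $a$; the combination $-a-d$ happens to agree, which is why your point-module count would come out right, but your identification ``with the same $d$ and the same degree $a$'' is not literally correct without the shifts.
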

\begin{proof}
We do the case that $c=q_k$; the case that $c=f_k$ is symmetric.    The idea is to show that the cohomology module in question can be shifted twice to obtain a tail of a module already studied in Lemma~\ref{lem-miracle}.  One must be careful, as it seems problematic to shift $\mc{R}^{\phi^m}$-modules in general, though it works for the special cases considered here; c.f. \cite[Lemma~5.5]{KRS}.

Fix $\ell \geq 1$, and let $\sG_n := \sO_{\ell T_c}(a,b) \cdot \sR^{\phi^m}_n$.  Then $\sG := \bigoplus_{n \in \NN} \sG_n$ is clearly an $\mc{R}^{\phi^m}$-module, and by Lemma~\ref{lem-cohom-module}(2),
\[ N := \bigoplus_{n \in \NN} H^1(T, \sG_n)\]
is an $R$-module isomorphic to $\CM^1(\sF_{\ell}, m)$.

Now, by \eqref{Inm-eq}, $\sI_n^{\phi^m}$ vanishes at $Q_k$ to order
\begin{equation}
\label{mult-cases-eq} \begin{cases}
0 & \text{if}\  k \geq n+m-1 \\
 n+m-k-1 & \text{if}\ m-1 \leq k \leq n+m-1 \\
 n  & \text{if}\ k \leq m-1.
 \end{cases}
 \end{equation}
For $r = \max(m, k+1)$,  we have for $n \geq 0$ that
\[ \sG_n \cong \sI_{Q_k}^{n+m-r} \cdot \sO_{\ell T_{c}}(a,b)\otimes \sL_n^{\phi^m}
 \cong \sI_{Q_k}^{n+m-r}\cdot \sO_{\ell T_{c}}(a,b) \otimes \sL_m^{-1} \otimes \sL_{n+m}.\]
For appropriate $a', b'$, there is thus an isomorphism $\sG_n \cong \sI_{Q_k}^{n+m-r}\cdot \sO_{\ell T_{c}}(a',b') \otimes \sL_{n+m}$ that respects the action by $\sR^{\phi^m}$ on each side.   Shifting by $(-m)$ and applying cohomology, we get an isomorphism
\[
\bigoplus_{n \in \NN} H^i(T, \mc{G}_n)[-m] \cong \bigoplus_{n \geq m} H^i(T, \mc{G}_{n-m})
\]
of $R$-modules.  In particular, letting $\mc{H}_n := \sI_{Q_k}^{n-r}\cdot  \sO_{\ell T_c} (a', b')\otimes \sL_n$, we have
\[ N[-m]_{\geq r} \cong \bigoplus_{n \geq r} H^1(T, \sH_n). \]

Next, given our assumption that $\tau$ is general, $\phi^{-k}:  T \dra T$ is defined and a local isomorphism from a neighborhood $U$ of $Z$ to a neighborhood $\phi^{-k}(U)$ of $\phi^{-k}(Z) = T_c$. The $\mc{R}$-module $\bigoplus  \sH_n$ has action maps $\mu_{n, \ell}: \mc{H}_n \otimes \mc{R}_{\ell}^{\phi^n} \to \mc{H}_{n+\ell}$ involving sheaves supported along $\phi^{-k}(U)$.  Thus we can pull back these action maps by $\phi^{-k}$ and reindex to get
$ \mu'_{n, \ell}: (\mc{H}_{n+k}^{\phi^{-k}}  \otimes  \mc{R}_{\ell}^{\phi^{n}}) \to \mc{H}_{n + k + \ell}^{\phi^{-k}}$. These action maps make $\mc{H}' = \bigoplus_{n \in \NN}  \mc{H}_{n+k}^{\phi^{-k}}$ into another $\mc{R}$-module. Explicitly, since $\mc{H}_n = \sI_{Q_k}^{n-r} \cdot \sO_{\ell T_c} (a', b') \otimes \sL_n$, we have an isomorphism
\[
\mc{H}_{n+k}^{\phi^{-k}} \cong (\sI_{Q_k}^{n-r+k} \cdot \sO_{\ell T_c}(a', b' ) \otimes \sL_{n+k})^{\phi^{-k}} \cong
 \sI_Q^{n-r+k} \cdot \sO_{\ell Z}(a'', b'') \otimes \sL_n
\]
for the appropriate $a'', b''$. Since $\phi^{-k}$ is an isomorphism on $U$, pulling back by $\phi^{-k}$  induces an isomorphism in cohomology of any sheaf supported on $\phi^{-k}(U)$.  Thus the pullback by $\phi^{-k}$ induces a bijection $H^1(T, \mc{H})[k]_{\geq 0} \cong H^1(T, \mc{H}')$, and this is an isomorphism of $R$-modules since the pullback commutes with the action maps by construction. Combining this with the first shift we calculated, we get
\[
N[-m+k]_{\geq r-k} \cong  \bigoplus_{n \geq r-k} H^1(T, \sI_Q^{n-r+k} \cdot \sO_{\ell Z}(a'', b'') \otimes \sL_n),
\]
where the right hand side is a tail of a module considered in Lemma~\ref{lem-miracle}.  Recalling that $N  \cong \CM^1(\mc{F}_{\ell}, m)$, the result thus follows immediately from Lemma~\ref{lem-miracle}, with $d = r-k >0$, $a = a''$.  \end{proof}

Another consequence of Lemma~\ref{lem-miracle} is that it tells us $R^1p_*$ of certain twists of a fat point on $T$.  We record this as:
\begin{lemma}\label{lem-pushfwd1}
Let $\sh{I}$ be the ideal sheaf of the point $z \in T$, and let $a, b \in \ZZ$.  Let $g = p(z)$.    If
 $a\leq -2$ and $n \geq -a-1$,  then
\[ R^1p_* \sh{I}^n (n+a,b) \cong \struct_{(-a-1) g} \oplus \cdots \struct_g.
\]
Further, for $\ell, n \geq -a-1$ the natural map
\[ \gamma: R^1p_* \sI^n(n+a,b) \to R^1p_* (\sI^n(n+a, b)|_{\ell T_g}) \]
is an isomorphism.

If $a\geq -1$, then $R^1 p_* \sh{I}^n(n+a,b) = 0$ for all $n \geq 0$.
\end{lemma}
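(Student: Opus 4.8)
The plan is to reduce everything to a local computation at the point $g=p(z)$, where $R^1p_*\sI^n(n+a,b)$ is clearly supported: away from the fibre of $p$ through $z$ the sheaf $\sI^n$ agrees with $\sO_T$, and $\sO_T(n+a,b)$ restricts to $\sO_{\PP^1}(n+a)$ on every fibre of $p$, which has vanishing $H^1$ as soon as $n+a\ge -1$, i.e.\ $n\ge -a-1$. Let $nz$ denote the subscheme $V(\sI^n)$, a fat point of length $\binom{n+1}{2}$ supported at $z$, and consider
\[
0\to\sI^n(n+a,b)\to\sO_T(n+a,b)\to\sO_{nz}(n+a,b)\to 0 .
\]
Applying $Rp_*$ and using that $p$ has one-dimensional fibres (so $R^2p_*=0$), that $\sO_{nz}$ has finite support (so $R^1p_*\sO_{nz}(n+a,b)=0$), and that $R^1p_*\sO_T(n+a,b)\cong H^1(\PP^1,\sO_{\PP^1}(n+a))\otimes_\kk\sO_{\PP^1}(b)=0$ for $n\ge -a-1$ by the K\"unneth formula, I obtain, for all $n\ge\max(0,-a-1)$,
\[
R^1p_*\sI^n(n+a,b)\cong\coker\bigl(p_*\sO_T(n+a,b)\longrightarrow p_*\sO_{nz}(n+a,b)\bigr).
\]
(When $a\ge -1$ this holds for all $n\ge 0$, and the cokernel computation below shows it then vanishes.)

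Next I would compute this cokernel in a local affine neighbourhood of $g$. Choose a regular system of parameters $(s,v)$ of $\sO_{T,z}$ with $v=p^*(\text{local parameter at }g)$ and $s$ restricting to an affine coordinate on the fibre vanishing at $z$, so that $\sI_z=(s,v)$. Since a line bundle is locally trivial and the $\sO_{\PP^1}(b)$-twist is constant along the base, over $p^{-1}$ of a small neighbourhood of $g$ the sheaf $p_*\sO_T(n+a,b)$ is free with basis $1,s,\dots,s^{n+a}$ over $\sO_{\PP^1,g}$, while $p_*\sO_{nz}(n+a,b)\cong\kk[s,v]/(s,v)^n=\bigoplus_{j=0}^{n-1}s^j\,\kk[v]/(v^{n-j})$ as $\sO_{\PP^1,g}$-modules, and the restriction map sends $s^j$ to its residue class, i.e.\ $s^j\mapsto s^j$ for $j\le n-1$ and $s^j\mapsto 0$ otherwise. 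Hence if $a\ge -1$ the monomials $1,\dots,s^{n-1}$ already lie in the image and the cokernel is $0$; if $a\le -2$ the image is $\bigoplus_{j=0}^{n+a}s^j\,\kk[v]/(v^{n-j})$ and the cokernel is $\bigoplus_{j=n+a+1}^{n-1}s^j\,\kk[v]/(v^{n-j})$, which after reindexing $k=n-j$ is $\struct_g\oplus\struct_{2g}\oplus\cdots\oplus\struct_{(-a-1)g}$. This yields the first and last assertions.

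For the map $\gamma$, I would use the short exact sequence
\[
0\to\sI^n(n+a,b-\ell)\xrightarrow{\;v^\ell\;}\sI^n(n+a,b)\to\sI^n(n+a,b)|_{\ell T_g}\to 0 ,
\]
which is exact because $v^\ell$, regarded as the section of $\sO_T(0,\ell)=p^*\sO_{\PP^1}(\ell)$ cutting out $\ell T_g$, is a nonzerodivisor on the torsion-free sheaf $\sI^n(n+a,b)$, so the relevant $\shTor_1$ vanishes. Applying $Rp_*$: surjectivity of $\gamma$ is immediate from $R^2p_*=0$, and injectivity follows once I check that the map $R^1p_*\sI^n(n+a,b-\ell)\to R^1p_*\sI^n(n+a,b)$ in the long exact sequence is zero. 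By the projection formula $R^1p_*\sI^n(n+a,b-\ell)\cong R^1p_*\sI^n(n+a,b)\otimes\sO_{\PP^1}(-\ell)$ and this map is multiplication by $v^\ell$; but $R^1p_*\sI^n(n+a,b)\cong\bigoplus_{k=1}^{-a-1}\struct_{kg}$ is annihilated by $\mathfrak{m}_g^{\,-a-1}$, hence by $v^\ell$ as soon as $\ell\ge -a-1$. This gives the middle assertion.

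The argument is essentially elementary, and the only place needing care is the bookkeeping in the local model: correctly identifying the image of the restriction map as the $\sO_{\PP^1,g}$-submodule $\bigoplus_{j=0}^{n+a}s^j\,\kk[v]/(v^{n-j})$ of $\kk[s,v]/(s,v)^n$, and observing that the hypothesis $n\ge -a-1$ is exactly what forces $R^1p_*\sO_T(n+a,b)=0$, which is what makes the cokernel description legitimate in the first place.
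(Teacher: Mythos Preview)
Your proof is correct and takes a genuinely different route from the paper's.

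The paper reduces to $z=Q$, observes the support is at $q$, and then invokes the theorem on formal functions to identify the completed stalk $\widehat{(R^1p_*\sI^n(n+a,b))_q}$ with $\varprojlim_\ell H^1(\ell Z,\sI^n(n+a,b)|_{\ell Z})$; it then appeals to the technical Lemma~\ref{lem-miracle} (with $d=0$) to compute these \v Cech groups on thickened fibres, show the inverse system stabilises for $\ell\ge -a-1$, and read off both the module structure and the isomorphism $\gamma$. By contrast, you bypass formal functions entirely: you push forward the tautological sequence $0\to\sI^n\to\sO_T\to\sO_{nz}\to 0$, use K\"unneth to kill $R^1p_*\sO_T(n+a,b)$, and compute the cokernel of $p_*\sO_T(n+a,b)\to p_*\sO_{nz}$ directly in the local model $\kk[s,v]/(s,v)^n$. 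Your treatment of $\gamma$ via the projection formula and the observation that $R^1p_*\sI^n(n+a,b)$ is annihilated by $\mf m_g^{-a-1}$ is particularly clean.

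What each approach buys: yours is more elementary and self-contained for this lemma, and makes the dependence on the hypothesis $n\ge -a-1$ transparent (it is exactly what kills $R^1p_*\sO_T(n+a,b)$). The paper's route is not really shorter, but it leverages Lemma~\ref{lem-miracle}, which is needed anyway for the $R$-module structure and noetherianity of the cohomology modules in Lemma~\ref{lem-Nnoeth}; the paper thus treats the present lemma as a corollary of machinery already in place. Your argument shows that this particular lemma stands on its own without that machinery.
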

\begin{proof}
The result of the lemma does not depend on the choice of point $z$, so we may assume that $z = Q$. Let $U:=  \mb{P}^1 \smallsetminus \{q\}.$ On $V := p^{-1}(U)$ we have
 $\sh{I}^n(n+a,b) \vert_V \cong \sO(n+a, b) \vert_V$.
Since $R^1 p_*$ is local on the base, we have
\[
\bigl(R^1p_* \sO(n+a, b) \bigr) \vert_U \cong H^1(V, \mc{O}(n + a, b) \vert_{V}) \cong H^1(\mb{P}^1_U, \mc{O}_{\mb{P}^1_U}(n + a)) = 0
\]
since $n+a \geq -1$ in all cases.  Thus $R^1p_* \sh{I}^n(n+a,b)$ is supported at $\{q\}$.

We apply the theorem on formal functions \cite[Theorem~III.11.1]{Ha}.  For some $\ell, n \geq 0$, consider the commutative diagram
\begin{equation}
\label{bigsquare-eq} \xymatrix{
 \widehat{(R^1 p_* \sh{I}^n(n+a,b))_q} \ar[d] \ar[r]^{\cong} &  \varprojlim_{\ell} H^1(\ell Z, \sh{I}^n(n+a,b)|_{ \ell Z})\ar[d] \\
R^1p_* \sI^n(n+a,b) \otimes \sO_{\PP^1,q}/\mf{m}^{\ell}_q \ar[r] &  R^1p_* (\sI^n(n+a, b)|_{\ell Z}). }
\end{equation}
Here, the top row is the isomorphism guaranteed by the theorem on formal functions, and the bottom row is the natural morphism between
the $\ell$th terms of the respective inverse limits.  Note that
\[
H^1(\ell Z, (\sh{I}^n(n+a,b))|_{ \ell Z}) \to H^1(\ell Z, \sh{I}^n \cdot (\mc{O}_T(n+a,b)|_{ \ell Z}))
\]
is an isomorphism, since moving the $\sh{I}^n$ outside of the restriction changes the sheaf on a $0$-dimensional set at most. Thus taking $d = 0$ in Lemma~\ref{lem-miracle} tells us exactly about the inverse limit $\varprojlim_{\ell} H^1(\ell Z, \sh{I}^n(n+a,b)|_{ \ell Z})$.
That lemma shows that the limit is trivially $0$ if $a \geq -1$; while if $a \leq -2$, then the maps in the limit stabilize for all $\ell, n \geq -a-1$, and thus the right-hand map in \eqref{bigsquare-eq} is an isomorphism for such $\ell, n$.  So the limit is isomorphic to
$\struct_{(-a-1) z} \oplus \cdots \struct_z$ as claimed.  This also shows that the limit is supported on $\sO_{\PP^1,q}/\mf{m}^{\ell}_q$
and so the left-hand map in \eqref{bigsquare-eq}, and in fact the natural map
\[ R^1p_* \sI^n(n+a,b) \to R^1p_* \sI^n(n+a,b) \otimes \sO_{\PP^1,q}/\mf{m}^{\ell}_q,\] are isomorphisms.  Then for $\ell, n \geq -a-1$ the bottom arrow of \eqref{bigsquare-eq} is an isomorphism, and finally it follows that the natural map $\gamma$ of the lemma statement is an isomorphism as well.
\end{proof}

We now apply the lemma to compute $R^1p_* \sO(a,b) \otimes \sR^{\phi^m}$.

\begin{corollary}\label{cor-R1p-R}
Let  $a, b \in \ZZ$  and $m, n \in \NN$. If $n \geq -a-1$, the length of $R^1p_* \sh{R}_n^{\phi^m}(a,b)$ is $2(m \binom{-a}{2} + \binom{-a}{3})$; in particular, $R^1p_* \sh{R}_n^{\phi^m}(a,b) = 0$ if $a \geq -1$. When $a \leq -2$, the (scheme-theoretic) support of $R^1p_* \sh{R}_n^{\phi^m}(a,b)$ is equal to
\[C(a,m) := (-a-1)f_0 + \cdots + (-a-1)f_{m-1} + (-a-2)f_m + \cdots + f_{m-a-3} + (-a-1) q_0 + \cdots +q_{m-a-3}.\]
In particular, this support does not depend on $n \geq -a-1$.
\end{corollary}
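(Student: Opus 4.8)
The plan is to reduce the computation to repeated applications of Lemma~\ref{lem-pushfwd1}, using the factorization $\sR_n^{\phi^m} = \sI_n^{\phi^m}\,\sL_n^{\phi^m}$ and the explicit description of the fat-point scheme $B_n^m$ from Proposition~\ref{prop-0dim}. First I would recall that $\sR_n^{\phi^m}(a,b) \cong \sI_n^{\phi^m}\otimes \sL_n^{\phi^m}(a,b)$, and that by Proposition~\ref{prop-fund-pts}(3) we have $\sL_n^{\phi^m}\cong \sO(n, \binom{n+m+1}{2}-\binom{m+1}{2})$, so the second coordinate of the twist plays no essential role for $R^1p_*$ (it only affects the fiberwise degree along the fibers of $p$, and $R^1p_*$ is computed fiber by fiber). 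Since $p$ projects onto the second factor, the relevant fiberwise degree is the \emph{first} coordinate, namely $n+a$; so $R^1p_*\sR_n^{\phi^m}(a,b)$ is controlled entirely by the ideal sheaf $\sI_n^{\phi^m}$ twisted so that the fiberwise degree is $n+a$, which is exactly the setup of Lemma~\ref{lem-pushfwd1} applied at each of the points $F_j$, $Q_j$ in the support of $B_n^m$.

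Next I would argue locally at each point in the support of $B_n^m$. By Proposition~\ref{prop-0dim}, $\sI_n^{\phi^m}$ is the ideal of the fat-point scheme $nF_0+\cdots+nF_{m-1}+(n-1)F_m+\cdots+F_{n+m-2}$ together with the symmetric chain for $Q$; since $\tau$ is general, all these points lie on distinct fibers of $p$, so $R^1p_*$ decomposes as a direct sum over the points, and at a point of multiplicity $\mu$ lying over $g\in\PP^1$, Lemma~\ref{lem-pushfwd1} (with $n$ there replaced by $\mu$, and fiberwise twist $\mu + a'$ where $a' = (n+a)-\mu$, i.e. the parameter ``$a$'' of that lemma is $n+a-\mu$) tells us the local contribution is $\struct_g\oplus\cdots\oplus\struct_{(-(n+a-\mu)-1)g}$ when $n+a-\mu\le -2$ and $0$ otherwise. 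For a point $F_j$ with $0\le j\le m-1$ the multiplicity is $\mu=n$, so the parameter is $n+a-n = a$, giving a nonzero contribution of length $\binom{-a}{1}=-a-1$ precisely when $a\le -2$; for $F_{m+i}$ with $0\le i\le n-2$ the multiplicity is $\mu = n-1-i$, so the parameter is $a+1+i$, giving length $-(a+1+i)-1 = -a-i-2$ when $a+i\le -3$, i.e. for $0\le i\le -a-3$. Summing: the $F$-chain contributes, for $a\le -2$, the points $(-a-1)f_0+\cdots+(-a-1)f_{m-1}+(-a-2)f_m+\cdots+1\cdot f_{m-a-3}$, and symmetrically for the $Q$-chain, which is exactly the scheme $C(a,m)$ claimed. (When $a\ge -1$, every parameter $\ge -1$, so everything vanishes.) One must check the scheme-theoretic, not merely set-theoretic, support: Lemma~\ref{lem-pushfwd1} gives $R^1p_*\sI^\mu(\mu+a',b)\cong \struct_g\oplus\struct_{2g}\oplus\cdots\oplus\struct_{(-a'-1)g}$ as $\sO_{\PP^1}$-modules, and the largest infinitesimal neighborhood appearing, $\struct_{(-a'-1)g}$, pins down the scheme structure of the support at that point as $(-a'-1)g$.

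For the length count, with $a\le -2$ I would just add up the lengths of the local pieces: the $F_0,\dots,F_{m-1}$ contributions each have length $1+2+\cdots+(-a-1) = \binom{-a}{2}$ (summing the lengths $\len\struct_g + \len\struct_{2g}+\cdots = 1+2+\cdots$), the $F_m$ contribution has length $\binom{-a-1}{2}$, and so on down to length $\binom{2}{2}$ at $F_{m-a-3}$; so the total $F$-contribution is $m\binom{-a}{2}+\binom{-a-1}{2}+\binom{-a-2}{2}+\cdots+\binom{2}{2} = m\binom{-a}{2}+\binom{-a}{3}$ by the hockey-stick identity, and doubling for the symmetric $Q$-chain gives $2(m\binom{-a}{2}+\binom{-a}{3})$, matching the statement; and this is manifestly $0$ when $a\ge -1$ since all the binomial coefficients $\binom{-a}{2}$, $\binom{-a}{3}$ vanish. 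Independence of $n\ge -a-1$ is then clear, since for such $n$ every point $F_j$, $Q_j$ appearing in the above has multiplicity large enough that its parameter in Lemma~\ref{lem-pushfwd1} is the $n$-independent value $a, a+1, \dots$ described above, and the hypothesis $n\ge -a-1$ of that lemma is exactly what is needed at each point. The main obstacle I anticipate is purely bookkeeping: correctly matching the multiplicity chains of Proposition~\ref{prop-0dim} against the ``$a$''-parameter normalization of Lemma~\ref{lem-pushfwd1} (which is phrased with the fiberwise twist written as $\mu + a$ for a point of multiplicity $\mu$), and checking that the fiber indices $f_j, q_j$ get truncated at exactly $j = m-a-3$; I do not expect any conceptual difficulty beyond carefully applying the already-established Lemma~\ref{lem-miracle}/Lemma~\ref{lem-pushfwd1} fiber by fiber and using that the fibers are distinct by genericity of $\tau$.
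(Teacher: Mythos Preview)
Your proposal is correct and follows essentially the same approach as the paper's proof: both localize $\sR_n^{\phi^m}(a,b)\cong\sI_n^{\phi^m}(n+a,b')$ around each of the (disjoint) fibers containing a point of $B_n^m$, read off the local multiplicity $e_k$ from Proposition~\ref{prop-0dim}, apply Lemma~\ref{lem-pushfwd1} with the parameter $a' = n+a-e_k$ to get the contribution $\max(0,-a'-1)$ to the scheme-theoretic support at each $q_k$ (and symmetrically at $f_k$), and then sum $\binom{-a'}{2}$ over all points via the hockey-stick identity to obtain the length. Your bookkeeping (indexing the decreasing chain by $i=k-m$ rather than by $k$) differs only cosmetically from the paper's, and your verification that the global hypothesis $n\geq -a-1$ forces the local hypothesis of Lemma~\ref{lem-pushfwd1} at every point is exactly the check the paper leaves implicit.
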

\begin{proof}
By our choice of $\tau$, the points $f_i, q_j$ are all distinct.  Let $n \geq -a-1$.  We have  $\sh{R}_n^{\phi^m}(a,b) \cong \mc{I}_n^{\phi^m}(n+a, b')$, where $b':= \binom{n+1}{2} +nm+b$.  For fixed $k \geq 0$, the multiplicity $e_k$ of $\mc{I}_n^{\phi^m}$ locally at $Q_k$ can be calculated as in \eqref{mult-cases-eq}.  In particular, if $\mc{I}$ is the ideal sheaf of $Q_k$, then in a small enough neighborhood $V$ of $q_k$ we have $(\mc{I}_n^{\phi^m}(n+a, b')) \vert_{p^{-1}(V)} \cong \mc{I}^{e_k}(e_k + [n + a -e_k], b') \vert_{p^{-1}(V)}$.  Here, $a' := n + a - e_k$ satisfies $a' = a$ if $k \leq m -1$, $a' = a - m + k + 1$ if $m-1 \leq k \leq n + m -1$, and $a' = n + a \geq -1$ if $k \geq n + m -1$.    It is now immediate from Lemma~\ref{lem-pushfwd1} that $R^1p_* \sh{R}_n^{\phi^m}(a,b)$ has multiplicity $\max(0,-a'-1)$ at $q_k$; since a symmetric result holds locally at the $f_k$, this shows exactly that $R^1p_* \sh{R}_n^{\phi^m}(a,b)$ is supported scheme-theoretically at $C(a,m)$.

Considering the lengths of the sheaves in Lemma~\ref{lem-pushfwd1} gives
\begin{multline*}
 \len(R^1p_* \sh{I}_n^{\phi^m}(n+a,b')) =
2 (m \binom{-a}{2} + \binom{-a-1}{2} + \binom{-a-2}{2} + \cdots + \binom{2}{2} + 0) \\
= 2 (m \binom{-a}{2} + \binom{-a}{3}).
\end{multline*}
This gives the first statement.
\end{proof}
\begin{remark}
\label{rem-notample} We can now justify the earlier assertion that the sheaves $\{\sR^{\phi^m}_n\}_{n \in \NN}$ do not form an ample sequence in the sense of \cite{VdB1997}.  By the corollary, if $m \geq 1$ and $a \leq -2$, then using Proposition~\ref{prop-5} we have
\[\dim_{\kk} H^1(T, \sO(a,b)\otimes \sR_n^{\phi^m}) \geq \dim_{\kk} H^0(\PP^1, R^1p_* \sO(a,b) \otimes \sR_n^{\phi^m}) > 0\]
for all $n \geq -a-1$.  If $m=0$ we must take $a \leq -3$.
\end{remark}
We are now ready to finish the proof that the modules $\QM(\sO(a,b),m)$ are noetherian.

\begin{theorem}\label{thm-Q}
Let $\sh{F}$ be an invertible sheaf on $T$  and let  $m \in \NN$.  Then the right $R$-module $\QM(\sF,m)$ is noetherian and is, up to finite dimension, an extension of finitely many point modules.
\end{theorem}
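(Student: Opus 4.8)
Since $\mathcal{F}$ is invertible we may take $\mathcal{F} = \struct(a,b)$, and the plan is to reduce the computation of $\QM(\struct(a,b),m)$ to the cohomology modules $\CM^1(\struct(a,b)|_{\ell T_c},m)$ already handled in Lemma~\ref{lem-Nnoeth}. First I would dispose of low degrees: there are only finitely many $n$ with $n < -a-1$, and for each $H^0(\PP^1, R^1p_*(\struct(a,b)\otimes\sR_n^{\phi^m}))$ is finite dimensional, so $\QM(\struct(a,b),m)$ differs by a finite-dimensional module from its truncation $\QM(\struct(a,b),m)_{\geq -a-1}$, which is an $R$-submodule since $R$ is $\NN$-graded; as finite-dimensional modules are noetherian, it suffices to treat this truncation. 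By Corollary~\ref{cor-R1p-R}, for $n \geq -a-1$ the sheaf $R^1p_*(\struct(a,b)\otimes\sR_n^{\phi^m})$ is supported on the fixed finite set of points underlying $C(a,m)$, that is, on finitely many of the $f_i$ and $q_j$. Because $R^1p_*$ and $H^0(\PP^1,\blank)$ are local on the base, and the multiplication maps defining the $R$-action (as in Lemma~\ref{lem-cohom-module}(1) and Proposition~\ref{prop-5}) respect this localization, the truncated module splits as a \emph{finite direct sum of $R$-submodules}, one $\QM^{(c)}$ supported at each point $c$ of that set. It is then enough to show each $\QM^{(c)}$ is noetherian and, up to finite dimension, an extension of finitely many point modules.

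Fix such a $c$, say $c = q_k$; the case $c = f_k$ is identical using the corresponding halves of Lemmas~\ref{lem-pushfwd1} and~\ref{lem-Nnoeth}. For a fat fiber $\ell T_c$, the surjection $\struct(a,b)\thra \struct(a,b)|_{\ell T_c}$ induces, by functoriality of $\QM(\blank,m)$ (noted after Proposition~\ref{prop-5}), an $R$-module map $\QM(\struct(a,b),m)\to \QM(\struct(a,b)|_{\ell T_c},m)$; since the target is supported at $q_k$ this kills all summands except $\QM^{(q_k)}$, and on that summand it is, in each degree $n$, the map $R^1p_*(\struct(a,b)\otimes\sR_n^{\phi^m})\to R^1p_*((\struct(a,b)\otimes\sR_n^{\phi^m})|_{\ell T_c})$ localized at $q_k$ — which depends only on the sheaf near $q_k$. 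There, by Lemma~\ref{lem-pointchains1} (valid since $o(\theta)=\infty$) and Proposition~\ref{prop-fund-pts}(3), $\struct(a,b)\otimes\sR_n^{\phi^m}\cong \sI_{Q_k}^{e_k}(n+a,\star)$ with $e_k$ the multiplicity from \eqref{mult-cases-eq}. The crucial observation is that $n+a-e_k$ is \emph{independent of $n$} in the ranges of $k$ occurring in $C(a,m)$: it equals $a$ when $k\leq m-1$ (where $e_k=n$) and $a-m+k+1$ when $m-1\leq k\leq n+m-1$ (where $e_k=n+m-k-1$), and it is $\leq -2$ precisely when $q_k$ lies in the support. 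Consequently Lemma~\ref{lem-pushfwd1}, applied with ``$n$'' $=e_k$ and ``$a$'' $=n+a-e_k$, holds with a \emph{single fixed} $\ell$ (any $\ell\geq -(n+a-e_k)-1$, now a constant), and shows that for all $n\geq -a-1$ this localized restriction map is an isomorphism. Hence $\QM^{(q_k)}$ and $\QM(\struct(a,b)|_{\ell T_c},m)$ agree in all sufficiently large degrees.

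Finally, $\QM(\struct(a,b)|_{\ell T_c},m)$ is the module we want: its companion $\KM(\struct(a,b)|_{\ell T_c},m)$ vanishes, since $p_*$ of a sheaf supported on the zero-dimensional scheme $\ell c$ is a torsion sheaf on $\PP^1$ and has no $H^1$, so \eqref{LSS}/Proposition~\ref{prop-5} gives $\QM(\struct(a,b)|_{\ell T_c},m)\cong \CM^1(\struct(a,b)|_{\ell T_c},m)$, which by Lemma~\ref{lem-Nnoeth} is noetherian and, up to finite dimension, an extension of finitely many point modules. A module agreeing in high degrees with a noetherian module of this kind is itself noetherian and, up to finite dimension, an extension of finitely many (possibly shifted) point modules; summing over the finitely many $c$ and adding back the finite-dimensional low-degree part then proves the theorem. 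The main obstacle is the bookkeeping in the middle step: checking that the restriction map is genuinely $R$-linear and, above all, that one fixed truncation level $\ell$ of the fat fiber works simultaneously for all degrees $n$, which rests entirely on the observation that the twist $n+a-e_k$ remaining after peeling off the local multiplicity is constant in $n$, so Lemma~\ref{lem-pushfwd1} applies uniformly.
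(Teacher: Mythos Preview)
Your proof is correct and follows essentially the same approach as the paper: reduce $\QM(\struct(a,b),m)$ via restriction maps to the fat-fiber cohomology modules $\CM^1(\struct(a,b)|_{\ell T_c},m)$, use Lemma~\ref{lem-pushfwd1} to show these restriction maps are isomorphisms in high degree for a single fixed $\ell$, and then invoke Lemma~\ref{lem-Nnoeth}. The only organizational difference is that the paper packages the restrictions into one map $\sF \to \bigoplus_{c \in C}\sF|_{\ell T_c}$ and asserts the resulting $R^1p_*$ map is an isomorphism for $n,\ell \geq -a-1$, whereas you localize first into summands $\QM^{(c)}$ and spell out explicitly the key point (left implicit in the paper, though present in the proof of Corollary~\ref{cor-R1p-R}) that $n+a-e_k$ is independent of $n$, which is exactly what makes a uniform $\ell$ work in Lemma~\ref{lem-pushfwd1}.
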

\begin{proof}
 Recall that
\[\QM(\sh{F}, m) = \bigoplus_{n\geq 0} H^0(\PP^1,R^1p_* (\sh{F} \otimes \sh{R}_n^{\phi^m})).\]
Let $\sF \cong \struct(a,b)$ for some $a,b \in \ZZ$. If $a \geq -1$, then $\QM(\sF,m)=0$ by Corollary~\ref{cor-R1p-R}. Thus without loss of generality we may assume that $a \leq -2$. Let $C:=C(a,m)$ as defined in Corollary~\ref{cor-R1p-R}.

For any $\ell \geq 1$, consider the natural map \beq\label{F-restrict} \sF \to \bigoplus_{c \in C} \sF|_{\ell T_c} \eeq of sheaves on $T$. This induces a natural map \beq\label{fn} \xymatrix{
 R^1p_*(\sF\otimes \sR_n^{\phi^m}) \ar[r] & \bigoplus_{c \in C} R^1p_* (\sF|_{\ell T_c} \otimes \sR_n^{\phi^m}). }
\eeq By Lemma~\ref{lem-pushfwd1}, this map is an isomorphism for $n, \ell \geq -a-1$.

Notice that if we sum \eqref{fn} over all $n$ and take global sections, we have precisely the module homomorphism $\QM(\sF, m) \to \bigoplus_{c \in C} \QM(\sF|_{\ell T_c}, m)$ induced by \eqref{F-restrict}.  Setting $n_0 = \ell_0 = -a-1$, then \[ \QM(\sF, m)_{\geq n_0} \cong \bigoplus_{c\in C} \QM(\sF|_{\ell T_c}, m)_{\geq n_0}\] for all $\ell \geq \ell_0$. Since each $\sF|_{\ell T_c}$ is supported on  a (fat) fiber of $p$, the modules $\KM(\sF|_{\ell T_c}, m)$ all vanish, and from Proposition~\ref{prop-5} we obtain that $\QM(\sF|_{\ell T_c},m) \cong \CM^1(\sF|_{\ell T_c}, m)$.
Thus
\[\QM(\sh{F}, m)_{\geq n_0} \cong \bigoplus_{c \in C} \CM^1( \sh{F} \vert_{\ell_0 T_c}, m)_{\geq n_0}.\]
The result follows from Lemma~\ref{lem-Nnoeth}.
\end{proof}

We now give the delayed proof of the main technical lemma, Lemma~\ref{lem-miracle}.
\begin{proof}[Proof of Lemma~\ref{lem-miracle}]

The proof is somewhat long, so we break it up into steps.

\noindent \textbf{Step 1:  Setting up identifications.}

Recall that $d \geq 0,  a, b \in \mb{Z}$ are fixed, and we have sheaves $\sH(\ell)_{n}:= \sI_Q^{n-d}\cdot \sO_{\ell Z}(aX+bW)\otimes \sL_n$ and a cohomology module $H(\ell):= \bigoplus_{n \geq \max(1,d)} H^1(T, \sH(\ell)_n)$.  Fix $\ell \geq \max(1, -a-1)$ and let $\mc{H}_n := \mc{H}(\ell)_{n}$ and $H:= H(\ell)$.

Let $U^+:= \ell Z \cap (T \smallsetminus Y)$ and let $U^- := \ell Z \cap (T \smallsetminus X)$ be charts on $\ell Z$. Let $S^+:= \kk[u, v]/(v^{\ell})$ and let $S^-:= \kk[u^{-1},v]/(v^{\ell})$.  We identify $U^+$ with $\spec S^+$ and $U^-$ with $\spec S^-$. Let $U^{\pm}:= U^+ \cap U^-$ and let $S^{\pm}:= \sO(U^{\pm}) = \kk[u,u^{-1}, v]/(v^{\ell})$.  We will use \v{C}ech cohomology on this open cover to compute $H_n = H^1(T, \mc{H}_{n})$ and to study the $R$-action on $H$.  We will show that $H$ is (up to finite dimension) an extension of $\binom{-a-d}{2}$ modules with Hilbert series $1/(1-s)$, each of which is a shifted point module.  It will follow that $H$ itself is noetherian.

Let $n_0 =  \max(d, -a-1)$ and let $n \geq n_0$ in the following calculations. We will regard both $\sH_n$ and $\sL_n(aX+bW)|_{\ell Z}$ as subsheaves of the constant sheaf $\big(\kk(u)[v]/(v^{\ell})\big) t^n$ on $\ell Z$, where $t^n$ is a placeholder to remind us that the action of $R$ is twisted by the appropriate power of $\varphi$. Let $V$ be an open subset of $\ell Z$.  If $s = \alpha(u,v) t \in R_1$ then the multiplication map
\[ \sH_n(V)  \otimes \kk s \to \sH_{n+1}(V)  \subseteq \big(\kk(u)[v]/(v^{\ell})\big) t^{n+1} \]
is given explicitly by $f(u,v) t^n \otimes s \mapsto f\alpha^{\phi^n} t^{n+1}$.

\noindent \textbf{Step 2:  Identifying a basis for the cohomology group.}

Since $\tau$ is general, $W^{\phi^i} \neq Z$ for any $i$.  Recalling that $\mc{L} = \mc{O}_T(Y + W)$ and so
\[\mc{L}_n = \mc{O}_T(Y + Y^{\phi} + \cdots+ Y^{\phi^{n-1}} + W + W^{\phi} + \cdots + W^{\phi^{n-1}}),\]
we get
\[ \sL_n(aX+bW)|_{\ell Z} = \sO_{\ell Z} (aX + Y + Y^{\phi} + \cdots+ Y^{\phi^{n-1}})t^n.\]
As a divisor, $(aX + Y + Y^{\phi} + \cdots+ Y^{\phi^{n-1}})|_Z$ is concentrated at $P$ and $Q$.   We thus have $\sH_n(U^\pm) = \sO_{\ell Z}(U^\pm) t^n = S^{\pm} t^n$ under these identifications.  Further, if $U= U^+$ or $U= U^-$, then under these identifications $\sH_n(U)$ is contained in $S^{\pm}t^n$. The image of the \v{C}ech differential
\[ \partial_n:  \sH_n(U^+) \oplus \sH_n(U^-) \to \sH_n(U^\pm)\]
is thus equal to $\sH_n(U^+)+ \sH_n(U^-) \subseteq \sH_n(U^\pm)$.

We claim that for $n \geq n_0$, the image of $\partial_n$ is equal to the vector space  $L t^n$, where
\begin{multline*}
  L = \\
 \spann(u^i v^j \st \max(0,-a-d) \leq j < \ell, \mbox{ or } 0 \leq j \leq  -a-d-1 \mbox{ and } i \geq - a, \mbox{ or }
  0 \leq j \leq -a-d-1 \mbox{ and } i\leq j+d) .
\end{multline*}
In particular, $L = S^{\pm}$ if $a+d  \geq -1$. Assume the claim for the moment.  This allows us to identify $H_n$ with $\displaystyle \left( S^{\pm}/L  \right)  t^n$, that is with the span of (the images of) the monomials
\[ \{ u^i v^j  \st 0 \leq j \leq i-d-1, d+1 \leq i \leq -a-1\} \cdot t^n.\]
If  we fix $i$ in the range $d+1 \leq i \leq -a-1$, we obtain the $i-d$ monomials $\{u^i, u^i v, \ldots, u^i v^{i-d-1}\} \cdot t^n$.   We see that $u^i \kk[v]/(v^{i-d})t^n$ is  a direct summand of $H^1(\ell Z, \mc{H}_n)$ as a $\kk[v]/(v^{\ell})$-module. Since $v$ is a local coordinate on the base $\PP^1$ at $q$, we have
\[ H^1(\ell Z, \mc{H}_n) \cong \sO_{ q} \oplus \sO_{2 q} \oplus \cdots \oplus \sO_{(-a-d-1)q}\]
as claimed in  the statement of the lemma.

For any $d \geq 0$, multiplication by $t \in R_1$ takes $\sH_n(U^\pm) = S^\pm t^n$ to $\sH_{n+1}(U^\pm) = S^\pm t^{n+1}$ and takes $\im
\partial_n = Lt^n$ to $\im \partial_{n+1}= L t^{n+1}$.  Since $H_n = \coker \partial_n$, multiplication by $t$ induces a bijection
$\mu_t : H_n \to H_{n+1}$.  The claim that restriction induces an isomorphism $H(\ell+1)_{n} \to H(\ell)_n$ is also immediate.

\noindent \textbf{Step 3:  Proving the claim that $\im \partial_n = Lt^n$.}

We drop the $t^n$ coefficients and identify $\sH_n(U)$  with a subspace of $S^{\pm}$, for $U = U^+, U^-, U^{\pm}$.  Let $J_k := v^k S^{\pm}$. We will show that $J_{ k} \cap L = J_{ k} \cap \im \partial_n$ for all $k\geq 0$.

By Proposition~\ref{prop-fund-pts}, $Q \in Y^{\phi^i} \cap Z$ for $i \geq 0$.  Since $(Y^{\phi^i}.Z) = 1$, the curve $Y^{\phi^i}$ meets $Z$ transversely at $Q$, and nowhere else.  On the other hand, $X \cap Z = \{P\}$.  Since $U^+ = U \smallsetminus \{ Q \}$, we have
\[ \sH_n(U^+) = \mc{O}_{\ell Z}(aX + Y + Y^{\phi} + \dots + Y^{\phi^{n-1}})(U^+) = u^{-a} S^+.\]

To compute $\sH_n(U^-)$, let $x\eta + y\xi$ be the $(1,i)$-form defining $Y^{\phi^i}$, where $\eta, \xi$ are homogeneous of degree $i$ in $\kk[w, z]$.   Since $Y^{\phi^i}$ meets $Z$ transversely at $Q$, the germ of $\eta$ in $\kk[v]/(v^{\ell})$ is contained in the maximal ideal of $\kk[v]/(v^{\ell})$, and the germ of $\xi$ is invertible.  Thus $Y^{\phi^i}$ is defined on $U^-$ by $r_i := u^{-1} + \alpha_i$, where $\alpha_i \in v \kk[v]/(v^{\ell})$.  In $S^{\pm}$ we have $r_i^{-1} = u(1-\alpha_i u + (\alpha_i u)^2 - \cdots \pm (\alpha_i u)^{\ell-1})$. Let $s:= r_0 r_1 \cdots r_{n-1}$,
and let $h:= s^{-1} u^{-n}$.
Thus
\[
\sL_n(aX+bW)|_{\ell Z}(U^-) = (aX + Y + Y^{\phi} + \cdots + Y^{\phi^{n-1}})|_{\ell Z}(U^-) = s^{-1} S^- = u^n h S^-.
\]

For $0 \leq i \leq \ell-1$, there are elements $\epsilon_i \in v^i\kk[v]/(v^{\ell})$ so that
 so that $h = 1 + \epsilon_1 u + \epsilon_2 u^2 + \cdots + \epsilon_{\ell-1} u^{\ell-1}$.
Multiplying by $\sI_Q^{n-d}$, we have
\[ \sH_n(U^-) = (u^{-1}, v)^{n-d} u^n h S^- = u^dhS^- + u^{d+1}vhS^- + \cdots + u^n v^{n-d} h S^-.\]

Now that we have a detailed description of $\sH_n(U^-)$ and $\sH_n(U^+)$, we make some observations.  First, $\im \partial_n \subseteq L$. To see this, note that since $S^-$ is spanned by monomials in $\kk[u^{-1}, v]$, therefore $\sH_n(U^-)$ is spanned by elements of the form
\[ u^iv^j h = u^i v^j + u^{i+1} v^j \epsilon_1 + u^{i+2} v^j \epsilon_2 + \cdots + u^{i + \ell  -1}v^j \epsilon_{\ell -1}  \]
where $i \leq \min(n, j + d)$ and  $0 \leq j \leq \ell$. Since $v^k | \epsilon_k$ for each $k$, $u^iv^j h$ is a sum of terms of the form $u^{i'}v^{j'}$ with $i' \leq j' + d$, and these are all in $L$ by definition.  Obviously $\sH_n(U^+) \subseteq L$, so the first observation follows.  Second, we observe that for each monomial $u^iv^j \in L$ there is an element in $\im \partial_n$ of the form $u^iv^j + s$ where $s \in J_{ j + 1} \cap L$.  If $i \geq -a$ this is obvious since  $u^{-a} S^+ = \sH_n(U^+) \subseteq \im \partial_n$; otherwise, we have $u^iv^j$ with $i \leq j + d$ and $i \leq -a-1$, so in particular $i \leq n$ by the choice of $n$.   The observation now follows  by the analysis above of the elements $u^iv^j h$ which span $\sH_n(U^-)$.

Given the two observations above, a simple downward induction on $k$ proves that $J_{ k} \cap L = J_{ k} \cap \im \partial_n$ for all $0 \leq k \leq \ell$.  Taking $k = 0$, we obtain that $L = \im \partial_n$ as claimed.

\noindent \textbf{Step 4:  Finding the filtration by point modules.}

We next show that there is a complete flag on the vector space $S^\pm/L$ that induces an $R$-module filtration of $H$ whose subfactors are point modules.
To see this, let us study further the action of $R$ on the sheaf $\sH := \bigoplus \sH_n$.
As we have already noted, any $s \in R_1$ acts on $\sH_n$ and its cohomology as the image of $s^{\phi^n}$ under the natural maps
\[   H^0(T, \sR_1^{\phi^n}) \to H^0(T, \sR_1^{\phi^n}|_{\ell Z}) \to H^0(\ell Z, \sI_Q \cdot \sL_1^{\phi^n}|_{\ell Z})\cdot t.
\]
Let $\sR':= \sI_Q \cdot \sL_1^{\phi^n}|_{\ell Z}$.

We use the open cover $U^+, U^-$ of $\ell Z$ to compute $H^0(\ell Z, \sR')$.  Recall that $Y^{\phi^n}|_{\ell Z}$ is defined on $U^-$ by $r_n = u^{-1} + \alpha_n$, for some $\alpha_n \in v \kk[v]/(v^{\ell})$.  Let $g: = 1 - u \alpha_n + (u \alpha_n)^2 - \cdots \pm (u \alpha_n)^{\ell-1}$, so $r_n^{-1} = u g$. Note that $g \in S^+ \smallsetminus u S^+$.  Since
\[
\mc{L}_1^{\phi^n}|_{\ell Z} = \mc{O}(W^{\phi^n} + Y^{\phi^n})|_{\ell Z} =  \mc{O}_{\ell Z}(Y^{\phi^n}),
\]
we compute that
\beq\label{action} H^0(\ell Z, \sR') = \Bigl( (u^{-1}, v) \cdot ug S^- \cap S^+ \Bigr)  = \Bigl( g \kk[v]/(v^{\ell}) + uv g \kk[v]/(v^{\ell}) \Bigr). \eeq

We now begin to construct the flag on $S^\pm/L$. For $d \leq e \leq -a-1$, define
\[ V(e) := \spann( u^i v^j + L \st i \leq j + e ) \subseteq S^\pm/L.\]
By \eqref{action}, if $f \in H^0(\ell Z, \sR')$, then $V(e) \cdot f \subseteq V(e)$.  Since $R_1$ acts on $H_n = (S^\pm/L) t^n$
 as its image in  $H^0(T, \sR')\cdot t$, the vector spaces $\underline{V}(e) := \bigoplus_{n \geq n_0} V(e) t^n$
are $R$-submodules of $H_{\geq n_0}$.  We therefore  have a  chain of $R$-modules
\[ 0 = \underline{V}(d) \subseteq \underline{V}(d+1) \subseteq \cdots \subseteq \underline{V}(-a-1) = H_{\geq n_0}.\]
Note from Step 2 that the multiplication-by-$t$ map $\mu_t$ induces a bijection from $\underline{V}(e)_n$ to $\underline{V}(e)_{n+1}$, for $n \geq n_0$.

Fix $d+1 \leq e \leq -a-1$ and consider the subfactor
\[ \frac{\underline{V}(e)}{\underline{V}(e-1)} = \bigoplus_{n \geq n_0} \frac{V(e)}{V(e-1)} t^n.\]
Now, $V(e)/V(e-1)$ has basis
\[ u^e + V(e-1), \ u^{e+1} v + V(e-1), \ \ldots, \ u^{-a-1}v^{-a-1-e} + V(e-1).\]
For $0 \leq c \leq -a-e$, let
\[ W(c):= \spann(u^{j+e} v^j + V(e-1) \st j \geq c) \subseteq V(e)/V(e-1).\]
By \eqref{action}, multiplication by $f \in H^0(\ell Z, \sR')$ preserves the spaces $W(c)$.  Thus $\underline{W}(c) := \bigoplus_{n \geq n_0} W(c) t^n$ is an $R$-submodule of $\underline{V}(e)/\underline{V}(e-1)$, and there is a  chain
\[ 0 = \underline{W}(-a-e) \subseteq \underline{W}(-a-e-1) \subseteq \cdots \subseteq \underline{W}(0) =
\underline{V}(e)/\underline{V}(e-1)\] of $R$-modules.  Again, the map $\mu_t$ gives a bijection from $\underline{W}(c)_n \to \underline{W}(c)_{n+1}$ for $n \geq n_0$.

For $0 \leq c \leq -a-e-1$ and $n \geq n_0$, the element $a_n := \bigl(u^{c+e} v^c + W(c+1)\bigr)t^n$ generates the 1-dimensional vector space $(\underline{W}(c)/\underline{W}(c+1))_n$.  Our analysis of $\mu_t$ above shows that  $a_n t = a_{n+1}$.  Thus $\underline{W}(c)/\underline{W}(c+1)$ is cyclic and torsion free, with Hilbert series $s^{n_0}/(1-s)$.  It is thus a (shifted) point module, and  is noetherian.  Since $H$ is, up to finite dimension, an extension of such modules, $H$ itself is noetherian.

Finally, since the filtration of $H$ is induced from a complete flag on $S^{\pm}/L$, the number of point modules appearing as subfactors is equal to $\dim_{\kk} S^\pm/L = \binom{-a-d}{2}$.
\end{proof}

\section{Completing the proof of Theorem~\ref{thm-main}}\label{K-MODULES}

In this section we prove that the modules $\KM(\sO(a,b),m)$ are noetherian (in fact, finite dimensional over $\kk$), and complete the proof of Theorem~\ref{thm-main}.

We will work with both $\sR$ and $\sA = \sR(\mathbb{1})$.  We begin by making some computations of  cohomology of the sheaves $\sh{A}_n^{\sigma^m}(a,b)$.

\begin{lemma}\label{lem-R1p-A}
Let $m \in \NN$ and $ a, b \in \ZZ$. For all $n \geq -a-1$, $R^1p_* \sh{A}^{\sigma^m}_n(a,b)$ is a 0-dimensional sheaf of length $2(m \binom{-a}{2} + \binom{-a}{3})$.
\end{lemma}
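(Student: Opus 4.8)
The plan is to reduce the computation of $R^1 p_* \sh{A}_n^{\sigma^m}(a,b)$ to a cokernel that can be read off from the explicit monomial structure of the fat point scheme $B_n^m(\mathbb 1)$.

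First I would record that $\sh{A}_n^{\sigma^m}(a,b) \cong \sh{I}_n^{\sigma^m}(n+a, b')$, where $b' := k+b$ and $k = \binom{n+m+1}{2} - \binom{m+1}{2}$: this is Proposition~\ref{prop-fund-pts}(3), which applies because $\mathbb 1 = \tau(1,1) \in V(j)$ for every $j$ (take $j$ large enough for the given $n,m$). The tautological exact sequence
\[ 0 \to \sh{A}_n^{\sigma^m}(a,b) \to \struct_T(n+a,b') \to \struct_{B_n^m(\mathbb 1)} \to 0 \]
(the skyscraper $\struct_{B_n^m(\mathbb 1)}$ is unchanged by the twist, as $B_n^m(\mathbb 1)$ is $0$-dimensional) pushes forward under $p$; since $n+a \geq -1$ we have $R^1 p_* \struct_T(n+a,b') = 0$ (because $H^1(\PP^1,\struct(n+a)) = 0$) and $R^2 p_* = 0$, so the long exact sequence gives an isomorphism
\[ R^1 p_* \sh{A}_n^{\sigma^m}(a,b) \cong \coker\bigl( p_* \struct_T(n+a,b') \to p_* \struct_{B_n^m(\mathbb 1)} \bigr). \]
By Proposition~\ref{prop-0dim} (the $\theta = 1$ case) $B_n^m(\mathbb 1)$ is $0$-dimensional and supported at $\{F,Q\}$, so this cokernel is $0$-dimensional, supported at $\{f,q\}$ with $f := p(F)$, $q := p(Q)$, and of length at most $\len B_n^m(\mathbb 1) = 2\bigl(m\binom{n+1}{2} + \binom{n+1}{3}\bigr)$.

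It then remains to compute the cokernel locally at $q$, the computation at $f$ being identical by the symmetry of Lemma~\ref{sym-lem}. By Lemma~\ref{lem-pointchains2}(2) and its symmetric form, near $Q$ the ideal $\sh{I}_n^{\sigma^m}$ equals $\prod_{j=m}^{m+n-1}(s,v^j)$, where $s = y/x$ is a local equation of $Y$ and $v = z/w$ is a local coordinate on the base $\PP^1$ at $q$. A staircase analysis of this monomial ideal shows that, locally at $q$, one has $p_* \struct_{B_n^m(\mathbb 1)} \cong \bigoplus_{i=0}^{n-1} s^i\,\kk[v]/(v^{e_{n-i}})$ as a $\kk[v]_{(v)}$-module, where $e_k := km + \binom{k}{2}$ (this recovers the local length $\sum_{k=1}^n e_k = m\binom{n+1}{2} + \binom{n+1}{3}$). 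Meanwhile $p_* \struct_T(n+a,b')$ is, locally at $q$, the free $\kk[v]_{(v)}$-module on $1,s,\dots,s^{n+a}$ (and is $0$ when $n+a = -1$), and the restriction map carries $s^i \phi(v)$ to $s^i\,\overline{\phi(v)}$, surjecting onto the summand of index $i$ for each $0 \leq i \leq n+a$. Since $a \leq -2$ forces $n+a \leq n-2$, the summands of index $n+a+1,\dots,n-1$ survive untouched, so the cokernel at $q$ has length $\sum_{i=n+a+1}^{n-1} e_{n-i} = \sum_{k=1}^{-a-1} e_k = m\binom{-a}{2} + \binom{-a}{3}$ (the case $n+a = -1$ gives the same answer, as then the map is $0$ and $n = -a-1$); doubling for the contribution at $f$ gives the claimed total, and when $a \geq -1$ the index range is empty, so $R^1 p_* = 0$.

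I expect the only delicate point to be the staircase description of $\kk[s,v]_{(s,v)}/\prod_{j=m}^{m+n-1}(s,v^j)$ together with the verification that the restriction map is diagonal in the above sense — in particular, matching the local equation $s$ of $Y$ appearing in that ideal with the fibre coordinate in terms of which $p_* \struct_T(n+a,b')$ is free — and handling the $m = 0$ bookkeeping, where the factor $(s,v^0) = \struct_T$ simply drops out of the product. Everything else reduces to the hockey-stick identities $\sum_{k=1}^{N} k = \binom{N+1}{2}$ and $\sum_{k=1}^{N} \binom{k}{2} = \binom{N+1}{3}$.
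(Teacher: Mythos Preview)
Your argument is correct and takes a genuinely different route from the paper. The paper localizes at $f$ (and by symmetry $q$), invokes the theorem on formal functions to identify the stalk of $R^1p_*\sh{A}_n^{\sigma^m}(a,b)$ with $\varprojlim_\ell H^1(\ell W,\sh{I}_n^{\sigma^m}\cdot\struct_{\ell W}((a+n)Y))$, and then computes each $H^1(\ell W,\blank)$ by a \v{C}ech calculation on the two-chart cover of $\ell W$, counting the surviving monomials $u^iw^j$ and checking that the inverse system stabilizes. You instead push forward the defining short exact sequence $0\to\sh{A}_n^{\sigma^m}(a,b)\to\struct_T(n+a,b')\to\struct_{B_n^m(\mathbb 1)}\to 0$ and read the length off the cokernel of $p_*\struct_T(n+a,b')\to p_*\struct_{B_n^m(\mathbb 1)}$, using that $R^1p_*\struct_T(n+a,b')=0$ once $n+a\geq -1$.

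Your route is more elementary: it avoids formal functions and inverse limits entirely, and the staircase decomposition of $\kk[s,v]/\prod_{j=m}^{m+n-1}(s,v^j)$ makes the cokernel transparent. The one point you flag as delicate is genuine but harmless here: for $\tau=\mathbb 1$ the proof of Lemma~\ref{lem-pointchains2} gives $X^{\sigma^m}=X+mZ$ and $Y^{\sigma^m}=Y+mW$, so at $Q$ the local coordinates in $\sh{I}_1^{\sigma^j}=(s,v^j)$ really are the fibre coordinate $s=y/x$ and the base coordinate $v=z/w$, and after trivializing $\struct_T((n+a)Y)$ near $Q$ by $s^{n+a}$ the restriction map does send the basis $1,u,\dots,u^{n+a}$ to $s^{n+a},s^{n+a-1},\dots,1$ in $\kk[s,v]/I$, which is exactly the diagonal form you need. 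The paper's approach, by contrast, is cut to match the formal-functions and \v{C}ech machinery already set up in Section~\ref{Q-MODULES} (Lemmas~\ref{lem-miracle} and~\ref{lem-pushfwd1}), so it fits the surrounding narrative but is heavier for this particular computation.
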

\begin{proof}
The proof uses similar methods as those in the last section:  we use the theorem on formal functions followed by an explicit \v{C}ech cohomology computation.  Recall that we write $\mc{A}_n^{\sigma^m} = \sh{I}_n^{\sigma^m} \mc{L}_n^{\sigma^m}$ for an ideal sheaf $\mc{I}_n^{\sigma^m}$, which in this case defines a $0$-dimensional subscheme supported at $F$ and $Q$.

First, a similar argument to the proof of Lemma~\ref{lem-pushfwd1} shows that the sheaf $R^1p_* \sA_n^{\sigma^m}(a, b)$ is 0-dimensional, and in fact is supported at $\{f, q\}$.  Of course the structure at those two points will be symmetric and in this proof we concentrate on the point $f$. For $\ell \geq 1$, let $\sh{F}_{\ell} :=  \sI_n^{\sigma^m} \cdot \sO_{\ell W}((a+n) Y )$. Note that the natural map $\sA_n^{\sigma^m}(a,b)|_{\ell W} \to \sF_\ell$ induces an isomorphism on $H^1$.

The theorem on formal functions shows that \beq \label{A-formal-eq} \big(R^1p_* \sA_n^{\sigma^m}(a, b)\big)_f \cong \varprojlim_{\ell} H^1(\ell W, \mc{F}_{\ell}). \eeq
Let $w:=v^{-1}$ and let $u, u^{-1}$ be local coordinates on $W$.  We define $S^-:= \kk[u^{-1},w]/(w^{\ell})$, $S^+:= \kk[u, w]/(w^{\ell})$, and $S^{\pm}:= \kk[u, u^{-1},w]/(w^{\ell})$. Let $U^+, U^-, U^\pm$ respectively be the spectra of $S^+, S^-$, and $S^{\pm}$, as open subsets of $\ell W$.

For brevity of notation, write $\mc{F} = \mc{F}_{\ell}$.  By Lemma~\ref{lem-pointchains2}(2), the ideal sheaf $\sI^{\sigma^i}_1$ is equal locally at $F$ to $(u, w^i)$, and so we get
\[
\sF(U^+) = (u, w^m)(u, w^{m+1}) \ldots (u, w^{m+n-1})S^+.
\]
Further, $\sF(U^-) = u^{n+a}S^-$, and $\sF(U^\pm) = S^\pm$. We need to calculate
\[ H^1(\ell W, \sF) \cong \sF(U^\pm)/(\sF(U^+)+\sF(U^-)).\]
If we take $\ell$ large enough, a basis of monomials for the factor $S^+/\sF(U^+)$  is the same as a basis of monomials for $\kk[u,w]/ (u, w^m)(u,w^{m+1}) \ldots (u, w^{m+n-1})$, and this was already calculated in Proposition~\ref{prop-0dim}.  Namely, setting $j_h = m + h-1$ for $1 \leq h \leq n$, for the indices $j$ with $\sum_{h = 1}^{k-1} j_h \leq j < \sum_{h = 1}^{k} j_h$ we get $u^iw^j$ for all $0 \leq i < n - k + 1$.  Thus we see that $\sF(U^\pm)/(\sF(U^+)+\sF(U^-))$ has the following basis of monomials:  for the indices $j$ with $\sum_{h = 1}^{k-1} j_h \leq j < \sum_{h = 1}^{k}j_h $, we get $u^iw^j$ for all $i$ satisfying $n + a < i < n - k + 1$.

There are
\[ m(-a-1)+(m+1)(-a-2) + \cdots + (m-a-2)(1)  = m \sum_{i = 1}^{-a-1} i  + \sum_{i=1}^{-a-2}i(-a-1-i)= m \binom{-a}{2} +
\binom{-a}{3}\]
 of these.
Thus $\len H^1(\ell W, \sF_{\ell}) = m \binom{-a}{2} + \binom{-a}{3}$ for all $\ell \gg 0$.  In addition, since we get the same $\kk$-basis for all large $\ell$, clearly the maps in the inverse limit in \eqref{A-formal-eq} are isomorphisms for all large $\ell$, and so we have calculated the length of $R^1p_* \sA_n^{\sigma^m}(a, b)$ at $f$.  By symmetry the length at $q$ is the same, and so we have
\[\len R^1p_* \sF = 2(m \binom{-a}{2} + \binom{-a}{3}),\]
as claimed.
\end{proof}

\begin{proposition}\label{prop-H1p*A}
For any $m \in \NN$ and $a,b \in \ZZ$, we have $H^1 (\PP^1, p_* \sh{A}_n^{\sigma^m}(a,b)) = 0$  for $n \gg 0$.
\end{proposition}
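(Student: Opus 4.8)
The plan is to compute the coherent sheaf $\mc{E}_n := p_*\bigl(\sh{A}_n^{\sigma^m}(a,b)\bigr)$ on $\PP^1$ explicitly, as a direct sum of line bundles, and then simply read off the vanishing of $H^1$ once $n$ is large. Since $\mathbb 1 \in V(j)$ for every $j$, Proposition~\ref{prop-fund-pts}(3) gives $\sh{L}_n^{\sigma^m}\cong\struct(n,k)$ with $k = \binom{n+m+1}{2}-\binom{m+1}{2}$, so $\sh{A}_n^{\sigma^m}(a,b)\cong\sh{I}_n^{\sigma^m}(n+a,k+b)$, where $\sh{I}_n^{\sigma^m}$ is the ideal sheaf of the $0$-dimensional scheme $B_n^m(\mathbb 1)$, supported at $F$ and $Q$. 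As a subsheaf of a locally free sheaf it is torsion free, so $\mc{E}_n$ is torsion free, hence locally free on the smooth curve $\PP^1$. For $n$ large enough that $n+a\geq 0$, the projection formula applied to $p=\pr_2$ and $\struct_T(n+a,k+b)=\pr_1^*\struct(n+a)\otimes\pr_2^*\struct(k+b)$ gives $p_*\struct_T(n+a,k+b)\cong\bigoplus_{j=0}^{n+a}\struct_{\PP^1}(k+b)\,e_j$ with $e_j:=x^{j}y^{n+a-j}$ the standard monomial basis of $H^0(\PP^1,\struct(n+a))$ in the fibre direction; and the induced inclusion $\mc{E}_n\hookrightarrow\bigoplus_j\struct_{\PP^1}(k+b)\,e_j$ is an isomorphism over $\PP^1\smallsetminus\{f,q\}$, where $f:=p(F)$ and $q:=p(Q)$, because $p^{-1}(\PP^1\smallsetminus\{f,q\})$ avoids $F$ and $Q$.

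The heart of the argument is the local structure of $\mc{E}_n$ at $f$ and $q$. By Lemma~\ref{lem-pointchains2} (with $\rho=\theta=1$), $\sh{I}_1^{\sigma^i}$ equals $(u,w^i)$ locally at $F$ for suitable local coordinates (as in the proof of Lemma~\ref{lem-R1p-A}), so $\sh{I}_n^{\sigma^m}=\prod_{i=m}^{m+n-1}\sh{I}_1^{\sigma^i}$ is, locally at $F$, the monomial ideal with minimal generators $u^{\,n-\ell}w^{\,s_\ell}$, $0\leq\ell\leq n$, where $s_\ell := \ell m + \binom{\ell}{2}$ for $\ell\geq 0$ and $s_\ell:=0$ for $\ell\leq 0$. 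Expanding a section of $\struct_T(n+a,k+b)$ near $F$ as $\sum_{j}c_j\,e_j$, the condition of lying in $\sh{I}_n^{\sigma^m}$ decouples into the divisibility conditions $w^{\,s_{n-j}}\mid c_j$; thus $\mc{E}_n$ is diagonal in the $e_j$-basis at $f$, the $j$-th summand being twisted down by $s_{n-j}$. Applying the symmetry $\psi$ of Lemma~\ref{sym-lem} (which interchanges $F,Q$, commutes with $\sigma$, acts trivially on $\Pic T$, and descends on the base to the swap $f\leftrightarrow q$) and redoing the computation at $Q$ — where $e_j$ corresponds to $(y/x)^{n+a-j}$ — gives the $q$-defect of $e_j$ as $s_{j-a}$. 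Hence
\[
p_*\bigl(\sh{A}_n^{\sigma^m}(a,b)\bigr)\;\cong\;\bigoplus_{j=0}^{n+a}\struct_{\PP^1}\!\bigl(k+b-s_{n-j}-s_{j-a}\bigr).
\]

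To conclude: the function $\ell\mapsto s_\ell$ above is convex on $\ZZ$, so with $p+q$ fixed the sum $s_p+s_q$ is maximised at the extremes of the admissible range, which here gives $s_{n-j}+s_{j-a}\leq s_n+s_{\max(0,-a)}$ for all $0\leq j\leq n+a$. A short computation yields the identity $k=\binom{n+m+1}{2}-\binom{m+1}{2}=s_n+n$, so every summand of $\mc{E}_n$ has degree at least $(s_n+n)+b-s_n-s_{\max(0,-a)}=n+b-s_{\max(0,-a)}$. Thus as soon as $n\geq\max\bigl(s_{\max(0,-a)}-b-1,\ -a,\ 0\bigr)$, every summand has degree $\geq-1$, so $H^1$ of each vanishes and $H^1\bigl(\PP^1,p_*(\sh{A}_n^{\sigma^m}(a,b))\bigr)=0$.

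The main obstacle is the middle step: verifying that the monomial-ideal condition defining $\sh{I}_n^{\sigma^m}$ near $F$ (and near $Q$) genuinely decouples in the $e_j$-basis of $p_*\struct_T(n+a,k+b)$, and bookkeeping the way the two natural monomial bases — at $F$, lying over $u=0$, and at $Q$, lying over $u=\infty$ — are matched under $p_*$, which is exactly what produces the two shifts $s_{n-j}$ and $s_{j-a}$ and, ultimately, the mild asymmetry between $a\geq 0$ and $a<0$.
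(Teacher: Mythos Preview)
Your proof is correct and follows essentially the same strategy as the paper: compute $p_*\sh{A}_n^{\sigma^m}(a,b)$ explicitly as a direct sum of line bundles by analysing the monomial ideal $\sh{I}_n^{\sigma^m}$ locally at $F$ and $Q$, then bound the summand degrees from below via the convexity of $\ell\mapsto s_\ell$. The packaging differs only cosmetically---you use the $\psi$-symmetry of Lemma~\ref{sym-lem} and the convention $s_\ell=0$ for $\ell\le 0$ to treat both signs of $a$ in one formula, whereas the paper does a chart-by-chart section computation and splits into the cases $a\le -1$ and $a\ge 0$---but the underlying computation and the resulting line-bundle decomposition are identical.
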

\begin{proof}
 Let $\sh{F} := \sh{I}_n^{\sigma^m}((n+a)Y)$, and suppose that $a \leq -1$.
We begin by computing $p_* \sF$. We let $u, u^{-1}, v, v^{-1}$ be local coordinates on $T$, where $u^{\pm}$ give coordinates along the fibers of $p$ and $v^{\pm}$ give coordinates along the base of $p$.  We denote the four open affines covering $T$ by $U^+_+$, $U^-_+$, $U^+_-$, and $U^-_-$, where for example $U^+_- = \spec\kk[u,v^{-1}]$. Note that in this coordinate system, $F$ is defined by $u=0, v^{-1}=0$, and $Q$ is defined by $u^{-1}=0, v=0$. Then by Lemma~\ref{lem-pointchains2}(2) we have:
\begin{gather*} \sh{F}(U^+_-) = (u,v^{-m})(u, v^{-(m+1)})\cdots (u,v^{-(m+n-1)}) \kk[u,v^{-1}]; \ \ \
\sh{F}(U^-_-) = u^{n+a} \kk[u^{-1},v^{-1}]; \\
\sh{F}(U^-_+) = u^{n+a} (u^{-1},v^m)(u^{-1},v^{m+1})\cdots (u^{-1},v^{m+n-1})\kk[u^{-1},v];\ \text{and}\ \ \sh{F}(U^+_+) = \kk[u,v].
\end{gather*}

Let $U_+ = \spec \kk[v]$ and $U_- = \spec \kk[v^{-1}]$ be charts for $\PP^1$. Now, if $n \geq -a$, then we have
\[
 (p_*\sh{F})(U_-) = \sh{F}(U^+_-) \cap \sh{F}(U^-_-)
 = \bigoplus_{i=0}^{n+a} u^i  v^{-(m(n-i) +\binom{n-i}{2})} \kk[v^{-1}]
\]
 and
\[
 (p_* \sh{F})(U_+) = \sh{F}(U^+_+) \cap \sh{F}(U^-_+)
 = \bigoplus_{i=0}^{n+a} u^i v^{m(i-a) + \binom{i-a}{2}} \kk[v].
\]
Note that the intersections
 $p_* \sh{F}(U_+) \cap u^i \kk[v, v^{-1}]$ and
 $p_* \sh{F}(U_{-}) \cap u^i \kk[v, v^{-1}]$ give a compatible direct sum decomposition of
 $p^* \sh{F}(U_+) $ and $p^* \sh{F}(U_{-})$.  Therefore, if $n \geq -a \geq 1$, then
 \begin{align*}
  p_* \sh{F} & \cong \bigoplus_{i=0}^{n+a} \struct(-(m(n-i) + \binom{n-i}{2} +m(i-a) + \binom{i-a}{2})) \\
   & \cong \bigoplus_{i=0}^{n+a} \struct(-(m(n-a) + \binom{n-i}{2} + \binom{i-a}{2})).
   \end{align*}

  By the projection formula,
 \[ p_* \sh{A}_n^{\sigma^m}(a,b) \cong p_* \sh{F}(0,nm +\coeff + b) \cong (p_* \sh{F})(nm+\coeff+b).\]
 To show that $H^1(\PP^1,p_*(\sh{A}_n^{\sigma^m}(a,b))) = 0$ for $n \gg 0$, it suffices to show that  there is some $n_0 \geq 0$ such that the sheaf
\[ \sh{G}_{n,i} := \struct(-(m(n-a) + \binom{n-i}{2} + \binom{i-a}{2}) +nm + \coeff + b)
 = \struct( \coeff - \binom{n-i}{2} - \binom{i-a}{2} + b')\]
 (where $b'=ma+b$ does not depend on $n$ or $i$) has vanishing $H^1$ for all $n \geq n_0$ and $0 \leq i \leq n+a$.

Consider the function $-\binom{n-i}{2} - \binom{i-a}{2}$ as $i$ varies.  On the interval $0 \leq i \leq n+a$ this is equal to
 \[ \frac{1}{2}[ -(n-i)(n-i-1) - (i-a)(i-a-1)]\]
 which is quadratic in $i$ with leading term $-i^2$.  It is symmetric around $i = (n+a)/2$ and thus attains its minimum at the
 endpoints
$i=0$
 and $i=n+a$.  This minimum value is $-\binom{n}{2} - \binom{-a}{2}$.
 Now we may choose $n_0$ so that $\coeff - \binom{n}{2} - \binom{-a}{2} + b' \geq -1$ for all $n \geq n_0$. Then for any $n \geq n_0$ and $0 \leq i \leq n+a$, we have
\[ \coeff - \binom{n-i}{2} - \binom{i-a}{2} + b' \geq \coeff - \binom{n}{2} - \binom{-a}{2} + b' \geq -1.\]
  Thus $H^1(\PP^1, \sh{G}_{n,i}) = 0 $ for all $i$, and so $H^1(\PP^1, p_* \sh{A}_n^{\sigma^m}(a,b)) = 0$.

If $n, a \geq 0$, then a similar computation as above gives that
\[ p_* \sF \cong \bigoplus_{i=0}^a \struct(-(m(n-i) + \binom{n-i}{2}))^{\oplus 2} \oplus
    \bigoplus_{i=a+1}^{n-1} \struct(-(m(n-a)  + \binom{n-i}{2} + \binom{i-a}{2})).\]
The proof that $H^1(\PP^1, p_* \sA_n^{\sigma^m}(a,b)) = 0$ for $n \gg 0 $ is similar to that for $a\leq -1$, and we leave the details to the reader.
\end{proof}

\begin{theorem}\label{thm-K}
  Let $\tau = \tau(\rho, \theta)$ for a pair $(\rho, \theta)$ algebraically independent over $\FF$.  For any $a,b \in \ZZ$ and $m \in
  \NN$, the $R$-module $\KM(\sO(a,b), m)$
is finite-dimensional and therefore noetherian.
\end{theorem}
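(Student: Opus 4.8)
The plan is to prove the stronger fact that $\KM(\struct(a,b),m)_n = 0$ for all $n \gg 0$. Since each graded component $\KM(\struct(a,b),m)_n = H^1(\PP^1, p_*(\struct(a,b)\otimes \sh{R}_n^{\phi^m}))$ is the cohomology of a coherent sheaf on $\PP^1$, hence finite-dimensional over $\kk$, this will show that $\KM(\struct(a,b),m)$ is a finite-dimensional graded module over the connected graded algebra $R$; such a module is automatically noetherian.

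The main tool is the comparison with $A = R(\mathbb{1})$. Put $c := 2\bigl(m\binom{-a}{2} + \binom{-a}{3}\bigr)$; with the conventions of Lemma~\ref{lem-R1p-A} and Corollary~\ref{cor-R1p-R} this is the common length of the $0$-dimensional sheaves $R^1p_*\sh{A}_n^{\sigma^m}(a,b)$ and $R^1p_*\sh{R}_n^{\phi^m}(a,b)$ once $n \geq -a-1$, and it equals $0$ when $a \geq -1$. First I would analyse $A$: the Leray exact sequence of low-degree terms for $p$, applied to $\sh{A}_n^{\sigma^m}(a,b)$ exactly as in the proof of Proposition~\ref{prop-5}, gives
\[
0 \to H^1(\PP^1, p_*\sh{A}_n^{\sigma^m}(a,b)) \to H^1(T, \sh{A}_n^{\sigma^m}(a,b)) \to H^0(\PP^1, R^1p_*\sh{A}_n^{\sigma^m}(a,b)) \to 0 .
\]
By Proposition~\ref{prop-H1p*A} the left-hand term vanishes for $n \gg 0$, while by Lemma~\ref{lem-R1p-A} the right-hand term is the space of global sections of a $0$-dimensional sheaf of length $c$, so has dimension $c$ and no higher cohomology. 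Hence $h^1(T, \sh{A}_n^{\sigma^m}(a,b)) = c$ for $n \gg 0$.

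Next I would transfer this bound to $R$. By Theorem~\ref{thm-generic}(1) we have $h^1(T, \sh{R}_n^{\phi^m}(a,b)) \leq h^1(T, \sh{A}_n^{\sigma^m}(a,b)) = c$ for $n \gg 0$. On the other hand, Proposition~\ref{prop-5} provides the exact sequence $0 \to \KM(\struct(a,b),m) \to \CM^1(\struct(a,b),m) \to \QM(\struct(a,b),m) \to 0$, and Corollary~\ref{cor-R1p-R} says $R^1p_*\sh{R}_n^{\phi^m}(a,b)$ is $0$-dimensional of length exactly $c$ for $n \geq -a-1$, so $\dim_\kk \QM(\struct(a,b),m)_n = c$ for such $n$. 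Taking degree-$n$ pieces and comparing dimensions gives
\[
\dim_\kk \KM(\struct(a,b),m)_n = h^1(T, \sh{R}_n^{\phi^m}(a,b)) - c \leq 0
\]
for $n \gg 0$, whence $\KM(\struct(a,b),m)_n = 0$ for $n \gg 0$, proving Theorem~\ref{thm-K}. In the degenerate range $a \geq -1$ one has $c = 0$ and both higher direct images vanish for $n \geq -a-1$, so the inequality reads $h^1(T,\sh{R}_n^{\phi^m}(a,b)) \leq h^1(T,\sh{A}_n^{\sigma^m}(a,b)) = 0$ directly.

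Finally, to close out the section: with Theorems~\ref{thm-Q} and~\ref{thm-K} established, Corollary~\ref{cor-Leray} shows $R$ is right noetherian; since $R^{op} \cong R(\tau^{-1})$ by Proposition~\ref{prop-2} and $\tau^{-1} = \tau(\rho^{-1},\theta^{-1})$ still has coordinates algebraically independent over $\FF$, the same argument applied to $R(\tau^{-1})$ shows $R$ is left noetherian, and together with the properties recorded in Theorem~\ref{thm-generic} this completes the proof of Theorem~\ref{thm-main}. I do not expect a real obstacle at this stage — the whole argument is a length count comparing $R^1p_*\sh{R}_n^{\phi^m}(a,b)$ with $R^1p_*\sh{A}_n^{\sigma^m}(a,b)$ — but the crucial and slightly delicate point is that Corollary~\ref{cor-R1p-R} delivers an \emph{equality} of these lengths rather than merely a bound, for it is exactly this equality, together with Theorem~\ref{thm-generic}(1), that forces the $\KM$-contribution to vanish in high degree.
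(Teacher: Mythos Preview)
Your argument is correct and is essentially the same as the paper's: both use the Leray exact sequence together with the equality of lengths of $R^1p_*$ from Corollary~\ref{cor-R1p-R} and Lemma~\ref{lem-R1p-A}, the semicontinuity bound $h^1(T,\sh{R}_n^{\phi^m}(a,b))\leq h^1(T,\sh{A}_n^{\sigma^m}(a,b))$, and the vanishing of $H^1(\PP^1,p_*\sh{A}_n^{\sigma^m}(a,b))$ from Proposition~\ref{prop-H1p*A} to force $\KM(\struct(a,b),m)_n=0$ for $n\gg 0$. The only difference is cosmetic: you first compute $h^1(T,\sh{A}_n^{\sigma^m}(a,b))=c$ and then subtract, while the paper writes the whole comparison as a single chain of (in)equalities.
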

\begin{proof}
Fix $a,b$.  Recall that $\KM(\sO(a,b), m) = \bigoplus_{n \in \NN} H^1(\PP^1,p_* \sh{R}_n^{\phi^m}(a,b))$. By Theorem~\ref{thm-generic},
  \[ \dim_{\kk} H^1(T, \sh{R}_n^{\phi^m}(a,b) ) \leq \dim_{\kk}H^1(T, \sh{A}_n^{\sigma^m}(a,b)).\]
    By Corollary~\ref{cor-R1p-R} and Lemma~\ref{lem-R1p-A}, if $n \geq -a-1$ we have
  \[ \len R^1p_* \sh{R}_n^{\phi^m}(a,b) = 2(m \binom{-a}{2} + \binom{-a}{3}) = \len R^1 p_* \sh{A}_n^{\sigma^m}(a,b).\]

But now, by the Leray spectral sequence we have
\begin{align*}
  \dim_{\kk} H^1(\PP^1, p_* \sh{R}_n^{\phi^m}(a,b))
   & = \dim_{\kk} H^1( T,  \sh{R}_n^{\phi^m}(a,b)) - \dim_{\kk} H^0(\PP^1,R^1 p_* \sh{R}_n^{\phi^m}(a,b)) \\
   & \leq \dim_{\kk} H^1(T, \sh{A}_n^{\sigma^m}(a,b)) - \dim_{\kk} H^0(\PP^1, R^1 p_* \sh{A}_n^{\sigma^m}(a,b)) \\
   & = \dim_{\kk} H^1(\PP^1, p_* \sh{A}_n^{\sigma^m}(a,b)).
   \end{align*}
By Proposition~\ref{prop-H1p*A}, this last term vanishes for $n \gg 0$.  So $\KM(\sO(a,b), m)$ is finite-dimensional, as claimed.
   \end{proof}

We are finally ready to give the proof of the main theorem.
 \begin{proof}[Proof of Theorem~\ref{thm-main}]

Let $\tau = \tau(\rho, \theta)$ for a pair $(\rho, \theta)$ algebraically independent over $\FF$.  Let $R:= R(\tau)$.  For any $a, b \in\ZZ$ and $m \in \NN$, by Theorem~\ref{thm-Q} $\QM(\struct(a,b),m)$ is a noetherian right $R$-module.  By Theorem~\ref{thm-K}, $\KM(\struct(a,b),m)$ is noetherian.
By Corollary~\ref{cor-Leray}, $R$ is therefore right noetherian.

Now, $R^{op} \cong R(\tau^{-1})$ by Theorem~\ref{prop-2}.  Since $\tau^{-1} = \tau(\rho^{-1}, \theta^{-1})$, $R$ is also left noetherian. By Proposition~\ref{prop-Koszul}, $R$ is defined by the relations \eqref{rels}; setting
$c := \zeta/\epsilon = \frac{\theta-1}{\theta+1}$ and $d := \delta/\gamma = \frac{\rho-1}{\rho+1}$
we obtain the given presentation of $R$. The remaining properties of $R$ are given by Proposition~\ref{prop-Koszul}.
   \end{proof}

We conclude with a brief discussion of some questions suggested by the results in this paper, which we hope to address in further work.
We still do not have a very deep understanding of the category of graded $R$-modules; for instance, is it closely related to
some more geometrically defined category?  Some other important questions are whether $R$ satisfies the Artin-Zhang $\chi_1$ condition, and what the structure of the point modules over $R$ is.

Now that we know that noetherian GK-4 birationally commutative surfaces exist, this naturally raises the question of whether they can be  classified, thus completing the classification of noetherian birationally commutative surfaces.  The work of Diller and Favre in \cite{DF2001} shows that the GK-4 growth type arises in a fairly limited situation:  the field automorphism $\varphi$ must be induced by a birational self-map $\phi$ of some ruled surface which preserves the ruling. Thus there is some hope that the general GK-4 birationally commutative surface is not too different in behavior from the examples we consider in this paper, although it probably will not have such special homological properties. 

Finally, what are the implications of the fact that connected graded noetherian Koszul algebras of finite global dimension are not automatically AS-Gorenstein?  Let $R:= R(\tau)$, for general $\tau$.  We note that  $R$ is not {\em strongly noetherian}:  by a similar proof as that of \cite[Theorem~9.2]{KRS}, $R$ is not generically flat, and therefore there is a commutative noetherian $\kk$-algebra $C$ so that $R\otimes C$ is not noetherian.  If a connected graded  strongly noetherian $\kk$-algebra $R$ has finite global dimension and is Koszul, must it be AS-Gorenstein?  Conversely, a counterexample to this question would be extremely interesting.

\bibliographystyle{amsalpha}

\providecommand{\bysame}{\leavevmode\hbox to3em{\hrulefill}\thinspace}
\providecommand{\MR}{\relax\ifhmode\unskip\space\fi MR }
\providecommand{\MRhref}[2]{%
  \href{http://www.ams.org/mathscinet-getitem?mr=#1}{#2}
}
\providecommand{\href}[2]{#2}

\end{document}